\tikzstyle{EDR}=[draw=lightgray,line width=0pt,preaction={clip, postaction={pattern=north east lines, pattern color=gray}}]
\tikzstyle{EDR1}=[draw=lightgray,line width=0pt,preaction={clip, postaction={pattern=north west lines, pattern color=gray}}]
\def\normo#1{\left\|#1\right\|}
\def\wt#1{\widetilde{#1}}
\def\wh#1{\widehat{#1}}
\newcommand{\F}{\mathcal{F}}
\newcommand{\C}{\mathbb{C}}
\newcommand{\N}{\mathbb{N}}
\newcommand{\R}{\mathbb{R}}
\newcommand{\Z}{\mathbb{Z}}
\newcommand{\al}{\alpha}
\newcommand{\be}{\beta}
\newcommand{\ga}{\gamma}
\newcommand{\de}{\delta}
\newcommand{\e}{\varepsilon}
\newcommand{\la}{\lambda}
\newcommand{\s}{\sigma}
\newcommand{\ta}{\tau}
\newcommand{\x}{\xi}
\newcommand{\De}{\Delta}
\newcommand{\p}{\partial}
\newcommand{\re}{\mathop{\mathrm{Re}}}
\newcommand{\im}{\mathop{\mathrm{Im}}}
\newcommand{\supp}{\operatorname{supp}}
\newcommand{\dist}{\operatorname{dist}}
\newcommand{\rank}{\operatorname{rank}}
\newcommand{\zs}{\mathbb S^1\setminus \{1\}}
\newcommand{\mf}{\mathfrak}
\newcommand{\phim}{[\Phi, \mf a]}
\newcommand{\mI}{{\mathbf I}}
\newcommand{\mm}{\mathfrak M}
\newcommand{\pyd}{\Phi^{y_d}}
\newcommand{\Ell}{{\bf Ell}(N,\epsilon)}
\newcommand{\elne}{{{\bf Ell}(N,\epsilon)}}
\newcommand{\mnb}{{{\bf Mul}({ N}, b)}}
\numberwithin{equation}{section}
\newtheorem{thm}{Theorem}[section]
\newtheorem{cor}[thm]{Corollary}
\newtheorem{lem}[thm]{Lemma}
\newtheorem{defn}[thm]{Definition}
\newtheorem{prop}[thm]{Proposition}
\newtheorem{conj}{Conjecture}
\theoremstyle{remark}
\newtheorem{rem}{Remark}
\begin{document}

\title[Sharp resolvent estimates]{
Sharp resolvent estimates
\\
outside of the uniform boundedness range
}

\author[Y. Kwon]{Yehyun Kwon}
\author[S. Lee]{Sanghyuk Lee}

\address{School of Mathematics, Korea Institute for Advanced Study,  Seoul 02455, Republic of Korea}
\email{yhkwon@kias.re.kr}
\address{Department of Mathematical Sciences and RIM, Seoul National University, Seoul 08826, Republic of Korea}
\email{shklee@snu.ac.kr}

\subjclass[2010]{35B45, 42B15} \keywords{resolvent estimate}

\begin{abstract}
In this paper we are concerned with resolvent estimates for the Laplacian $\Delta$ in Euclidean spaces. Uniform resolvent estimates for $\Delta$ were shown by Kenig, Ruiz and Sogge \cite{KRS} who established rather a complete description of the Lebesgue spaces allowing such estimates. However, the problem of obtaining sharp $L^p$--$L^q$ bounds depending on $z$ has not been considered in a general framework which admits all possible $p,q$.  In this paper, we present a complete picture of sharp $L^p$--$L^q$ resolvent estimates, which may depend on $z$.  We also obtain the sharp resolvent estimates for the fractional Laplacians and a new result for the Bochner--Riesz operators of negative index.
\end{abstract}
\maketitle

\section{Introduction and main results}\label{intro}
In this paper we are concerned with the  resolvent estimate for the Laplacian which is of the form 
\begin{equation}\label{resol0}
\|(-\Delta -z)^{-1}  f\|_{L^q(\R^d)}\le C\|f\|_{L^p(\R^d)}, \quad \forall z\in\C\setminus[0,\infty) .
\end{equation}
When $z=0$ the estimate is simply  the classical Hardy--Littlewood--Sobolev inequality.  If $z\in (0,\infty)$ the left hand side cannot be defined even as a distribution without additional assumption. Throughout this article we assume $z\in\C\setminus[0,\infty)$. The inequality  \eqref{resol0} and its variants  (especially, with $C$ independent of $z$)  have applications  to various related problems. Among them are uniform Sobolev estimates, unique continuation properties  \cite{KRS, JKL}, limiting absorption principles \cite{GS}, absolute continuity of the spectrum of periodic Schr\"odinger operators \cite{Shen}  and eigenvalue bounds for  Schr\"odinger operators with complex potentials \cite{Fr11, Fr18}.  As just mentioned, \eqref{resol0} has been usually considered with $C$ independent of $z$ but the sharp bounds which are allowed to be dependent on $z$ are not studied in a general framework. The primary purpose of this paper is to provide  complete characterization of the sharp  $L^p$--$L^q$ bounds for the resolvent operators up to  a multiplicative constant.  

\subsection*{Uniform resolvent estimate} In their celebrated work \cite{KRS}  Kenig, Ruiz and Sogge showed that,  for certain pairs of $p, q$, the constant $C$  in \eqref{resol0} can be chosen uniformly in $z\in\C\setminus[0,\infty)$. More precisely, for $d\ge 3$, it was shown  that there is a uniform constant $C=C(p,q,d)>0$ such that \eqref{resol0} holds  if and only if  $1/p-1/q=2/d$ and $\frac{2d}{d+3}<p<\frac{2d}{d+1}$, or equivalently $(1/p, 1/q)$ lies on the open line segment whose endpoints are  
\begin{equation}\label{AA'3}
	A=A(d):=\Big(\frac{d+1}{2d}, \frac{d-3}{2d} \Big), \quad A'=A'(d):=\Big(\frac{d+3}{2d}, \frac{d-1}{2d}\Big), \quad d\ge 3.
\end{equation} 
See Figure \ref{fig2}.  They used these estimates to show {\it uniform Sobolev estimates} for second order elliptic differential operators on the same range of $p, q$ (see \cite[Theorem 2.2]{KRS}).  When $(\frac1p,\frac1q)=(\frac{d+2}{2d}, \frac{d-2}{2d})$\footnote{The midpoint of the line segment $AA'$ in Figure \ref{fig2}.} the same estimate was also obtained by Kato and Yajima \cite[pp. 493--494]{KY} by a different approach.  

The result in \cite{KRS}  gives complete characterization of the range of $p, q$ which admits  the uniform resolvent estimate. However, it is not difficult to see that if  $C$ in \eqref{resol0} is allowed to be dependent on $z\in\C\setminus[0,\infty)$,  there is a larger set of $p,q$ for which the estimate \eqref{resol0} holds. To be precise, for  $z\in \C\setminus[0,\infty)$  let us set 
\[	\|(-\Delta-z)^{-1}\|_{p\to q}:=\inf\Big\{C_z:   \|(-\Delta -z) ^{-1} f \|_{L^q(\R^d)}\le C_z\| f \|_{L^p(\R^d)}, ~ \forall f\in  \mathcal S(\R^d)  \Big\},	\]
where $\mathcal S(\R^d)$ denotes the space of Schwartz functions on $\R^d$.

\begin{prop}\label{eresb}
Let $d\ge2$, $1\le p, q \le \infty$ and $z\in\C\setminus [0,\infty)$. Then $\|(-\Delta-z)^{-1}\|_{p\to q}<\infty$  if and only if  $(1/p, 1/q)\in \mathcal R_0$ which is given by 
\[	\mathcal R_0=\mathcal R_0(d):= 
	\begin{cases}
	\big\{ (x,y) : 0\le x, y\le 1, ~ 0\le x-y <1 \big\}
		   &\text{ if } ~  d=2,\\[5pt]
	\big\{(x,y) : 0\le x, y\le 1,~ 0\le x-y \le  \frac2d\big\} \setminus \big\{ \big(1, \frac{d-2}d \big) , \big(\frac 2d, 0\big) \big\}	
		   &\text{ if } ~  d\ge 3.
	\end{cases}	\]
\end{prop}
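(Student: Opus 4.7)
The proof naturally splits into showing the two inclusions. I would begin with the explicit formula for the convolution kernel $K_z$ of $(-\Delta-z)^{-1}$, namely
\[
K_z(x) \;=\; \frac{i}{4}\Bigl(\frac{\sqrt{z}}{2\pi|x|}\Bigr)^{\!(d-2)/2} H^{(1)}_{(d-2)/2}\!\bigl(\sqrt{z}\,|x|\bigr),
\]
with the branch $\im\sqrt{z}>0$. The classical asymptotics of the Hankel function $H^{(1)}_\nu$ yield $|K_z(x)|\lesssim |x|^{-(d-2)}$ near $0$ when $d\ge 3$ (respectively $|K_z(x)|\lesssim 1+|\log|x||$ when $d=2$), and the exponential decay $|K_z(x)|\lesssim e^{-c_z|x|}|x|^{-(d-1)/2}$ for $|x|\ge 1$, where $c_z=\im\sqrt{z}>0$. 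From these estimates one reads off that $K_z\in L^r(\R^d)$ for every $1\le r < d/(d-2)$ when $d\ge 3$ (for every $1\le r<\infty$ when $d=2$), and that $K_z\in L^{d/(d-2),\infty}(\R^d)$ when $d\ge 3$.

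For sufficiency, suppose $(1/p,1/q)\in\mathcal R_0$. If the strict inequality $1/p - 1/q < 2/d$ holds (resp.\ $1/p-1/q < 1$ when $d=2$), I choose $r$ satisfying $1+1/q = 1/p+1/r$; this $r$ lies in $[1, d/(d-2))$ (resp.\ $[1,\infty)$), so Young's inequality gives $\|(-\Delta-z)^{-1}\|_{p\to q}\le \|K_z\|_r<\infty$. On the critical segment $1/p-1/q = 2/d$ with $(1/p,1/q)$ off the two corners $(1,(d-2)/d)$ and $(2/d,0)$, the condition amounts to $1<p<q<\infty$; I would split $K_z = K_z\chi_{\{|x|\le 1\}}+K_z\chi_{\{|x|>1\}}$, bound the first piece pointwise by $C|x|^{-(d-2)}$ so that convolution with it is bounded $L^p\to L^q$ by Hardy--Littlewood--Sobolev, and note that the second piece is in $L^1\cap L^\infty$ and so is handled on the same pair by Young.

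For necessity, the standard fact that every bounded translation-invariant operator from $L^p$ to $L^q$ requires $p\le q$ gives $1/p\ge 1/q$. The upper bound $1/p - 1/q\le 2/d$ (strict when $d=2$) is extracted by testing with $f = \chi_{B_R}$ as $R\to 0$: the lower bound on $K_z$ near the origin forces $(-\Delta-z)^{-1}f(x)\gtrsim R^2$ on $|x|\le R/2$ when $d\ge 3$ and $\gtrsim R^2 |\log R|$ when $d=2$, so comparing $\|(-\Delta-z)^{-1}f\|_q$ with $\|f\|_p$ as $R\to 0$ produces the desired inequality (with the logarithm enforcing strictness in dimension two, ruling out $(p,q)=(1,\infty)$). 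Finally, the two HLS-endpoints when $d\ge 3$ are excluded as follows: at $(p,q)=(1,d/(d-2))$, applying the putative bounded operator to a delta-sequence and invoking Fatou's lemma would force $K_z\in L^{d/(d-2)}(\R^d)$, which contradicts the near-origin behavior; the dual endpoint $(d/2,\infty)$ is ruled out by duality. The single delicate point is the critical-line case in the sufficiency step: direct Young's inequality just barely fails at the endpoint exponent $r=d/(d-2)$, and one must route through Hardy--Littlewood--Sobolev (equivalently, weak-$L^{d/(d-2)}$ convolution) to cover the open segment; everything else is routine.
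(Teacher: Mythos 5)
Your proof is correct, and it proceeds along a genuinely different route from the paper's. The paper never invokes the explicit Green's function: for sufficiency it decomposes the multiplier $(|\xi|^2-z)^{-1}$ into a low-frequency bump plus dyadic annular pieces $\beta(2^{-j}|\xi|)(|\xi|^2-z)^{-1}$, applies Young's inequality to each piece to get the bound $2^{j(d/p-d/q-2)}$, sums when $1/p-1/q<2/d$, and then handles the critical line $1/p-1/q=2/d$ via a Bourgain-type real interpolation lemma (Lemma~\ref{intpl}$(\mathrm{III})$) to pass from the dyadic bounds directly to restricted weak type and then strong type. You instead estimate the Hankel-function kernel pointwise and route the critical line through Hardy--Littlewood--Sobolev after a near/far splitting, which is more elementary and avoids the interpolation machinery. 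For the necessity the paper also stays in the Fourier side: it gets $1/p-1/q\le 2/d$ by localizing $\hat f$ to a high annulus, rescaling, and letting $j\to\infty$ so that the multiplier converges to $|\xi|^{-2}$ on the annulus; and it excludes the endpoints $(1,d/(d-2))$ and $(d/2,\infty)$ via a Mikhlin-multiplier reduction to the failure of the HLS endpoint for $(-\Delta)^{-1}$, falling back to the explicit $K_0$ kernel only for the $d=2$ endpoint. Your scheme (test with $\chi_{B_R}$ as $R\to0$ exploiting the real positive leading singularity of $K_z$; Fatou applied to a delta-sequence to force $K_z\in L^{d/(d-2)}$; duality) accomplishes the same thing entirely in physical space. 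The trade-off is that your argument is shorter and more classical for this specific proposition, but it leans on the explicit Helmholtz Green's function; the paper's Fourier-side argument transfers with almost no change to the fractional operator $(-\Delta)^{s/2}$ treated in Section~\ref{fract}, where no such closed-form kernel is available. One small point you should make explicit in a final write-up: in the $\chi_{B_R}$ test the lower bound $\re K_z(x)\gtrsim|x|^{-(d-2)}$ for small $|x|$ (which holds because the leading coefficient of the singularity is real and positive, independent of $z$) is what guarantees $|(-\Delta-z)^{-1}\chi_{B_R}|\gtrsim R^2$ on $B_{R/2}$ without cancellation from the complex phase.
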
 

In view of  Proposition \ref{eresb} it is natural to ask what is the sharp value of $\|(-\Delta-z)^{-1}\|_{p\to q}$ which depends on $z$.\footnote{Sharpness here refers to the optimal dependence of $\|(-\Delta-z)^{-1}\|_{p\to q}$ on the spectral parameter $z$.}  For some $p, q$ such estimate (modulo  a constant multiplication)  can be deduced by interpolation between estimates in \cite{KRS, Gu} and  the easy bound 
\begin{equation} 
	\|(-\Delta-z)^{-1} f \|_2 \le  \frac{ \| f \|_2}{\dist(z,[0,\infty))} ,
\end{equation}
which directly  follows from the Fourier transform and Plancherel's identity.  Some of related results can be found in  \cite{Fr18}. Moreover, these estimates turn out to be sharp (see \eqref{sc} and Proposition \ref{shp} below). But, the sharp bound  for $\|(-\Delta-z)^{-1}\|_{p\to q}$ with general $p, q$ cannot be deduced from interpolation between previously known estimates. For the purpose we need to make use of $L^p$ theory of oscillatory integral operators of Carleson--Sj\"olin type under the additional elliptic condition (\cite{CS, Ho72, St-book, Lee2, GHI}, also see Section \ref{sect2-1} below).

\subsection*{Boundedness of the associated multiplier operators} To obtain the sharp resolvent estimates, it is convenient to consider bounds for the associated multiplier operators. Clearly, 
\begin{equation}\label{mult}
\| (-\Delta - z)^{-1}\|_{p\to q}
 	=\sup_{\|f\|_p\le 1} \Big\| \F^{-1} \Big( \frac{\F f(\xi)}{|\xi|^2-z} \Big) \Big\|_{L^q(\R^d)}, \quad \forall z\in\C\setminus[0,\infty).
\end{equation}
Here $\F$ and $\F^{-1}$ denote the Fourier and inverse Fourier transforms on $\R^d$, respectively. Since the multiplier $(|\xi|^2-z)^{-1}$ becomes singular as $z$ approaches to the set $[0,\infty)$ it is reasonable to expect that the bound $\| (-\Delta - z)^{-1}\|_{p\to q}$ gets worse  as $\dist(z, [0,\infty))\to 0$.  Thanks  to homogeneity  and scaling, we have that 
\begin{equation}\label{sc}
\| (-\Delta - z)^{-1}\|_{p\to q} = |z|^{-1+\frac d2(\frac 1p-\frac 1q)} \Big\| \Big(-\Delta -\frac{z}{ |z|}\Big)^{-1} \Big\|_{p\to q}, \quad \forall z\in \C\setminus [0,\infty).
\end{equation}
Thus we may assume that  $|z|=1$, $z\neq 1$ to get the sharp bounds for $\| (-\Delta - z)^{-1}\|_{p\to q}$.  Indeed, when $d\ge 3$, it was shown in \cite{KRS} that there is a uniform constant $C$, independent of $z$, such that
\begin{equation}\label{unif}
\| (-\Delta - z)^{-1}\|_{p\to q} \le C, \quad  \forall z\in \zs \footnote{ $\mathbb S^1:=\{z\in\C: |z|=1\}$.}
\end{equation}
if $(1/p, 1/q)$ lies in either the open line segment of which endpoints are  $A$ and $A'$ (see \eqref{AA'3} and Figure \ref{fig2}), or the line of duality $1/p+1/q=1$ restricted to $\frac{d+3}{2d+2}\le \frac1p \le \frac{d+2}{2d}$ (see \cite[Lemma 2.2(b) and Theorem 2.3]{KRS}).  Later,  Guti\'{e}rrez (\cite[Theorem 6]{Gu}) extended \eqref{unif} to the optimal range of $p,q$. More precisely,  she  proved that the uniform bound \eqref{unif} is true if  $(1/p,1/q)$ lies in the set
\[	\mathcal R_1=\mathcal R_1(d):= \Big\{ (x,y)\in\mathcal R_0(d) :   \frac{2}{d+1}\le x-y\le \frac 2d, ~ x>\frac {d+1}{2d}, ~ y<\frac{d-1}{2d} \Big\}, \quad d\ge 3.	\]
This region is the closed trapezoid $ABB'A'$ from which the closed line segments joining $A,B$ and $A',B'$ are removed (see Figure \ref{fig2}).  She also established the $L^{p,1}$--$L^{q,\infty}$ (restricted weak type) analogues of \eqref{unif} when $(1/p, 1/q)$ is either $B$ or $B'$, where
\begin{equation}\label{BBprime}
	B=B(d):=\Big(\frac{d+1}{2d}, \frac{(d-1)^2}{2d(d+1)} \Big),  \quad B'=B'(d):=\Big(\frac{d^2+4d-1}{2d(d+1)}, \frac{d-1}{2d} \Big).
\end{equation} 

Failure of \eqref{unif} for $(1/p, 1/q)\notin \mathcal R_1$ has been actually known before in the studies of the Bochner--Riesz operators of negative orders (see Section \ref{REBR}). In fact, the necessity of  the conditions $\frac1p>\frac{d+1}{2d}$ and $\frac1q<\frac{d-1}{2d}$ follow since  \eqref{unif} combined with \eqref{mult} implies $L^p$--$L^q$ boundedness of the restriction-extension operator on the sphere (see Theorem \ref{rest_ext_sphere} and  \cite[pp. 341--342]{KRS}), which is a constant multiple of the Bochner--Riesz operator \eqref{nbr} of order $-1$. The other two conditions $ \frac{2}{d+1}\le \frac1p-\frac1q$ and $ \frac1p-\frac1q \le \frac 2d$ can be obtained by the Knapp type example (see B\"orjeson \cite{Bo}) and a simple argument involved with the Littlewood--Paley projection (see Proof of Proposition \ref{eresb}), respectively. 

When $d=2$, as far as the authors are aware,  the corresponding results regarding the uniform resolvent estimate \eqref{unif} are not explicitly stated anywhere else before,  although the $L^p$--$L^q$ mapping properties of the closely related Bochner--Riesz operaters of negative order are well known (see e.g., \cite{bak, CKLS} and references therein). However, the method in \cite{Gu} can be applied to obtain \eqref{unif}  provided that $(1/p,1/q)$ is contained in the pentagon 
\[	\mathcal R_1(2):= \{(x,y): 2/3\le x-y <1, \, 3/4<x\le1, \, 0\le y<1/4 \}.	\] 
See Figure \ref{fig1} and Remark \ref{rmk1}.

\begin{figure}
\captionsetup{type=figure,font=footnotesize}
\begin{minipage}[b]{0.45\textwidth}
\centering
\begin{tikzpicture} [scale=0.6]\scriptsize
	\draw [<->] (0,10.7)node[above]{$y$}--(0,0) node[below]{$(0,0)$}--(10.7,0) node[right]{$x$};
	\draw (0,10) --(10,10)--(10,0) node[below]{$(1,0)$};
	\draw (2.5,2.5)node[left]{$D$}--(30/4,10/12)node[above]{$B$};
	\draw (7.5,7.5)node[above]{$D'$}--(110/12,10/4)node[left]{$B'$};
	\draw (0,0)--(10,10);
	\draw (30/4,10/12)--(110/12,10/4);
	\draw (19/3,11/3) node{$\mathcal R_2$};
	\draw (30/8, 1) node{$\mathcal R_3$};
	\draw (9, 50/8) node{$\mathcal R_3'$};
	\draw (9, 1) node{$\mathcal R_1$};
	\draw [dash pattern={on 2pt off 1pt}]  (0,5)node[left]{$\frac12$}--(5,5)node[above]{$H$}--(5,0)node[below]{$\frac12$}; 
	\draw [dash pattern={on 2pt off 1pt}] (30/4,10/12)--(30/4,0)node[below]{$E$};
	\draw [dash pattern={on 2pt off 1pt}] (110/12,10/4)--(10,10/4)node[right]{$E'$};
\end{tikzpicture}\caption{The case $d=2$.}\label{fig1}
\end{minipage}\hfill
\begin{minipage}[b]{0.46\textwidth}
\centering
\begin{tikzpicture} [scale=0.6] \scriptsize
	\draw [<->] (0,10.7)node[above]{$y$}--(0,0) node[below]{$(0,0)$}--(10.7,0) node[right]{$x$};
	\draw (0,10) --(10,10)--(10,0) node[below]{$(1,0)$};
	\draw (4,4)node[left]{$D$}--(6,8/3)node[above]{$B$}--(10-8/3,4)node[left]{$B'$}--(6,6)node[above]{$D'$};
	\draw (0,0)--(10,10);
	\draw (4,0)node[below]{$\frac 2d$}--(10,6);
	\draw [dash pattern={on 2pt off 1pt}]  (0,5)node[left]{$\frac12$}--(5,5)node[above]{$H$}--(5,0)node[below]{$\frac12$}; 
	\draw [dash pattern={on 2pt off 1pt}] (6, 8/3)--(6, 2)node[below]{$A$}--(6,0)node[below]{$E$};;
	\draw [dash pattern={on 2pt off 1pt}] (10-8/3, 4)--(8, 4)node[right]{$A'$}--(10,4)node[right]{$E'$};;
	\draw (5.6, 4.4) node{$\mathcal R_2$};
	\draw (30/8, 2) node{$\mathcal R_3$};
	\draw (8, 50/8) node{$\mathcal R_3'$};
	\draw (6.9, 3.2) node{$\mathcal R_1$};
\end{tikzpicture}\caption{The case $d\ge3$.}\label{fig2}
\end{minipage}
\end{figure}

\subsection*{Conjecture regarding $L^p$--$L^q$ resolvent estimate with  $(1/p, 1/q)\in\mathcal R_0\setminus \mathcal R_1 $}  
Having seen that we have the uniform bound \eqref{unif} on the optimal range $\mathcal R_1$, we now proceed to investigate the (non-uniform) sharp bounds with $p,q$ which lie outside of the uniform boundedness range.  As becomes clear later, the problem is closely related to sharp $L^p$--$L^q$ boundedness of  the Bochner-Riesz operators of negative orders (see Section \ref{REBR}).  The non-uniform bounds on the resolvents have been used to study eigenvalues of the Schr\"odinger operators with complex potentials (for example, see \cite{Fr18, Cu15}).

In order to state our results we introduce some notations which denote points and regions in the closed unit square $I^2:=\{(x,y)\in\R^2: 0\le x,y\le 1\}$.  For each $(x,y) \in I^2$  we set 
\[	(x,y)' :=(1-y, 1-x).	\]
Similarly, for every subset $\mathcal R$ of $I^2$ we define $\mathcal R'\subset I^2$ by 
\[	\mathcal R':=\{ (x,y)\in I^2: (x,y)' \in \mathcal R \}.\]

\begin{defn}  For   $X_1, \cdots, X_\ell\in I^2$, we denote by $[X_1, \cdots, X_\ell]$ the convex hull of  the points $X_1, \cdots, X_\ell$. In particular, if  $X, Y\in I^2$,  $[X,Y]$  denotes  the closed line segment  connecting $X$ and $Y$ in $I^2$. We also denote by $(X,Y)$ and $[X,Y)$  the open interval $[X,Y]\setminus\{X,Y\}$ and the half-open interval $[X,Y]\setminus\{Y\}$, respectively. 
\end{defn}

For every $d\ge2$ and every $(1/p, 1/q)\in I^2$, define a nonnegative number
\begin{equation}\label{def-gamma}
	\gamma_{p,q} =\gamma_{p,q}(d) : =\max \Big\{ 0, ~ 1-\frac{d+1}2\Big(\frac 1p-\frac 1q \Big),~ \frac{d+1}2-\frac dp, ~ \frac dq-\frac{d-1}{2} \Big\}.
\end{equation}
The definition of $\ga_{p,q}$ naturally leads to division of $\{(x,y)\in I^2: y\le x\}$ into the four regions
\begin{align}
\label{pentagon}\mathcal P=\mathcal P(d)
	&:=\Big\{(x,y)\in I^2: x-y\ge \frac{2}{d+1}, ~ x>\frac{d+1}{2d}, ~y<\frac{d-1}{2d} \Big\}, \\
\label{trapezoid}\mathcal T=\mathcal T(d)
	&:=\Big\{(x,y)\in I^2: 0\le x-y<\frac{2}{d+1},~\frac{d-1}{d+1}(1-x) \le y \le \frac{d+1}{d-1} (1-x) \Big\}, \\
\label{quadrangle}\mathcal Q=\mathcal Q (d)
	&:=\Big\{(x,y)\in I^2: y< \frac{d-1}{d+1}(1-x),~ y\le x<\frac{d+1}{2d}\Big\}, 
\end{align}
and $\mathcal Q'$.\footnote{$\mathcal P=[(1,0),  E, B, B', E']\setminus ([E, B]\cup [E', B'] )$,  $\mathcal T=[B, D, D', B']\setminus [B, B']$,  and  $\mathcal Q=[(0,0), D, B, E]\setminus([D, B]\cup [B, E])$. See Figure \ref{fig1} and Figure \ref{fig2}.}  We  now observe that $\mathcal R_1(d)=\mathcal P(d)\cap\mathcal R_0(d)$. Setting $H:=(\frac12, \frac12)$ and  $D=D(d):=(\frac{d-1}{2d}, \frac{d-1}{2d})$ we also define $\mathcal R_2=\mathcal R_2(d)$ and $\mathcal R_3=\mathcal R_3(d)$  by
\[
\mathcal R_2 
	:= \mathcal T(d) \setminus \big([D,H)\cup [D',H) \big), \qquad
\mathcal R_3 
	:= \mathcal Q(d) \cap \mathcal R_0(d).
\]
See Figure \ref{fig1} and Figure \ref{fig2}. Observe that the sets $\mathcal R_i$ $(i=1,2,3)$ and $\mathcal R_3'$ are mutually disjoint. Setting $E=E(d):=(\frac{d+1}{2d},0)$ we have that
\[	\Big(\bigcup_{i=1}^3\mathcal R_i \Big) \cup \mathcal R_3'=\mathcal R_0 \setminus \big( [B,E] \cup [B', E'] \cup [D,H) \cup [D',H) \big),\]
and we also see  that
\begin{equation}\label{gamma} 
\gamma_{p,q}
=\left\{\begin{matrix}
0 & \quad \text{if} \quad (\frac 1p,\frac1q)\in \mathcal R_1, \\[4pt] 
1-\frac{d+1}2 (\frac 1p-\frac 1q ) & \quad \text{if} \quad (\frac 1p,\frac1q)\in\mathcal R_2, \\[4pt] 
\frac{d+1}2-\frac dp & \quad \text{if} \quad (\frac 1p,\frac1q)\in\mathcal R_3, \\[4pt] 
\frac dq-\frac{d-1}{2} & \quad \text{if}  \quad (\frac 1p,\frac1q)\in\mathcal R_3'.
\end{matrix}\right.
\end{equation} 

In Section \ref{shp1} we obtain the following lower bounds for $\|(-\Delta-z)^{-1}\|_{p\to q}$. 
\begin{prop}\label{shp} 
Let $d\ge 2$. Suppose that $(1/p, 1/q)\in  \big(\bigcup_{i=1}^3 \mathcal R_i\big) \cup \mathcal R_3'$. Then, for $ z\in \zs$,
\begin{equation}\label{shp2}
\|(-\Delta-z)^{-1}\|_{p\to q} \gtrsim \dist(z,[0,\infty))^{-\gamma_{p,q}},
\end{equation}
where the implicit constant is independent of $z\in\zs$.
\end{prop}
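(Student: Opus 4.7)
The plan is to construct explicit lower-bound examples in each of the four regions and to reduce $\mathcal R_3$ to $\mathcal R_3'$ by duality. Normalize $|z|=1$, $z\ne 1$, and write $\lambda=\dist(z,[0,\infty))$. For $z$ bounded away from $1$ (say $\lambda\ge\tfrac12$) the required bound $\|(-\Delta-z)^{-1}\|_{p\to q}\gtrsim 1\sim\lambda^{-\gamma_{p,q}}$ follows by testing against any fixed Schwartz $f_0$ with $\hat f_0$ supported in an annulus bounded away from the unit sphere: on $\supp\hat f_0$ the multiplier $(|\xi|^2-z)^{-1}$ is smooth with magnitude $\sim 1$ uniformly in such $z$, hence $\|(-\Delta-z)^{-1}f_0\|_q\sim 1\sim\|f_0\|_p$. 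In particular this settles $\mathcal R_1$, where $\gamma_{p,q}=0$. I may therefore assume $\lambda\ll 1$, so that $z$ is close to $1$; writing $z=k^2$ with $\im k>0$ one has $k\approx 1$ and $\im k\sim\lambda$.

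For $\mathcal R_2$ I use a Knapp tube. Choose $\xi_*$ with $|\xi_*|^2=\re z$, and in coordinates where $\xi_*=|\xi_*|e_1$ take $T=\{\xi_*+\eta:|\eta_1|\le c\lambda,\ |\eta'|\le c\sqrt\lambda\}$ with $|T|\sim\lambda^{(d+1)/2}$. A Taylor expansion of $|\xi|^2$ around $\xi_*$ gives $\bigl||\xi|^2-z\bigr|\sim\lambda$ on $T$. Setting $f=\F^{-1}\chi_T$, one has $|f|\sim|T|$ on the dual tube $T^*$ of volume $\sim|T|^{-1}$, so $\|f\|_p\sim|T|\cdot|T^*|^{1/p}\sim\lambda^{(d+1)(1-1/p)/2}$. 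On $T$ the multiplier satisfies $(|\xi|^2-z)^{-1}\approx(-i\,\im z)^{-1}$, hence $\|(-\Delta-z)^{-1}f\|_q\sim\lambda^{-1}\|f\|_q$. Dividing,
\[
\|(-\Delta-z)^{-1}\|_{p\to q}\gtrsim \lambda^{-1+(d+1)(1/p-1/q)/2}=\lambda^{-\gamma_{p,q}}.
\]

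For $\mathcal R_3'$ I test against a fixed Gaussian $\phi(x)=e^{-|x|^2}$, using the asymptotics $G_z(x)\sim c_d(k/|x|)^{(d-1)/2}e^{ik|x|}$ of the resolvent kernel valid for $|kx|\gg 1$. On $1\ll|x|\le c\lambda^{-1}$ the damping $|e^{ik|x|}|=e^{-|x|\,\im k}$ is bounded below, so $|G_z(x)|\sim|x|^{-(d-1)/2}$. The expansion $|x-y|=|x|-y\cdot x/|x|+O(|y|^2/|x|)$ for $|y|\lesssim 1\ll|x|$ gives $G_z*\phi(x)\approx G_z(x)\hat\phi(kx/|x|)$; since $\hat\phi$ is a strictly positive Gaussian, $|(-\Delta-z)^{-1}\phi(x)|\gtrsim|x|^{-(d-1)/2}$ on that annulus. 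Therefore
\[
\|(-\Delta-z)^{-1}\phi\|_q^q\gtrsim\int_1^{c\lambda^{-1}}r^{d-1-q(d-1)/2}\,dr\sim\lambda^{-d+q(d-1)/2},
\]
where the power-law growth requires $q<2d/(d-1)$, i.e.\ $1/q>(d-1)/(2d)$, which is precisely the $\mathcal R_3'$ condition. Dividing by $\|\phi\|_p\sim 1$ yields $\lambda^{-d/q+(d-1)/2}=\lambda^{-\gamma_{p,q}}$.

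The region $\mathcal R_3$ is then handled by duality: $\|(-\Delta-z)^{-1}\|_{p\to q}=\|(-\Delta-\bar z)^{-1}\|_{q'\to p'}$, the involution $(1/p,1/q)\mapsto(1/q',1/p')$ sends $\mathcal R_3$ to $\mathcal R_3'$, and a direct computation shows $d/p'-(d-1)/2=(d+1)/2-d/p$, so $\gamma$ is preserved. The main technical obstacle is the pointwise lower bound on $G_z*\phi$ in the $\mathcal R_3'$ step: the oscillation of $e^{ik|x|}$ across $\supp\phi$ could in principle produce cancellation. This is circumvented by the Gaussian choice of $\phi$ (making $\hat\phi$ strictly positive on $\{|\xi|\le 2\}$), which after the far-field reduction $G_z*\phi(x)\approx G_z(x)\hat\phi(kx/|x|)$ yields the desired pointwise estimate.
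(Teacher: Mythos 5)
Your proposal is correct, and the overall structure (explicit Knapp example for $\mathcal R_2$, a radial/far-field example for $\mathcal R_3'$, duality for $\mathcal R_3$, triviality for $\mathcal R_1$) matches the paper's strategy; but your execution of the $\mathcal R_3'$ step takes a genuinely different route. The paper avoids cancellation at the outset by reducing to the \emph{imaginary part} of the multiplier, $\frac{\de}{(|\xi|^2-1)^2+\de^2}$, which is positive; it then tests against a radial Schwartz bump in frequency, $\wh f = \phi(|\xi|)$, and computes the inverse Fourier transform via the Bessel asymptotics \eqref{bessel_asymp}, carefully splitting the radial integral to isolate the non-oscillating contribution (Lemma \ref{lower}, proof of \eqref{r4sh_fr}). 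You instead work on the kernel side, invoking the far-field Helmholtz Green's function asymptotics $G_z(x)\sim c_d\,k^{(d-3)/2}|x|^{-(d-1)/2}e^{ik|x|}$ (the power of $k$ in your write-up is off, but immaterial since $|k|\approx 1$), testing against a Gaussian and noting that the convolution against a rapidly decaying bump reduces to $\wh\phi$ evaluated at the fixed complex frequency $kx/|x|$, which for a Gaussian is uniformly nonzero. Both resolve the same cancellation issue, but differently: the paper's positivity trick versus your far-field phase-factoring. One concrete advantage of the paper's version is that Lemma \ref{lower} is stated for $m_\de^s(r)=\de/((r^s-1)^2+\de^2)$ with arbitrary $s>0$; it is reused verbatim for the fractional Laplacian in Section \ref{fract}. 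Your argument relies on the explicit form of the Green's function of $-\Delta$ and does not transfer directly to $(-\Delta)^{s/2}$. A minor point: in your $\mathcal R_2$ argument the claim ``$(|\xi|^2-z)^{-1}\approx(-i\im z)^{-1}$'' should be backed by a brief remark that the phase of the multiplier over $T$ is within $O(c)$ of a constant (so no cancellation in $\int_T$); alternatively, you could simply adopt the paper's imaginary-part reduction, which makes the Knapp example term-by-term positive.
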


As mentioned in the above, when $(1/p,1/q)\in[B,E]\cup[B',E']$, $\sup_{z\in\zs} \|(-\Delta-z)^{-1}\|_{p\to q} = \infty$. For $(1/p,1/q)\in[D, H) \cup [D', H')$,  it is likely that  by adapting Fefferman's  disproof of  disk multiplier  conjecture \cite{F71} one can show $\sup_{z\in\zs}  \dist(z,[0,\infty))^{\gamma_{p,q}} \|(-\Delta-z)^{-1}\|_{p\to q} = \infty$.  However, for the other $p,q$ with  $(1/p, 1/q)\in \big(\bigcup_{i=2}^3 \mathcal R_i\big)\cup\mathcal R_3'$ it seems natural to expect  that  the lower bound in \eqref{shp2} is also an upper bound.  

For $p,q$ with $(1/p, 1/q)\in \big(\bigcup_{i=2}^3 \mathcal R_i\big)\cup\mathcal R_3'$ and $z\in \C\setminus [0,\infty)$, let us set  
\[  { \mathlarger \kappa}_{p,q}(z) ={ \mathlarger \kappa}_{p,q,d}(z):= |z|^{-1+\frac d2(\frac 1p-\frac 1q)+\gamma_{p,q}}  \dist(z,[0,\infty))^{-\gamma_{p,q}}.	\]
Since $\dist(|z|^{-1}z, [0,\infty))=|z|^{-1}\dist(z, [0,\infty))$,  from  Proposition  \ref{shp} and  \eqref{sc} we  conjecture the following which  completely characterizes the  resolvent estimates outside of the uniform boundedness range.

\begin{conj}\label{main_conj} Let $d\ge 2$ and $(1/p, 1/q)\in \mathcal R_2 \cup \mathcal R_3 \cup \mathcal R_3'$.  There exists an absolute constant $C$, depending only on $p$, $q$ and $d$, such that, for $z\in \C\setminus [0,\infty)$, 
\begin{equation}\label{con1}	
C^{-1}{\mathlarger \kappa}_{p,q}(z) \le \|(-\Delta-z)^{-1}\|_{p\to q} \le C  {\mathlarger \kappa}_{p,q}(z).
\end{equation}
\end{conj}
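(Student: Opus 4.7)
By the scaling identity \eqref{sc}, the upper bound in \eqref{con1} reduces to proving $\|(-\Delta-z)^{-1}\|_{p\to q}\lesssim \delta^{-\gamma_{p,q}}$ for $|z|=1$ and $\delta:=\dist(z,[0,\infty))$; the matching lower bound is Proposition~\ref{shp}. When $\re z\le 1/2$ one has $\delta\gtrsim 1$ and the estimate follows from Proposition~\ref{eresb}, so assume $z=e^{i\theta}$ with $0<|\theta|\ll 1$ and $\delta\approx|\theta|$, and split
\[	(|\xi|^2-z)^{-1}=(2\sqrt z)^{-1}\bigl((|\xi|-\sqrt z)^{-1}-(|\xi|+\sqrt z)^{-1}\bigr)	\]
with $\im\sqrt z>0$. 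The second summand is a smooth bounded symbol contributing $O(1)$; the entire problem concentrates on the first, which is singular on a $\delta$-thickening of $\{|\xi|\approx 1\}$.

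The plan is a dyadic decomposition around this sphere. I would introduce a partition $\chi_k$ supported where $\bigl||\xi|-\re\sqrt z\bigr|\sim 2^k\delta$ for $k=0,1,\dots,k_0$ with $2^{k_0}\delta\sim 1$ (and $\chi_0$ on the $\delta$-tube), and write $(|\xi|-\sqrt z)^{-1}=\sum_{k=0}^{k_0}m_k$ with $\|m_k\|_\infty\lesssim (2^k\delta)^{-1}$ and $m_k$ supported on an annulus of thickness $2^k\delta$ around the unit sphere. The proof then reduces to the single-scale bound
\[	\|T_{m_k}\|_{p\to q}\lesssim (2^k\delta)^{-\gamma_{p,q}}	\]
for $(1/p,1/q)\in\mathcal R_2\cup\mathcal R_3\cup\mathcal R_3'$; since $\gamma_{p,q}>0$ on this union, the geometric series $\sum_k(2^k\delta)^{-\gamma_{p,q}}$ is dominated by its $k=0$ (resonant) term and sums to $\lesssim \delta^{-\gamma_{p,q}}$.

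The single-scale bound is precisely the sharp $L^p$--$L^q$ estimate for a negative-index Bochner--Riesz multiplier supported on a $2^k\delta$-annulus about the unit sphere. After anisotropic rescaling to unit thickness, this becomes an oscillatory integral of Carleson--Sj\"olin type for a phase with nonvanishing Gaussian curvature on the sphere, whose sharp $L^p$--$L^q$ theory is exactly the content of Section~\ref{sect2-1} (following \cite{CS,Ho72,St-book,Lee2,GHI}). On $\mathcal R_1$ the analogous bound with $\gamma_{p,q}=0$ recovers Guti\'{e}rrez's uniform estimate \cite{Gu}; on $\mathcal R_2\cup\mathcal R_3\cup\mathcal R_3'$ the exponent $\gamma_{p,q}$ from \eqref{def-gamma} is precisely the sharp loss for the associated extension/restriction operator, which also explains why $[D,H)\cup[D',H)$ must be excluded (this is the regime where the Fefferman disc-multiplier obstruction intervenes). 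Once the single-scale bound is established at the trapezoid vertices $B$, $B'$, interpolation with the trivial bound $\|T_{m_k}\|_{2\to 2}\lesssim (2^k\delta)^{-1}$ and with the uniform bounds on $\mathcal R_1$ covers the full range.

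The main obstacle is the strong-type single-scale estimate at the vertices $B$, $B'$ on the Knapp line $\{1/p-1/q=2/(d+1)\}$: \cite{Gu} delivers only restricted weak type there, whereas dyadic summation with the sharp exponent demands strong type. Upgrading rests on modern multilinear and polynomial-method oscillatory-integral technology---the Stein--Tomas restriction theorem for $d=2,3$ and the Guth--Hickman--Iliopoulou sharpening \cite{GHI} for $d\ge 4$---after which the dyadic sum above, together with Proposition~\ref{shp}, yields the conjecture.
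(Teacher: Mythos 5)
Your overall strategy -- scale to $|z|=1$, reduce to $z$ close to $1$, dyadically decompose the multiplier around the singular sphere at scales $2^k\delta$, prove a sharp single-scale bound and sum the geometric series -- is exactly the paper's strategy (compare Section~\ref{reduction} and the decomposition \eqref{break_dyadic}). The summation arithmetic $\sum_k (2^k\delta)^{-\gamma_{p,q}}\lesssim \delta^{-\gamma_{p,q}}$ for $\gamma_{p,q}>0$ is also correct.

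The gap is in the claim that the required single-scale estimate
\[
\|T_{m_k}\|_{p\to q}\lesssim(2^k\delta)^{-\gamma_{p,q}}
\]
is available on all of $\mathcal R_2\cup\mathcal R_3\cup\mathcal R_3'$, which you attribute to ``the sharp $L^p$--$L^q$ theory'' of Carleson--Sj\"olin type oscillatory integrals in Section~\ref{sect2-1}. That theory is \emph{not} sharp in the conjectured range when $d\ge 3$: Theorem~\ref{osc-est1} (Guth--Hickman--Iliopoulou) gives $p=q>p_*$ with $p_*$ as in \eqref{pstar}, which is strictly larger than the conjecturally optimal $2d/(d-1)$, and the bilinear input Corollary~\ref{tcsbil} forces the condition \eqref{below_the_line}, i.e.\ $(1/p,1/q)$ strictly below the line through $P_*$ and $P_\circ$. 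Consequently Proposition~\ref{dethick} -- the precise form of your single-scale bound -- only holds for $q_\circ<q\le \frac{2(d+1)}{d-1}$, and the argument delivers the sharp resolvent bound only on $\wt{\mathcal R}_2\cup\wt{\mathcal R}_3\cup\wt{\mathcal R}_3'$, a proper subset of $\mathcal R_2\cup\mathcal R_3\cup\mathcal R_3'$. This is precisely the content of Theorem~\ref{thm}: the full Conjecture~\ref{main_conj} is established only for $d=2$ (where $P_\circ=P_*=D$ and the two families of regions coincide), and remains open for $d\ge 3$ in the triangle $[D,P_\circ,P_*]$, its reflection, and the sliver $\mathcal R_2\setminus\wt{\mathcal R}_2$. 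Your interpolation step -- ``once the single-scale bound is established at $B$, $B'$, interpolation... covers the full range'' -- also cannot close this gap: interpolating between $B$, $B'$, the $L^2$ bound, and $\mathcal R_1$ reaches only the convex hull of the points where strong-type single-scale estimates are actually proved, which is exactly $\wt{\mathcal R}_2\cup\wt{\mathcal R}_3\cup\wt{\mathcal R}_3'$. In short, what you have sketched is a proof of Theorem~\ref{thm}, not of Conjecture~\ref{main_conj}; the latter would require progress on the (open) sharp Carleson--Sj\"olin / oscillatory integral problem beyond $p_*$.
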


\subsection*{Sharp $L^p$--$L^q$ resolvent estimate with  $(1/p, 1/q)\notin \mathcal R_1$}
\begin{figure}
\captionsetup{type=figure,font=footnotesize}
\begin{minipage}[b]{0.45\textwidth}
\centering
\begin{tikzpicture}[scale=0.6] \scriptsize
	\path [fill=lightgray] (0,0)--(10,10)--(10,0)--(0,0);
	\draw [<->] (0,10.7)node[above]{$y$}--(0,0) node[below]{$(0,0)$}--(10.7,0) node[right]{$x$};
	\draw (0,10) --(10,10)--(10,0) node[below]{$(1,0)$};
	\draw (2.5,2.5)node[left]{$D$}--(30/4,10/12)node[above]{$B$};
	\draw (7.5,7.5)node[above]{$D'$}--(110/12,10/4)node[left]{$B'$};
	\draw (0,0)--(2.5,2.5);
	\draw (7.5,7.5)--(10,10);
	\draw (30/4,10/12)--(110/12,10/4);
	\draw (19/3,11/3) node{$\wt{\mathcal R}_2$};
	\draw (30/8, 1) node{$\wt{\mathcal R}_3$};
	\draw (9, 50/8) node{$\wt{\mathcal R}_3'$};
	\draw (9, 1) node{$\mathcal R_1$};
	\draw [dash pattern={on 2pt off 1pt}] (2.5,2.5)--(7.5,7.5);
	\draw [dash pattern={on 2pt off 1pt}]  (0,5)node[left]{$\frac12$}--(5,5)node[above]{$H$}--(5,0)node[below]{$\frac12$}; 
	\draw [dash pattern={on 2pt off 1pt}] (30/4,10/12)--(30/4,0)node[below]{$E$};
	\draw [dash pattern={on 2pt off 1pt}] (110/12,10/4)--(10,10/4)node[right]{$E'$};
\end{tikzpicture}\caption{Theorem \ref{thm} when $d=2$.}\label{figthm2}
\end{minipage}\hfill
\begin{minipage}[b]{0.5\textwidth}
\centering
\begin{tikzpicture} [scale=0.6]\scriptsize
	\path [fill=lightgray] (0,0)--(15/4,15/4)--(50/11, 40/11)--(5,5)--(10-40/11,10-50/11)--(10-15/4,10-15/4)--(10,10)--(10,6)--(4,0)--(0,0);
	\draw [<->] (0,10.7)node[above]{$y$}--(0,0) node[below]{$(0,0)$}--(10.7,0) node[right]{$x$};
	\draw (0,10) --(10,10)--(10,0) node[below]{$(1,0)$};
	\draw (4,4)node[above]{$D$}--(6,8/3)node[above]{$B$}--(10-8/3,4)node[left]{$B'$}--(6,6)node[above]{$D'$};
	\draw (0,0)--(4,4);
	\draw [dash pattern={on 2pt off 1pt}] (4,4)--(6,6);
	\draw (6,6)--(10,10);
	\draw (4,0)node[below]{$\frac 2d$}--(10,6);
	\draw [dash pattern={on 2pt off 1pt}]  (0,5)node[left]{$\frac12$}--(5,5)node[above]{$H$}--(5,0)node[below]{$\frac12$}; 
	\draw [dash pattern={on 2pt off 1pt}] (6, 8/3)--(6, 2)node[below]{$A$}--(6,0)node[below]{$E$};
	\draw [dash pattern={on 2pt off 1pt}] (10-8/3, 4)--(8, 4)node[right]{$A'$}--(10,4)node[right]{$E'$};
	\draw [dash pattern={on 2pt off 1pt}] (0,25/7)node[left]{$\frac{d}{2(d+2)}$}--(5, 25/7);
	\draw [dash pattern={on 2pt off 1pt}] (50/11, 40/11)--(5,5)node[above]{$H$}--(10-40/11,10-50/11)--(10-15/4,10-15/4);
	\draw (5.6, 4.4) node{$\wt{\mathcal R}_2$};
	\draw (30/8, 2) node{$\wt{\mathcal R}_3$};
	\draw (8, 50/8) node{$\wt{\mathcal R}_3'$};
	\draw (6.9, 3.2) node{$\mathcal R_1$};
	\draw [fill, red] (5, 25/7) circle [radius=0.03];
	\draw [fill, red] (15/4,15/4) circle [radius=0.03];
	\draw [fill, red] (50/11, 40/11) circle [radius=0.03] node[below] {$P_\circ$};
	\draw [dash pattern={on 2pt off 1pt}, red] (15/4,15/4)node[left]{$P_*$}--(5, 25/7);
\end{tikzpicture}\caption{Theorem \ref{thm} when $d\ge3$.}\label{figthm}
\end{minipage}
\end{figure}
Our main result is that the estimate \eqref{con1} is true for most of cases of $p$, $q$.   For the statement of the result we introduce additional notations. Let  $p_\circ$, $q_\circ$ and $p_\ast$ be defined by 
\begin{align}
\label{pstar}
\frac1{p_*} := 
	\begin{cases}
		\frac{3(d-1)}{2(3d+1)}
			&\!\!\!\text{ if } d \text{ is odd } \\[4pt]
		 \frac{3d-2}{2(3d+2)}
			&\!\!\!\text{ if } d \text{ is even}
	\end{cases} ,   
	\qquad  \Big(\frac1{p_\circ}, \frac1{q_\circ}\Big) :=
	\begin{cases}
		\big( \frac{(d+5)(d-1)}{2(d^2+4d-1)},\, \frac{(d-1)(d+3)}{2(d^2+4d-1)} \big) 
			&\!\!\!\text{ if } d \text{ is odd } \\[4pt]
		\big( \frac{d^2+3d-6}{2(d^2+3d-2)}, \, \frac{(d-1)(d+2)}{2(d^2+3d-2)} \big)    
			&\!\!\!\text{ if } d \text{ is even }
	\end{cases}.
\end{align} 
The number $p_\ast$ is related to Theorem \ref{osc-est1} and the numbers $p_\circ$, $q_\circ$ are determined by \eqref{below_the_line} and  $\frac1{q}=\frac{d-1}{d+1}(1-\frac1{p})$. We also set  $P_*=P_*(d):=(1/p_*, 1/p_*)$  and $P_\circ :=P_\circ(d)=(1/p_\circ,1/{q_\circ})$.  See Figure \ref{figthm} and Section \ref{prelim} (Corollary \ref{tcsbil}).  
When $d\ge 2$ we define $\wt{\mathcal R}_2=\wt {\mathcal R}_2(d)$ and $\wt{\mathcal R}_3=\wt{\mathcal R}_3(d)$ by
\[	\wt{\mathcal R}_2:=[B,B',P_\circ ', H, P_\circ]\setminus\big( [P_\circ, H) \cup [P_\circ ', H)\cup [B, B'] \big), \qquad \wt{\mathcal R}_3:=\mathcal R_3 \setminus [D, P_\circ, P_*].	\]
If $d=2$, note that $P_\circ=P_*=D=(1/4,1/4)$ and $\wt{\mathcal R}_i = \mathcal R_i$, $i=2,3$. See Figure \ref{figthm2}.
\begin{thm}\label{thm}  Let $z\in \C\setminus [0,\infty)$. 
If $d=2$, Conjecture \ref{main_conj} is true.  If $d\ge3$,  the conjectured estimate \eqref{con1} is true whenever $(1/p,1/q)\in \wt{\mathcal R}_2 \cup \wt{\mathcal R}_3 \cup\wt{\mathcal R}_3'$. Furthermore, when $d\ge2$,  for $(1/p,1/q)\in\{B,B'\}$  the estimate $	\|(-\Delta-z)^{-1}f\|_{q,\infty} \le C |z|^{-1+\frac{d}{d+1}} \|f\|_{p,1}$  
holds, and for  $(\frac1p, \frac{d-1}{2d})\in (B',E']\cap \mathcal R_0$  the  estimate 
$\|(-\Delta-z)^{-1}f\|_{\frac{2d}{d-1},\infty} \le C |z|^{-1+\frac d2(\frac1p-\frac1q)} \|f\|_p$ holds. 
\end{thm}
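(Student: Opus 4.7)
By the scaling identity \eqref{sc} we may normalize $|z|=1$; together with Proposition \ref{shp} this reduces matters to establishing the matching upper bound
\[
\|(-\Delta - z)^{-1}\|_{p \to q} \lesssim \delta^{-\gamma_{p,q}}, \qquad z \in \zs,
\]
where $\delta := \dist(z, [0,\infty))$. The plan is to split the multiplier $m_z(\xi) = (|\xi|^2 - z)^{-1}$ into a regular part supported away from the critical sphere $\{|\xi|^2 = \re z\}$ and a singular part localized in a bounded annular neighborhood of it. The regular part satisfies Mikhlin-type symbol estimates uniformly in $z$, and after a Littlewood--Paley decomposition it inherits the Hardy--Littlewood--Sobolev / Riesz-potential bounds without any $\delta$-loss, thereby contributing the expected factor $|z|^{-1 + \frac d2(\frac 1p - \frac 1q)}$ throughout $\mathcal R_0$ (cf.\ the proof of Proposition \ref{eresb}).

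The singular part, after translating the sphere to radius one, is a Bochner--Riesz type multiplier of negative order $-1$ at scale $\delta$. Decomposing dyadically in the distance to the unit sphere one writes it as $\sum_{2^{-k} \gtrsim \delta} T_k$, where each $T_k$ is a Carleson--Sj\"olin type oscillatory integral operator concentrated in a $2^{-k}$-neighborhood of the unit sphere; the non-vanishing Gaussian curvature of the sphere supplies the requisite ellipticity (cf.\ Section \ref{sect2-1}). One proves the natural scaling inequality $\|T_k\|_{p \to q} \lesssim 2^{k\gamma_{p,q}}$ at a selected list of vertices of the region; when $\gamma_{p,q} > 0$ summing the resulting geometric series over $2^{-k} \gtrsim \delta$ produces the claimed $\delta^{-\gamma_{p,q}}$, while on the zero-$\gamma$ boundary the corresponding restricted weak type version of Guti\'errez avoids the summation altogether. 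Riesz--Thorin interpolation (for strong endpoints) and real interpolation (for the Lorentz endpoints at $B, B'$), combined with the duality $((-\Delta-z)^{-1})^* = (-\Delta - \bar z)^{-1}$, then fill in $\wt{\mathcal R}_2 \cup \wt{\mathcal R}_3 \cup \wt{\mathcal R}_3'$. The four vertex inputs are: Plancherel at $H$; Guti\'errez's restricted weak type bound at $B, B'$, which also yields the two weak-type displays of the theorem after real interpolation with the trivial bound along $(B', E'] \cap \mathcal R_0$; the sharp Stein--Tomas inequality at the Stein--Tomas point; and the sharp Carleson--Sj\"olin estimate (Theorem \ref{osc-est1}) at $P_\circ, P_\circ'$ in higher dimensions.

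The principal difficulty is this last input. Obtaining the sharp $L^{p_\circ} \to L^{q_\circ}$ bound for $T_k$ in $d \ge 3$ requires the strongest available linear oscillatory integral estimate with elliptic phase, currently known only up to exponent $p_\ast$ via the multilinear-to-linear scheme of Guth--Hickman--Iliopoulou together with bilinear restriction in the spirit of Tao--Vargas--Vega and Lee. In two dimensions Lee's sharp bilinear restriction theorem yields $p_\ast = 4$ and hence $P_\ast = P_\circ = D$, so the vertex analysis sweeps out all of $\mathcal R_2 \cup \mathcal R_3 \cup \mathcal R_3'$ and proves Conjecture \ref{main_conj} in full. In $d \ge 3$ the triangle $[D, P_\circ, P_\ast]$ lies beyond current oscillatory integral technology and is accordingly excluded from the theorem, in direct parallel to the open pieces of the Bochner--Riesz and restriction conjectures.
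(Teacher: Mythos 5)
Your architecture matches the paper's: normalize to $|z|=1$ via \eqref{sc}, split off a Mikhlin-type regular part, decompose the singular annular piece dyadically in distance to the sphere, estimate the dyadic pieces via Carleson--Sj\"olin and bilinear methods, and re-sum by Bourgain-style interpolation; the lower bound is Proposition \ref{shp}. This is essentially the paper's route.

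The genuine gap is in the weak-type display on $(B',E']$. There is no ``trivial bound along $(B',E']\cap\mathcal R_0$'' to interpolate with the restricted weak type bound at $B'$: the uniform resolvent estimate fails everywhere on $[B',E']$, and real interpolation of a restricted weak type estimate with a strong-type estimate at nearby points in $\mathcal P$ only yields (restricted) weak type along an interior connecting segment, never a genuine $L^p\to L^{q,\infty}$ bound along the closed edge. The paper instead deduces the weak-type resolvent estimate on $(B',E']$ from a weak-type $L^p\to L^{q,\infty}$ estimate for the restriction--extension operator (Theorem \ref{rest_ext_sphere}, itself a consequence of the improved negative Bochner--Riesz bound in Theorem \ref{boc_rie_neg}, which depends on Proposition \ref{dethick}), combined with the Bourgain-type interpolation Lemma \ref{intpl}(\mathrm{I\!I\!I}) applied to the dyadic pieces $C_j$ --- the crucial observation (due to Bak) being that one fixes the source exponent $p$ and varies the target $q$ in the dyadic re-summation, which is what upgrades restricted weak type to weak type. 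A secondary imprecision: Theorem \ref{osc-est1} supplies only the diagonal $p=q>p_*$; the off-diagonal sharp estimate on the segment from $P_\circ$ to the Stein--Tomas point is Proposition \ref{dethick}, proved by a stationary-phase Carleson--Sj\"olin reduction of the frequency-localized kernel, a Whitney decomposition with parabolic rescaling back into the normalized class ${\bf Ell}(N,\epsilon)$, and interpolation between Tao's bilinear restriction estimate and the diagonal linear estimate. You correctly flag this as the principal difficulty, but the proposal does not supply the mechanism, and without it neither the segment $(P_\circ,B)$ nor, by interpolation, the bulk of $\wt{\mathcal R}_2\cup\wt{\mathcal R}_3\cup\wt{\mathcal R}_3'$ gets covered.
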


It is also possible to obtain similar results regarding the Laplace--Beltrami operator on compact manifolds (\cite{KL}). To prove the sharp resolvent estimates \eqref{con1} we dyadically decompose the multipliers $(|\xi|^2-z)^{-1}$  by taking into account the region of $\xi$ where the multiplier gets singular as $\im z\to 0$. Such idea is now classical in the context of the Bochner--Riesz conjecture (e.g. \cite{C85, Lee1}).  It is important to obtain the optimal  $L^p$--$L^q$ bounds for each of the operators which are given by the dyadic decomposition. For the purpose  we use the Carleson--Sj\"olin reduction (\cite{CS, St-book}), and combine this with Theorem \ref{osc-est1} in Section \ref{sect2-1} (\cite{GHI}) and  bilinear estimate for the extension operator associated to the hypersurfaces of elliptic type (\cite{T}). For more details, see Section \ref{prelim} (Corollary \ref{tcsbil}).

\begin{rem}\label{rmk1}
As mentioned in the above, the restricted weak type $(p,q)$ estimates with $(1/p,1/q)=B, B'$ when $d\ge 3$ were shown in \cite{Gu}. In Section \ref{proof_of_main_thm} we provide a different proof of those restricted weak type estimates  for $d\ge 2$, together with the weak type $(p,q)$ estimates when $(1/p, 1/q)$ is in the half open line segment $[E', B')\cap \mathcal R_0$ (see Figure \ref{figthm2}, Figure \ref{figthm} and Remark \ref{diff_Gu}).  This upgrades the endpoint case of uniform Sobolev estimate in \cite{rxz} from the restricted weak type $(p,q)$ to the weak type $(p,q)$ for $(1/p,1/q)=A'=\big(\frac{d+3}{2d}, \frac{d-1}{2d}\big)$ when $d\ge 4$.  Also, for $p,q$ satisfying $(1/p,1/q)\in\mathcal R_1(d)$, the uniform resolvent estimate \eqref{unif} follows by duality and interpolation. (For $d=2$ an additional simple argument involving frequency localization and Young's inequality is necessary to cover the case $(1/p,1/q)\in\mathcal R_1(2)$.)
\end{rem}

\begin{rem}
When $d=1$ it is also possible and much simpler to obtain the sharp resolvent estimates. For $z\in\C\setminus[0,\infty)$ we write $(- d^2/dx^2-z )^{-1}f(x)=G_z*f(x)$, where $G_z(x)=\frac{i}{2\sqrt{z}}e^{i\sqrt{z}|x|}$ (see \cite[p. 203]{Tes}). Since the kernel is bounded and integrable, Young's inequality and  \eqref{sc} yield
\[	\|(-{d^2}/{dx^2}-z)^{-1}\|_{p\to q}\lesssim |z|^{-\frac12(\frac1p-\frac1q)} \dist(z,[0,\infty))^{-1+\frac1p-\frac1q}, \quad \forall z\in\C\setminus [0,\infty)	\]
for all $p,q$ such that $1\le p\le q\le \infty$. Following the argument in Section \ref{knapp} one can easily  check that the estimates are sharp.
\end{rem}

\subsection*{Resolvent estimates on compact Riemannian manifolds}
Let $(M,g)$ be a  $d$-dimensional compact Riemannian manifold without boundary. When $d \ge 3$ Dos Santos Ferreira, Kenig, and Salo proved in \cite{DKS} that for any fixed $\de>0$ the uniform estimate
\begin{equation}\label{resol_g}
\|(-\Delta_g-z)^{-1}f\|_{L^{\frac{2d}{d-2}}(M)} \le C\|f\|_{L^{\frac{2d}{d+2}}(M)}
\end{equation}
holds for all $z\in \Xi_\de :=\{z\in\C\setminus[0,\infty): \im\sqrt{z}\ge \de\}$.\footnote{Here we choose the branch of $\sqrt z$, $z\in\mathbb C\setminus[0,\infty)$, such that the imaginary part is positive. Note that $\Xi_\de=\{z\in\C\setminus[0,\infty): (\im z)^2\ge4\de^2(\re z+\de^2) \}$. In the complex plane this region excludes a  neighborhood of the origin and a parabolic region opening to the right.} 
Shortly afterwards, Bourgain, Shao, Sogge and Yao \cite{BSSY}  proved that if $M$ is Zoll, then the region $\Xi_\de$ cannot be significantly improved by showing that 
\begin{equation}\label{shp_region}
\lim_{\la\to+\infty} \sup_{\ta\in[1,\la]} \|(-\Delta_g-(\ta^2+ i\e (\ta)\ta))^{-1}\|_{L^{\frac{2d}{d+2}}(M) \to L^{\frac{2d}{d-2}}(M)} = +\infty
\end{equation}
whenever $\e(\ta)>0$ for all $\ta$, and $\e(\ta)\to 0$ as $\ta\to +\infty$. However, in some cases where the manifold has favorable geometry such as the flat torus or Riemannian manifolds with nonpositive sectional curvature, the range of $z$ for \eqref{resol_g} can be extended (see \cite{BSSY}).  Shao and Yao  \cite{SY} proved the off-diagonal $L^p(M)$--$L^q(M)$ estimate of \eqref{resol_g}  for $p,q$ satisfying $1/p-1/q = 2/d$, $p\le\frac{2(d+1)}{d+3}$ and $q\ge \frac{2(d+1)}{(d-1)}$, but it is not known whether this range of $p,q$  is optimal even for $p,q$ which satisfy $1/p-1/q = 2/d$.  In \cite{FS} Frank and Schimmer observed that  the argument in \cite{DKS} can be applied to establish $L^p(M)$--$L^{p'}(M)$ analogue of \eqref{resol_g} when $ \frac{2d}{d+2}<p< \frac{2(d+1)}{d+3}$ and $d\ge 2$. They also obtained the estimate
\[
\|(-\Delta_g-z)^{-1}f\|_{L^{\frac{2(d+1)}{d-1}}(M)} \le C|z|^{-\frac1{d+1}}\|f\|_{L^{\frac{2(d+1)}{d+3}}(M)}
\]
with $C$ independent of $z\in\Xi_\de$ by proving an off-diagonal restricted weak type bound for the parametrix constructed in \cite{DKS}.

\begin{figure}
\captionsetup{type=figure,font=footnotesize}
\centering
	\begin{subfigure}[b]{0.38\textwidth}
	\centering
	\includegraphics[width=\textwidth]{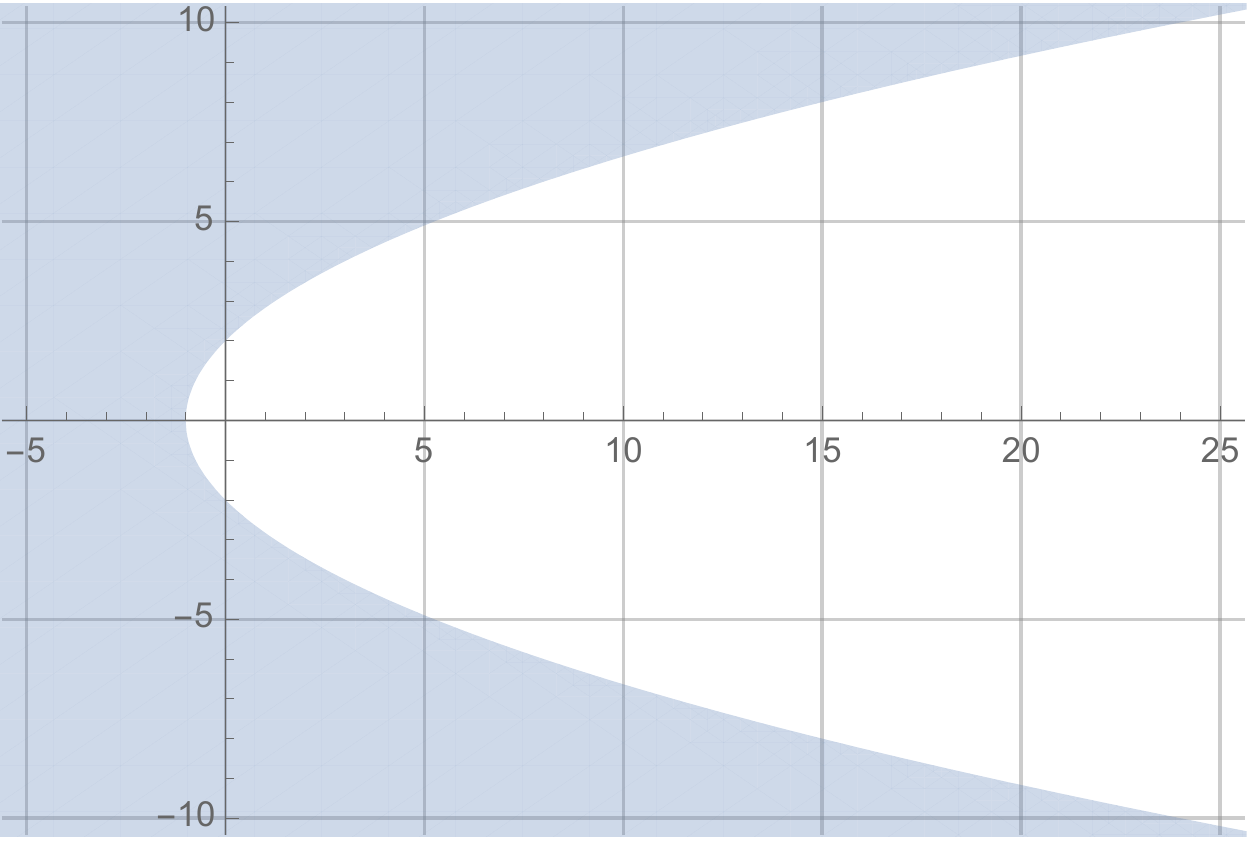}
	\caption{\footnotesize Compact manifold case: $\Xi_1$}
	\end{subfigure}
	\qquad \qquad
	\begin{subfigure}[b]{0.38\textwidth}
	\centering
	\includegraphics[width=\textwidth]{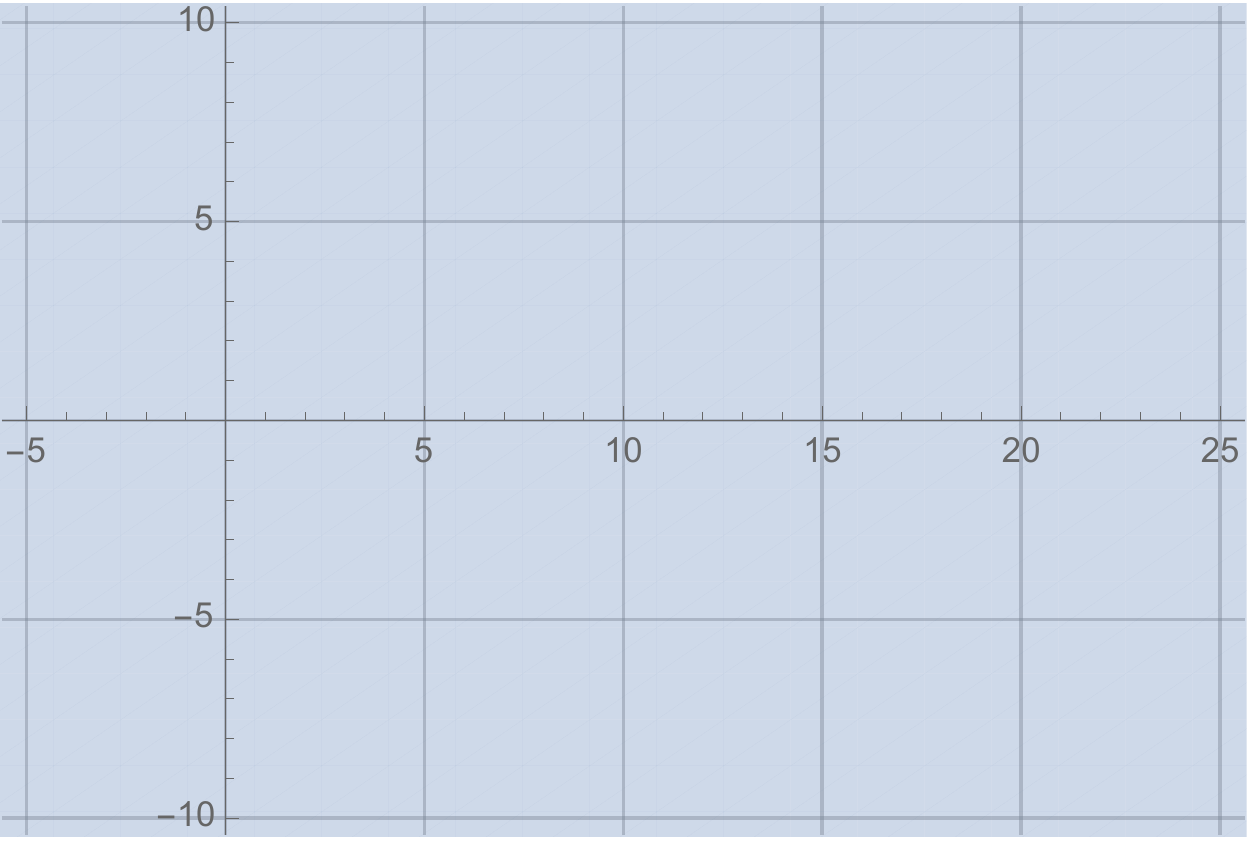}
	\caption{\footnotesize Euclidean case: $\C\setminus [0,\infty)$}
	\label{KRS_fig}
	\end{subfigure}
\caption{Spectral regions for which the uniform $L^{\frac{2d}{d+2}}$--$L^\frac{2d}{d-2}$ resolvent estimate holds.}
\label{dks_krs}
\end{figure}

\subsection*{Regions of spectral parameters where uniform resolvent estimate is allowed}
Since we now have sharp resolvent estimates which depend on the spectral parameter $z$,  it is possible, for each given $p,q$, to describe the region of $z$ for which the resolvent estimates are uniform. 

The $L^\frac{2d}{d+2}(\R^d)$--$L^\frac{2d}{d-2}(\R^d)$ bound for $(-\Delta-z)^{-1}$ is uniform in $z\in  \C\setminus[0, \infty)$ while the uniform estimate \eqref{resol_g} on compact manifold holds only for $z\in\Xi_\de$ (see Figure \ref{dks_krs}). Thus, we may reasonably  expect that the bound   $\|(-\Delta-z)^{-1}\|_{p\to q}$ behaves better on $\mathbb R^d$ than on compact manifolds. However, as is to be seen below, it is rather surprising that, for certain $p,q$,  the bound for $\|(-\Delta-z)^{-1}\|_{p\to q}$  has a similar  behavior with those on compact manifolds and the profile of the $z$-region where  $\|(-\Delta-z)^{-1}\|_{p\to q}$  is uniformly bounded  changes dramatically depending on the values of $p,q$.

For $p,q$ which satisfy  $(1/p,1/q)\in \mathcal R_1\cup \big( \bigcup_{i=2}^3\wt{\mathcal R}_i\big) \cup\wt{\mathcal R}_3'$, and $\ell>0$ we define the region $\mathcal Z_{p,q}(\ell)$ of spectral parameters by
\[	\mathcal Z_{p,q}(\ell):= \{z\in\C\setminus [0,\infty):  {\mathlarger \kappa}_{p,q}(z)\le \ell\}.	\] 
For simplicity,  let us focus on the case $\ell =1$, and  describe  roughly the typical shapes of $\mathcal Z_{p,q}(1)$. See Section \ref{drawing_figures} (and Figure \ref{fig_line_of_duality} and Figure \ref{fig_shapes_p2}) for detailed description of $\mathcal Z_{p,q}(\ell)$ in terms of $p,q,d,$ and $\ell$.

\vspace{-10pt}

\begin{itemize}
\item  If $d\ge3$ and $(1/p, 1/q)\in (A,A')$, then $\mathcal Z_{p,q}(1)=\C\setminus[0,\infty)$ (see Figure \ref{KRS_fig}).  
\item  If $ (1/p,1/q)\in\mathcal R_1$, and $1/p-1/q<2/d$,  then $\mathcal Z_{p,q}(1)$ is given by  removing the unit disk centered at zero from $\C\setminus [0,\infty)$ (see Figure \ref{disk_removed}).  
\item  If $(1/p, 1/q)\in \wt{\mathcal R}_2$, then $\mathcal Z_{p,q}(1)$ basically have  two different types.   
When $(p,q)\neq(2,2)$, $\mathcal Z_{p,q}(1)$ is the complex  plane  minus a neighborhood of $[0,\infty)$ which shrinks along the positive real line as $\re z\to \infty$ (see Figure \ref{fig_shrink}).  When $p=q=2$, $\mathcal Z_{2,2}(1)$ is the complex plane from which the $1$-neighborhood of $[0,\infty)$ is removed (see Figure \ref{1nbd_removed}).
\end{itemize}

\vspace{-10pt}

A remarkably interesting phenomenon occurs  when $(1/p,1/q)\in\wt{\mathcal R}_3\cup\wt{\mathcal R}_3'$. To describe this let us divide  $\wt{\mathcal R}_3$ into the three subsets $\wt{\mathcal R}_{3,+}$, $\wt{\mathcal R}_{3,0}$, and $\wt{\mathcal R}_{3,-}$, given by
\[	\wt{\mathcal R}_{3,\pm} :=\Big\{(x,y)\in \wt{\mathcal R}_3: \pm\Big(x+y-\frac{d-1}d\Big)>0 \Big\}, \quad
	\wt{\mathcal R}_{3,0} :=\Big\{(x,y)\in \wt{\mathcal R}_3:x+y-\frac{d-1}d=0 \Big\}.	\]

\vspace{-12pt}
	
\begin{itemize}
\item  If  $(1/p,1/q)\in \wt{\mathcal R}_{3,+}\cup\wt{\mathcal R}_{3,+}'$,    $\mathcal Z_{p,q}(1)$ is similar type as in the case $(1/p,1/q)\in\wt{\mathcal R}_2\setminus\{H\}$ (see Figure \ref{fig_shrink}). 
\item  If $(1/p,1/q)\in \wt{\mathcal R}_{3,0}\cup\wt{\mathcal R}_{3,0}'$,   we have  $\mathcal Z_{p,q}(1)=\mathcal Z_{2,2}(1)$ (see Figure \ref{1nbd_removed}). 
\item  Let $(1/p,1/q)\in\wt{\mathcal R}_{3,-}\cup \wt{\mathcal R}_{3,-}'$. If $1/p-1/q<2/d$,  $\mathcal Z_{p,q}(1)$ is the complement (in $\C$) of a neighborhood of $[0,\infty)$ whose boundary becomes wider as $\re z$ gets large  (see Figure \ref{fig_z6}). If $1/p-1/q=2/d$, then $\mathcal Z_{p,q}(1)=\{z\in\C\setminus \{0\} : \re z\le 0\}$. 
\end{itemize}
 
%%%%%%%%%%%%%%%%%%%%%%%%%%%%%%%%%%%%%%%%%%%%%%%%%%%%
\begin{figure}%[htp]
\captionsetup{type=figure,font=footnotesize}
\centering
	\begin{subfigure}[b]{0.38\textwidth}
		\centering
		\includegraphics[width=\textwidth]{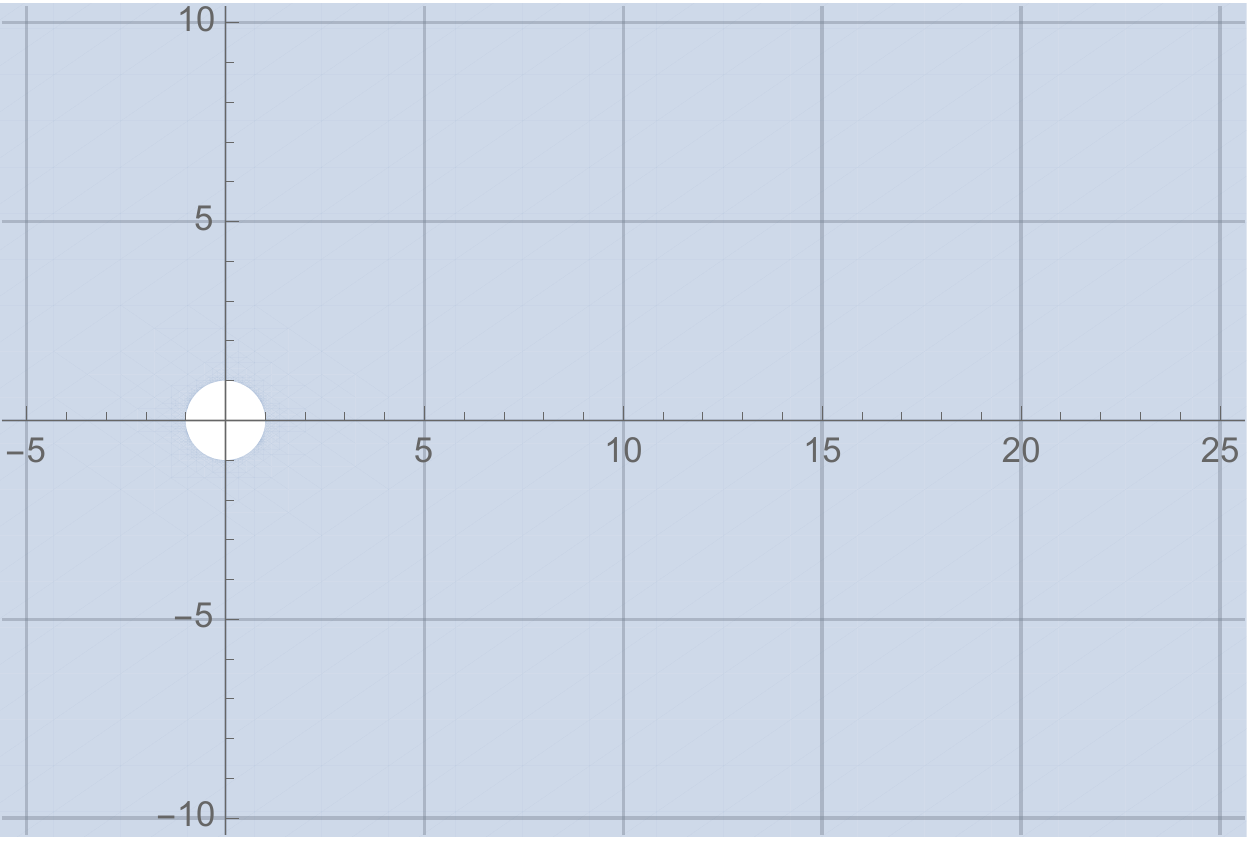}
		\caption{\scriptsize $(\frac1p,\frac1q)\in\mathcal R_1\setminus (A,A')$ }
		\label{disk_removed}
	\end{subfigure}
	\qquad\qquad
	\begin{subfigure}[b]{0.38\textwidth}
		\centering
		\includegraphics[width=\textwidth]{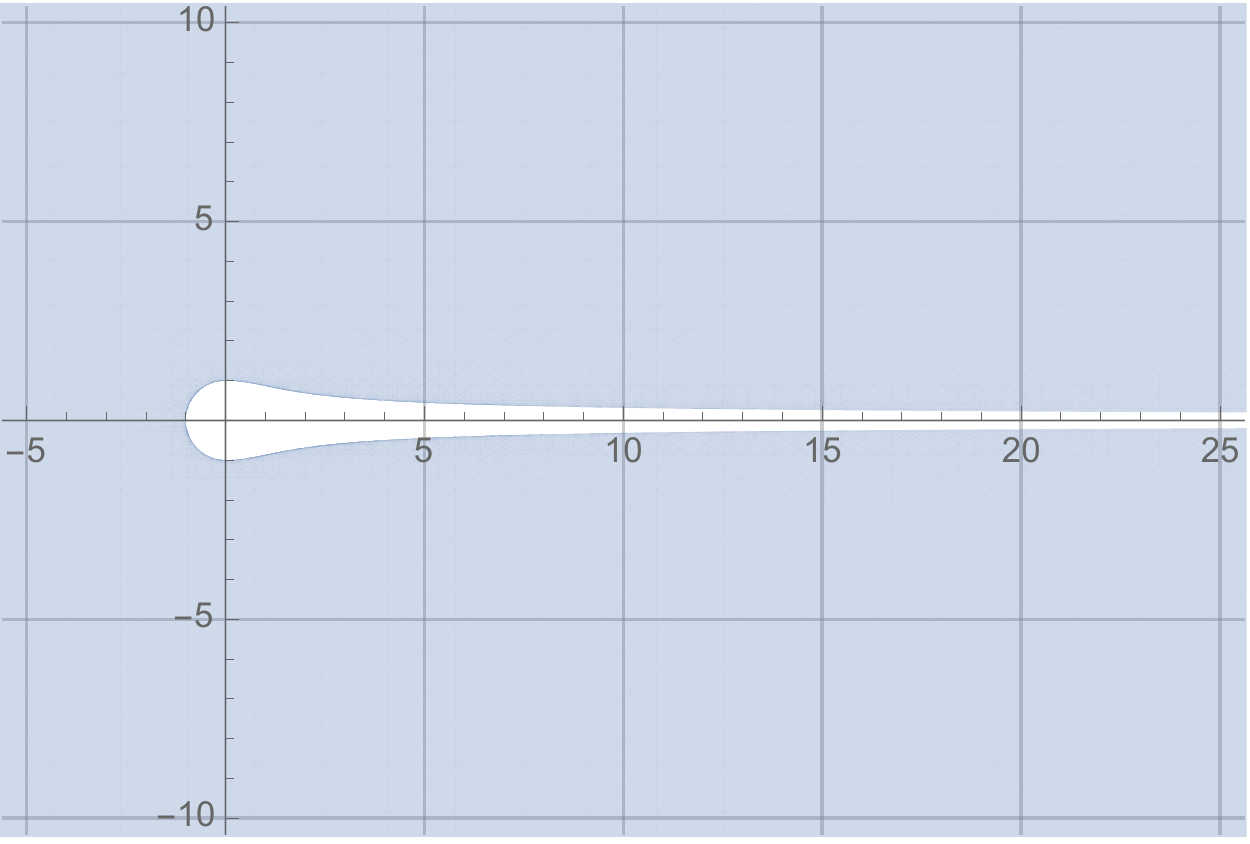}
		\caption{\scriptsize $(\frac1p,\frac1q)\in (\wt{\mathcal R}_2\setminus \{H\})\cup \wt{\mathcal R}_{3,+} \cup \wt{\mathcal R}_{3,+}'$}
		\label{fig_shrink}
	\end{subfigure}
	
	\bigskip
	
	\begin{subfigure}[b]{0.38\textwidth}
		\centering
		\includegraphics[width=\textwidth]{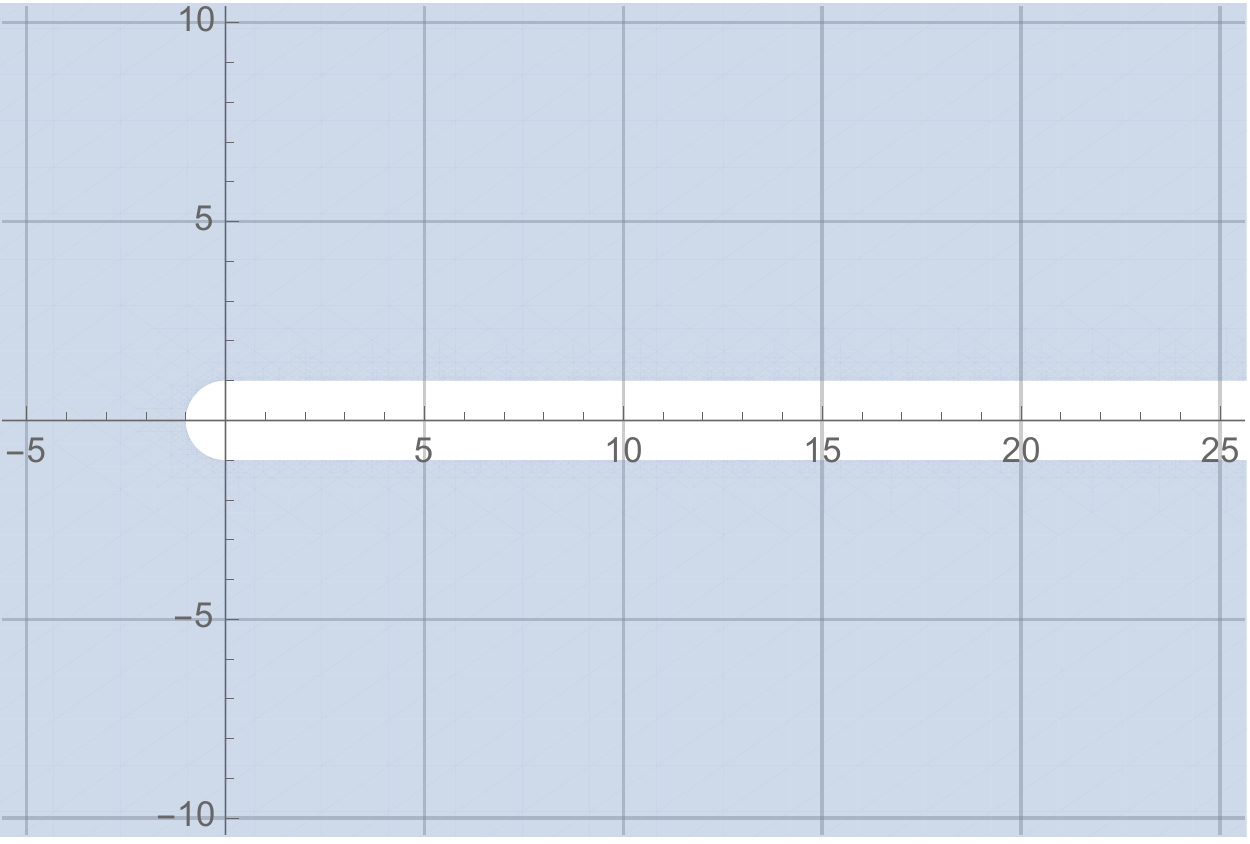}
		\caption{\scriptsize $(\frac1p,\frac1q)\in \{H\}\cup \wt{\mathcal R}_{3,0} \cup \wt{\mathcal R}_{3,0}'$}
		\label{1nbd_removed}
	\end{subfigure}
	\qquad\qquad
	\begin{subfigure}[b]{0.38\textwidth}
		\centering
		\includegraphics[width=\textwidth]{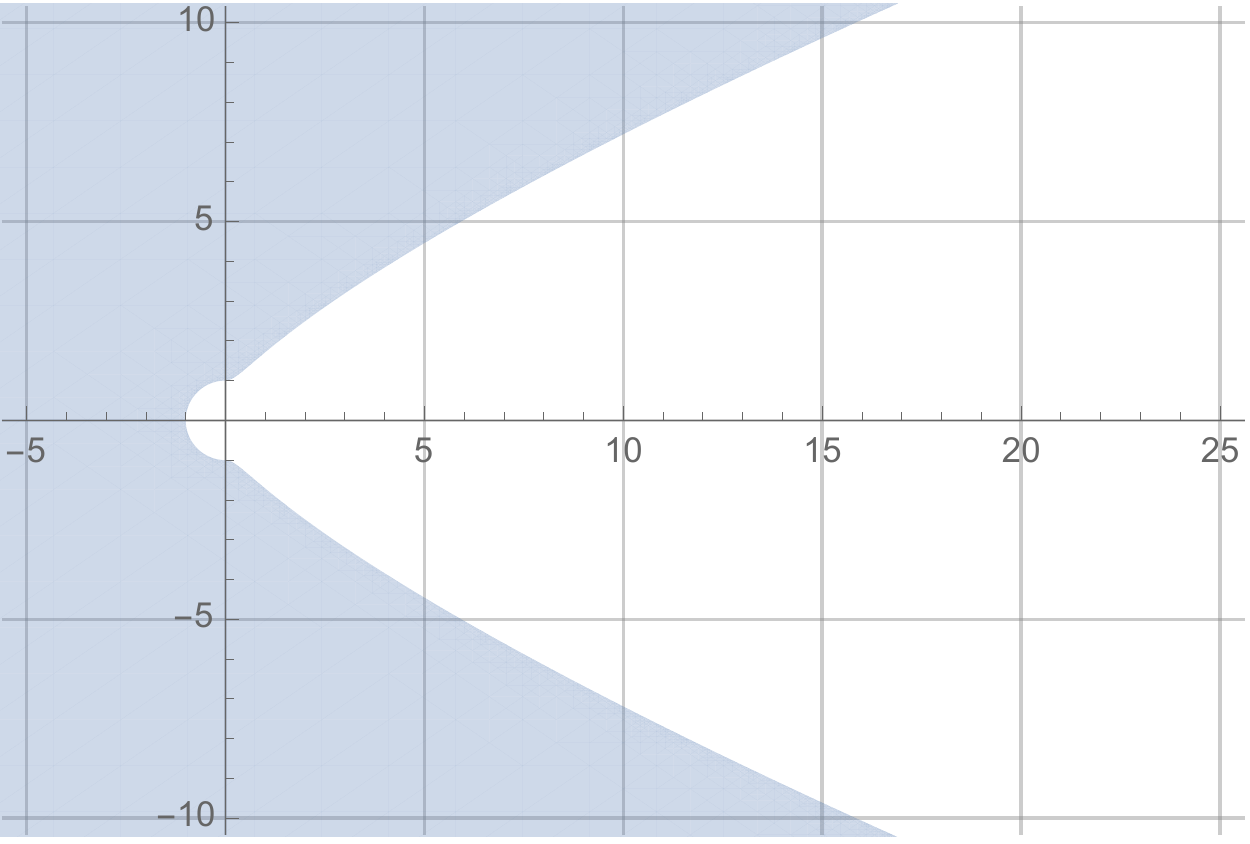}
		\caption{\scriptsize $(\frac1p,\frac1q)\in \wt{\mathcal R}_{3,-}  \cup \wt{\mathcal R}_{3,-}'$}
		\label{fig_z6}
	\end{subfigure}		
\caption{Some typical appearances of the spectral region $\mathcal Z_{p,q}(1)$ when $d\ge3$ and $\frac1p-\frac1q<\frac2d$ .}\label{fig_shapes}
\end{figure}

\subsection*{Location of the eigenvalues of $-\Delta+V$}
The sharp resolvent estimates (Theorem \ref{thm}) can be  used  to  specify  the location of eigenvalues of non-self-adjoint Schr\"odinger operators $-\Delta+V$ acting in $L^q(\R^d)$, $1\le q\le \infty$.  As was shown in \cite{Fr11, Fr18}, if $-\Delta+V$ acts in $L^2(\R^d)$ one can use the Birman--Schwinger principle, but this  is not the case when  $-\Delta+V$ acts in $L^q(\R^d)$, $q\neq2$. 
  
\begin{cor} \label{location_eigen}
Let $(1/p,1/q)\in\mathcal R_1\cup \big(\bigcup_{i=2}^3 \wt{\mathcal R}_i \big)\cup \wt{\mathcal R}_3'$ and let $C>0$ be the constant which appears in \eqref{con1}. Fix a positive number $\ell>0$ (we choose $\ell\ge1$ if $1/p-1/q=2/d$). Suppose that,  for some $t\in(0,1)$,
\begin{equation}\label{small_potential} 
	\|V\|_{L^\frac{pq}{q-p}(\R^d)} \le t (C\ell )^{-1}.
\end{equation}
If $E\in\C\setminus[0,\infty)$ is an eigenvalue of $-\Delta+V$ acting in $L^q(\R^d)$, then $E$ must lie in $\C\setminus \mathcal Z_{p,q}(\ell)$.
\end{cor}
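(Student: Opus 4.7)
\textbf{Proof plan for Corollary \ref{location_eigen}.}

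The plan is to run the classical resolvent/Birman--Schwinger style argument in the $L^p$--$L^q$ setting, using Theorem \ref{thm} as the main input. Suppose $E\in\C\setminus[0,\infty)$ is an eigenvalue of $-\Delta+V$ on $L^q(\R^d)$, with nonzero eigenfunction $u\in L^q(\R^d)$. First I would verify the H\"older pairing: since $1/p=1/q+(q-p)/(pq)$, the assumption $V\in L^{pq/(q-p)}(\R^d)$ together with $u\in L^q(\R^d)$ gives
\[	\|Vu\|_{L^p(\R^d)} \le \|V\|_{L^{pq/(q-p)}(\R^d)}\,\|u\|_{L^q(\R^d)}.	\]
In particular $Vu\in L^p(\R^d)$, so the eigenvalue equation $(-\Delta-E)u=-Vu$ can be rewritten, via the Fourier multiplier representation on the right-hand side of \eqref{mult}, as
\[	u = -(-\Delta-E)^{-1}(Vu)\quad \text{in the distributional sense}.	\]
Taking $L^q$-norms of both sides and invoking the operator norm bound gives
\[	\|u\|_{L^q(\R^d)} \le \|(-\Delta-E)^{-1}\|_{p\to q}\,\|V\|_{L^{pq/(q-p)}(\R^d)}\,\|u\|_{L^q(\R^d)}.	\]

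Since $u\neq 0$, this yields the Birman--Schwinger type inequality
\[	1 \le \|(-\Delta-E)^{-1}\|_{p\to q}\,\|V\|_{L^{pq/(q-p)}(\R^d)}.	\]
Now I would argue by contradiction: assume $E\in\mathcal Z_{p,q}(\ell)$, i.e.\ $\mathlarger\kappa_{p,q}(E)\le\ell$. Since $(1/p,1/q)\in\mathcal R_1\cup(\bigcup_{i=2}^3\wt{\mathcal R}_i)\cup\wt{\mathcal R}_3'$, Theorem \ref{thm} (together with the already-known uniform bound on $\mathcal R_1$, so that in that case $\mathlarger\kappa_{p,q}(E)$ is effectively $1$ when $1/p-1/q=2/d$, which is precisely why the hypothesis $\ell\ge 1$ is imposed there) gives
\[	\|(-\Delta-E)^{-1}\|_{p\to q} \le C\,\mathlarger\kappa_{p,q}(E) \le C\ell.	\]
Combining this with the smallness assumption \eqref{small_potential} produces $1\le C\ell\cdot t(C\ell)^{-1}=t<1$, a contradiction. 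Hence $E\notin\mathcal Z_{p,q}(\ell)$, as desired.

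The only nontrivial step is the passage from the distributional eigenvalue equation to the identity $u=-(-\Delta-E)^{-1}(Vu)$ in $L^q$; this is the main obstacle to watch, but it is standard once $Vu\in L^p(\R^d)$ is established, because the Fourier multiplier $(|\xi|^2-E)^{-1}$ is bounded away from $0$ and $\infty$ on any frequency annulus when $E\in\C\setminus[0,\infty)$, and the resolvent extends continuously from $\mathcal S(\R^d)$ to $L^p(\R^d)$ by Theorem \ref{thm}. Everything else is H\"older's inequality and a direct application of the sharp resolvent estimate, so the corollary drops out once Theorem \ref{thm} is in hand.
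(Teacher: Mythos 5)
Your proof is correct and follows essentially the same route as the paper: assume $E\in\mathcal Z_{p,q}(\ell)$, invoke Theorem \ref{thm} to get $\|(-\Delta-E)^{-1}\|_{p\to q}\le C\ell$, then use H\"older to bound $\|Vu\|_p$ and derive $\|u\|_q\le t\|u\|_q$, a contradiction. The paper writes the chain slightly differently (applying the resolvent to $(-\De+V-E)u - Vu$ and using Minkowski, with $(-\De+V-E)u=0$), while you write $u=-(-\Delta-E)^{-1}(Vu)$ directly, but the content is identical; your closing remark about justifying that identity is a reasonable point that the paper silently takes for granted.
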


This is rather a direct consequence of  Theorem \ref{thm}.  Let $u\in L^q(\R^d)$ be an eigenfunction of $-\Delta+V$ with eigenvalue $E\in\C\setminus[0,\infty)$. If $E$ were  contained in $\mathcal Z_{p,q}(\ell)$, Theorem \ref{thm} gives $\|(-\De-E)^{-1}\|_{p\to q}\le C{\mathlarger \kappa}_{p,q}(E)\le C\ell$.  By Minkowski's and H\"older's inequalities, and \eqref{small_potential} we have
\[	\|u\|_{q} \le C\ell \big(\|(-\De+V-E)u\|_{p} +\|Vu\|_{p}\big) \le C\ell \|V\|_{\frac{pq}{q-p}}\|u\|_q \le t\|u\|_q,	\]
which implies $u= 0$ since $t<1$. This is contradiction, hence $E$ must be in $\C\setminus\mathcal Z_{p,q}(\ell)$.

\begin{rem}
It is possible to formulate a statement which is analogous to the observation in \cite[p. 220, Remark (1)]{Fr18}. For example, if $(1/p,1/q)\in \wt{\mathcal R}_2\cup \wt{\mathcal R}_{3,+}\cup \wt{\mathcal R}_{3,+}'$, then for a sequence of eigenvalues $\{E_j\}$ of $-\Delta+V$ acting in $L^q(\R^d)$ such that $\re E_j\to\infty$ we have $\im E_j\to 0$ provided that $\|V\|_{\frac{pq}{q-p}}$ is small enough.  However, it does not seem to be likely that this phenomenon continues to be true for $p,q$ satisfying  $(1/p,1/q)\in \big(\wt{\mathcal R}_{3,0}\cup \wt{\mathcal R}_{3,- }\big)\cup \big(\wt{\mathcal R}_{3,0}\cup \wt{\mathcal R}_{3,- }\big)'$  and it would be interesting to ask whether there is a potential $V\in L^{\frac{pq}{q-p}}$ for which this kind of phenomenon fails.
\end{rem}
\begin{rem}
If $1/p-1/q=2/d$, and \eqref{small_potential} is satisfied with some $\ell\ge1$ and $t\in(0,1)$, then it follows from Corollary \ref{location_eigen} that the Schr\"odinger operator $-\De+V$ acting in $L^q(\R^d)$ does not have any eigenvalue of which real part is negative.
\end{rem}

\subsection*{Sharp resolvent estimate for the fractional Laplacian}
We also consider the sharp bound  on $\|((-\Delta)^{\frac s2}-z)^{-1}\|_{p\to q}$,  that is to say,  the $L^p$--$L^q$ resolvent estimate for the fractional Laplacian $(-\Delta)^{\frac s2}$ which is defined by
\[	(-\Delta)^{\frac s2}f(x)=\frac{1}{(2\pi)^d} \int_{\R^d} e^{ix\cdot\xi}|\xi|^{s} \wh f(\xi)d\xi.	\]
Uniform bounds on $\|((-\Delta)^{\frac s2}-z)^{-1}\|_{p\to q}$ for $p,q$ on certain range  were obtained in  Cuenin \cite{Cu15} and  these bounds were used to study eigenvalues of the fractional Schr\"odinger operators with complex potentials. Later, uniform bounds up to the optimal range of $p,q$ were obtained  by Huang, Yao, and Zheng \cite{HYZ}.  We also obtain the sharp bounds on $\|((-\Delta)^{\frac s2}-z)^{-1}\|_{p\to q}$ for $p,q$ which are not contained in the uniform boundedness range. See Theorem \ref{fractional}.  
 
Our method here is flexible and robust enough so that  it is  rather straightforward  to extend our argument from the Laplacian to  the fractional  Laplacian. This allows us to obtain the sharp bounds on $((-\Delta)^{\frac s2}-z)^{-1}$ for $s\in (0,d)$, which include the results for the resolvent of the Laplacian. Furthermore, Proposition \ref{shp}, Theorem \ref{thm}, and Corollary \ref{location_eigen}, can also be generalized in the context of  the fractional Laplacian $(-\Delta)^{\frac s2}$, $s>0$. There are also some new phenomena which do not  appear in the study of the resolvent of the Laplacian. For example, if $s$ is small,  the profile of the spectral parameter region where uniform bound is  allowed  never takes the form such as in Figure \ref{fig_shrink} (see Section \ref{fract} for details). However, we postpone discussions regrading  the resolvent of the fractional Laplacian until the last section to keep the presentation simpler. 
 
\subsubsection*{Organization of this paper} In Section \ref{prelim}, we review some properties of hypersurfaces of elliptic type, and the $L^p$--$L^q$ estimate for the Carleson--Sj\"olin type oscillatory integral operators. Then we obtain sharp estimates for the related multiplier operators of which frequency is localized.  In Section \ref{proof_bil},  based on the results obtained in Section \ref{prelim}, we establish Proposition \ref{dethick} which is the main ingredient for the proof of Theorem \ref{thm}.   In Section \ref{proof_of_main_thm} we prove Theorem \ref{thm}, and give descriptions in detail for various regions of spectral parameters $\mathcal Z_{p,q}(\ell)$ depending on $p$, $q$, $d$, and $\ell$. In Section \ref{shp1} the proof of  Proposition \ref{eresb} and Proposition \ref{shp} is given.  In Section \ref{fract} we obtain the sharp resolvent estimates for the fractional Laplacian $(-\De)^{\frac s2}$, $0<s< d$.

\subsubsection*{Notations}  For positive numbers $A$ and $B$, $A\lesssim B$ means that there is a constant $C$ such that $A\le CB$. We write $A\approx B$ if $A\lesssim B$ and $B\lesssim A$.  Both $x\cdot y$ and $\langle x, y \rangle$ denote the Euclidean inner product of $x$ and $y$. For a function $f$ on $\R^d$
\[
\F f(\xi)=\wh f(\xi) =\int_{\R^d} f(x)e^{-ix\cdot \xi}dx, \qquad  \F^{-1}f(\x)= f^\vee (\x)=(2\pi)^{-d}\wh f(-\x)
\]
denote the Fourier and inverse Fourier transforms, respectively. We set $D=-i({\p}/{\p x_1},\cdots , {\p}/{\p x_d})$. For a bounded measurable function $m$,  $m(D)f$ denotes the Fourier multiplier operator $(m\wh{f}\, )^\vee$. For $p,q\in [1,\infty]$ we define $\|m(D)\|_{p\to q}:=\sup_{\|f\|_p\le1}\|m(D)f\|_q$. For any pair of subsets $A$, $B$ of the Euclidean spaces or the complex plane, we write $\dist(A,B):=\inf\{|x-y|: x\in A,\, y\in B\}$. For any rectangle $Q$ and a positive number $a$, $a Q$ is the rectangle whose side length is $a$ times that of $Q$ with same center as $Q$. $B_{d}(c,r)$ is the open ball in $\R^{d}$ centered at $c$ with radius $r$. If $A$ is a set $\chi_A$ is the characteristic function of $A$.  We denote by $C_0^\infty(X)$ the class of smooth functions which are compactly supported in the set $X$. Throughout this paper, we fix an even function $\be\in C^\infty_0(\R)$ which is supported in the interval $[-9/8, -3/8]\cup[3/8, 9/8]$ and satisfies $\sum_{j=-\infty}^\infty \be(2^{-j}t)=1$ whenever $t\neq 0$. We also set $\be_0 =1-\sum_{j\ge 0}\be(2^{-j}\,\cdot\,)\in C^\infty_0((-3/4, 3/4))$. For a variable $x\in \R^d$ and a multi-index $\al\in\N_0^d$ we sometimes write $x=(x',x_d)\in\R^{d-1}\times\R$ and $\al=(\al',\al_d)\in\N_0^{d-1}\times\N_0$.

\subsubsection*{Acknowledgement} The authors were supported by NRF-2018R1A2B2006298. We  would like to  thank Ihyeok Seo and Younghun Hong for discussions on related issues, and the anonymous referees for various helpful comments.

\section{Estimates for localized frequency}\label{prelim}

In this section  we prove basic estimates which play important roles in obtaining  our main result. 

\subsection{Oscillatory integral operator of Carleson--Sj\"olin type} \label{sect2-1}
Let $\la\ge1$, $\mf a\in C^\infty_0(\R^d\times \R^{d-1})$, $\Phi\in C^\infty(\R^d\times\R^{d-1})$, and let $T_\la [\Phi, \mf a]$ be the operator defined by
\[
T_\la [\Phi, \mf a]f (x)= \int_{\R^{d-1}}e^{i\la\Phi(x,u)}\mf a(x,u) f(u) du, \quad (x,u)\in\R^d\times\R^{d-1}.
\]
Suppose that, for every $(x, u) \in \supp \mf a$, 
\begin{equation}\label{mixhess}
\rank \big(\p_x\p_u \Phi (x, u) \big) = d-1.
\end{equation}
We also assume that, for every $(x_\circ,u_\circ)\in \supp\mf a$, if $v\in\mathbb S^{d-1}$ is the (unique up to sign) direction such that the function $ u\to \langle v, \p_x\Phi (x_\circ, u)\rangle$ has a critical point at $u=u_\circ$, then
\begin{equation}\label{curv}
\rank \left( \p_u^2 \langle v, \p_x\Phi (x_\circ, u)\rangle \vert_{u=u_\circ}\right)=d-1.
\end{equation}
The operator $T_\la[\Phi, \mf a]$ with $\Phi$  satisfying \eqref{mixhess}, \eqref{curv} on $\supp\mf a$ is called the {\it Carleson--Sj\"olin type} oscillatory integral operator which originated from the work of Carleson and Sj\"olin \cite{CS} for the study of the two dimensional Bochner--Riesz problem (also, see \cite[pp. 60--70]{sogge_book}, \cite{MSS}). H\"ormander \cite{Ho72}  proved 
\begin{equation}\label{oscineq2}
\|T_\lambda  \phim  f\|_{L^q(\R^2)} \lesssim  \lambda^{-2/q} \|f\|_{L^p(\R)}
\end{equation}
if $4<q\le \infty$ and $3/q\le 1-1/p$, and the range of $p,q$ for \eqref{oscineq2} is optimal. The following higher dimensional extension is due to Stein \cite{St-beijing} (also, see \cite[Chapter 9]{St-book}).
\begin{thm}  \label{osc-est} Suppose $\Phi$ satisfies \eqref{mixhess} and \eqref{curv} on $\supp \mf a$. Then, for $1\le p, q\le \infty$ satisfying  $q\ge \frac{2(d+1)}{d-1}$ and $\frac{d+1}q\le (d-1) (1-\frac1p )$, the following  estimate holds:
\begin{equation}\label{oscineq}
\|T_\lambda  \phim f\|_{L^q(\R^d)} \lesssim  \lambda^{-d/q} \|f\|_{L^p(\R^{d-1})}.
\end{equation}
\end{thm}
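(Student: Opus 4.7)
The plan is to establish the endpoint estimate at $(p,q) = (2, 2(d+1)/(d-1))$ and then fill in the rest of the range by Riesz--Thorin interpolation with the trivial bounds $\|T_\lambda f\|_{L^\infty} \lesssim \|f\|_{L^1}$ and $\|T_\lambda f\|_{L^\infty} \lesssim \|f\|_{L^\infty}$ (which hold because $\mathfrak{a}\in C^\infty_0$). By a smooth partition of unity we may assume $\mathfrak{a}$ is supported in a small neighborhood of a fixed point $(x_\circ, u_\circ)$; then an affine change of variables in $x$ and $u$ using the nondegeneracy \eqref{mixhess} brings $\Phi$ into a convenient normal form in which the map $u\mapsto \partial_x\Phi(x_\circ,u)$ parametrizes a smooth hypersurface whose second fundamental form is nondegenerate, the latter being precisely condition \eqref{curv}.

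The endpoint estimate is, by the $TT^*$ identity, equivalent to the bound
\begin{equation*}
\|T_\lambda T_\lambda^* g\|_{L^q(\mathbb{R}^d)} \lesssim \lambda^{-2d/q}\|g\|_{L^{q'}(\mathbb{R}^d)}, \qquad q=\tfrac{2(d+1)}{d-1},
\end{equation*}
for the operator with kernel
\begin{equation*}
K_\lambda(x,y)=\int_{\mathbb{R}^{d-1}} e^{i\lambda[\Phi(x,u)-\Phi(y,u)]}\,\mathfrak{a}(x,u)\,\overline{\mathfrak{a}(y,u)}\,du.
\end{equation*}
Writing $\Phi(x,u)-\Phi(y,u)=\langle x-y,\Psi(x,y,u)\rangle$ by the fundamental theorem of calculus, and setting $v=(x-y)/|x-y|$, the $u$-phase $|x-y|\langle v,\Psi\rangle$ has, by \eqref{mixhess} and \eqref{curv}, a unique nondegenerate critical point in $u$ near $u_\circ$ for $x$ close to $y$, and its Hessian there has rank $d-1$. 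Stationary phase therefore yields $|K_\lambda(x,y)|\lesssim (\lambda|x-y|)^{-(d-1)/2}$ when $|x-y|\gtrsim \lambda^{-1}$, while the trivial bound $|K_\lambda|\lesssim 1$ suffices for $|x-y|\lesssim \lambda^{-1}$; together,
\begin{equation*}
|K_\lambda(x,y)|\lesssim (1+\lambda|x-y|)^{-(d-1)/2}.
\end{equation*}

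The main obstacle is that this pointwise bound alone is not sharp: Young's inequality at the exponent $r=q/2=(d+1)/(d-1)$ fails to be integrable in the relevant sense, since one would need $r>2d/(d-1)$. One must therefore exploit the oscillatory structure of $K_\lambda$ and not merely its modulus. The clean way is to use stationary phase in its full asymptotic form to write, for $|x-y|\gtrsim\lambda^{-1}$,
\begin{equation*}
K_\lambda(x,y)=\lambda^{-(d-1)/2} e^{i\lambda \phi(x,y)} b_\lambda(x,y),
\end{equation*}
where $b_\lambda$ is a bounded symbol and $\phi$ inherits a Carleson--Sj\"olin-type nondegeneracy from \eqref{mixhess}--\eqref{curv}; one then treats $T_\lambda T_\lambda^*$ as a Fourier integral operator and argues by a dyadic decomposition $|x-y|\approx 2^j \lambda^{-1}$. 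After rescaling each dyadic piece to unit scale, the problem reduces to an $L^{q'}\!\to\! L^q$ estimate of Tomas--Stein/restriction type for an oscillatory integral with a nondegenerate phase on a $(d-1)$-dimensional curved hypersurface; the gain is $2^{-j\epsilon}$ for some $\epsilon>0$, which makes the $j$-summation convergent and yields the claimed factor $\lambda^{-2d/q}$.

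Finally, the remaining $(p,q)$ in the stated range lie in the convex hull of the endpoint $(1/2, (d-1)/(2(d+1)))$ and the trivial corners $(0,0),(1,0),(0,(d-1)/(2(d+1)))$ in the $(1/p,1/q)$-plane, so Riesz--Thorin interpolation between the endpoint bound and the trivial $L^\infty$ and $L^1\!\to\! L^\infty$ bounds (both with $\lambda^0$ constant, matching $\lambda^{-d/q}$ at $q=\infty$) completes the proof.
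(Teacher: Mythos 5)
The paper does not prove this theorem; it is Stein's result \cite{St-beijing} (see also \cite[Ch.~9]{St-book}) and is quoted with citation only, so your argument stands or falls on its own. The interpolation scaffolding is essentially fine: granted the $L^2\to L^{2(d+1)/(d-1)}$ endpoint with constant $\lambda^{-d/q}$, the stated range is the convex hull of $(\tfrac12,\tfrac{d-1}{2(d+1)})$, $(1,0)$, $(0,0)$, $(0,\tfrac{d-1}{2(d+1)})$ in the $(1/p,1/q)$-plane, and Riesz--Thorin against these corners carries the correct powers of $\lambda$. However the fourth corner is not a \emph{trivial} corner: the $L^\infty\to L^{2(d+1)/(d-1)}$ bound is a consequence of the $L^2$ endpoint together with H\"older, using that $\mathfrak a$ is compactly supported in $u$, and it should be presented that way rather than grouped with the genuinely trivial $L^1\to L^\infty$ and $L^\infty\to L^\infty$ bounds.

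The genuine gap is in the endpoint estimate. First, the diagnosis of why the pointwise bound is insufficient is backwards: at $r=q/2=\tfrac{d+1}{d-1}$ the kernel $(1+\lambda|x-y|)^{-(d-1)/2}\chi_{|x-y|\lesssim 1}$ \emph{is} in $L^r$ (one needs $r<\tfrac{2d}{d-1}$ for integrability, which holds), and Young does apply — but it only yields $\lambda^{-(d-1)/2}$, which is slower than the required decay $\lambda^{-2d/q}=\lambda^{-d(d-1)/(d+1)}$. Second, the expansion $K_\lambda=\lambda^{-(d-1)/2}e^{i\lambda\phi}b_\lambda$ with $b_\lambda$ bounded holds only at unit scale; for $|x-y|\approx 2^j\lambda^{-1}$ the amplitude has size $2^{-j(d-1)/2}$, and it is precisely this $|x-y|$-dependence that any serious argument must exploit. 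Third, and most importantly, the step ``after rescaling each dyadic piece the problem reduces to an $L^{q'}\to L^q$ estimate of Tomas--Stein/restriction type'' is circular: the phase $\phi$ produced by stationary phase on the $TT^*$ kernel is a general Carleson--Sj\"olin-type phase, not $x\cdot\xi$, and the $L^{q'}\to L^q$ bound for such an oscillatory integral is exactly the statement being proved; Tomas--Stein restriction is a \emph{consequence} of Theorem~\ref{osc-est}, not a lemma available for its proof. Moreover, even if that step were available, the claimed $2^{-j\epsilon}$ gain cannot hold at the exact endpoint $q=\tfrac{2(d+1)}{d-1}$: the dyadic pieces contribute $O(1)$ each there, just as in Tomas's proof of restriction, where the sum loses a logarithm and the endpoint is recovered only by Stein's analytic-family interpolation. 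Stein's actual proof of the present theorem proceeds via a different mechanism: after the $TT^*$ reduction one foliates $\R^d$ by a one-parameter family of $(d-1)$-dimensional slices adapted to the phase, applies H\"ormander's sharp $L^2$ oscillatory-integral lemma on each slice (this is where \eqref{mixhess} enters), and then the curvature hypothesis \eqref{curv} provides the extra decay in the transverse variable needed to close the endpoint by complex interpolation. That mechanism is not present in your sketch, so the endpoint estimate remains unproved.
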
 
Bourgain  \cite{Bo-osc} showed that the estimate \eqref{oscineq} under the conditions \eqref{mixhess} and \eqref{curv}  generally fails if $q< \frac{2(d+1)}{d-1}$ when $d\ge3$ is odd. However, in \cite{Lee2} one of the authors observed that in addition to  \eqref{mixhess}, \eqref{curv}, if we assume that
\begin{equation}
\label{samesign}
\text{the surface $u\to \p_x \Phi(x,u)$ has $d-1$ nonzero principal curvatures of the same sign,}\footnote{This is equivalent to saying that the matrix $\p_u^2\langle v, \p_x\Phi \rangle$ is   either positive or negative definite.}
\end{equation} 
then the range of $p,q$ for which \eqref{oscineq} holds can be enlarged to $q>\frac{2(d+2)}{d}$. For  most recent developments  see  Bourgain and Guth \cite{BG} and  Guth, Hickman and Iliopoulou \cite{GHI}. These results are   based on multilinear estimates due to Bennett, Carbery and Tao \cite{BCT} and the method of polynomial partitioning due to Guth \cite{G1, G2}. We record here the recent sharp result due to Guth, Hickman and Iliopoulou \cite{GHI}.
\begin{thm} \label{osc-est1} 
Let $d\ge2$ and suppose $\Phi$ satisfies \eqref{mixhess}, \eqref{curv} and \eqref{samesign} on $\supp \mf a$. Then, the  estimate  \eqref{oscineq}  holds whenever $p=q>p_*$, for $p_*$ given in \eqref{pstar}.  This is sharp (up to endpoint) in the sense that there are examples of  Carleson--Sj\"olin type operators $T_\la\phim$ with  phase functions satisfying all of \eqref{mixhess}, \eqref{curv} and \eqref{samesign} for which the estimate \eqref{oscineq}  with $p=q$ fails whenever $p<p_*$.
\end{thm}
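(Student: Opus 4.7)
The plan is to reduce the general Carleson--Sj\"olin operator with phase satisfying \eqref{mixhess}, \eqref{curv}, \eqref{samesign} to (a small perturbation of) the extension operator associated to an elliptic hypersurface, and then to attack the latter by the polynomial partitioning method of Guth. Concretely, after localizing $x$ to a small ball and rescaling, one changes variables so that the phase takes the form $\Phi(x,u)=\langle x',u\rangle + x_d h(u)+ E(x,u)$ where $h$ has positive definite Hessian on $\supp\mf a$ and $E$ is a small perturbation of higher order in the $x$-variable after a further rescaling by $\lambda$. By a parabolic rescaling in $u$ one may further assume $h(u)=|u|^2/2$ up to an error that becomes an admissible variable-coefficient perturbation; thus it suffices to prove the estimate for the variable-coefficient extension operator $Ef(x)=\int e^{i\Phi(x,u)}\mf a(x,u) f(u)\,du$ on a ball of radius $R=\lambda$, where the goal is $\|Ef\|_{L^p(B_R)}\lesssim R^\epsilon \|f\|_p$ for every $p>p_*$.

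To prove this bound I would run the Bourgain--Guth broad/narrow analysis combined with polynomial partitioning. First, decompose $f=\sum_\tau f_\tau$ into pieces with $u$ supported in caps $\tau$ of a small scale $K^{-1}$, so that $Ef=\sum_\tau Ef_\tau$, and at each point $x\in B_R$ either a $k$-tuple of the $|Ef_\tau(x)|$ is comparable in size and the caps are quantitatively transverse (the broad case), or the sum concentrates in $O(K^{d-k})$ caps lying in a $(k-2)$-plane (the narrow case). In the broad case one invokes the Bennett--Carbery--Tao $k$-linear restriction/extension theorem together with its variable-coefficient analogue to get the required bound with a power loss in $K$. In the narrow case one uses a polynomial $P$ of controlled degree to partition $B_R$ into cells on which $Ef$ distributes evenly, handles the cellular contribution by induction on the radius $R$, and handles the wall contribution by induction on the dimension of the smallest algebraic variety $Z$ in which the dominant wave packets concentrate; the numerology of how wave packets tangent to $Z$ interact with $k$-broadness is what forces the exponent $p_*$ to depend on the parity of $d$. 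Iterating the broad/narrow reduction and choosing $K$ appropriately at each scale closes the induction and yields the estimate for all $p>p_*$.

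The main obstacle, and the core of the argument in \cite{GHI}, is the wave-packet tangency analysis inside the polynomial cells: one must show that the wave packets of $Ef$ that are $R^{1/2+\delta}$-tangent to a variety $Z$ of dimension $m$ satisfy an enhanced $L^p$ bound reflecting the $(m+1)$-dimensional elliptic geometry, and this is where assumption \eqref{samesign} is essential (it rules out the Kakeya-compression phenomena that Bourgain exploited in \cite{Bo-osc} to rule out $q<\frac{2(d+1)}{d-1}$ without ellipticity). Finally, sharpness up to the endpoint follows by constructing Knapp-type examples in which $f$ is a sum of characteristic functions of $(R^{-1/2})$-caps arranged so that the associated tubes form a compressed, Heisenberg-type configuration in $\mathbb R^d$; this is an adaptation of Bourgain's odd-dimensional example, modified to the even-dimensional setting to account for the discrepancy in the formula for $p_*$, and it produces phases meeting \eqref{mixhess}, \eqref{curv}, \eqref{samesign} for which \eqref{oscineq} with $p=q<p_*$ fails.
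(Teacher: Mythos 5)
The paper does not prove Theorem~\ref{osc-est1}; it is stated as a black box, with the explicit attribution ``We record here the recent sharp result due to Guth, Hickman and Iliopoulou \cite{GHI}.'' So there is no in-paper argument against which to check your proposal. What the paper does provide is Remark~\ref{stability}, which points out that the estimate of \cite{GHI} is stable under small smooth perturbations of $\Phi$ and $\mf a$ because the induction in \cite{GHI} runs over a normalized class of phases; this stability is the only property of Theorem~\ref{osc-est1}, beyond the statement itself, that the present paper actually uses (in the proof of Proposition~\ref{cstomul}, where the phase $\Phi^{y_d}$ ranges over a family depending on $y_d$ and $\psi\in{\bf Ell}(N,\epsilon)$).

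Your sketch is a fair high-level account of the Guth--Hickman--Iliopoulou argument: localize and rescale to a ball of radius $R\approx\la$, reduce to a variable-coefficient extension operator for an elliptic hypersurface, run the Bourgain--Guth broad/narrow dichotomy, treat the broad part by Bennett--Carbery--Tao $k$-linear estimates, and handle the narrow part via polynomial partitioning with a two-parameter induction on radius and on the dimension of the variety to which wave packets are tangent, with \eqref{samesign} ruling out the degenerate Kakeya-compression scenarios that Bourgain used in \cite{Bo-osc} to break the range $q<\frac{2(d+1)}{d-1}$. The parity of $d$ enters through the tangency numerology, and the sharpness examples are Heisenberg-type compressed tube configurations. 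None of this is verifiable at the level of detail given, and indeed the hard part of \cite{GHI} is exactly making the cellular/tangent/transverse trichotomy close the induction with the claimed exponent; but since the paper treats Theorem~\ref{osc-est1} purely as a citation, your job here was really just to identify the right reference and the shape of its argument, which you have done. One point worth flagging: your write-up does not mention the perturbative stability that Remark~\ref{stability} isolates; for the application in this paper that uniformity is essential, and it is worth noting explicitly that it comes for free from the structure of the induction in \cite{GHI} rather than having to be proved separately.
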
 

\begin{rem} \label{stability} The estimate  \eqref{oscineq} with $p=q$   in  Theorem \ref{osc-est1}   is uniform under small smooth  perturbation of the phase $\Phi$ and the amplitude $\mf a$.  In fact,  the estimate  \eqref{oscineq}   in \cite{GHI} was obtained by running induction argument over a class of  operators while the  phase functions  are properly normalized. See \cite[Lemma 4.1, Definition 11.3]{GHI}. Since small smooth perturbation of the phase functions are  allowed within the class of operators, stability of the estimates follows. 
\end{rem}

\subsection{Functions of elliptic type}
Let $N\in\N$ and $\epsilon>0$.  Let us set  $\mI =[-1,1]$.  Following \cite{TVV} and \cite{LRS}, we define ${\bf Ell}(N,\epsilon)$ as the class of $C^N$-functions $\psi : \mI^{d-1}\to\R$ satisfying
\vspace{-10pt}  
\begin{itemize}
	\item $\psi(0)=0$ and $\nabla\psi(0)=0$;
	\item Let $w(\xi')=\psi(\xi')-|\xi'|^2/2 $. Then 
\begin{equation}\label{elliptic_error}
 \sup_{\xi'\in \mI^{d-1}} \max_{0\le |\al|\le N} |\p^\al w (\xi') | \le \epsilon .
\end{equation}
\end{itemize}
\vspace{-10pt}  
Typically $N$  is chosen to be large and $\epsilon$ to be small.  As was pointed out in \cite{TVV}, every convex smooth hypersurface  with nonvanishing Gaussian curvature can be locally parametrized as  graph of a function of elliptic type after a proper affine transformation. 

For later use, we record here an approximate property of functions of elliptic type, which is an easy consequence of Taylor's theorem. Let $H\psi$ denote the Hessian matrix of $\psi$.
\begin{lem}\label{approximation}
Let $N$, $\epsilon$ be as above and $0<\rho\le 2^{-1}$.   For $\psi\in{\bf Ell}(N,\epsilon)$ and $c\in ( 2^{-1} \mI)^{d-1}$ set 
\begin{equation}\label{para_scale}
\psi_{c,\rho}(\xi') = \rho^{-2}\big( \psi(\rho\xi'+c) -\psi(c)-\rho\nabla\psi(c)\cdot\xi'\big). 
\end{equation}
Then, we have 
\begin{equation}\label{para_scale_approx}
\sup_{(\xi',c)\in  \mI^{d-1}\times ( 2^{-1} \mI)^{d-1} }\Big | \partial_{\xi'}^\alpha \Big( \psi_{c,\rho}(\xi')- \frac12\langle H\psi(c)\xi',\xi'\rangle \Big) \Big| 
\le	\begin{cases}
	 \, \frac{(d-1)^{3-|\al|}}{(3-|\al|)!} \rho \epsilon &  \text{if} \quad 0\le |\alpha|\le 2, \\[4pt] 
	 \, \rho^{|\al|-2} \epsilon &  \text{if} \quad 3\le |\alpha|\le N .
	\end{cases}
\end{equation}
Moreover,  there is a constant $\mf c$, depending only on $d$,  such that if $\psi\in{\bf Ell}(N,\epsilon)$, then, for all $c\in( 2^{-1} \mI)^{d-1}$ and $0<\rho\le 2^{-1}$,  $\psi_{c,\rho} \in {\bf Ell}(N,  \mf c\epsilon)$.
\end{lem}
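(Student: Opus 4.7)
I would begin by writing $\psi = \tfrac12|\cdot|^2 + w$ with $w$ satisfying $|\partial^{\be}w|\le\epsilon$ on $\mI^{d-1}$ for $|\be|\le N$. Then $\nabla\psi(c) = c+\nabla w(c)$ and $H\psi(c) = I + Hw(c)$, and a direct cancellation of the half-quadratic terms in the definition \eqref{para_scale} yields the clean identity
\[
\psi_{c,\rho}(\xi') - \tfrac12\langle H\psi(c)\xi',\xi'\rangle = \rho^{-2}F(\xi'),
\]
where $F(\xi') := w(c+\rho\xi') - w(c) - \rho\nabla w(c)\cdot\xi' - \tfrac{\rho^2}{2}\langle Hw(c)\xi',\xi'\rangle$ is the second-order Taylor remainder of $w$ around $c$ evaluated at $\rho\xi'$. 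The assumptions $c\in(2^{-1}\mI)^{d-1}$, $\xi'\in\mI^{d-1}$, $\rho\le 1/2$ keep every intermediate point $c+t\rho\xi'$, $t\in[0,1]$, inside $\mI^{d-1}$, so the pointwise bound $|\partial^{\be}w|\le\epsilon$ is available throughout the remainder analysis.

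The two regimes of $|\al|$ in \eqref{para_scale_approx} will be treated separately. For $|\al|\ge 3$, the polynomial being subtracted in $F$ has degree at most $2$ in $\xi'$, so $\partial_{\xi'}^{\al}F(\xi') = \rho^{|\al|}(\partial^{\al}w)(c+\rho\xi')$ and the claimed bound $\rho^{|\al|-2}\epsilon$ is immediate. For $|\al|\le 2$ I would expand $(\partial^{\al}w)(c+\rho\xi')$ by Taylor's theorem with integral remainder of order $3-|\al|$ around $\xi'=0$; the lower-order terms are precisely $\rho^{-|\al|}\partial^{\al}P$ where $P$ is the polynomial subtracted in $F$, so they cancel and only the integral remainder survives, giving
\[
\partial_{\xi'}^{\al}F(\xi') = \rho^{3}(3-|\al|)\!\!\sum_{|\ga|=3-|\al|}\!\!\frac{(\xi')^{\ga}}{\ga!}\int_0^1(1-t)^{2-|\al|}(\partial^{\al+\ga}w)(c+t\rho\xi')\,dt.
\]
Estimating this using $|(\xi')^{\ga}|\le1$, $|\partial^{\al+\ga}w|\le\epsilon$, $\int_0^1(1-t)^{2-|\al|}dt=(3-|\al|)^{-1}$, and the multinomial identity $\sum_{|\ga|=k}1/\ga! = (d-1)^k/k!$, and then dividing by $\rho^{2}$, produces exactly $\tfrac{(d-1)^{3-|\al|}}{(3-|\al|)!}\rho\epsilon$.

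The ${\bf Ell}(N,\mf c\epsilon)$ conclusion then follows quickly: $\psi_{c,\rho}(0) = 0$ and $\nabla\psi_{c,\rho}(0)=0$ are immediate from \eqref{para_scale}, and the decomposition
\[
\psi_{c,\rho}(\xi') - \tfrac12|\xi'|^2 = \rho^{-2}F(\xi') + \tfrac12\langle Hw(c)\xi',\xi'\rangle
\]
represents the elliptic-error function of $\psi_{c,\rho}$ as the sum of a term already controlled by \eqref{para_scale_approx} and a quadratic in $\xi'$ whose coefficients $\partial^{\be}w(c)$, $|\be|=2$, are bounded by $\epsilon$. Since $\rho\le 1/2$, every prefactor $\rho\epsilon$ and $\rho^{|\al|-2}\epsilon$ is bounded by $\epsilon$ up to a purely dimensional constant, so each derivative of $\psi_{c,\rho}-\tfrac12|\cdot|^2$ of order at most $N$ is majorized by $\mf c\,\epsilon$ for some $\mf c = \mf c(d)$.

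\textbf{Main obstacle.} The argument is essentially bookkeeping for multivariate Taylor remainders and presents no serious difficulty. The only point requiring care is tracking the combinatorial prefactors so that the final bound comes out as the exact constant $\tfrac{(d-1)^{3-|\al|}}{(3-|\al|)!}\rho\epsilon$ rather than merely $O_d(\rho\epsilon)$, which is handled by the single multinomial identity $\sum_{|\ga|=k}1/\ga! = (d-1)^k/k!$ above.
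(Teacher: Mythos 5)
Your proof is correct and takes essentially the same approach as the paper: both reduce the statement to bounding the third-order Taylor remainder of a rescaled function, with the student working explicitly through the decomposition $\psi=\tfrac12|\cdot|^2+w$ and the integral form of the remainder while the paper applies Taylor's theorem to $\psi$ directly (using that $\partial^\alpha\psi=\partial^\alpha w$ for $|\alpha|\ge3$) and states the sup-norm bound; the final constant arises in both cases from the multinomial bound $\sum_{|\gamma|=k}|(\xi')^\gamma|/\gamma!\le(d-1)^k/k!$ on $\mI^{d-1}$. The argument for membership in $\mathbf{Ell}(N,\mathfrak c\epsilon)$ is likewise the same one-line observation in both.
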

\begin{proof}
Clearly $\psi_{c,\rho}(0)=0$ and $\nabla\psi_{c,\rho}(0)=0$. Let $|\xi'|_1:=\sum_{k=1}^{d-1} |\xi_k|$.  By Taylor's theorem we have for $(\xi',c)\in  \mI^{d-1}\times ( 2^{-1} \mI)^{d-1} $ that
\[
\Big| \psi_{c,\rho}(\xi')-\frac12\langle H\psi(c)\xi',\xi'\rangle \Big| \le \frac{|\xi'|_1^3}{3!} \sup_{|\al|=3, \, 0\le t\le 1} \left| \rho(\p^{\al}\psi)(\rho t\xi'+c)\right| \le \frac{  |\xi'|_1^3 }{3!}\rho\epsilon.
\]
The second inequality follows from  \eqref{elliptic_error} since $\rho t\xi' +c\in \mI^{d-1}$ whenever $(\xi',c)\in  \mI^{d-1}\times ( 2^{-1} \mI)^{d-1}$, $0<\rho\le 2^{-1}$ and $0\le t\le 1$. Similarly, using Taylor's theorem and \eqref{elliptic_error} we also have
\begin{align*}
\Big | \partial_{\xi'}^\alpha \Big( \psi_{c,\rho}(\xi')- \frac12\langle H\psi(c)\xi',\xi'\rangle \Big) \Big|  \le
\begin{cases}
	\frac{\rho\epsilon}{2!} |\xi'|_1^2 &  \text{if} \quad  |\alpha|=1,\\[4pt] 
	\rho\epsilon |\xi'|_1 &  \text{if} \quad  |\alpha|=2.
\end{cases}
\end{align*}
If $|\alpha|\ge 3$, then $ \p_{\xi'}^\al\big(\psi_{c,\rho}(\xi')-\frac12\langle H\psi(c)\xi',\xi'\rangle \big) = \rho^{|\al|-2}(\p^\alpha\psi)(\rho\xi'+c)$. Hence we have \eqref{para_scale_approx}.  The second assertion follows immediately from \eqref{elliptic_error}, \eqref{para_scale_approx} and comparing $\langle H\psi(c)\xi',\xi'\rangle$ with $|\xi'|^2$.
\end{proof}

\subsection{Estimates for the operator with localized frequency}
To obtain the sharp  bound  \eqref{con1},  the  case in which  $|z|\approx 1$, $\re z>0$, and $|\im z|\ll 1$ is most important  (see  Section \ref{reduction} below).  In this case,  the corresponding Fourier multiplier  carries  most of its mass  near the sphere $S_z:=\{\xi\in\R^d : |\xi| = \sqrt{\re z}\}$, where $\sqrt{\re z}\approx1$.  Since $S_z$ is compact and convex with non-vanishing curvature, using finite decomposition and affine transformations, we can regard $S_z$  as   a finite union of graphs of functions of elliptic type.  Such operations do not have significant effect on the estimate \eqref{con1} except for a minor change of the multiplicative constant $C$. 

Now, by a dyadic decomposition (away from the graph of a function $\psi(\xi')$ of elliptic type) in the Fourier side,  we need to  obtain the sharp bounds for the multiplier operators of which Fourier transform is supported in a $\delta$-neighborhood of the surface $\xi_d=\psi(\xi')$.  

For $b>0$, let us set 
\[	\mnb=\{ m\in C^N(\mathbb R^d):      2^{-1}\le |m| \le 2, \   | \partial^\alpha m|\le b,  \  1\le |\alpha|\le { N} \}.	\]
Let $0<\de\le 1$, $\la\ge1$ and $\de\la\le 1/{10}$, and let $\varphi\in C^{\infty}(\R)$. For $\psi\in \elne$ and $m\in \mnb$  we  set  
\begin{align}
\label{mult_de}
	&\mm_{\de}(\xi) :=\varphi \Big(\frac{m(\xi)(\xi_d-\psi(\xi'))}\de \Big) \be_0 \Big(\frac{m(\xi)(\xi_d-\psi(\xi'))}{\de} \Big)   \chi_0(\xi), \\
\label{mult_la}
	&\mm_{\de,\la}(\xi) :=\varphi \Big(\frac{m(\xi)(\xi_d-\psi(\xi'))}\de \Big) \be \Big(\frac{m(\xi)(\xi_d-\psi(\xi'))}{\de\la} \Big) \chi_0(\xi),
\end{align}
where $\chi_0\in C^\infty_0 (2^{-1}\mI^{d})$. A particular example of $\varphi$  is the function $\varphi(t)=(2t\pm i)^{-1}$. In the proof of Theorem \ref{thm} (Section \ref{pf_of_main_prop}) $\psi$ parametrizes the surface given by a part of the sphere which is deformed under parabolic rescaling. By the additional  $m$ we may perturb the multipliers $\mf M_{\de}$ and $\mf M_{\de,\la}$, so this allows us to handle  other classes of operators which are given by multipliers with similar structure. 

The following  provide sharp estimates for $\mf M_{\de}(D)$ and $\mf M_{\de,\la}(D)$ and these are most  important ingredients in proving Theorem \ref{thm}. For its application, see Section \ref{pf_of_main_prop} (especially \eqref{break_dyadic}) and the proof of Theorem \ref{boc_rie_neg}.
\begin{prop}\label{dethick} Let $b>0$ and suppose that, for $ k\ge 0$, 
\begin{equation}\label{diff}
\Big| \Big(\frac{d}{dt} \Big)^k  \varphi(t) \Big | \le C_k t^{-k-1}, \quad |t|\ge 1.
\end{equation}
Then,  for $p,q$ satisfying  $\frac 1q=\frac{d-1}{d+1}(1-\frac 1p)$ and $q_\circ< q\le \frac{2(d+1)}{d-1}$,  there exist $N$ and $\epsilon>0$ such that the following hold uniformly provided that $\psi\in {\bf Ell}(N,\epsilon)$ and $m\in \mnb$: 
\begin{align}
\label{dela_0} 
	&\normo{\mm_{\de}(D)f}_{L^q(\R^d)}  \le C \de^{\frac{1-d}2+\frac dp} \|f\|_{L^p(\R^d)}, \\
\label{dela} 
	&\normo{\mm_{\de, \la}(D)f}_{L^q(\R^d)}  \le C \la^{-1} (\de\la)^{\frac{1-d}2+\frac dp} \|f\|_{L^p(\R^d)}.
\end{align}
Here the constant $C$ may depend on $b$, $d$, $p$, $q$, $N$, $\epsilon$, $\varphi$ and $\chi_0$, but is independent of $\de$, $\la$, $m$, $\psi$ and $f$. 
\end{prop}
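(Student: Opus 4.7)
The plan is to prove \eqref{dela} first, by parabolic rescaling down to a unit-scale Carleson--Sj\"olin type operator, and then derive \eqref{dela_0} from \eqref{dela} through a dyadic decomposition in the distance to the surface.

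To establish \eqref{dela}, I first absorb the $\varphi$-factor. On the support of $\be(m(\xi)(\xi_d-\psi(\xi'))/(\de\la))$ one has $|m(\xi)(\xi_d-\psi(\xi'))/\de|\approx\la\ge 1$, so hypothesis \eqref{diff} gives $|\partial_t^k\varphi(t/\de)|\lesssim \la^{-k-1}$ there. A routine chain-rule computation then shows that $\la\cdot\mf M_{\de,\la}(\xi)$ is, together with its derivatives, uniformly bounded at scale $\de\la$ in the normal direction and at scale $1$ tangentially. Setting $s=\de\la\le 1/10$, matters reduce to proving $\|T_{\wt{\mf M}}f\|_q \lesssim s^{(1-d)/2+d/p}\|f\|_p$, where $\wt{\mf M}$ is a bounded smooth symbol supported in a $\sim s$-thickening of the $\sim s$-shell of the surface $\xi_d=\psi(\xi')$. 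I then cover the tangential support by an $s^{1/2}$-grid of points $c\in(2^{-1}\mI)^{d-1}$ and, on each cap, perform the affine change of variables $\xi'=c+s^{1/2}\eta'$, $\xi_d=\psi(c)+s^{1/2}\nabla\psi(c)\cdot\eta'+s\eta_d$. By Lemma \ref{approximation} the image surface is $\eta_d=\psi_{c,s^{1/2}}(\eta')$ with $\psi_{c,s^{1/2}}\in{\bf Ell}(N,\mf c\epsilon)$, a small smooth perturbation of the standard paraboloid. The scaling identity $\|T_m\|_{p\to q}=|\det A|^{1/p-1/q}\|T_{\tilde m}\|_{p\to q}$, with $|\det A|=s^{(d+1)/2}$ and the line condition $1/q=\tfrac{d-1}{d+1}(1-1/p)$, converts a uniform unit-scale $L^p\to L^q$ bound for the rescaled operator into the target factor $s^{(d+1)(1/p-1/q)/2}=s^{(1-d)/2+d/p}$.

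The heart of the argument is this unit-scale bound: a multiplier whose symbol is a bounded smooth bump supported in a unit neighborhood of an elliptic hypersurface maps $L^p\to L^q$ with a uniform constant on the line $1/q=\tfrac{d-1}{d+1}(1-1/p)$ for $q_\circ<q\le\tfrac{2(d+1)}{d-1}$. Via the Carleson--Sj\"olin reduction, the operator is recast as an oscillatory integral whose phase satisfies \eqref{mixhess}, \eqref{curv} and \eqref{samesign}. I would then combine (i) the Stein--Tomas endpoint at $q=\tfrac{2(d+1)}{d-1}$, $p=2$; (ii) the sharp diagonal estimate of Guth--Hickman--Iliopoulou (Theorem \ref{osc-est1}) at $p=q>p_*$; and (iii) Tao's bilinear extension estimate \cite{T} for transversal pieces of elliptic surfaces, the latter promoted to linear $L^p\to L^q$ bounds on the line through the standard Whitney decomposition of cap pairs into bilinearly separated ones. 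Interpolation between these ingredients delivers the required bound precisely on the range $q>q_\circ$. Uniformity of the unit-scale estimates under the perturbation $\psi\mapsto\psi_{c,s^{1/2}}$ is supplied by Remark \ref{stability}.

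Finally, \eqref{dela_0} is deduced from \eqref{dela} through a dyadic decomposition of $\be_0$ in the distance to the surface: writing $\be_0(t/\de)=\be_0(2^J t/\de) + \sum_{j=0}^{J-1}\be(2^{j}t/\de)\be_0(t/\de)$ (with only finitely many $j$ contributing) localizes $\mf M_\de$ into pieces supported on shells of scale $\de\cdot 2^{-j}$. Each piece falls within the scope of \eqref{dela} with parameters $(\de\cdot 2^{-j},1)$ and a bounded $\varphi$, hence contributes at most $(\de\cdot 2^{-j})^{(1-d)/2+d/p}$; the geometric sum in $j$ converges because $(1-d)/2+d/p>0$ on the relevant range of $p$ (a consequence of $q>q_\circ$), yielding \eqref{dela_0}. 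The main obstacle is the unit-scale Carleson--Sj\"olin estimate of the preceding paragraph; once that is in hand, the parabolic rescaling, the stability check, and the summation are technical but standard.
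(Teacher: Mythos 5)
Your reduction to a ``unit-scale'' estimate by covering the tangential support with an $s^{1/2}$-grid and parabolically rescaling each cap does not close. After the affine change of variables, a single cap becomes a multiplier whose symbol is a smooth bump supported in a \emph{unit} neighborhood of a unit piece of the elliptic surface; such a multiplier is trivially bounded $L^p\to L^q$ for every $p\le q$ by Young's inequality, so it cannot be the ``heart of the argument'' and cannot account for the threshold $q>q_\circ$. More importantly, the full operator splits into roughly $s^{-(d-1)/2}$ such caps, and you never explain how to sum them: the triangle inequality costs a factor $s^{-(d-1)/2}$ which destroys the claimed bound, and for the admissible range one has $p<2<q$, so there is no Plancherel-type orthogonality on either side that would rescue the naive sum.

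The paper's proof avoids this by \emph{not} fixing a single cap scale. It runs a Whitney decomposition of $Q\times Q$ away from the diagonal over \emph{all} dyadic scales $j$, writing $(\mm_{\de,\la}(D)f)^2=\sum_j T_j(f,f)$, and the parabolic rescaling is applied \emph{per pair} of bilinearly separated cubes at each scale $2^{-j}$. At scales $2^{-j}>(\de\la)^{1/2}$ (Lemma~\ref{smj}) the rescaled pair feeds into Corollary~\ref{tcsbil}, which is where Tao's bilinear estimate is interpolated against the trivial bilinear bound coming from Guth--Hickman--Iliopoulou (Proposition~\ref{cstomul}); this interpolation and \eqref{below_the_line} are precisely where $q>q_\circ$ enters. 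At scales $2^{-j}\lesssim(\de\la)^{1/2}$ (Lemma~\ref{lgj}) a direct kernel/Young argument is used instead; this regime corresponds roughly to your single chosen scale $s^{1/2}$ and below, and it carries its own nontrivial content. The cross-scale sum is then handled by the almost-orthogonality Lemma~\ref{piece} (within a scale) and the Bourgain-type real interpolation Lemma~\ref{intpl} (across scales); the latter exploits that the $j$-dependent factors in \eqref{smjeq} have opposite signs on the two sides of the line $\frac{d+1}q=(d-1)(1-\frac1p)$. None of this multi-scale structure appears in your sketch, and it is exactly what makes the estimate sharp at the endpoint; without it the argument does not give \eqref{dela}. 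Your final step, deducing \eqref{dela_0} from \eqref{dela} by writing $\be_0=\sum_{j\ge1}\be(2^j\,\cdot\,)$ and treating each piece as $\mm_{\de 2^{-j},1}$ with the dilated cutoff $\varphi_j(t)=\varphi(2^{-j}t)$, is a reasonable alternative to the paper's ``same argument'' remark, but it works only because on the relevant support $|t|\approx 1$ the family $\{\varphi_j\}_{j\ge1}$ is uniformly smooth and bounded; the constants $C_k$ of \eqref{diff} for $\varphi_j$ grow like $2^j$ for $|t|\ge1$, so one must check that the constant in \eqref{dela} really only sees $\varphi$ near the support of $\be$, not through \eqref{diff} itself, a point you leave implicit.
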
 

\begin{rem} 
Similar estimates  were obtained in \cite[Proposition 2.4]{CKLS}. However, there are differences  which need to be mentioned. Firstly, the function $\phi\in\mathcal S(\R)$ in \cite[Proposition 2.4]{CKLS} is assumed to have the special cancellation property $\supp\wh\phi =\{t\in\R: |t|\approx 1\}$ which  was crucial in obtaining the sharp estimate,  whereas we do not need such extra assumption in Proposition \ref{dethick}.  This is necessary for the proof of Theorem \ref{thm}. Unlike \cite{CKLS}  the associated multipliers $(|\xi|^2-z)^{-1}$ are not homogeneous, so we cannot decompose them in such a nice way as in  \cite[Lemma 2.1]{CKLS} (also, see \cite[Section 2.1]{JKL}).  Secondly, we allow smooth perturbation of $m$ in \eqref{mult_de} and \eqref{mult_la} with $m$ satisfying $|m|\approx 1$. Lastly, the estimates \eqref{dela_0} and \eqref{dela} hold on a wider  range of $p,q$ than that of the estimate in \cite[Proposition 2.4]{CKLS}. 
\end{rem}

We postpone the proof of Proposition \ref{dethick} until the next section. For the rest of this section we  present results which will be used for the proof of Proposition \ref{dethick}.

\subsection{$L^p$ boundedness of multiplier operators}\label{lp_bdd_mult}
In this section, we obtain sharp $L^p$ estimates for the multiplier operators $\mm_\de(D)$ and $\mm_{\de,\la}(D)$ which are consequences  of Theorem \ref{osc-est1}. We work with $\mm_{\de,\la}(D)$ only, since the same argument also works for $\mm_\de(D)$.  In what  follows  all of the $L^p$ estimates are uniform in $\psi\in {\bf Ell}(N,\epsilon)$, $m\in \mnb$, provided that $N$ is sufficiently large and $\epsilon$ is sufficiently small. 

\begin{prop}\label{cstomul}
Let $b$, $\chi_0$, $\de$, $\la$ and $\varphi$  be as in Proposition \ref{dethick}, and suppose that $p_*<p\le \infty$. Then there exist a large $N>0$, a small $\epsilon>0$ and a constant $C>0$ such that
\begin{align}
	\nonumber	 \|\mm_\de(D)f\|_{L^p(\R^d)}	&\le 	C \de^{\frac dp - \frac{d-1}2}\|f\|_{L^p(\R^d)},\\
	\label{CStoMUL} 	\|\mm_{\de,\la}(D)f\|_{L^p(\R^d)}	&\le 	C \la^{-1} (\de\la)^{\frac dp - \frac{d-1}2}\|f\|_{L^p(\R^d)},
\end{align}
where the constants $C$ are independent of $\de$, $\la$,  $\psi\in{\bf Ell}(N,\epsilon)$ and $m\in \mnb$. 
\end{prop}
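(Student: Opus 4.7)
I will focus on \eqref{CStoMUL}; the analogous bound for $\mm_\de(D)$ is obtained by the same method (and is slightly simpler, since there is no outer dyadic separation from the surface). The starting observation is that on the support of $\be(\cdot/(\de\la))$ the argument of $\varphi(\cdot/\de)$ has size $\sim\la$, so by \eqref{diff} we have $|\varphi^{(k)}|\lesssim\la^{-k-1}$. Consequently $\mm_{\de,\la}$ is a smooth symbol of size $\lesssim\la^{-1}$ supported in a $\de\la$-neighborhood of the elliptic graph $\xi_d=\psi(\xi')$, with $\xi_d$-derivatives paying a factor $(\de\la)^{-1}$ each and $\xi'$-derivatives only $O(1)$, uniformly in $m\in\mnb$ and $\psi\in\Ell$. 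In effect $\mm_{\de,\la}=\la^{-1}\wt{\mm}$ where $\wt{\mm}$ is a normalized bounded symbol supported in a $\de\la$-neighborhood of an elliptic graph, and the task becomes to invoke the standard Carleson--Sj\"olin $L^p$-estimate for such symbols.

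The reduction to Theorem \ref{osc-est1} proceeds via a parabolic rescaling at scale $\rho=\sqrt{\de\la}$. I would decompose the symbol using a partition of unity in $\xi'$ into caps of radius $\rho$ centered at points $c\in(2^{-1}\mI)^{d-1}$, and on each cap apply the affine substitution $\xi'=c+\rho\eta'$, $\xi_d=\psi(c)+\rho\nabla\psi(c)\!\cdot\!\eta'+\rho^2\eta_d$. By Lemma \ref{approximation}, in the new variables the surface is the graph of $\psi_{c,\rho}\in{\bf Ell}(N,\mf c\epsilon)$, the rescaled frequency support is an $O(1)$-neighborhood of that graph, and the rescaled symbol is still bounded by $\la^{-1}$ with uniformly bounded derivatives. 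Since $L^p$ Fourier-multiplier norms are invariant under affine substitutions of the frequency variable, it suffices to bound each rescaled piece in $L^p$: after Fourier inversion such a piece is realized as an oscillatory integral operator with phase $\Phi(x,\xi')=x'\!\cdot\!\xi'+x_d\psi_{c,\rho}(\xi')$, which satisfies \eqref{mixhess}, \eqref{curv} and \eqref{samesign} uniformly in $c$. Theorem \ref{osc-est1}, together with the stability under smooth perturbations (Remark \ref{stability}), then yields a bound $\lesssim\la^{-1}$ per cap for every $p>p_\ast$, uniformly in $c$, $m$ and $\psi$.

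There are $\sim(\de\la)^{-(d-1)/2}$ such caps, which must be reassembled. At $p=\infty$ the triangle inequality gives $\|\mm_{\de,\la}(D)\|_{\infty\to\infty}\lesssim\la^{-1}(\de\la)^{-(d-1)/2}$, matching the claim at that endpoint. For an exponent $p$ just above $p_\ast$, applying Theorem \ref{osc-est1} directly to the assembled operator (rather than cap by cap) produces the sharp bound $\|\mm_{\de,\la}(D)\|_{p\to p}\lesssim\la^{-1}(\de\la)^{d/p-(d-1)/2}$: the angular separation of the caps supplies the quasi-orthogonality that converts the per-cap $\la^{-1}$ into the correct power of $\de\la$. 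Riesz--Thorin interpolation between these two endpoints delivers \eqref{CStoMUL} on the whole range $p_\ast<p\le\infty$. The main technical hurdle is precisely keeping all constants uniform as $c$, $m$ and $\psi$ vary, since each cap produces a slightly perturbed elliptic phase $\psi_{c,\rho}$ and a perturbed multiplicative factor $m$; this uniformity is exactly the content of Remark \ref{stability} and of the second part of Lemma \ref{approximation}, which preserves ellipticity with a single constant $\mf c=\mf c(d)$ under parabolic rescaling.
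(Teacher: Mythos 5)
Your reduction strategy differs from the paper's and has a genuine gap at the heart of the argument. The paper proves \eqref{CStoMUL} by computing the kernel $K_{\de,\la}=\F^{-1}\mm_{\de,\la}$ via stationary phase (Lemma \ref{stnry}), dyadically decomposing it in the physical variable $|x_d|\approx 2^l$, and then applying Theorem \ref{osc-est1} to each rescaled dyadic piece with $2^l$ playing the role of the oscillatory parameter; the summability in $l$ comes from the amplitude decay $\de(1+2^l\de\la)^{-M}$ (Lemma \ref{symbol}). You instead propose a frequency-side parabolic rescaling into caps of radius $\rho=\sqrt{\de\la}$ and then claim that ``applying Theorem \ref{osc-est1} directly to the assembled operator'' produces the sharp bound for $p$ near $p_\ast$. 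That step is circular: it is exactly the statement you are trying to prove. Theorem \ref{osc-est1} concerns operators $T_\la[\Phi,\mf a]$ with compactly supported amplitude $\mf a(x,u)$ and an explicit oscillatory parameter $\la$; the multiplier operator $\mm_{\de,\la}(D)$ has neither, since $x$ ranges over all of $\R^d$ and there is no free large parameter in the integral. Producing that structure is precisely what the stationary-phase computation of the kernel and the dyadic decomposition in $|x_d|$ accomplish, and your outline omits both.

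The cap decomposition also does not repair the gap. After your rescaling the per-cap multiplier is a smooth bump of size $\la^{-1}$ with bounded derivatives on an $O(1)$ set, so the per-cap bound $\|\mm_c(D)\|_{p\to p}\lesssim\la^{-1}$ is simply a Schwartz-kernel estimate and owes nothing to Theorem \ref{osc-est1}. Using disjointness of frequency supports to obtain the $L^2$ bound $\la^{-1}$, and summing the $\sim(\de\la)^{-(d-1)/2}$ caps at $p=\infty$, then interpolating, yields only $\la^{-1}(\de\la)^{(d-1)(1/p-1/2)}$, which is strictly weaker than the target $\la^{-1}(\de\la)^{d/p-(d-1)/2}$ by a factor $(\de\la)^{1/p}$. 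Your appeal to ``angular separation supplies the quasi-orthogonality'' is not a mechanism that closes this $1/p$-gap. To recover it one must exploit the oscillation of the kernel $K_{\de,\la}$ in the physical variable $x_d$, which is exactly the Carleson--Sj\"olin reduction implemented in the paper via Lemmas \ref{stnry}--\ref{symbol} and estimate \eqref{local_operator}.
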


We will achieve this by making use of  Theorem \ref{osc-est1}.  For this purpose we need to compute  the kernel $K_{\de,\la}$ of the operator $\mm_{\de, \la}(D)$. 

\begin{rem} \label{simple_cutoff}
To begin with, we readjust the cutoff functions $\chi_0$  in \eqref{mult_la} of which role is not so significant for the overall estimates.  We may regard $\chi_0 \widehat f $  as if it is $\widehat f$ (note that $\|\mathcal F^{-1}(\chi_0 \widehat f\,)\|_p\lesssim \|f\|_p$ if $\chi_0\in C_0^\infty$).  We may also introduce a new cutoff function $\chi'$ whenever $\chi_0 \chi'=\chi_0$ and replace $\widehat f$ with $\chi' \widehat f$.   By decomposing (with a suitable partition of unity) $\chi_0$ into finitely many cutoff functions with smaller support (of diameter $\lesssim \epsilon_0$)  we may  assume $\chi_0$ is supported in a small neighborhood near the surface $\xi_d=\psi(\xi')$. Otherwise, the contribution is negligible.  In fact, the associated kernel has a bounded $L^1$-norm as can be seen easily by a straightforward kernel estimate.  Let $\xi_0=(c, \psi(c))$ and suppose $\chi_0$ is supported in  $B_d(\xi_0, \epsilon_0)$ for a fixed $\epsilon_0$. Then, for $0<\rho\le 2^{-1}$, we may use the harmless affine transform   
\begin{equation}\label{affine}
	\xi\to L_{c,\rho}(\xi)=\big(\rho \xi'+c, \,  \rho^2\xi_d+ \psi(c)+ \rho\nabla\psi(c)\cdot \xi'\big)
\end{equation}
to  write  
\begin{equation}\label{multi-mod}
	\mm_{\de, \la}(L_{c,\rho}(\xi))= \varphi \Big(\frac{m( L_{c,\rho} \xi)(\xi_d-\psi_{c,\rho}(\xi'))}{\rho^{-2}\de }\Big) \be \Big(\frac{m( L_{c,\rho} \xi)(\xi_d-\psi_{c,\rho}(\xi'))}{\rho^{-2}\de \la} \Big) \chi_0(L_{c,\rho}\xi).
\end{equation}
By Lemma \ref{approximation} $\psi_{c,\rho}\in {\bf Ell}(N,\mf c\epsilon)$. Also,  $m\circ L_{c,\rho} \in {{\bf Mul}(N, Cb)}$ for some $C>0$.  Thus we may regard this as the same multiplier given by \eqref{mult_la} by simply replacing $\rho^{-2}\de$, $m\circ L_{c,\rho}$ and $\psi_{c,\rho}$ with $\de$, $m$ and $\psi$, respectively.  Hence taking $\epsilon_0$ small enough and  $\rho=\epsilon_0 2^{7}$, after a simple manipulation (discarding the part of multiplier which is away from the surface) we may assume  the cutoff function takes the form 
\[ 	\chi_0(\xi)=\chi(\xi') \chi_1 (\xi_d-\psi(\xi'))	\] 
and $\chi\in C_0^\infty(B_{d-1}(0, 2^{-7}))$  and $\chi_1\in C^\infty_0 ( (-2^{-7}, 2^{-7}))$.
\end{rem}

Before we prove \eqref{CStoMUL} rigorously, we explain the idea behind our proof with a simplified multiplier. This will help the reader to understand the detailed (but technical) proof below in this section. Let us assume that $m=1$, $\varphi(t)=1/t$, $\psi(\xi')=\frac12 |\xi'|^2$ and $\chi_0(\xi)=\chi(\xi')\chi_1(\xi_d-\psi(\xi'))$. Then the multiplier takes the simpler form
\[\mathfrak M_{\de,\la}(\xi) = \frac1\la b\Big(\frac{\xi_d-\psi(\xi')}{\de\la} \Big) \chi(\xi')\chi_1(\xi_d-\psi(\xi')),\]
where $b(t)=|t|^{-1}\beta(t)$. By the stationary phase method $K_{\de,\la}(x):=\F^{-1}(\mathfrak M_{\de, \la})(x)$ ``approximately" equals
\[ \lambda^{-1}|x_d|^{-\frac{d-1}2} e^{-i\frac{|x'|^2}{2x_d}} \int e^{ix_d\xi_d} b(\xi_d/\de\la)\chi_1(\xi_d) d\xi_d \]
for $|x_d|\gtrsim 1$ and $|x'|\lesssim |x_d|$. We dyadically decompose this kernel along $x_d$:
\[\sum_{l=1}^\infty\lambda^{-1}\beta(2^{-l}x_d)|x_d|^{-\frac{d-1}2} e^{-i\frac{|x'|^2}{2x_d}} \int e^{ix_d\xi_d} b(\xi_d/\de\la)\chi_1(\xi_d) d\xi_d=:\sum_{l=1}^\infty K_{\de,\la,l}(x). \]
Now the matter reduces to obtaining
\begin{equation}\label{psuedo}
\|2^{dl}K_{\de,\la,l}(2^l\,\cdot\,)*f\|_p\lesssim 2^{(\frac{d+1}2-\frac dp)l}\de(1+2^l\de\la)^{-M}\|f\|_p,
\end{equation}
because summation over $l$ gives the desired bound. Note that 
\[2^{dl}K_{\de,\la,l}(2^l\,\cdot\,)*f(x)= \lambda^{-1}2^{\frac{(d+1)l}2}\int e^{-i2^l\frac{|x'-y'|^2}{2|x_d-y_d|}} a_{\de,\la, l}(x_d-y_d) f(y)dy,\]
where $a_{\de,\la, l}(x_d-y_d)=\beta(x_d-y_d)|x_d-y_d|^{-\frac{d-1}2}\int e^{i2^l(x_d-y_d)\xi_d}b(\xi_d/\de\la)\chi_1(\xi_d) d\xi_d$.  Application of the oscillatory integral estimate (Theorem \ref{osc-est1}) gives \eqref{psuedo}. Of course this is an oversimplification. We provide detailed argument in what follows.

By Remark \ref{simple_cutoff} and change of variables $(\xi',\xi_d)\to(\xi',\xi_d+\psi(\xi'))$ we may write
\begin{equation} \label{kernel}
K_{\de,\la}(x):= \mathcal F^{-1}(\mm_{\de, \la})(x)
	= \frac1{2\pi}	\int e^{ix_d\xi_d}  \chi_1(\xi_d)	I_\psi (x;\xi_d)  \, d\xi_d,
\end{equation}
where 
\[ I_\psi (x;\xi_d) =   \frac1{(2\pi)^{d-1}}\int e^{i(x'\cdot\xi'+x_d\psi(\xi'))}     \varphi \Big(\frac{\wt m(\xi)\xi_d}\de \Big) \be \Big(\frac{\wt m(\xi)\xi_d}{\de\la} \Big) \chi(\xi') d\xi' \]
and $\wt m (\xi)=m(\xi',\xi_d+\psi(\xi'))$. $\wt m$ still enjoys the same property as $m$ in Proposition \ref{dethick}, that is to say, $\wt m\in {\bf Mul}(N, Cb)$ for some $C>0$.  For simplicity  we put
\begin{equation} \label{Adl}
 A_{\de, \la} (\xi) :=	\varphi \Big(\frac{\wt m(\xi)\xi_d}\de \Big) \be \Big(\frac{\wt m(\xi)\xi_d}{\de\la} \Big) \chi(\xi').
\end{equation}

Let us collect some bounds for the functions $A_{\de,\la}$ and their differentials which will be useful later when we show Proposition \ref{cstomul}.

\begin{lem}\label{diff_uniform}
Let $0<\de\le\de\la\le 1$, $b>0$ and let $\psi\in{\bf Ell}(N,\epsilon)$, $m\in \mnb$.  Then, for every $(d-1)$-dimensional multi-index $\vartheta \in\N_0^{d-1}$ with $|\vartheta|\le N$, we have 
\begin{equation}\label{diff_uniform1}
	\sup_{\xi} \big| \p_{\xi'}^\vartheta A_{\de, \la}(\xi) \big| \le C_\vartheta \la^{-1}
\end{equation}
uniformly in $\de$, $\la$, $\psi\in \Ell$ and $m\in \mnb$.  More generally, for every $d$-dimensional multi-index $\al=(\al',\al_d)\in \N_0^{d-1}\times \N$ and every $\vartheta\in \N_0^{d-1}$  such that $|\al|+|\vartheta|\le N$, we have
\begin{equation}\label{diff_uniform2}
	\sup_{\xi} \big| \p_{\xi'}^{\al'} \p_{\xi_d}^{\al_d} \big( (\xi_d)^\ell \p_{\xi'}^\vartheta A_{\de,\la}(\xi)\big)\big| \le C_{\al,\vartheta} \la^{-1} (\de\la)^{-\al_d+\ell}
\end{equation}
with $C_{\al,\vartheta}$ independent of $\de$, $\la$, $m$ and $\psi$. 
\end{lem}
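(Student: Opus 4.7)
The plan is to exploit the two-scale structure of $A_{\de,\la}$ on the support of its $\be$-factor. There one has $|\xi_d|\sim\de\la$ and $|\wt m(\xi)\xi_d/\de|\sim\la\ge 1$, so the decay hypothesis \eqref{diff} for $\varphi$, combined with the boundedness of $\be$ and its derivatives, will provide the fundamental factor $\la^{-1}$. Because $\xi'$-derivatives act on quantities of unit scale while $\xi_d$-derivatives act on quantities of scale $\de\la$, each $\p_{\xi_d}$ will cost $(\de\la)^{-1}$ whereas each $\p_{\xi'}$ will be essentially free.

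First I would introduce $G(\xi):=\wt m(\xi)\xi_d/(\de\la)$ and the one-variable function $\Phi_\la(s):=\varphi(\la s)\be(s)$, so that $A_{\de,\la}(\xi)=\Phi_\la(G(\xi))\chi(\xi')$. Using \eqref{diff} on $|s|\in[3/8,9/8]$ (so that $|\la s|\ge 3\la/8\ge 1$ when $\la\ge 8/3$, and just smoothness of $\varphi$ on a bounded set otherwise), the identity $\Phi_\la^{(n)}(s)=\sum_k\binom{n}{k}\la^k\varphi^{(k)}(\la s)\be^{(n-k)}(s)$ yields $|\Phi_\la^{(n)}(s)|\le C_n\la^{-1}$ uniformly for $0\le n\le N$. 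Meanwhile, a direct computation using $\wt m\in\mnb$, the identity $\p_{\xi_d}^k(\wt m\xi_d)=k\p_{\xi_d}^{k-1}\wt m+\xi_d\p_{\xi_d}^k\wt m$ for $k\ge1$, and the support condition $|\xi_d|\lesssim\de\la$ shows
\[
	|\p_{\xi'}^{\mu}G|\le C_\mu b\quad(|\mu|\ge 1),\qquad |\p_{\xi'}^{\mu}\p_{\xi_d}^{k}G|\le C_{\mu,k}(1+b)(\de\la)^{-1}\quad(k\ge 1),
\]
uniformly in $\psi\in\Ell$ and $m\in\mnb$.

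Next I would apply the multivariable Faà di Bruno formula. A typical term in $\p_{\xi'}^{\be'}\p_{\xi_d}^{j}\Phi_\la(G)$ has the form $\Phi_\la^{(n)}(G)\prod_i \p_{\xi'}^{\mu_i}\p_{\xi_d}^{j_i}G$ with $\sum_i\mu_i=\be'$ and $\sum_i j_i=j$; the factors with $j_i\ge 1$ each contribute $(\de\la)^{-1}$ whereas those with $j_i=0$ contribute only constants in $b$. Since $\sum_i j_i=j$ and $|\Phi_\la^{(n)}|\lesssim\la^{-1}$, the worst case (in which the $\xi_d$-derivatives are distributed as singletons) yields $\la^{-1}(\de\la)^{-j}$, and the Leibniz rule against $\chi(\xi')$ does not worsen this, so
\[
	|\p_{\xi'}^{\be'}\p_{\xi_d}^{j}A_{\de,\la}(\xi)|\le C_{\be',j}\,\la^{-1}(\de\la)^{-j}
\]
on $\supp A_{\de,\la}$ for every $|\be'|+j\le N$. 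Taking $j=0$ is exactly \eqref{diff_uniform1}.

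Finally, for \eqref{diff_uniform2} I would apply the Leibniz rule to $\p_{\xi'}^{\al'}\p_{\xi_d}^{\al_d}(\xi_d^{\ell}\p_{\xi'}^{\vartheta}A_{\de,\la})$, observing that $\p_{\xi_d}^{k}\xi_d^{\ell}=0$ for $k>\ell$ and $|\p_{\xi_d}^k\xi_d^\ell|\lesssim|\xi_d|^{\ell-k}\lesssim(\de\la)^{\ell-k}$ on the support otherwise. Substituting the previous display for $\p_{\xi'}^{\al'+\vartheta}\p_{\xi_d}^{\al_d-k}A_{\de,\la}$, each term in the Leibniz expansion is bounded by $(\de\la)^{\ell-k}\cdot\la^{-1}(\de\la)^{-(\al_d-k)}=\la^{-1}(\de\la)^{-\al_d+\ell}$, and summing over $0\le k\le\min(\al_d,\ell)$ yields exactly the right-hand side of \eqref{diff_uniform2}. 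The main (and only) obstacle is the bookkeeping in the Faà di Bruno step, which becomes transparent once one sees that every differentiation of $G$ either preserves the unit $\xi'$-scale or imposes the single penalty $(\de\la)^{-1}$.
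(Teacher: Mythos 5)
Your proof is correct and relies on the same mechanism as the paper's own argument: the support restriction forces $|\xi_d|\approx\de\la$, the decay hypothesis \eqref{diff} on $\varphi$ supplies the $\la^{-1}$ factor, and each $\p_{\xi_d}$ hitting the composite costs $(\de\la)^{-1}$, which is then combined with Leibniz for the $(\xi_d)^\ell$ weight. The only difference is organizational — you compress the two-factor multiplier into $\Phi_\la(G)$ and invoke Fa\`a di Bruno, while the paper differentiates the product $\varphi(\wt m\xi_d/\de)\,\be(\wt m\xi_d/(\de\la))\,\chi$ directly — so the underlying estimates and the role of each hypothesis coincide.
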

\begin{proof}
For every $k\in\N_0$ note that $\be^{(k)} \big( \frac{\wt m(\xi) \xi_d}{\de\la} \big) \neq 0$ only if $|\xi_d|\approx\de\la$ since $|\wt m|\approx 1$. Hence for every $\vartheta\in\N_0^{d-1}$ with $0\le |\vartheta|\le N$  it is easy to see that $\supp \p_{\xi'}^\vartheta A_{\de,\la}$ is contained in  the set $\{\xi : |\xi_d| \approx \de\la \}$ and that 
\begin{equation}\label{be_diff_1}
	\sup_{\xi} \Big| \p_{\xi'}^\vartheta \Big( \be \Big(\frac{\wt m(\xi)\xi_d}{\de\la} \Big) \Big) \Big| \le C_\vartheta
\end{equation}
with $C_\vartheta$ independent of $\de$, $\la$, $m$ and $\psi$.  Also, for  $0\le |\vartheta|\le N$,  
\[	\sup_{\xi} \Big| \p_{\xi'}^\vartheta \Big( \varphi \Big( \frac{\wt m (\xi)\xi_d}{\de}\Big) \Big) \Big| 
	\lesssim \sum_{k=0}^{|\vartheta|} \sup_{\xi } \Big| \varphi^{(k)} \Big(\frac{\wt m(\xi)\xi_d}{\de}\Big) \Big(\frac{\xi_d}{\de} \Big)^k \Big|,	\]
where the implicit constant is independent of $\de$, $\la$, $m$ and $\psi$.  Since $|\wt m| \approx 1$ and $|\xi_d|\approx\la\de$ on $\supp A_{\de,\la}$, by \eqref{diff}  we see that 
\begin{equation}\label{phi_diff_1}
\sup_{\xi'\in\supp\chi, \, |\xi_d|\approx\de\la} \Big| \p_{\xi'}^\vartheta \Big( \varphi \Big( \frac{\wt m (\xi)\xi_d}{\de}\Big) \Big) \Big| 
	\le C_\vartheta \la^{-1} .
\end{equation}
By combining \eqref{be_diff_1} and \eqref{phi_diff_1}  it is easy to see  \eqref{diff_uniform1}. 

For the proof of \eqref{diff_uniform2} we first consider the case $\alpha'=\vartheta=0$. Note that  $\p_{\xi_d}^{\al_d} \big((\xi_d)^\ell  A_{\de,\la} \big)$ is given by a linear combination of  $(\xi_d)^{\ell-n}  \p_{\xi_d}^{\alpha_d-n}  A_{\de,\la}$  and $\p_{\xi_d}^{\alpha_d-n}  A_{\de,\la}$ is also a linear combination of 
\[  \delta^{-\nu}(\lambda\delta)^{-\mu} \chi_{\mu,\nu} (\xi) \varphi^{(\nu)}\Big(\frac{\wt m(\xi)\xi_d}{\de} \Big)  \beta^{(\mu)}\Big(\frac{\wt m(\xi)\xi_d}{\de\la} \Big),  \quad  \mu+\nu\le  \alpha_d-n.	\] 
Here $\chi_{\mu,\nu}$ is a smooth function with bounded derivatives.  Since $|\xi_d|\approx\la\de$ on $\supp A_{\de,\la}$, we deduce the desired bound \eqref{diff_uniform2} by \eqref{diff}.  For the general cases one can routinely repeat the same argument keeping in mind that $\p_{\xi'}^{\al'}$ or $\p_{\xi'}^{\vartheta}$ behaves almost similarly as in \eqref{diff_uniform1} on $\supp A_{\de,\la}$. So, we omit the detail. 
\end{proof}

We now obtain the asymptotic for the function $I_\psi(\,\cdot \, , \xi_d)$. Since $\psi\in \Ell$, $\nabla\psi(0)=0$ and $\nabla \psi (\xi')= \xi'+O(\epsilon)$ for $\xi'\in\mI^{d-1}$. Thus,  by the inverse function theorem we see that  there exist neighborhoods $U$, $V$ of the origin and a unique diffeomorphism $g: U \to V$ such that $g(0)=0$ and 
\begin{equation}\label{gauss_map}
	t'+\nabla\psi(g(t'))=0.
\end{equation}
If we take $\epsilon$ sufficiently small, we may assume that $U\supset  B_{d-1}(0, 1/2)$.  In fact, $(g(t'), \psi \circ g(t'))$ is the unique point on the graph $\mathcal G (\psi):=\{(\xi',\psi(\xi')): \xi'\in\supp\chi\}$ at which the normal vector is parallel to $(t',1)$. We denote by ${\bf K}(\xi)$ the Gaussian curvature of the surface $\mathcal G (\psi)$ at point $\xi=(\xi',\psi(\xi'))$ and by ${\bf J}g$ the Jacobian matrix of the diffeomorphism $g$.  Direct differentiation of the equation \eqref{gauss_map} gives  
\begin{equation}\label{jacobian}
	((H\psi)\circ g) \cdot {\bf J}g = -I_{d-1}.
\end{equation}

\begin{lem}\label{stnry} 
Let $0< \de \le \de\la \lesssim 1$. Suppose that $N$ (resp., $\epsilon$) is large (resp., small) enough so that for every $\psi\in{\bf Ell}(N, \epsilon)$, the aforementioned diffeomorphism $g: U\supset B_{d-1}(0,1/2 )\to V$ exists.  Then the following hold.  

\vspace{-8pt}  
$(\mathrm I)$ If $|x_d|\ge 1/2$ and $2^5|x'|\le|x_d|$,  then for every $M\in \N$ satisfying $2M\le N$  we have 
\begin{equation}\label{stationary} 
I_\psi (x;\tau)
	=\frac{c_d}{\sqrt{|{\bf K}|}} e^{i ( x' \cdot g (\frac{ x'}{x_d} ) +x_d\psi\circ g  ( \frac{x'}{x_d}) )} \sum_{j=0}^{M-1} \mathcal{D}_{j}A_{\de,\la}(\xi',\ta)\Big|_{\xi'=g(\frac{x'}{x_d})} |x_d|^{-\frac{d-1}{2}-j}  +\mathcal {E}_{\de, \la, M}(x; \tau),
\end{equation}
where $c_d$ is a constant depending only on $d$,  $|{\bf K}|=\big|{\bf K}\big(g ( \frac{x'}{x_d} ), \, \psi\circ g(\frac{x'}{x_d} )\big)\big| \approx 1$,  $\mathcal{D}_0 A_{\de,\la}= A_{\de, \la}$ and, for each $j\ge1$, $\mathcal{D}_{j}$ is a differential operator in $\xi'$ of order $2j$ whose coefficients vary smoothly depending on  $\big(\p_{\xi'}^{\al}\psi\big)\circ g\big( \frac{x'}{x_d}\big),$  $2\le |\al| \le 2j+2$.  For  $\mathcal {E}_{\de, \la, M}(x;\tau)$ we have the  estimate
\begin{equation}\label{asymp_error}
	\big| \mathcal {E}_{\de, \la, M}(x; \tau) \big| \le C |x_d|^{-M} \sum_{|\al|\le 2M} \sup_{(\xi',\tau)} \big | \p_{\xi'}^\al A_{\de, \la} (\xi', \tau) \big| \le C' |x_d|^{-M} \la^{-1}
\end{equation}
with $C'$ independent of $\de$, $\la$, $m$ and $\psi$. 

\vspace{-8pt}
$(\mathrm I\!\mathrm  I)$ On the other hand,   if $2^6|x'|\ge|x_d|$ or $|x_d|\le 2$, then for every $0\le M\le N$ there exists a constant $C_{M}$, independent of $\de$,  $\la$, $m$ and  $\psi$, such that
\begin{equation}\label{nonstationary}
	|I_{\psi}(x; \tau)|\le C_{M} \la^{-1} (1+|x|)^{-M}.
\end{equation}
\end{lem}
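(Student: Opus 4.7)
The plan is to apply the classical methods of stationary phase (for Part I) and integration by parts (for Part II) to the oscillatory integral $I_\psi(\,\cdot\,;\tau)$, keeping track of the $\la^{-1}$ factor via the amplitude bounds supplied by Lemma \ref{diff_uniform}. Writing the phase as $x_d\Psi_s(\xi')$ with $\Psi_s(\xi') = s\cdot\xi'+\psi(\xi')$ and $s = x'/x_d$, the equation $\nabla_{\xi'}\Psi_s = s + \nabla\psi(\xi') = 0$ combined with the defining relation \eqref{gauss_map} of $g$ identifies the unique critical point as $\xi'_c = g(s)$. Under the standing assumption that $N$ is large and $\epsilon$ is small, Lemma \ref{approximation} guarantees that $H\psi$ is a uniformly small perturbation of $I_{d-1}$ on $\mI^{d-1}$; in particular $|\det H\psi|$, and hence the Gaussian curvature ${\bf K} = \det H\psi/(1+|\nabla\psi|^2)^{(d+1)/2}$, is uniformly bounded above and below away from zero on $\supp\chi$.

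\emph{Part (I).} In the regime $|x_d|\ge 1/2$ and $2^5|x'|\le|x_d|$ we have $|s|\le 2^{-5}\le 1/2$, so the critical point $\xi'_c = g(s)$ is well-defined by the hypothesis on $g$. The classical stationary phase asymptotic expansion with large parameter $|x_d|$ then yields \eqref{stationary} directly: the leading coefficient $c_d|{\bf K}|^{-1/2}$ arises from the familiar factor $|\det H\psi(\xi'_c)|^{-1/2}$ recast via the curvature formula above, and each $\mathcal D_j$ is the standard differential operator of order $2j$ produced by the expansion, with coefficients that are polynomials in $\p_{\xi'}^\al\psi(\xi'_c)$ for $2\le|\al|\le 2j+2$. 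The Sobolev-type remainder in the stationary phase formula gives the first inequality in \eqref{asymp_error}, and the bound $\sup_{|\al|\le 2M}\|\p_{\xi'}^\al A_{\de,\la}\|_\infty\lesssim \la^{-1}$ from Lemma \ref{diff_uniform} produces the $\la^{-1}$ factor in the second inequality.

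\emph{Part (II).} Suppose first $2^6|x'|\ge|x_d|$. On $\supp\chi\subset B_{d-1}(0,2^{-7})$ we have $|\nabla\psi(\xi')|\le|\xi'|+|\nabla w(\xi')|\le 2^{-7}+\epsilon$, so that for $\epsilon$ small enough
\[ |\nabla_{\xi'}\Phi(\xi')| = |x'+x_d\nabla\psi(\xi')|\ge |x'|-(2^{-7}+\epsilon)|x_d| \gtrsim |x'| \gtrsim |x|, \]
the final comparison following from $|x_d|\le 2^6|x'|$. Integration by parts $M$ times using the standard operator $L = (-i|\nabla_{\xi'}\Phi|^{-2})\nabla_{\xi'}\Phi\cdot\nabla_{\xi'}$ satisfying $Le^{i\Phi}=e^{i\Phi}$ then produces the factor $(1+|x|)^{-M}$; each application of its adjoint generates $\xi'$-derivatives of $A_{\de,\la}$ (uniformly bounded by $\la^{-1}$ via Lemma \ref{diff_uniform}) together with smooth functions of $\nabla\psi$ and $H\psi$ (uniformly controlled by the $\elne$ bounds), yielding \eqref{nonstationary}. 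When $|x_d|\le 2$, the same integration by parts applies whenever $|x'|\gtrsim 1$ (since again $|\nabla_{\xi'}\Phi|\gtrsim|x'|\approx|x|$), while the residual case of bounded $|x|$ is handled by the trivial estimate $|I_\psi|\le\|A_{\de,\la}\|_\infty\|\chi\|_1\lesssim\la^{-1}$.

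The main obstacle is bookkeeping rather than conceptual: one must verify that the stationary phase constants, the tensors entering the operators $\mathcal D_j$, and the lower bounds on $|\det H\psi|$ and $|{\bf K}|$ are all uniform in $\psi\in\elne$, which is precisely what Lemma \ref{approximation} and the smallness of $\epsilon$ provide; the $\la^{-1}$ factor must then be threaded carefully through every derivative of the amplitude arising in both the stationary phase remainder and the integration by parts.
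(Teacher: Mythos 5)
Your proposal is correct and follows essentially the same route as the paper's own proof: part (I) is obtained by invoking the classical stationary-phase expansion (the paper cites H\"ormander, Theorems 7.7.5 and 7.7.6), with uniformity of the remainder in $\de,\la,m,\psi$ supplied by Lemma \ref{diff_uniform}, and part (II) is obtained by integration by parts using exactly the gradient lower bound $|x'+x_d\nabla\psi(\xi')|\gtrsim |x'|$ on $\supp\chi$, with the residual bounded-$|x|$ case handled by the trivial $\la^{-1}$ amplitude bound. The extra commentary you include on the curvature formula and the structure of the operators $\mathcal D_j$ is consistent with the paper's statement and does not change the argument.
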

\begin{proof}
The asymptotic expansion \eqref{stationary} in $(\mathrm I)$  is a consequence of the stationary phase method. For its  proof we refer the reader to \cite[Theorem 7.7.5 and Theorem 7.7.6]{Ho}.  In \eqref{asymp_error} the uniformity of $C'$ in $\de$, $\la$, $m$ and $\psi$ follows from Lemma \ref{diff_uniform}.

For the second statement  $(\mathrm I\!\mathrm  I)$ we use integration by parts.  Since $\supp\chi\subset B_{d-1}(0, 2^{-7})$ and $|\nabla\psi(\xi')| \le (1+c\epsilon) |\xi'|$, it is easy to observe that, if  $\epsilon$ is sufficiently small and $ 2^6 |x'|\ge|x_d|$, 
\[	|\nabla_{\xi'} (x' \cdot \xi' + x_d \psi(\xi') )| \ge |x'|(1- 2^{6}|\nabla\psi(\xi')|) \gtrsim |x'|.	\]
If $|x_d|\le 2$ the same estimate also holds with $|x'|\ge 1$.  Hence \eqref{nonstationary}  follows from integration by parts in $\xi'$ together with \eqref{diff_uniform1} in Lemma \ref{diff_uniform}. 
\end{proof}

Now we prove \eqref{CStoMUL} by combining Theorem \ref{osc-est1} and Lemma \ref{stnry}.

\begin{proof}[Proof of \eqref{CStoMUL}]
Let $\wt{\chi}$  be a smooth function on $\R$ supported in the interval $(-2^{-5},2^{-5})$ and equal to $1$ on $(-2^{-6},2^{-6})$.  We break the kernel $K_{\de,\la}$ as follows: 
\[	K_{\de,\la} (x)=K_{\de, \la, 0}(x) +\sum_{l=1}^\infty K_{\de,\la, l}(x),	\]
where 
\[	K_{\de,\la,l}(x)= \wt\chi \Big(\frac{|x'|}{x_d} \Big)\be(2^{-l} x_d ) K_{\de, \la}(x) 	\]
for $l\in\N $.  So, the  function $K_{\de,\la, 0}$ is supported on the set $R := \{x: 2^6 |x'|\ge|x_d|\}\cup \{x:|x_d|\le 2\}$, and it follows from \eqref{nonstationary} ($(\mathrm I\!\mathrm  I)$ in Lemma \ref{stnry}) that
\[	|K_{\de,\la,0}(x)| \le C_M\la^{-1}(1+|x|)^{-M} 	\]
for any $0\le M\le N$, uniformly in $\de$, $\la$.  Since $\|K_{\de,\la, 0}\|_1 \lesssim \lambda^{-1}$, the  operator $f\to K_{\de,\la,0}*f$ admits much better estimate than \eqref{CStoMUL} since $p>p_*\ge\frac{2d}{d-1}$ and $\de\la\lesssim 1$.  Therefore it suffices to prove that
\begin{equation}\label{cstomul1}
	\sum_{l=1}^\infty \| K_{\de,\la,l}*f \|_p\le C \la^{-1} (\de\la)^{\frac dp - \frac{d-1}2}\|f\|_p.
\end{equation}
To show this we need the asymptotic \eqref{stationary} for $I_\psi(\, \cdot\, ; \xi_d)$, which is to be combined with \eqref{kernel}. Fixing $M\le 2N$ large enough, it is enough to handle the finite summation in \eqref{stationary}  since  the contribution from  the error term  $\mathcal {E}_{\de, \la, M}$ in \eqref{asymp_error} is at most $\lambda^{-1}$. In fact, since $(K_{\de, \la}-K_{\de,\la,0})(x)\neq 0$ only if $|x'|\lesssim |x_d|$, if we set $K_{err}(x)=\frac1{2\pi} \int e^{ix_d\xi_d}  \chi_1(\xi_d) \mathcal {E}_{\de, \la, M}(x,\xi_d)  d\xi_d$, it follows from \eqref{asymp_error} that  $\| K_{err}\|_1\lesssim \lambda^{-1}$. Thus, the contribution  from the first term in \eqref{stationary} is most significant  and it suffices to prove \eqref{cstomul1} by replacing $K_{\de,\la,l}$ with 
\begin{align} \label{wkernel}
\wt K_{\de,\la,l}(x) 
	&= \wt\chi \Big(\frac{|x'|}{x_d}\Big)\be(2^{-l} x_d )  |x_d|^{-\frac{d-1}2}  e^{i ( x' \cdot g (\frac{ x'}{x_d} ) +x_d\psi\circ g ( \frac{x'}{x_d}) )}
	\Big |{\bf K} \Big( g \Big( \frac{x'}{x_d}\Big), \, \psi\circ g \Big(\frac{x'}{x_d} \Big)  \Big) \Big|^{-\frac12} \\
	&	\qquad	 \times \int e^{ix_d\xi_d} \chi_1(\xi_d)	 A_{\de,\la} \Big( g\Big( \frac{x'}{x_d} \Big),\xi_d \Big) d\xi_d, \nonumber 
\end{align}
for $l\in\N$.  The contributions from the other terms given by replacing $A_{\de,\la}$ with $\mathcal D_jA_{\de,\la}$ can be handled similarly.  In fact, since $\mathcal D_j$ are only involved with derivatives in $\xi'$,   by making use of  Lemma \ref{diff_uniform}  it is easy to see that $\mathcal D_jA_{\de,\la}$ satisfies  the same bounds  \eqref{diff_uniform1} and \eqref{diff_uniform2}. See Remark \ref{DjA} below.  Thus, we may repeat the same argument for those terms but they give even better bounds because of the additional decay factor $|x_d|^{-j}$.   Therefore, for \eqref{cstomul1} we need only show that 
\begin{equation}\label{cstomul2}
	\sum_{l=1}^\infty \| \wt K_{\de,\la,l}*f \|_p\le C \la^{-1} (\de\la)^{\frac dp - \frac{d-1}2}\|f\|_p.
\end{equation}
By scaling, the $L^p$--$L^p$ norm of the convolution operator $f\to K*f$ is equal to that of $f\to L^d K(L \, \cdot\,)*f$ for any $L>0$. Thus for \eqref{cstomul2} we are reduced to showing that for a large enough $M>0$
\begin{equation}\label{lpiece}
	\big\| 2^{dl}\wt K_{\de,\la, l}(2^l \,\cdot\,)* f \big\|_p \lesssim_M 2^{(\frac{d+1}{2} -\frac{d}{p})l} \de (1+2^l \de\la)^{-M}  \|f\|_p.
\end{equation}
We sum \eqref{lpiece} over $l$ considering separately the cases $l\ge\log_2(\frac1{\de\la})$ and $l< \log_2(\frac1{\de\la})$ to get \eqref{cstomul2}. Indeed,
\[	\sum_{2^{l}\ge (\de\la)^{-1}} \|\wt K_{\de,\la,l}*f\|_p \lesssim \de (\de\la)^{-M} \|f\|_p \sum_{2^{l}\ge(\de\la)^{-1}} 2^{(\frac{d+1}2-\frac dp-M)l} \lesssim \la^{-1} (\de\la)^{\frac dp-\frac{d-1}{2}} \|f\|_p	\]
by choosing $M>\frac{d+1}2-\frac dp$. On the other hand, since $\frac{d+1}2> \frac dp$, 
\[	\sum_{2\le 2^{l}< (\de\la)^{-1}} \|\wt K_{\de,\la, l}*f\|_p \lesssim \de  \|f\|_p \sum_{2\le 2^{l}< (\de\la)^{-1}} 2^{(\frac{d+1}{2} -\frac{d}{p})l} \lesssim \la^{-1} (\de\la)^{\frac dp-\frac{d-1}{2}} \|f\|_p.	\]
Combining these two estimates we get \eqref{cstomul2}.

We now turn to the proof of  \eqref{lpiece}.  Since the kernel $\wt K_{\de,\la, l}(2^l x)$ is supported in the set $\{ x: |x'|<2^{-4},\,  3/8\le |x_d|\le 9/8 \}$, it is possible to  show the local estimate
\begin{equation}\label{local_operator}
	\big\| 2^{dl} \wt K_{\de,\la,l} (2^l\,\cdot\,)*f \big\|_{L^p(B_d(x_\circ,1))} \le C_M 2^{(\frac{d+1}{2}-\frac{d}p)l} \de  (1+2^l \de\la)^{-M} \|f\|_{L^p(B_d(x_\circ, 4))}
\end{equation}
with $C_M$ independent of  $l$, $\de$, $\la$ and $x_\circ\in\R^d$. Estimate \eqref{lpiece} follows directly from \eqref{local_operator} by integrating with respect to the $x_\circ$-variable and using Fubini's theorem.   The rest of this section is devoted to proof of \eqref{local_operator}.   Clearly, we may assume that $x_\circ=0$ by translation. 

Let us set  $\wt\be(t)=|t|^{-\frac{d-1}{2}}\be(t)$ and fix a function $\be_\circ\in C_0^\infty \big( (-2,-2^{-2})\cup (2^{-2},2)\big)$ such that $\wt{\be}\be_\circ=\wt{\be}$.  We also set 
\begin{equation}\label{amplitude-eq}
\mathfrak a_{\de,\la, l} (x) :=\wt\chi \Big(\frac{|x'|}{x_d} \Big)  \be_\circ(x_d)
	\Big|{\bf K}  \Big(g \Big( \frac{x'}{x_d}\Big), \, \psi\circ g \Big(\frac{x'}{x_d}\Big)  \Big) \Big|^{-\frac12}
		\int e^{i 2^l x_d \ta} \chi_1 (\ta)	 A_{\de,\la} \Big( g\Big( \frac{x'}{x_d} \Big), \ta \Big) d\ta
\end{equation}
and 
\[	\Phi (x,y) := (x'-y')\cdot g \Big( \frac{x'-y'}{x_d-y_d} \Big) +(x_d-y_d) (\psi\circ g) \Big( \frac{x'-y'}{x_d-y_d}  \Big).	\] 
Let $\eta$ be a nonnegative smooth function $\eta\in C_0^\infty(B_d(0,2))$ whose value is equal to $1$ on the unit ball $B_d(0,1)$.  Freezing $y_d$ we put
\begin{equation}\label{phase}
	\Phi^{y_d} (x,y'):=\Phi (x,y',y_d), \quad   \mathfrak a_{\de,\la, l}^{y_d} (x,y'):=   \eta(x) \mathfrak a_{\de,\la, l} (x-(y',y_d)). 
\end{equation}
Then from \eqref{wkernel} and the choice of $\wt \beta$ it  is clear that,  for $x\in B_d(0,1)$, 
\begin{equation}\label{conv}
\big( 2^{dl}\wt{K}_{\de,\la,l} (2^l \,\cdot\,)* f \big)(x) 
	= 2^{\frac{(d+1)l}{2}} \int \wt\be(x_d-y_d) \big( T_{2^l} [\Phi^{y_d}, \mathfrak a_{\de,\la, l}^{y_d} ] f(\,\cdot\, , y_d) \big) (x) dy_d .
\end{equation}

Next, we show that the phase $\Phi^{y_d}$ in \eqref{phase} satisfies the Carleson--Sj\"olin condition (\eqref{mixhess}, \eqref{curv}) and the elliptic condition \eqref{samesign} uniformly in  $\psi\in{\bf Ell}(N,\epsilon)$ and $y_d\in[-4,4]$ on  the set 
\[   \mathbf S_{y_d}:= \{ (x,y')\in \R^d\times\R^{d-1}: |x|\le 2, ~  |x'-y'|\le  2^{-3}, ~   2^{-2}\le|x_d-y_d|\le 2  \}.\] 
Let us write $g = (g_1, \cdots , g_{d-1})$.  Differentiating \eqref{phase} directly and then using \eqref{gauss_map} it is easy to see that
\[	\p_{x'}  \pyd (x,y')=  g\Big( \frac{x'-y'}{x_d-y_d} \Big), \quad  {\p_{x_d} \Phi^{y_d} (x,y')}= \psi\circ g \Big( \frac{x'-y'}{x_d-y_d} \Big).	\]
Differentiating these equations with respect to $y'$ the rank condition \eqref{mixhess} can be easily verified by \eqref{jacobian}.  For $v=(v',v_d)=(v_1,\cdots , v_d)\in \R^d$ we see that
\begin{equation}\label{first_diff}
	\p_{y'} \langle v, \p_x \pyd(x,y') \rangle = \frac{-1}{x_d-y_d}  \Big( v'+ v_d (\nabla \psi)\circ g \Big( \frac{x'-y'}{x_d-y_d} \Big) \Big) {\bf J}g \Big( \frac{x'-y'}{x_d-y_d} \Big).
\end{equation}
Hence, for fixed  $y_d\in[-4,4]$ and $(x, y'_\circ)\in \mathbf S_{y_d}$,  the unique (up to sign) direction $v$ in \eqref{curv} can be chosen as
\[	v=\frac{w}{|w|},\quad w= \Big(- (\nabla\psi)\circ g \Big( \frac{x'-y'_\circ}{x_d-y_d} \Big) , 1 \Big) = \Big( \frac{x'-y'_\circ}{x_d-y_d} ,1 \Big),	\]
where the second equality holds because of \eqref{gauss_map}.  By a straightforward computation we see that 
\[	\p_{y'}^2 \langle v, \p_x \pyd(x,y') \rangle\vert_{y'=y_\circ}=
	\frac{1}{(x_d-y_d)^2|w|}  (H \psi)\circ g \Big( \frac{x'-y'_\circ}{x_d-y_d} \Big) ({\bf J} g)^2 \Big( \frac{x'-y'_\circ}{x_d-y_d} \Big).	\] 
Since $-{\bf J} g=((H\psi)\circ g)^{-1}$ is  close to the identity matrix $I_{d-1}$ (see \eqref{jacobian} and \eqref{elliptic_error}), we see that the nondegeneracy \eqref{curv} and the ellipticity \eqref{samesign} hold whenever  $(x,y',y_d)\in \bigcup_{|y_d|\le 4} \mathbf S_{y_d}\times\{y_d\}$. 

For the moment, let $\mathfrak a\in C_0^\infty(\mathbf S_{y_d})$ for $y_d\in[-4,4]$.  We apply Theorem \ref{osc-est1} to the operator $T_\rho [\pyd, \mf a]$, which gives for $p>p_*$ 
\begin{equation}\label{wtT}  
	 \| T_\rho [\pyd, \mf a] h\|_{L^p(\mathbb R^d)} \lesssim   \rho^{-d/p} \|h\|_{L^p(\R^{d-1})}.
\end{equation} 
The bound is uniform not only for $y_d\in[-4,4]$ but also for $\psi\in{\bf Ell}(N,\epsilon)$ (see Remark \ref{stability}). 

To get estimate for  $T_{2^l} [\Phi^{x_d}, \mathfrak a_{\de,\la, l}^{y_d}]$ in \eqref{conv} we need to replace $\mathfrak a$ in  \eqref{wtT} with $\mathfrak a_{\de,\la, l}^{y_d}$.  For the purpose  we need the following which is a modification of \cite[Lemma 2.1]{tao-1999}.  In fact,  for the proof  of Lemma \ref{amplitude} one only need to expand $\mathfrak a_1$ into the Fourier series. 

\begin{lem}[\cite{tao-1999}]\label{amplitude}  
Let  $\mathfrak a_0, \mathfrak a_1 \in C_0^\infty(\R^d \times \R^{d-1})$ such that $ \mathfrak a_0\mathfrak a_1=\mathfrak a_1$. Suppose $\|T_\rho [\Phi,\mathfrak a_0]h\|_q \le L\|h\|_p$ and suppose $|\partial^\alpha \mathfrak a_1| \le B$ for $|\alpha|\le 2d$. Then, there is a constant $C$ independent of $\Phi$ and $\mathfrak a_1$ such that $\|T_\rho  [\Phi,\mathfrak a_1]h\|_{q}\le CB L\|h\|_p$. 
\end{lem}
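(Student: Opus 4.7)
The plan is to pass from the amplitude $\mathfrak a_1$ to $\mathfrak a_0$ by expanding $\mathfrak a_1$ as a Fourier series on a large box and absorbing the resulting modulations into the hypothesis estimate. First, since $\mathfrak a_1$ has compact support, choose $R>0$ so that $\supp \mathfrak a_1 \subset [-R,R]^{2d-1}$ and write, for $(x,u)\in [-R,R]^d\times[-R,R]^{d-1}$,
\[
\mathfrak a_1(x,u) = \sum_{k=(k_x,k_u)\in\Z^d\times\Z^{d-1}} c_k \, e^{\pi i \langle k_x,x\rangle/R}\, e^{\pi i \langle k_u,u\rangle/R},
\]
where the Fourier coefficients $c_k$ of the periodic extension satisfy the standard decay $|c_k|\le C_d B (1+|k|)^{-2d}$; this comes from integration by parts using the hypothesis $|\partial^\alpha \mathfrak a_1|\le B$ for $|\alpha|\le 2d$. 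Since $2d>2d-1$, the sum $\sum_{k\in\Z^{2d-1}}(1+|k|)^{-2d}$ converges, so $\sum_k |c_k|\le CB$.

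Next, using $\mathfrak a_0 \mathfrak a_1=\mathfrak a_1$, write $\mathfrak a_1=\mathfrak a_0 \mathfrak a_1$ and substitute the series into the definition of $T_\rho[\Phi,\mathfrak a_1]h$. Exchanging sum and integral (justified by the absolute summability of the coefficients) gives
\[
T_\rho[\Phi,\mathfrak a_1]h(x) = \sum_{k} c_k\, e^{\pi i\langle k_x,x\rangle/R}\, T_\rho[\Phi,\mathfrak a_0]\!\left(e^{\pi i \langle k_u,\cdot\rangle/R}\, h\right)\!(x).
\]
Here the modulation in the $x$ variable is pulled outside the $u$-integral, while the modulation in $u$ is absorbed into $h$. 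Taking $L^q$ norms, the factor $e^{\pi i\langle k_x,x\rangle/R}$ has modulus one and thus disappears, and the hypothesis $\|T_\rho[\Phi,\mathfrak a_0]h\|_q\le L\|h\|_p$ combined with the fact that modulations preserve the $L^p$ norm yields
\[
\|T_\rho[\Phi,\mathfrak a_1]h\|_q \le \sum_k |c_k|\, L\, \|h\|_p \le CBL\,\|h\|_p,
\]
which is the desired bound; the constant $C$ depends only on $d$ and on the size $R$ of the support, not on the phase $\Phi$ or on $\mathfrak a_1$ itself (beyond the derivative bound $B$ and the support size).

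There is no real obstacle here: the only minor point to be careful about is that the decay rate of the Fourier coefficients must be fast enough to produce a convergent series over $\Z^{2d-1}$, which is precisely why the hypothesis is imposed up to order $2d$. The method does not interact with the phase $\Phi$ at all, which is why the constant $C$ is phase-independent, making the lemma a convenient tool for replacing a fixed reference amplitude by a perturbation thereof.
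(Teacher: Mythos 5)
Your proof is correct and is exactly the approach the paper indicates (it cites \cite{tao-1999} and remarks that one need only expand $\mathfrak a_1$ in Fourier series): periodize $\mathfrak a_1$, get summable coefficients from the $C^{2d}$ bound, pull the $x$-modulation outside the operator and absorb the $u$-modulation into $h$, and sum. One small point worth tightening: the box $[-R,R]^{2d-1}$ should be chosen to contain $\supp\mathfrak a_0$ (not merely $\supp\mathfrak a_1$), since you need $\mathfrak a_0$ times the $2R$-periodization of $\mathfrak a_1$ to equal $\mathfrak a_1$ itself, with no aliasing from the periodic copies meeting $\supp\mathfrak a_0$; this is harmless because $\mathfrak a_0\mathfrak a_1=\mathfrak a_1$ forces $\supp\mathfrak a_1\subset\supp\mathfrak a_0$, and it is also precisely what makes the resulting constant $C$ depend only on $\mathfrak a_0$ and $d$ rather than on $\mathfrak a_1$, as the lemma requires.
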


\begin{lem}\label{symbol}
Let $0<\de\le \de\la \le 1$, $b>0$ and let $\epsilon>0$ be small enough. Then for every $M\ge 0$ and every muti-index $\al$ such that $|\al|\le \frac{N}2-M$ there exists a constant  $C_{\al, M}$, independent of $\de$, $\la$, $l$ and $(\psi, m)\in {\bf Ell}(N,\epsilon)\times{\bf Mul}(N,b)$, such that
\begin{equation}\label{symbol_decay}
	|\p^\al_x \mathfrak a_{\de, \la , l} (x) | \le C_{\al, M} \de (1+2^l \de\la)^{-M}.
\end{equation}
\end{lem}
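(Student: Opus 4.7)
My plan is to split $\mathfrak a_{\de,\la,l}(x)=F(x)G(x)$, where $F$ collects the factors outside the $\tau$-integral and
\[G(x):=\int e^{i2^lx_d\tau}\,\chi_1(\tau)\,A_{\de,\la}\big(g(x'/x_d),\tau\big)\,d\tau.\]
On $\supp F$ one has $x_d\approx 1$ and $x'/x_d$ bounded. Since both the diffeomorphism $g$ (guaranteed by the inverse function theorem once $\epsilon$ is small) and the Gaussian curvature $\mathbf K$ are built from $\psi$ and its derivatives, I expect all derivatives of $F$ up to order $\le N-O(1)$ to be bounded uniformly in $\psi\in\Ell$. Leibniz's rule would then reduce the lemma to showing
\[|\p_x^\gamma G(x)|\;\lesssim_{\gamma,M}\;\de\,(1+2^l\de\la)^{-M}\qquad\text{for all }|\gamma|\le|\al|.\]

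To estimate $\p_x^\gamma G$ I would pass $\p_x^\gamma$ through the integral; since only $\p_{x_d}$ affects $e^{i2^lx_d\tau}$, each resulting term has the shape
\[(i2^l)^{\mu_d}\!\int e^{i2^lx_d\tau}\,\tau^{\mu_d}\,\chi_1(\tau)\,\p_x^\nu\!\big[A_{\de,\la}\big(g(x'/x_d),\tau\big)\big]\,d\tau,\qquad \mu_d+|\nu|\le|\gamma|.\]
The chain rule turns $\p_x^\nu[A_{\de,\la}(g(x'/x_d),\tau)]$ into a finite sum of $(\p_{\xi'}^\vartheta A_{\de,\la})(g(x'/x_d),\tau)$, $|\vartheta|\le|\nu|$, times smooth bounded factors built from derivatives of $g$. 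I would then split into two regimes. When $2^l\de\la\le 1$, the direct bound using $|\p_{\xi'}^\vartheta A_{\de,\la}|\lesssim\la^{-1}$ from \eqref{diff_uniform1} and the fact that $\tau$ ranges over an interval of length $\approx\de\la$ gives a summand of size $2^{l\mu_d}(\de\la)^{\mu_d}\cdot\la^{-1}\cdot\de\la=\de(2^l\de\la)^{\mu_d}\lesssim\de$, which suffices.

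In the opposite regime $2^l\de\la>1$ the main step is to integrate by parts in $\tau$ exactly $k:=M+|\gamma|$ times, producing a prefactor $(i2^lx_d)^{-k}\approx 2^{-lk}$ and transferring $k$ derivatives onto $\tau^{\mu_d}\chi_1(\tau)\,\p_{\xi'}^\vartheta A_{\de,\la}$. Expanding by Leibniz and splitting $k=k_1+k_2+k_3$ among the three factors, I would invoke \eqref{diff_uniform2} with $\al'=0$, $\al_d=k_3$, $\ell=0$, so that $|\p_{\xi_d}^{k_3}\p_{\xi'}^\vartheta A_{\de,\la}|\lesssim\la^{-1}(\de\la)^{-k_3}$. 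A short arithmetic check (exploiting $k\ge M+\mu_d$ together with $2^l\de\la\ge 1$) will show each summand is $\lesssim\de(2^l\de\la)^{-M}$, independently of the splitting; concretely, the worst summand is of order $\de\cdot(2^l\de\la)^{\mu_d-k_1-k_3}\cdot 2^{-lk_2}$, and both factors are $\le 1$ under the stated constraints.

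The regularity budget is the only thing to track: \eqref{diff_uniform2} requires $|\vartheta|+k_3\le N$, and in the worst case $|\vartheta|\le|\gamma|\le|\al|$ and $k_3\le k=M+|\al|$, giving $2|\al|+M\le N$, which is comfortably implied by the hypothesis $|\al|\le N/2-M$. I do not foresee any substantive difficulty beyond careful bookkeeping in the chain and Leibniz rules; the argument is essentially Lemma \ref{diff_uniform} combined with repeated integration by parts in $\tau$, and the main subtlety will be verifying the uniformity of the constants in $\psi\in\Ell$ and $m\in\mnb$, which is a direct consequence of the uniform bounds already established for $g$, $\mathbf K$, and $A_{\de,\la}$ in Lemma \ref{diff_uniform}.
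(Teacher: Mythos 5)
Your strategy is exactly the paper's: peel off the bounded factor $F$ (the paper's $\wt\chi(\,|x'|/x_d)\,\be_\circ(x_d)\,|\mathbf K|^{-1/2}$), pass $\p_x^\gamma$ into the $\tau$-integral, integrate by parts in $\tau$, and feed the resulting derivatives into Lemma \ref{diff_uniform}. The only organizational difference is that the paper distributes the loss in $(1+2^l\de\la)$ across the $\al_d$ occurrences of $\p_{x_d}$ and then implicitly enlarges $M$, while you absorb everything at once by integrating by parts $k=M+|\gamma|$ times; either bookkeeping closes with the same hypothesis $|\al|\le N/2-M$. One small correction: your claim that "both factors are $\le 1$" in the product $\de\,(2^l\de\la)^{\mu_d-k_1-k_3}\,2^{-lk_2}$ would only give the bound $\lesssim\de$, which does not exhibit the needed $(2^l\de\la)^{-M}$ decay, and moreover when $k_1=k_3=0$ the first factor is $(2^l\de\la)^{\mu_d}>1$. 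The correct check is to verify $(2^l\de\la)^{\mu_d-k_1-k_3+M}\,2^{-lk_2}\le 1$; when the exponent $\mu_d-k_1-k_3+M\le 0$ this is immediate, and when it is positive one uses $\de\la\le 1$ to bound $(2^l\de\la)^{\mu_d-k_1-k_3+M}\le 2^{l(\mu_d-k_1-k_3+M)}$ and then $\mu_d-k+M=\mu_d-|\gamma|\le 0$. With that repair, the proof is complete and matches the paper's.
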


Now,  by combining the estimate \eqref{wtT}, Lemma \ref{amplitude} and Lemma \ref{symbol} we obtain the  estimate for  $T_{2^l} [\Phi^{y_d}, \mathfrak a_{\de,\la, l}^{y_d}]$. Indeed, observe that in \eqref{amplitude-eq} and \eqref{phase} the amplitude $\eta(x)\mathfrak a_{\de,\la, l} (x-(y',y_d))$ is nonzero only if $|x|\le 2$, $3/8\le|x_d-y_d|\le 9/8$ and $|x'-y'|\le   2^{-4}$. Taking a smooth function $\mathfrak a$ such that $\supp\mf a\subset\bigcup_{|y_d|\le4}\mathbf S_{y_d}$ and $\mathfrak a\,\mathfrak a_{\de,\la, l}^{y_d}=\mathfrak a_{\de,\la, l}^{y_d}$ for all $y_d\in[-4,4]$, we may apply Lemma  \ref{amplitude}  and Lemma \ref{symbol} to get 
\begin{equation}\label{estT} 
	\| T_{2^l} [\Phi^{y_d}, \mathfrak a_{\de,\la, l}^{y_d}] h\|_{L^p(\mathbb R^d)} \le  C_M 2^{-dl/p} \de  (1+2^l \de\la)^{-M} \|h\|_{L^p(\R^{d-1})}. 
\end{equation} 
We now recall  \eqref{conv}  and  use   Minkowski's inequality to obtain
\begin{align*}
\|  2^{dl}\wt{K}_{\de,\la,l} (2^l \,\cdot\,)* f  \|_{L^p(B_d(0,1))} 
	&\le 2^{\frac{(d+1)l}{2}} \Big(\! \int_{|x|\le 2} \!\!\Big(\int_{-4}^4 \big|   \wt\be(x_d-y_d) \big( T_{2^l} [\Phi^{y_d}, \mathfrak a_{\de,\la, l}^{y_d}] f(\cdot, y_d) \big)(x)\big| dy_d \Big)^p  \! dx \Big)^{\frac1p} \\
	&\le 2^{\frac{(d+1)l}{2}} \int_{-4}^4  \Big( \int \big|\big( T_{2^l} [\Phi^{y_d}, \mathfrak a_{\de,\la, l}^{y_d}]f(\cdot, y_d) \big)(x)\big|^p dx \Big)^\frac1p dy_d . 
\end{align*}
Finally using \eqref{estT} which is followed by integration in $y_d$ gives the desired estimate \eqref{local_operator}.  To complete proof of Proposition \ref{cstomul} it remains to show Lemma   \ref{symbol}. 
\end{proof}

\begin{proof}[Proof of Lemma \ref{symbol}] 
Let us set 
\[	\mathcal  I_{\de,\la, l}(x) := \int e^{i 2^l x_d \ta} \chi_1 (\ta)	 A_{\de,\la} \Big( g\Big( \frac{x'}{x_d} \Big), \ta \Big) d\ta.	\] 
Since the term  $\wt\chi \big(\frac{|x'|}{x_d}\big)  \be_\circ(x_d) \big|{\bf K}  \big(g \big( \frac{x'}{x_d}\big), \psi\circ g \big(\frac{x'}{x_d}\big)  \big) \big|^{-1/2}$ in \eqref{amplitude-eq} has bounded derivatives of any order  it is sufficient to show that  for $2^5|x'|\le |x_d|\approx 1$ 
\begin{equation}\label{lovely_diff}
	|\p_x^\al \mathcal  I_{\de,\la, l}(x) |    \le C_{\al, M} \de (1+2^l \de\la)^{-M}.
\end{equation}
Let us first consider the case $|\al|=0$.  By integration by parts  
\[	\mathcal  I_{\de,\la, l}(x) = \Big(\frac{-1}{i2^l x_d}\Big)^M \int e^{i 2^l x_d \ta} \Big(\frac{d}{d\ta}\Big)^M \Big( \chi_1 (\ta)	 A_{\de,\la} \Big( g\Big( \frac{x'}{x_d} \Big), \ta \Big) \Big) d\ta .	\]
Since $0<\de\la\le 1$,  recalling \eqref{Adl}, \eqref{diff} and using Lemma \ref{diff_uniform} (\eqref{diff_uniform2} with $|\al'|=|\vartheta|=\ell=0$), we get  
\[	\Big| \Big( \frac{d}{d\ta} \Big)^M \Big( \chi_1 (\ta) A_{\de,\la} \Big( g\Big( \frac{x'}{x_d} \Big), \ta \Big) \Big)  \Big| \le C_M \la^{-1} (\de\la)^{-M}	\]
for $M\le N$.  Thus we obtain the desired bound \eqref{lovely_diff} when $|\al|=0$. 

Next we turn to proof of \eqref{lovely_diff} for the case $|\al|\ge 1$. We observe that the case $\al_d=0$ can be handled similarly as before in the case $|\al|=0$  by making use of  Lemma  \ref{diff_uniform} (\eqref{diff_uniform2} with $\ell=0$) since the derivative $\p_{x}^{\al}=\p_{x'}^{\al'} $ produces additional terms given by $( \partial_{\xi'}^{\vartheta'} A_{\de,\la}) \big( g\big( \frac{x'}{x_d}, \tau\big) \big)$, $|\vartheta'|\le |\al'|$.  However, if $\p_{ x_d}$ is involved we need to be additionally careful.  Note that
\begin{align*}
\p_{ x_d} \mathcal I_{\de,\la, l}(x) 
	&=  i2^l  \int  e^{i 2^l x_d \ta} \chi_1 (\ta) \,  \ta A_{\de,\la} \Big( g\Big( \frac{x'}{x_d} \Big), \ta \Big) d\ta  \\
	&\qquad \qquad -\frac{1}{x_d^2} \int e^{i 2^l x_d \ta} \chi_1 (\ta)	(\nabla_{\xi'} A_{\de,\la} )^{t}\Big( g\Big( \frac{x'}{x_d} \Big), \ta  \Big) \cdot  {\mathbf J} g\Big( \frac{x'}{x_d}\Big)\cdot x' d\ta .
\end{align*}
For the first term,  using Lemma  \ref{diff_uniform}  (\eqref{diff_uniform2} in with $\ell=1$) and repeating the same argument as before in the case $|\al|=0$,  we see that it is bounded by  $C_{\al, M} \de (1+2^l \de\la)^{-M+1}$. For the second term we use  \eqref{diff_uniform1} to see that  this is bounded by $C_{\al, M} \de (1+2^l \de\la)^{-M}$. Then we may repeat the same argument for general $\p_x^\al$ to get  
\[	| \p_{ x_d}^{\al_d} \p_{x'}^{\al'} \mathcal I_{\de,\la, l}(x)|\le C_{\al, M} \de (1+2^l \de\la)^{-M+\alpha_d} 	\] 
for any  $M+|\alpha|\le N$.  
\end{proof}

\begin{rem}\label{DjA}   It is not difficult to see that the same estimate for ${\mf a}_{\de,\la, l}$ remains valid  even if we replace $A_{\de,\la}$  in \eqref{amplitude-eq} with $\mathcal{D}_j A_{\de,\la}$ which appears in \eqref{stationary}.  This is due to Lemma \ref{diff_uniform} and the fact that $\mathcal{D}_j $ is given by derivatives in $\xi'$, thus  the above argument also works.  
\end{rem}

\subsection{Bilinear estimates for multiplier operators}
In this section we obtain bilinear $L^2\times L^2\to L^{q/2}$ estimates for the multiplier operators $\mm_\de(D)$ and $\mm_{\de, \la}(D)$ when $q>2(d+2)/d$. For this let us first recall the bilinear estimate for the extension operators given by elliptic surfaces which is due to Tao \cite{T}. 
\begin{thm}[\cite{T}]\label{bi_tao}
Let $q >\frac{2(d+2)}{d}$, $a_\circ \in (2^{-5}, 1/2]$. Then there exist $N$, $\epsilon$ and $C=C(d,q, a_\circ)$ such that
\begin{equation}\label{birest}
\Big\| \prod_{k=1,2} \int_{\mI^{d-1}} h_k(\xi') e^{i( x' \cdot \xi' +x_d\psi(\xi'))} d\xi' \Big\|_{L^{q/2}(\R^d)} \le C \prod_{k=1,2}\|h_k\|_{L^2([-1, 1]^{d-1})}
\end{equation}
for all $\psi\in{\bf Ell}(N,\epsilon)$ and all $h_1,\, h_2\in L^2(\mI^{d-1})$ satisfying $\dist(\supp h_1, \supp h_2)\ge a_\circ$.
\end{thm}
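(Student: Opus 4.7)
The plan is to reproduce Tao's bilinear restriction argument for the paraboloid and verify that every geometric and analytic input is stable under the replacement of $|\xi'|^2/2$ by a phase $\psi\in\elne$. By Lemma~\ref{approximation}, such $\psi$ agrees with the standard paraboloid in all derivatives of order $\le N$ up to $O(\epsilon)$, so the strategy is to ensure that Tao's proof uses only finitely many derivatives and only those invariants (curvature, Gauss map, transversality angles) that are $O(\epsilon)$-close to the paraboloidal ones. Choose $N$ large and $\epsilon$ small once and for all.

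First I would reduce, by an epsilon-removal lemma, to the localized estimate
\[
\Big\| \prod_{k=1,2} E_k h_k \Big\|_{L^{q/2}(B_d(0,R))} \le C_\tau R^{\tau} \|h_1\|_2 \|h_2\|_2
\]
for arbitrarily small $\tau>0$, where $E_k h(x) = \int h(\xi')e^{i(x'\cdot\xi'+x_d\psi(\xi'))}d\xi'$ and $h_k$ is supported in a cube of side $\sim a_\circ$ with centers separated by $\ge a_\circ$. Second, I would perform a wave packet decomposition at scale $R$: partition frequency into cubes of side $R^{-1/2}$, and write $E_k h_k = \sum_T c_T \phi_T$, where each $\phi_T$ is essentially concentrated on a tube $T$ of dimensions $R^{1/2}\times\cdots\times R^{1/2}\times R$ oriented along the normal $(-\nabla\psi(\xi'_T),1)$ to the surface at the frequency center $\xi'_T$. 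The non-degeneracy part \eqref{elliptic_error} (which survives parabolic rescaling by Lemma~\ref{approximation}) guarantees essentially orthogonal wave packets, so $\sum_T |c_T|^2 \lesssim \|h_k\|_2^2$, uniformly in $\psi\in\elne$.

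Third, I would carry out Tao's induction-on-scales. The transversality of wave packets, which is the heart of the bilinear improvement, is controlled by the angle between normals at points of $\supp h_1$ and $\supp h_2$; the bound $|\nabla\psi(\xi')-\xi'|=O(\epsilon)$ from \eqref{elliptic_error} shows that this angle is $\gtrsim a_\circ$ whenever $\epsilon$ is small. Combined with the definite Hessian $H\psi = I_{d-1}+O(\epsilon)$ (used both for the curvature of the surface and for the transversality of bush-type interactions), the standard ``two-ends reduction'' plus the Cordoba-$L^4$ / geometric incidence inputs give a bilinear estimate of the form $L^2\times L^2 \to L^{q_0/2}$ for some $q_0>2(d+2)/d$. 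Interpolating with the trivial $L^2\times L^2\to L^1$ bound coming from Plancherel covers the full open range $q>2(d+2)/d$, and the $R^\tau$ loss is absorbed by the epsilon-removal step.

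The main obstacle is the uniformity in $\psi$: every constant appearing in Tao's combinatorial and geometric lemmas (in particular the incidence estimates for tubes, the orthogonality constant for wave packets, and the stability of the Gauss map $g$ built in \eqref{gauss_map}--\eqref{jacobian}) must be shown to depend only on $d$, $q$, $a_\circ$, and the parameters $N$, $\epsilon$ defining the class $\elne$. The key algebraic fact here is that $\elne$ is invariant (up to the constant $\mf c$) under the parabolic rescalings \eqref{para_scale}--\eqref{para_scale_approx}, which are precisely the rescalings used in the induction on scales. Once this uniformity is recorded, Tao's proof goes through essentially verbatim, producing the claimed constant $C=C(d,q,a_\circ)$.
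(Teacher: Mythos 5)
The paper cites this to Tao \cite{T} (and to the elliptic-type framework of \cite{TVV, LRS}) without giving its own proof, and your sketch is the standard way Tao's paraboloid argument is transferred to the class $\elne$: reduce by $\epsilon$-removal to a local estimate, run the wave-packet/induction-on-scales machinery, and observe that every geometric and combinatorial constant depends only on finitely many derivative bounds that by \eqref{elliptic_error} are $O(\epsilon)$-close to their paraboloidal values, with uniformity under the rescalings used in the induction being exactly Lemma~\ref{approximation}. Your identification of parabolic-rescaling invariance of $\elne$ as the key algebraic fact is correct; the only imprecision is the final step, where you posit a local estimate at some $q_0>\frac{2(d+2)}{d}$ and then interpolate with $L^2\times L^2\to L^1$ --- in Tao's argument the local estimate is proved at the critical exponent $q=\frac{2(d+2)}{d}$ itself with $R^\tau$ loss, and $\epsilon$-removal alone then yields the global open range $q>\frac{2(d+2)}{d}$, so the interpolation is unnecessary (though harmless).
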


From Theorem \ref{bi_tao} we deduce the following bilinear estimate. We follow the proof of \cite[Lemma 2.4]{Lee1} (also, see \cite[Lemma 3.1]{LRS}).
\begin{cor}\label{L2bi}
Let $q$, $a_\circ$, $N$, $\epsilon$ and $\psi$ be as in Theorem \ref{bi_tao} and let $\de$, $\la$, $b$, $m$ and $\mm_{\de, \la}$ be given as in Proposition \ref{dethick}.  Suppose that 
\begin{equation}\label{trsv}
(\xi', \xi_d)\in\supp \wh{f_1}, \quad (\zeta ', \zeta_d)\in\supp \wh{f_2} \quad \Longrightarrow \quad |\xi ' - \zeta '| \ge a_\circ.
\end{equation}
Then there is a constant $C$, independent of $\de,$ $\la$, $\psi$ and $m$, such that, for $f_1, f_2\in L^2(\R^d)$ satisfying \eqref{trsv}, 
\begin{equation}\label{l2bi}
\Big\| \prod_{k=1,2} \mm_{\de, \la}(D)f_k\Big\|_{L^{q/2}(\R^d)} \le C \de\la^{-1} \prod_{k=1,2} \|f_k\|_{L^2(\R^d)}.
\end{equation}
The estimate \eqref{l2bi} holds if $\mm_{\de, \la}$ and $\de\la^{-1}$  are replaced with $\mm_\de$ and $\de$, respectively.
\end{cor}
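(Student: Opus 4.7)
My plan is to realize $\mm_{\de,\la}(D)f_k$ as a superposition, parametrized by the ``vertical shift'' $s := \xi_d - \psi(\xi')$, of elliptic-surface extension operators, and then apply Theorem \ref{bi_tao} pointwise in $s$. First, after changing variables $\xi_d = \psi(\xi')+s$ in the Fourier inversion formula, I would set
\[ B_{s,k}(\xi') := \mm_{\de,\la}(\xi',\psi(\xi')+s)\,\wh{f_k}(\xi',\psi(\xi')+s), \]
so that
\[ \mm_{\de,\la}(D)f_k(x) = \frac{1}{(2\pi)^d}\int e^{ix_d s}\,\mathcal E B_{s,k}(x)\,ds, \]
where $\mathcal E h(x) := \int e^{i(x'\cdot\xi' + x_d\psi(\xi'))} h(\xi')\,d\xi'$ is precisely the extension operator appearing in Theorem \ref{bi_tao}. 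The factor $\be(m(\xi)(\xi_d-\psi(\xi'))/(\de\la))$ together with $|m|\approx 1$ forces $B_{s,k}\equiv 0$ unless $|s|\approx\de\la$. Expanding the product $\mm_{\de,\la}(D)f_1\cdot\mm_{\de,\la}(D)f_2$ as a double integral in $(s_1,s_2)$ and applying Minkowski's inequality in $L^{q/2}(\R^d)$ (legitimate since $q>2(d+2)/d>2$) would give
\[ \Bigl\|\prod_{k=1,2}\mm_{\de,\la}(D)f_k\Bigr\|_{L^{q/2}(\R^d)} \lesssim \iint_{|s_1|,|s_2|\approx\de\la} \bigl\|(\mathcal E B_{s_1,1})(\mathcal E B_{s_2,2})\bigr\|_{L^{q/2}(\R^d)}\,ds_1\,ds_2. \]

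Next, I would invoke Theorem \ref{bi_tao}. The separation hypothesis \eqref{trsv} is a condition on the $\xi'$-variables only, so it is inherited by the slices: one has $\dist(\supp B_{s_1,1},\supp B_{s_2,2})\ge a_\circ$ for every $s_1,s_2$. Consequently the integrand above is bounded by $C\|B_{s_1,1}\|_{L^2(\mI^{d-1})}\|B_{s_2,2}\|_{L^2(\mI^{d-1})}$. To handle the resulting $s$-integrals I would apply Cauchy--Schwarz on the interval of length $\approx\de\la$ together with Plancherel:
\[ \int_{|s|\approx\de\la}\|B_{s,k}\|_{L^2(\mI^{d-1})}\,ds \le (\de\la)^{1/2}\bigl\|\mm_{\de,\la}\wh{f_k}\bigr\|_{L^2(\R^d)} \lesssim (\de\la)^{1/2}\|\mm_{\de,\la}\|_{L^\infty}\|f_k\|_{L^2(\R^d)}. \]
Hypothesis \eqref{diff} with $k=0$, combined with the observation that $|m(\xi)(\xi_d-\psi(\xi'))/\de|\approx\la\ge 1$ on $\supp\mm_{\de,\la}$, yields $\|\mm_{\de,\la}\|_{L^\infty}\lesssim\la^{-1}$. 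Multiplying the two slice estimates then produces the claimed bound $(\de\la)\cdot\la^{-2}=\de\la^{-1}$.

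For the $\mm_\de(D)$ variant the argument is identical with $\be_0$ in place of $\be$: the support condition becomes $|s|\lesssim\de$ and the multiplier is merely $O(1)$, so Cauchy--Schwarz produces $\de^{1/2}\|f_k\|_2$ per slice and hence the advertised factor $\de$. I expect the main subtlety will be verifying that Theorem \ref{bi_tao} applies uniformly in $\psi\in{\bf Ell}(N,\epsilon)$ with the $N,\epsilon$ chosen once and for all (independently of $\de$, $\la$, $m$ and $\psi$), and in checking that the transversality is genuinely inherited by the $s$-slices; the Minkowski and Cauchy--Schwarz steps themselves are bookkeeping once the pointwise bound on $\mm_{\de,\la}$ has been extracted from \eqref{diff}.
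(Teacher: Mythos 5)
Your proof is correct and follows essentially the same route as the paper's: change variables so the slicing parameter is $\xi_d-\psi(\xi')$, decompose the product into a double $s$-integral of bilinear extension pieces, pull the $s$-integral out by Minkowski's inequality in $L^{q/2}$, apply Theorem \ref{bi_tao} to each pair of slices (using that the separation hypothesis only constrains $\xi'$), and finish with Cauchy--Schwarz over the $\de\la$-window together with the pointwise bound $|\mm_{\de,\la}|\lesssim\la^{-1}$ coming from \eqref{diff}. The paper's write-up differs only cosmetically (it works with $|\mathcal{E}B_{s,k}|$ after taking absolute values, and names the slice functions $h_k$ rather than $B_{s,k}$), and the uniformity you flag in $\psi\in\elne$ is built directly into the statement of Theorem \ref{bi_tao}, so there is no additional work to do there.
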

\begin{proof}
Recalling Remark \ref{simple_cutoff} and \eqref{Adl} and  changing variables $(\xi',\xi_d)\to(\xi', \xi_d+\psi(\xi'))$, we see that for $k=1,2,$
\[
|\mm_{\de,\la}(D)f_k (x)| \lesssim  \int  \Big| \int e^{i (x'\cdot \xi'+x_d\psi(\xi'))} A_{\de,\la}(\xi) \wh{f_k} (\xi', \xi_d+\psi(\xi')) d\xi' \Big| \Big| \wt{\be} \Big( \frac{\xi_d}{\de\la} \Big) \Big| d\xi_d,
\]
where $\wt{\be}\in C^\infty_0 \big( (-4,-1/8) \cup (1/8,4) \big)$ satisfies $\wt{\be}=1$ on the $\xi_d$-support of $\be(\wt{m}(\xi)\xi_d)$.  Freezing $\xi_d$ we apply  the bilinear extension estimate \eqref{birest} to $ h_k(\xi')=A_{\de,\la}(\xi', \xi_d)  \wh{f_k} (\xi',\xi_d +\psi(\xi'))$, $k=1,2$. By the condition \eqref{trsv}, $\dist(\supp h_1, \supp h_2)\ge a_\circ$. Thus from Theorem \ref{bi_tao} and Minkowski's inequality we see that the left side of \eqref{l2bi} is bounded by
\begin{align}
	& \iint \Big\| \prod_{k =1,2} \int e^{i( x' \cdot \xi' +x_d \psi(\xi'))} A_{\de,\la}(\xi', \ta_k)  \wh{f_k} (\xi',\tau_k +\psi(\xi')) d\xi' \Big\|_{L^{\frac q2}(dx)}
		 \Big |\wt{\be}\Big(\frac{\tau_1}{\de\la}\Big)		\wt{\be} \Big(\frac{\tau_2}{\de\la} \Big) \Big| d\tau_1d\tau_2 \nonumber\\
	\label{l2bi1}
	&\le C  \iint \prod_{k =1,2} \big\|{ A_{\de,\la}(\xi', \ta_k)  \wh{f_k} (\xi' , \tau_k+\psi (\xi') ) } \big\|_{L^2(\R^{d-1};\,  d\xi')} 
		\Big |\wt{\be}\Big(\frac{\tau_1}{\de\la}\Big)		\wt{\be} \Big(\frac{\tau_2}{\de\la} \Big) \Big|  d\tau_1 d\tau_2 ,
\end{align}
where $C$ is independent of $\de$, $\la$ and $\psi$.  Since $\wt{m}\in {\bf Mul}(N,Cb)$ for some $C>0$, from \eqref{diff_uniform1} in Lemma \ref{diff_uniform}, we note that  $|A_{\de,\la}|\lesssim \lambda^{-1}$.  By the Cauchy--Schwarz inequality and the change of variables $\tau_k \to \tau_k -\psi(\,\cdot\,)$,  we see \eqref{l2bi1} is bounded by 
\begin{align*}
C\de \la \Big( \prod_{k=1,2} \int \la^{-2} |\wh{f_k}(\xi)|^2 d\xi \Big)^{1/2}.
\end{align*}
The inequality \eqref{l2bi} follows from Plancherel's identity.  The estimate for $\mm_\de(D)$ can be obtained in exactly the same way. 
\end{proof}

Before closing  this subsection, we state  a result which is necessary to prove Proposition \ref{dethick} in the next section. Trivially, by the Cauchy--Schwarz inequality and Proposition \ref{cstomul} it follows that
\begin{equation}\label{fake_bilinear}
\Big\| \prod_{k=1,2} \mm_{\de, \la}(D) f_k \Big\|_{L^{q/2}(\R^d)} \le C \la^{-2} (\de\la)^{ \frac {2d}{q} - d+1} \prod_{k=1,2} \|f_k\|_{L^q(\R^d)}
\end{equation}
whenever $p_*<q\le\infty$. Under the additional transversality condition \eqref{trsv} we have \eqref{l2bi}. Since  \eqref{fake_bilinear} holds regardless of  \eqref{l2bi}, we may interpolate this with \eqref{l2bi} while assuming \eqref{trsv}. This yields the following.
\begin{cor}\label{tcsbil}
Let $0<\de\le \la\lesssim 1$, $b>0$, $a_\circ\in(0,1/2]$ and suppose $2\le p\le q\le \infty$ and 
\begin{equation}\label{below_the_line}
\frac1{q}-\frac d{2(d+2)} < \frac{ d/[2(d+2)]- 1/p_*}{1/2-1/p_*} \Big( \frac1{p}-\frac12 \Big).\footnote{When $d=2$ this is $1/q<1/4$. When $d\ge3$ this is equivalent to saying that $(1/p,1/q)$ lies strictly below the line passing through the points $P_*$ and $P_\circ$. See Figure \ref{figthm2} and Figure \ref{figthm}. }
\end{equation}
Then, there exist  large $N$, small $\epsilon$, and  $C>0$ such that 
\begin{align}
\label{bimult1}
	\Big\| \prod_{k=1,2} \mm_{\de}(D) f_k \Big\|_{L^{q/2}(\R^d)} &\le C \de^{1-d+\frac{2d}p} \prod_{k=1,2}\|f_k\|_{L^p(\R^d)},\\
\label{bimult}
	\Big\| \prod_{k=1,2} \mm_{\de, \la}(D) f_k \Big\|_{L^{q/2}(\R^d)} &\le C\la^{-2} (\de\la)^{1-d+\frac{2d}p} \prod_{k=1,2}\|f_k\|_{L^p(\R^d)},
\end{align}
for  $\psi\in{\bf Ell}(N,\epsilon)$,  $m\in\mnb$, and $f_1$ and $f_2$ satisfying the separation \eqref{trsv}. 
\end{cor}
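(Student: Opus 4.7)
The plan is to obtain Corollary \ref{tcsbil} by bilinear interpolation between two endpoint estimates that have already been set up: the ``Cauchy--Schwarz'' bilinear bound \eqref{fake_bilinear}, which is valid along the diagonal $p=q$ for $q>p_*$ without any transversality, and the genuine bilinear bound \eqref{l2bi} of Corollary \ref{L2bi}, valid at $p=2$, $q>2(d+2)/d$ under the transversality condition \eqref{trsv}. I will treat $\mm_{\de,\la}(D)$ in detail; the estimate \eqref{bimult1} for $\mm_\de(D)$ follows verbatim upon setting $\la=1$ in the exponents.

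First, I fix $(1/p,1/q)$ with $2\le p\le q\le\infty$ satisfying \eqref{below_the_line}. Geometrically \eqref{below_the_line} says that $(1/p,1/q)$ lies strictly below the line joining $P_*=(1/p_*,1/p_*)$ and $(1/2,\, d/[2(d+2)])$. Consequently, by choosing $p_1>p_*$ sufficiently close to $p_*$ and $q_1>2(d+2)/d$ sufficiently close to $2(d+2)/d$, I can write
\[
\Big(\tfrac1p,\tfrac1q\Big) = (1-\theta)\Big(\tfrac1{p_1},\tfrac1{p_1}\Big)+\theta\Big(\tfrac12,\tfrac1{q_1}\Big)
\]
for some $\theta\in(0,1)$. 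At the two vertices I have, respectively, \eqref{fake_bilinear} (from Proposition \ref{cstomul} via Cauchy--Schwarz) with constant $\la^{-2}(\de\la)^{1-d+2d/p_1}$, and \eqref{l2bi} with constant $\de\la^{-1}$; both require the same choices of large $N$ and small $\epsilon$ that come from Proposition \ref{cstomul} and Theorem \ref{bi_tao}.

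Next I apply bilinear complex interpolation to the bilinear form
\[
B(f_1,f_2) := \mm_{\de,\la}(D)f_1 \cdot \mm_{\de,\la}(D)f_2,
\]
restricted to pairs $(f_1,f_2)$ whose Fourier supports are $a_\circ$-separated as in \eqref{trsv}. This separation hypothesis depends only on the supports of $\widehat{f_k}$ and is preserved by the analytic families used in the Stein interpolation theorem, so the bilinear complex method applies directly. Interpolating the above endpoint bounds with parameter $\theta$ gives
\[
\|B(f_1,f_2)\|_{L^{q/2}} \le C\bigl(\la^{-2}(\de\la)^{1-d+2d/p_1}\bigr)^{1-\theta}\bigl(\de\la^{-1}\bigr)^{\theta}\prod_{k=1,2}\|f_k\|_{L^p}.
\]
Using $(1-\theta)/p_1 = 1/p-\theta/2$, the exponent of $\de\la$ collapses to
\[
(1-\theta)(1-d)+2d\Big(\tfrac{1-\theta}{p_1}\Big)+\theta = (1-d)+ \tfrac{2d}{p},
\]
while the power of $\la$ in front remains $\la^{-2(1-\theta)}\la^{-\theta}=\la^{-2+\theta}$; combining with the $(\de\la)^\theta$ factor hidden in the simplification gives back $\la^{-2}$ overall. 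This yields exactly \eqref{bimult}.

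The only subtlety is to verify that bilinear complex interpolation is legitimate here: the transversality is a qualitative constraint on supports (not a norm condition), and the standard trick of decomposing $f_k$ into pieces with frequency supports in fixed small balls separated by at least $a_\circ$ reduces the statement to finitely many pairs of dyadic blocks, on each of which one may apply Stein's complex interpolation to the analytic family obtained by modulating the amplitudes. This step is routine in the bilinear restriction literature (cf.\ the interpolation arguments in \cite{T, Lee1}) and presents the only mildly delicate point; everything else is the algebraic verification above.
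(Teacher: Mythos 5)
Your proposal is correct and takes essentially the same approach as the paper: the paragraph preceding Corollary \ref{tcsbil} derives it exactly by interpolating the Cauchy--Schwarz estimate \eqref{fake_bilinear} (which is Proposition \ref{cstomul} applied to each factor) against the genuine bilinear estimate \eqref{l2bi} of Corollary \ref{L2bi}, under the support separation \eqref{trsv}. Your exponent arithmetic checks out and \eqref{below_the_line} is correctly identified as the range generated by this interpolation (with the degenerate boundary cases $p=q$ and $p=2$ supplied directly by the respective endpoint estimates).
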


\subsection{Bochner--Riesz operator of negative order} \label{REBR}
If $(1/p,1/q)\in \{B, B'\} \cup [A',B')$, then the (restricted) weak type estimates stated in Theorem \ref{thm} can be obtained as consequences of the well-known estimates for the restriction-extension operator $f \to \F^{-1}\big( \wh f d\sigma \big) $ which is defined by
\[	\F^{-1}\big( \wh f d\sigma \big) (x)= \frac{1}{(2\pi)^d}\int_{\mathbb{S}^{d-1}} \wh f (\theta) e^{ix\cdot\theta}d\sigma(\theta) ,	\]
where $d\sigma$ is the surface measure on the unit sphere $\mathbb S^{d-1}$. In fact, this is a special case of order $-1$ of the classical Bochner--Riesz operator 
\begin{equation}\label{nbr}
    \mathcal R^{\alpha} f=  \F^{-1}\Big(\,\frac{(1-|\xi|^2)_+^{\al}}{\Gamma(1+\alpha)}\, \wh f(\xi)\Big) 
\end{equation}
which is defined by analytic continuation when $\al\le-1$.  Here $\Gamma$ is the gamma function.  For $d\ge2$ and $\al\in (0,\frac{d+1}2]$ let us set 
\[  P_\al(d):=\Big( \frac{d-1}{2d} +\frac\alpha{d},\, 0\Big), \quad   Q_\al(d):= \Big( \frac{d-1}{2d} + \frac \alpha{d}, \,\frac{d-1}{2d}-\frac{\alpha(d-1)}{d(d+1)} \Big),   \]
and
\begin{equation}\label{region_pentagon}
\mathcal P_\al(d) := \Big\{(x,y)\in I^2: x-y\ge \frac{2\al}{d+1}, ~ x>\frac{d-1}{2d} +\frac{\al}d , ~ y<\frac{d+1}{2d}-\frac\al d \Big\}. 
\end{equation} 
The following has been conjectured.

\begin{conj}\label{conj2}
Let $d\ge2$ and $0<\alpha < \frac{d+1}2$. $\mathcal R^{-\alpha}$ is bounded from $L^p(\R^d)$ to $L^q(\R^d)$ if and only if $(1/p, 1/q)\in \mathcal P_\alpha(d)$. 
\end{conj}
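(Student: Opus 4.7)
The plan is to address necessity and sufficiency separately. For necessity, the three conditions defining $\mathcal P_\al(d)$ arise from standard counterexamples, exactly as in Section \ref{shp1} above: the scaling condition $1/p - 1/q \ge 2\al/(d+1)$ comes from a Knapp-type cap example, where one takes $f$ with $\wh f$ equal to the characteristic function of a tangential cap of dimensions $\de\times\cdots\times\de\times\de^2$ near $\mathbb S^{d-1}$ and lets $\de\to 0$; the boundary condition $1/p > (d-1)/(2d) + \al/d$ follows by testing against a bump concentrated on the sphere and tracking the far-field asymptotics of $\mathcal R^{-\al} f$ (or equivalently by showing that boundedness would force a restriction estimate for $\mathbb S^{d-1}$ that is known to fail there); the condition $1/q < (d+1)/(2d) - \al/d$ then comes by duality, since $(\mathcal R^{-\al})^* = \mathcal R^{-\al}$ up to unimportant factors.

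For sufficiency, I would mimic the strategy used to prove Theorem \ref{thm}. Decompose the multiplier of $\mathcal R^{-\al}$ into a piece supported away from the sphere and a sum of dyadic shell pieces:
\begin{equation*}
\frac{(1-|\xi|^2)_+^{-\al}}{\Gamma(1-\al)} \;=\; T_{\mathrm{smth}}(\xi) + \sum_{j\ge 0} m_j(\xi),
\end{equation*}
where $m_j$ is supported in $\{1-|\xi|\sim 2^{-j}\}$ and satisfies $|m_j|\sim 2^{j\al}$. The smooth part is trivially controlled by Young's inequality. For each $m_j$, use a finite decomposition together with affine transformations to view arcs of $\mathbb S^{d-1}$ as graphs of elliptic-type functions (cf.~Remark \ref{simple_cutoff}), so that, after parabolic rescaling and a further dyadic decomposition inside the shell, $m_j(D)$ reduces to the model operator $\mm_{\de,\la}(D)$ of \eqref{mult_la} with $\de\la \sim 2^{-j}$. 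Applying Proposition \ref{dethick} along the line $1/q = \frac{d-1}{d+1}(1-1/p)$ yields, after accounting for the $2^{j\al}$ size of $m_j$, a bound of the form
\begin{equation*}
\|m_j(D)f\|_{L^q(\R^d)} \lesssim 2^{j\al}\, 2^{-j(\frac dp - \frac{d-1}{2})} \|f\|_{L^p(\R^d)},
\end{equation*}
whose geometric sum converges exactly when $\frac dp - \frac{d-1}{2} > \al$, i.e.\ when $1/p > (d-1)/(2d) + \al/d$. Interpolation with trivial $L^2$--$L^2$ and $L^1$--$L^\infty$ bounds then extends the estimate to the interior of the pentagon $\mathcal P_\al(d)$, and the endpoints $P_\al, Q_\al$ and their duals would be handled by a restricted weak type argument in analogy with the $(1/p,1/q)=B,B'$ case of Theorem \ref{thm}.

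The main obstacle is inherited from the range restriction in Proposition \ref{dethick}, which only holds for $q_\circ < q \le 2(d+1)/(d-1)$. This limitation in turn reflects the current state of the art on Carleson--Sj\"olin type oscillatory integrals (Theorem \ref{osc-est1}): outside this range the required sharp bounds for the frequency-localized pieces $m_j(D)$ are not presently available, and without them the $j$-sum cannot be closed on the portion of the scaling line corresponding to the $\al$-analogues of the segments $[B,E]\cup[B',E']$ and the triangular region $[D, P_\circ, P_*]$. Consequently, the strategy outlined above would yield Conjecture \ref{conj2} only on the subregion of $\mathcal P_\al(d)$ analogous to $\wt{\mathcal R}_2\cup\wt{\mathcal R}_3\cup\wt{\mathcal R}_3'$ in Theorem \ref{thm}, and a full proof appears to be of the same order of difficulty as resolving the classical Bochner--Riesz conjecture itself.
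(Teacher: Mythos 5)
The statement you set out to prove is Conjecture \ref{conj2}; the paper does not prove it and does not claim to. It remains open for $d\ge 3$ (for $d=2$ it was resolved by Bak \cite{bak}). What the paper contributes is the partial result Theorem \ref{boc_rie_neg}: the conjecture holds for $d\ge 3$ provided $\alpha > \frac{d+1}{2}\bigl(\frac{1}{p_\circ}-\frac{1}{q_\circ}\bigr)$.

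Your proposal reconstructs essentially the same strategy that yields Theorem \ref{boc_rie_neg}, and you correctly identify the obstruction to pushing it further. The dyadic shell decomposition, the elliptic parametrization of $\mathbb{S}^{d-1}$, the application of Proposition \ref{dethick} to each localized piece, and the geometric-sum plus real-interpolation step are precisely what the paper uses (deferring to \cite[Theorem 1.1]{CKLS} with the wider range from Proposition \ref{dethick} in place of \cite[Proposition 2.4]{CKLS}). The threshold $\alpha > \frac{d+1}{2}\bigl(\frac{1}{p_\circ}-\frac{1}{q_\circ}\bigr)$ is exactly the condition guaranteeing that the vertex $Q_\alpha(d)$ on the scaling line $\frac{1}{q}=\frac{d-1}{d+1}\bigl(1-\frac{1}{p}\bigr)$ has $q$-exponent strictly above $q_\circ$, so that Proposition \ref{dethick} applies along the whole segment $(Q_\alpha,Q_\alpha')$; for smaller $\alpha$ the segment protrudes into the region $q\le q_\circ$ where the sharp oscillatory-integral bound (Theorem \ref{osc-est1}) is currently unavailable. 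Your assessment that a full proof is of comparable depth to the restriction and Bochner--Riesz conjectures is consistent with the authors' own framing.

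Two minor corrections. First, for $\alpha\ge 1$ the symbol $(1-|\xi|^2)_+^{-\alpha}/\Gamma(1-\alpha)$ is a distribution, not a function, so it cannot simply be cut into shell pieces with $|m_j|\sim 2^{j\alpha}$; the dyadic decomposition must carry built-in cancellation, as in \cite[Lemma 2.1]{CKLS} (cf.\ the function $\phi$ with $\supp\widehat\phi\subset\{|t|\approx 1\}$ and $\widetilde\phi(t)=t\phi(t)$ used in the proof of \eqref{rest_weak_type}). Second, $P_\alpha$ and $Q_\alpha$ lie on the boundary of $\mathcal P_\alpha(d)$ and are excluded from it, so the conjecture's ``if and only if'' does not assert boundedness at those points; the restricted-weak-type and weak-type estimates at $Q_\alpha$, $Q_\alpha'$ and on $(Q_\alpha',P_\alpha']$ recorded in Theorem \ref{boc_rie_neg} are supplementary endpoint results rather than part of closing the conjectured range.
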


This problem was studied by several authors \cite{Bo, CS88, bak, Gu99, CKLS}. The complete characterization  of the necessity part is due to B\"orjeson  \cite{Bo}.  Estimates for $\mathcal R^{-\al}$ with $\alpha> 1/2$ and $(1/p, 1/q)\in \mathcal P_\alpha(d)\setminus  (Q_\alpha(d), Q_\alpha'(d))$ were obtained by Sogge \cite{sogge-1986}.  Partial results regarding the critical estimate with $(1/p, 1/q)\in (Q_\alpha(d), Q_\alpha'(d))$ were obtained by Bak, McMichael and Oberlin \cite{BMO}.  When $d=2$, the conjecture was solved by Bak \cite{bak}. The restricted weak type estimates at $Q_\alpha(d)$ and $Q_\alpha'(d)$ were proven by Guti\'errez \cite{Gu99} for $\alpha>0$ when $d\ge2$, and for  $\alpha>1/2$  when $d\ge 3$. The conjecture was verified by Cho, Kim, Lee and Shim \cite{CKLS}  for  $\alpha>\frac{(d-2)(d+1)}{2(d-1)(d+2)}$ and weaker endpoint estimates were also obtained.   

From Proposition \ref{dethick} and typical dyadic decomposition we can improve the current state of the boundedness of $\mathcal R^{-\al}$.
\begin{thm}\label{boc_rie_neg}
If $d\ge 3$ and $\alpha>\frac{d+1}{2}(\frac1{p_\circ}-\frac1{q_\circ})$  (that is to say,  $\alpha> \frac{(d+1)(d-1)}{2(d^2+4d-1)}$  if $d$ is odd and $\alpha >\frac{(d+1)(d-2)}{2(d^2+3d-2)}$  if $d$ is even),  then  Conjecture \ref{conj2} is true. Moreover, $\mathcal R^{-\alpha}$ is of restricted weak type $(p,q)$ when $(1/p, 1/q)\in \{Q_\al(d), Q_\al'(d)\}$, and of weak type $(p,q)$ if $(1/p, 1/q)\in (Q_\al'(d), P_\al'(d)]$. 
\end{thm}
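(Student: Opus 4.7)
The proof proceeds by a dyadic decomposition near the sphere, combined with Proposition \ref{dethick}. Write the Bochner--Riesz multiplier as
\[
\frac{(1-|\xi|^2)_+^{-\alpha}}{\Gamma(1-\alpha)} = m_\infty(\xi) + \sum_{j \ge j_0} m_j(\xi),
\]
where $m_\infty$ is smooth and supported away from the unit sphere, and each $m_j$ is concentrated in the dyadic shell $\{\xi : 1-|\xi|^2 \sim 2^{-j}\}$. After a finite partition of unity near the sphere together with an affine change of variables that parametrizes the sphere locally as the graph $\xi_d = \psi(\xi')$ of a function of elliptic type, the relation $1-|\xi|^2 \approx 2(\psi(\xi')-\xi_d)$ allows us to identify $m_j$ with $2^{j\alpha}\,\mathfrak M_{2^{-j},1}$ up to bounded smooth factors, where the function $\varphi$ appearing in \eqref{mult_la} is chosen smooth and compactly supported with $\varphi(t)\beta(t) = |t|^{-\alpha}\chi_{\{t<0\}}\beta(t)$; the compact support of $\varphi$ guarantees condition \eqref{diff} regardless of the size of $\alpha$.

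Proposition \ref{dethick} then yields, for $(1/p,1/q)$ on the line $L_1 : \frac{1}{q} = \frac{d-1}{d+1}(1-\frac{1}{p})$ with $q_\circ < q \le \frac{2(d+1)}{d-1}$,
\[
\|m_j(D)f\|_{L^q(\R^d)} \lesssim 2^{j\alpha}\cdot 2^{-j(\frac{1-d}{2}+\frac{d}{p})}\|f\|_{L^p(\R^d)} = 2^{j(\alpha+\frac{d-1}{2}-\frac{d}{p})}\|f\|_{L^p(\R^d)}.
\]
The hypothesis $\alpha > \frac{d+1}{2}\bigl(\frac{1}{p_\circ}-\frac{1}{q_\circ}\bigr)$ is precisely the condition $q_{Q_\alpha(d)} > q_\circ$, so Proposition \ref{dethick} is applicable at points of $L_1$ arbitrarily close to $Q_\alpha(d)$. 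The geometric series converges exactly when $1/p > (d-1)/(2d)+\alpha/d$, i.e., at points of $L_1$ strictly past $Q_\alpha(d)$, and we obtain strong-type $(p,q)$ bounds there. Self-adjointness of $\mathcal R^{-\alpha}$ then yields the analogous strong type on the segment of the dual line $L_2 : \frac{1}{q}=\frac{d+1}{d-1}(1-\frac{1}{p})$ strictly past $Q_\alpha'(d)$.

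The full pentagon $\mathcal P_\alpha(d)$ is then covered by Riesz--Thorin interpolation. Strong type at the corner $(1,0)$ follows from $K^{-\alpha}\in L^\infty(\R^d)$, where $K^{-\alpha}(x) = c_d |x|^{-d/2+\alpha}J_{d/2-\alpha}(|x|)$ is bounded near zero by the asymptotic $J_\nu(r)\sim r^\nu$ and near infinity by $J_\nu(r)=O(r^{-1/2})$; strong type along the horizontal edge $y=0$ up to (but not including) $P_\alpha(d)$, and likewise along the dual vertical edge $x=1$, follows from $K^{-\alpha}\in L^{p'}$ for $p<p_{P_\alpha(d)}$ via Young's inequality. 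For any $P\in(Q_\alpha(d),Q_\alpha'(d))$, one chooses a line through $P$ of slope slightly greater than $1$, so that it meets $L_1$ past $Q_\alpha(d)$ and $L_2$ past $Q_\alpha'(d)$ at strong-type points; Riesz--Thorin then gives strong type at $P$, and interior points of $\mathcal P_\alpha(d)$ are treated similarly.

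For the restricted weak type at $Q_\alpha(d)$ (and at $Q_\alpha'(d)$ by duality), I would apply a Bourgain-style real interpolation: the dyadic bound from Proposition \ref{dethick} is available at both a point past $Q_\alpha(d)$ on $L_1$ (with geometric decay in $j$) and at a point before $Q_\alpha(d)$ on $L_1$ (with geometric growth), and balancing these via an atomic decomposition of $L^{p_{Q_\alpha(d)},1}$ yields the restricted weak type $L^{p_{Q_\alpha(d)},1}\to L^{q_{Q_\alpha(d)},\infty}$. The weak-type estimate on $(Q_\alpha'(d),P_\alpha'(d)]$ follows by Marcinkiewicz real interpolation between the restricted weak type at $Q_\alpha'(d)$ and the strong type at $P_\alpha'(d)$, which both lie on the horizontal line $1/q=(d+1)/(2d)-\alpha/d$. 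The main technical obstacle is the matching step of the first paragraph: because $|t|^{-\alpha}$ violates the decay condition \eqref{diff} when $\alpha<1$, the singular factor must be absorbed into $\varphi$ only on the support of the dyadic cutoff $\beta$, with $\varphi$ extended smoothly and with compact support outside in order to invoke Proposition \ref{dethick} as stated.
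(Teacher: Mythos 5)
Your overall strategy---dyadic decomposition near the sphere, matching the pieces to $\mathfrak M_{\delta,\lambda}$ via a compactly supported $\varphi$, applying Proposition~\ref{dethick} on the line $\frac1q=\frac{d-1}{d+1}(1-\frac1p)$, and using geometric summation together with Lemma~\ref{intpl}---is exactly the route the paper takes, since it simply swaps Proposition~\ref{dethick} in for \cite[Proposition~2.4]{CKLS} and then runs the argument of \cite[Theorem~1.1]{CKLS}, which is what you have reconstructed. The threshold identification $\alpha>\frac{d+1}{2}(\frac1{p_\circ}-\frac1{q_\circ})\iff Q_\alpha$ lies strictly past $P_\circ$ on $L_1$ is correct, and your treatment of condition~\eqref{diff} via a compactly supported extension of $|t|^{-\alpha}\chi_{\{t<0\}}$ off $\supp\beta$ is sound (the constant in Proposition~\ref{dethick} is allowed to depend on $\varphi$, hence on $\alpha$).

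There is, however, a genuine gap in your final step, the weak-type bound on $(Q_\alpha'(d),P_\alpha'(d)]$. You invoke ``Marcinkiewicz real interpolation between the restricted weak type at $Q_\alpha'(d)$ and the strong type at $P_\alpha'(d)$.'' Two things go wrong. First, strong type does \emph{not} hold at $P_\alpha'(d)=(1,\tfrac{d+1}{2d}-\tfrac\alpha d)$: the kernel $K^{-\alpha}$ decays only like $|x|^{-\frac{d+1}2+\alpha}$ and so lies in $L^{q,\infty}$ but not in $L^q$ at that exponent, and $L^1\to L^q$ boundedness would require $K^{-\alpha}\in L^q$; this contradicts your own preceding sentence, where you (correctly) restrict the Young-inequality argument to $p<p_{P_\alpha(d)}$ and hence strictly inside the edge. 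Second, even granting both endpoint inputs, real interpolation of a restricted weak-type estimate with a strong-type estimate at a common $q$ produces Lorentz-space bounds $L^{p,s}\to L^{q,r}$ that do not reduce to weak type $(p,q)$; the Marcinkiewicz mechanism goes in the other direction (two weak-type inputs yield strong type strictly in between). The argument the paper actually uses for this piece (see the proof of \eqref{weak_type} and the reference to Bak~\cite{bak}) is Lemma~\ref{intpl}(III): one fixes $p$ (equivalently, dualizes to fix the output exponent) and interpolates the dyadic estimates over two choices of $q$ with opposite-sign geometric factors, obtaining $\|\sum_j m_j(D)f\|_{q,\infty}\lesssim\|f\|_p$ directly. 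You should replace your Marcinkiewicz step with this fixed-$p$ Bourgain-type summation, or else handle $P_\alpha'$ separately by the endpoint inclusion $L^1*L^{q,\infty}\to L^{q,\infty}$ and then supply a correct interior argument.
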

\begin{proof}
By Proposition \ref{dethick} we may replace the condition $\frac{2d+4}{d}<q$ in \cite[Proposition 2.4]{CKLS} with $q_\circ<q$. Now the rest of the proof is identical with that of \cite[Theorem 1.1]{CKLS}.
\end{proof}
Especially, when $\alpha=1$, the result gives  the following characterization of $L^p$--$L^q$ boundedness for the restriction-extension operator,  which we need later. Recalling  \eqref{pentagon}, we note that $\mathcal P_1=\mathcal P$.
\begin{thm}[Restriction-extension estimates for the sphere] \label{rest_ext_sphere}   Let $d\ge 2$. The estimate 
\begin{equation}\label{rest_ext}
\Big\|\int_{\mathbb S^{d-1}} \wh f (\theta) e^{ix\cdot\theta} d\s(\theta) \Big\|_{L^q(\R^d)} \lesssim \|f\|_{L^p(\R^d)}
\end{equation}
holds if and only if $(1/p,1/q)\in \mathcal P$. Furthermore, for the critical $p,q$ such that $(1/p,1/q)=B$ or $B'$, the restricted weak type estimate holds instead of \eqref{rest_ext}. If $(1/p,1/q)\in (B',E']$, the weak type estimate holds (see Figure \ref{figthm2} and Figure \ref{figthm}).
\end{thm}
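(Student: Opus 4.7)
The plan is to reduce Theorem \ref{rest_ext_sphere} to the $\alpha=1$ case of Theorem \ref{boc_rie_neg}. As noted just before the statement, the operator $f\mapsto\F^{-1}(\wh f\,d\sigma)$ is, up to a harmless constant, the Bochner--Riesz operator $\mathcal R^{-1}$: by analytic continuation in $\alpha$, the distribution $(1-|\xi|^2)_+^\alpha/\Gamma(1+\alpha)$ at $\alpha=-1$ produces a constant multiple of the surface measure on $\mathbb S^{d-1}$. Moreover a direct check from \eqref{region_pentagon} gives $\mathcal P_1(d)=\mathcal P(d)$, $Q_1(d)=B$, $Q_1'(d)=B'$ and $P_1(d)=E$, so the region and endpoints in Theorem \ref{rest_ext_sphere} are exactly those that would emerge from Theorem \ref{boc_rie_neg} with $\alpha=1$.

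First I would verify that the hypothesis $\alpha>\tfrac{d+1}{2}(1/p_\circ-1/q_\circ)$ of Theorem \ref{boc_rie_neg} is automatic at $\alpha=1$. A short computation from \eqref{pstar} shows
\[
\tfrac{d+1}{2}\Big(\tfrac{1}{p_\circ}-\tfrac{1}{q_\circ}\Big)=\begin{cases}\dfrac{(d+1)(d-1)}{2(d^2+4d-1)}, & d\text{ odd},\\[4pt] \dfrac{(d+1)(d-2)}{2(d^2+3d-2)}, & d\text{ even},\end{cases}
\]
and both quantities are strictly less than $1$ for every $d\ge3$. Hence Theorem \ref{boc_rie_neg} with $\alpha=1$ delivers: strong type $(p,q)$ on $\mathcal P_1\setminus\{Q_1,Q_1'\}=\mathcal P\setminus\{B,B'\}$; restricted weak type at $Q_1=B$ and $Q_1'=B'$; and weak type on $(Q_1',P_1']=(B',E']$. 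For $d=2$ one simply invokes Bak's theorem \cite{bak}, which settles the full conjecture for all $\alpha>0$, including the endpoint restricted weak type and the weak-type estimates on the segment.

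For the necessity of $(1/p,1/q)\in\mathcal P$: the diagonal constraint $1/p-1/q\ge 2/(d+1)$ comes from testing \eqref{rest_ext} against a Knapp-type bump adapted to a $\rho\times\sqrt{\rho}$-cap on $\mathbb S^{d-1}$ (Börjeson \cite{Bo}); the conditions $1/p>(d+1)/(2d)$ and $1/q<(d-1)/(2d)$ follow by duality from the Stein--Tomas restriction theorem, since the composition of the restriction operator with its adjoint is exactly the operator in \eqref{rest_ext}.

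The main obstacle is not in the reduction itself, which is essentially a specialization, but in the machinery behind Theorem \ref{boc_rie_neg}: namely, the sharp bound in Proposition \ref{dethick} for the localized multipliers $\mathfrak M_\delta(D)$, combined with Tao's bilinear extension estimate (Theorem \ref{bi_tao}) and the Guth--Hickman--Iliopoulou oscillatory integral bound (Theorem \ref{osc-est1}). Once these inputs are available, the passage to Theorem \ref{rest_ext_sphere} is purely bookkeeping: identify $d\sigma$ with the analytic continuation of $(1-|\xi|^2)_+^{-1}/\Gamma(0)$, set $\alpha=1$ in the general statement, and read off the boundedness region together with the endpoint behaviour.
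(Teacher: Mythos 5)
Your proposal is exactly the paper's proof: Theorem~\ref{rest_ext_sphere} is stated immediately after Theorem~\ref{boc_rie_neg} and is obtained by specializing it at $\alpha=1$ (noting $\mathcal P_1=\mathcal P$, $Q_1=B$, $Q_1'=B'$, $P_1'=E'$), with the $d=2$ case covered by Bak's theorem; your verification that $\alpha=1$ satisfies the hypothesis of Theorem~\ref{boc_rie_neg} for all $d\ge3$ is correct. One small slip in a parenthetical: the necessity of $1/p>(d{+}1)/(2d)$ and $1/q<(d{-}1)/(2d)$ cannot ``follow from Stein--Tomas'' (a boundedness theorem cannot yield a necessary condition); the standard argument uses the decay $|\F^{-1}(d\sigma)(x)|\sim |x|^{-(d-1)/2}$, and in any case you (like the paper) correctly defer the full necessity to B\"orjeson~\cite{Bo}.
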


Finally, we record here the following real interpolation technique (see \cite{B, CSWaWr, Lee}), which will be needed several times in the succeeding sections. Here $\| \cdot\|_{r,s}$ denotes the norm of the Lorentz space $L^{r,s}$. 
\begin{lem}[\cite{Lee}]\label{intpl}
Let $\epsilon_1, \epsilon_2>0$, $1\le p_1^i,  p_2^i <\infty$, $1\le i \le k$, $1\le q_1, q_2 <\infty$. For every $j\in\Z$ let $T_j$ be $k$-linear operators satisfying $\|T_j(f_1, \cdots, f_k)\|_{q_1} \le M_1 2^{\epsilon_1 j}\prod_{i=1}^k \|f_i\|_{p_1^i}$ and $\|T_j(f_1, \cdots, f_k)\|_{q_2} \le M_2 2^{-\epsilon_2 j}\prod_{i=1}^k \|f_i\|_{p_2^i}$.  Then, for $\theta$, $q$ and $p_i$ defined by $\theta=\frac{\epsilon_2}{\epsilon_1+\epsilon_2}$, $\frac1q=\frac\theta{q_1}+\frac{1-\theta}{q_2}$ and $\frac1{p^i}=\frac\theta{p_1^i}+\frac{1-\theta}{p_2^i}$, the following hold: \\
$(\mathrm I)$
	$\| \sum_j T_j(f_1,\cdots, f_k)\|_{q,\infty} \le C M_0^\theta M_1^{1-\theta} \prod_{i=1}^k \|f_i\|_{p^i,1},$ \\[3pt]
$(\mathrm I\!\mathrm I)$
	$\| \sum_j T_j(f_1,\cdots, f_k)\|_{q} \le C M_0^\theta M_1^{1-\theta} \prod_{i=1}^k \|f_i\|_{p^i,1}$  \, if \, $q_1=q_2=q$, \\[3pt]
$(\mathrm I\!\mathrm I\!\mathrm I)$
	$\| \sum_j T_j(f_1,\cdots, f_k)\|_{q,\infty} \le C M_0^\theta M_1^{1-\theta} \prod_{i=1}^k \|f_i\|_{p^i}$ \, if \, $p^i_1=p_2^i=p^i$ for every $i$.
\end{lem}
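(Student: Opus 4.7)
The plan is to execute the classical Bourgain-style splitting trick: produce a restricted weak-type bound for the sum $\sum_j T_j(f_1,\ldots,f_k)$ at the interpolated point by splitting the sum at a threshold $j_0$ chosen to balance the two hypotheses, then upgrade to the three Lorentz-space conclusions via multilinear real interpolation.

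First, I would reduce to characteristic-function inputs $f_i = \chi_{E_i}$ with $|E_i|=a_i$ and, for each threshold $j_0 \in \Z$, split
\[ \sum_j T_j(f_1,\ldots,f_k) = S_-(j_0) + S_+(j_0), \quad S_-(j_0) := \sum_{j\le j_0} T_j(f_1,\ldots,f_k),\ S_+(j_0) := \sum_{j > j_0} T_j(f_1,\ldots,f_k). \]
The triangle inequality in $L^{q_1}$ and $L^{q_2}$, combined with the two given bounds and the geometric series in $j$, yield
\[ \|S_-(j_0)\|_{q_1} \lesssim M_1\, 2^{\epsilon_1 j_0} \prod_{i=1}^k a_i^{1/p_1^i}, \qquad \|S_+(j_0)\|_{q_2} \lesssim M_2\, 2^{-\epsilon_2 j_0} \prod_{i=1}^k a_i^{1/p_2^i}. \]
Applying Chebyshev to each piece at level $\alpha$, taking a union bound on $\{\,|\sum_j T_j| > 2\alpha\,\}$, and choosing $j_0$ so that the two measure estimates coincide (this forces $2^{(\epsilon_1+\epsilon_2)j_0}$ to be a specific power of $\alpha$ times $M_2/M_1$ and the $a_i$'s), one arrives at the restricted weak-type bound
\[ \Big|\Big\{\, \Big|\sum_j T_j(f_1,\ldots,f_k)\Big| > \alpha \,\Big\}\Big|^{1/q} \lesssim M_1^{\theta} M_2^{1-\theta}\, \alpha^{-1} \prod_{i=1}^k a_i^{1/p^i}, \]
with $\theta, q, p^i$ exactly as in the statement. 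This is assertion $(\mathrm I)$ restricted to characteristic inputs.

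Assertions $(\mathrm{II})$ and $(\mathrm{III})$ can be read off from the same splitting. For $(\mathrm{II})$, since $q_1 = q_2 = q$, a direct application of Minkowski's inequality in $L^q$ to $S_-(j_0)$ and $S_+(j_0)$ followed by the same optimization in $j_0$ bypasses the Chebyshev step and delivers the strong-$L^q$ bound on the output, while still requiring only $L^{p^i,1}$ inputs (after upgrading via atomic decomposition as in $(\mathrm I)$). For $(\mathrm{III})$, the assumption $p_1^i = p_2^i = p^i$ means the domain factors are already strong-type, so only the output side requires the level-set argument, and the weak-type bound on $L^{q,\infty}$ follows from the same splitting applied to general $f_i\in L^{p^i}$ after replacing the $\prod a_i^{1/p^i}$ factors with $\prod\|f_i\|_{p^i}$.

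The main obstacle will be the final upgrade from the restricted weak-type bound on characteristic inputs to the full $L^{p^i,1}\to L^{q,\infty}$ bound in $(\mathrm I)$. The linear Marcinkiewicz theorem handles this automatically, but the multilinear analogue requires a genuine Bourgain-type atomic argument: one writes each simple function in $L^{p^i,1}$ as a sum $\sum_n 2^n\chi_{E^i_n}$ of level-set indicators with $\sum_n 2^n |E^i_n|^{1/p^i} \approx \|f_i\|_{p^i,1}$, applies the restricted weak-type bound atom by atom, and then sums using the freedom to \emph{shift $j_0$ depending on the atom sizes} — this is precisely where the dyadic structure of the $T_j$'s is essential, and the argument carried out in \cite{Lee} (following \cite{B, CSWaWr}) performs exactly this bookkeeping, so I would invoke it directly rather than reprove it here.
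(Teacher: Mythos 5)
The paper offers no proof of this lemma, invoking \cite{Lee} (and its antecedents \cite{B, CSWaWr}) directly, and your proposal outlines exactly that argument: dyadic splitting of the $j$-sum at a threshold $j_0$, optimization to obtain a restricted weak-type bound at the interpolated exponents, and the Bourgain-type atomic upgrade to $L^{p^i,1}$ inputs, which you rightly defer to \cite{Lee}. One small thing worth flagging: the conclusion as printed carries the constant $M_0^\theta M_1^{1-\theta}$ while the hypotheses use $M_1$ and $M_2$; your calculation correctly produces $M_1^\theta M_2^{1-\theta}$, so the $M_0$ in the statement is a typo.
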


\section{Proof of Proposition \ref{dethick}}\label{proof_bil}
In order to deduce the linear estimates \eqref{dela_0} and \eqref{dela} from the bilinear estimates in Corollary \ref{tcsbil} we basically follow the strategy  in \cite{Lee1, CKLS} with some modifications. As before, we may only prove \eqref{dela}. The estimate \eqref{dela_0} can be obtained by the same argument. 

Let us put $Q=\mI^{d-1}$ and for every integer $j\ge 0$ let $\mathcal D(j)$ be the collection of the closed dyadic cubes of size $2^{-j}$ in $Q$, that is,
\[	\mathcal D(j) := \Big \{ \prod_{k=1}^{d-1} [n_k 2^{-j}, (n_k+1)2^{-j}] : \,  n_k\in \Z, \, -2^j\le n_k\le 2^j-1 \Big \}.	\]
For  convenience let us denote by  $Q^j_k$ the members of $\mathcal D(j)$. 

For every $j\ge 1$ we define a relation $\sim$ on the dyadic cubes  contained in $\mathcal D(j)$ as follows. For $Q^j_{k_1},\, Q^j_{k_2}\in\mathcal D(j)$ we write $Q^j_{k_1}\sim Q^j_{k_2}$ if $Q^j_{k_1} \cap Q^j_{k_2}=\emptyset$, but there are parent cubes in $\mathcal D(j-1)$ which have nonempty intersection. It is easy to see that  $2^{-j}\le \dist(Q^j_{k_1}, Q^j_{k_2}) \lesssim 2^{-j}$ if $k_1\neq k_2$. By a kind of  Whitney decomposition of $Q\times Q$ away from its diagonal $\Lambda_Q=\{(\xi',\xi') : \xi' \in Q\}$,
\[	Q\times Q\setminus \Lambda_Q =\bigcup_{j\ge 1} \bigcup_{Q^j_{k_1}\sim Q^j_{k_2}} Q^j_{k_1}\times Q^j_{k_2},	\]
hence
\begin{equation}\label{wdcmp}
\sum_{j\ge 1} \sum_{Q^j_{k_1}\sim Q^j_{k_2}} \chi_{Q^j_{k_1}}\chi_{Q^j_{k_2}} =1
\end{equation}
almost everywhere in $Q\times Q$ (\cite{TVV, Lee1, CKLS}). For  $Q_k^j\in\mathcal D(j)$ we define $f^j_k$ by 
\[	\wh{f^j_k} (\xi) = \chi_{Q^j_k}(\xi') \wh f(\xi).	\] 
As mentioned before (Remark \ref{simple_cutoff}), with $\chi_0$ supported near the origin  we may assume $\widehat f$ is supported in  $2^{-5}\mI^d$.  Then, by \eqref{wdcmp} we can write
\[	(\mm_{\de, \la} (D)f)^2=\sum_{j\ge 6}T_j(f,f):=\sum_{j\ge 6} \sum_{Q^j_{k_1}\sim Q^j_{k_2}} \prod_{i=1,2}\mm_{\de, \la} (D)f^j_{k_i}. 	\]
We now try to obtain sharp estimates for the bilinear operators $\{T_j: j\ge  6\}$.  We separately consider  the cases $2^{2j}\lesssim 1/\de\la$ and $2^{2j}\gtrsim 1/\de\la$. 

\begin{lem}\label{smj}
Let $p,q$ satisfy $2\le p < q \le 4$ and \eqref{below_the_line}, and suppose that $2^{2j}\de\la < 1/10$. Then, there  are $N$ and $\epsilon$ which are independent of such $p$, $q$, $j$, $\de$ and $\la$, such that  
\begin{equation}\label{smjeq}
\normo{T_j(f_1,f_2)}_{q/2} \le C 2^{2j(\frac{d+1}q -(d-1)(1-\frac 1p))} \la^{-2} (\de\la)^{1-d+\frac {2d}p} \|f_1\|_p\|f_2\|_p 
\end{equation}
for $\psi\in{\bf Ell}(N,\epsilon)$ and $m\in \mnb$. Here the constant $C$ is independent of $j$, $\de$, $\la$, $m$ and $\psi$.
\end{lem}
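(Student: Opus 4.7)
The plan is to derive \eqref{smjeq} from the bilinear estimate \eqref{bimult} of Corollary \ref{tcsbil} by parabolically rescaling each Whitney pair to unit scale, applying the bilinear estimate there, and summing via an orthogonality argument. For each pair $(Q^j_{k_1}, Q^j_{k_2})$ with $Q^j_{k_1}\sim Q^j_{k_2}$, the two cubes lie in a common parent cube of side $\approx 2^{-j}$, so after dilation by $2^j$ they become unit cubes separated by a distance $\gtrsim 1$. The hypothesis $2^{2j}\de\la<1/10$ is exactly what keeps the rescaled thickness $\de'=2^{2j}\de$ within the admissible range $\de'\la<1/10$ of Corollary \ref{tcsbil}, and the transversality constant $a_\circ\approx 1$ then becomes independent of $j$.

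Concretely, for each Whitney pair pick a reference point $c = c_{k_1,k_2}$ in a common grandparent cube and apply the affine transformation $L_{c,\rho}$ from \eqref{affine} with $\rho = 2^{-j}$. By Lemma \ref{approximation} the rescaled phase $\psi_{c,\rho}$ still lies in ${\bf Ell}(N,\mf c\epsilon)$, and by \eqref{multi-mod} the pulled-back multiplier $\mm_{\de,\la}\circ L_{c,\rho}$ takes the form of $\mm_{2^{2j}\de,\la}$ associated with the new phase $\psi_{c,\rho}$ and a perturbed amplitude $m\circ L_{c,\rho}\in{\bf Mul}(N,Cb)$. Since the rescaled Fourier supports are separated by $\gtrsim 1$, Corollary \ref{tcsbil} applies at scale $\de'=2^{2j}\de$. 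Combining the unit-scale estimate \eqref{bimult} with the standard scaling identity for multiplier operators under $x\mapsto L_{0,\rho}^T x$ (under which the $L^p$ norm scales by $\rho^{(d+1)/p}$ and the $L^{q/2}$ norm of a bilinear product by $\rho^{2(d+1)/q}$) yields, for each pair,
\[
    \big\|\mm_{\de,\la}(D)f^j_{1,k_1}\cdot\mm_{\de,\la}(D)f^j_{2,k_2}\big\|_{q/2}
    \le C\, 2^{2j(\frac{d+1}q-(d-1)(1-\frac1p))}\la^{-2}(\de\la)^{1-d+\frac{2d}p}\|f^j_{1,k_1}\|_p\|f^j_{2,k_2}\|_p ,
\]
where the exponent of $2^{2j}$ is the algebraic sum of $\rho^{-2(d+1)(\frac1q-\frac1p)}$ from the change of variables and $2^{2j(1-d+\frac{2d}p)}$ arising from $(2^{2j}\de\la)^{1-d+\frac{2d}p}$.

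To finish, sum over pairs. Since $q\ge 2$, Minkowski's inequality gives $\|T_j(f_1,f_2)\|_{q/2}\le\sum_{k_1\sim k_2}\|\mm_{\de,\la}(D)f^j_{1,k_1}\cdot\mm_{\de,\la}(D)f^j_{2,k_2}\|_{q/2}$. Because each $Q^j_k$ has only $O(1)$ $\sim$-neighbors, two applications of the Cauchy--Schwarz inequality reduce $\sum_{k_1\sim k_2}\|f^j_{1,k_1}\|_p\|f^j_{2,k_2}\|_p$ to $\prod_{i=1,2}\big(\sum_k\|f^j_{i,k}\|_p^2\big)^{1/2}$. For $2\le p<\infty$ the square-function estimate $\big(\sum_k\|f^j_k\|_p^2\big)^{1/2}\lesssim\big\|\big(\sum_k|f^j_k|^2\big)^{1/2}\big\|_p\lesssim\|f\|_p$ (vector-valued Minkowski plus the Littlewood--Paley inequality for cube decompositions) then yields \eqref{smjeq}. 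The sharp cutoffs $\chi_{Q^j_k}$ from \eqref{wdcmp} must be smoothed by a controlled-overlap partition of unity before the Littlewood--Paley step is invoked; this is harmless because $\mm_{\de,\la}$ already carries the smooth factor $\chi_0$.

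The principal obstacle is the bookkeeping of the parabolic rescaling: one must verify that $\mm_{\de,\la}\circ L_{c,\rho}$ indeed falls, \emph{uniformly} in $c$, $j$, $\psi\in\elne$ and $m\in\mnb$, into the admissible class to which Corollary \ref{tcsbil} applies with the new scale $\de'=2^{2j}\de$, and that the transformation of the amplitudes and phases is absorbed without loss into the class constants $(N,C\epsilon)$ and $(N,Cb)$. The hypothesis $2^{2j}\de\la<1/10$ is precisely designed to make this rescaling compatible with the scope of Corollary \ref{tcsbil}.
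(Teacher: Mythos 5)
Your derivation of the single-pair estimate \eqref{smj0} by parabolic rescaling is essentially the argument in the paper: the authors also take the center $c$ of the rectangle containing the pair, set $\rho\approx 2^{-j}$, verify via Lemma \ref{approximation} and Remark \ref{simple_cutoff} that the rescaled data $(\psi_{c,\rho}, m\circ L_{c,\rho})$ remain in ${\bf Ell}(N,\mathfrak c\epsilon)\times{\bf Mul}(N,Cb)$, apply Corollary \ref{tcsbil} at scale $\rho^{-2}\de$, and track the Jacobian factors $\rho^{(d+1)/p}$ and $\rho^{-2(d+1)/q}$. Your exponent bookkeeping is correct and matches the paper.

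Where you diverge is in the summation over Whitney pairs. The paper offloads this step entirely to Lemma \ref{piece} (cited as a slight modification of \cite[Lemma~3.5]{CKLS}) and proves only the single-pair bound. You instead re-derive the summation directly: triangle inequality in $L^{q/2}$ (using $q\ge 2$), Cauchy--Schwarz over the $\sim 2^{j(d-1)}$ pairs using the $O(1)$ overlap of Whitney neighbours, and then the single-scale square-function bound $\big(\sum_k\|f^j_k\|_p^2\big)^{1/2}\lesssim\|f\|_p$ for $p\ge2$, which is precisely what the black-boxed lemma encapsulates. This is a valid, self-contained alternative and in fact only needs $q\ge 2$ rather than $q\le 4$; the upper bound on $q$ in Lemma \ref{piece} is inherited from its source and is not what makes your version work.

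One remark in your proof is misleading: you claim the sharp cutoffs $\chi_{Q^j_k}$ ``must be smoothed'' before invoking the Littlewood--Paley step and attribute the harmlessness of this to the smooth factor $\chi_0$ in $\mathfrak M_{\de,\la}$. This does not make sense as stated -- $\chi_0$ is a fixed global cutoff and has nothing to do with smoothing the per-cube characteristic functions. Fortunately the remark is unnecessary: for a single-scale lattice decomposition by cubes the inequality $\big\|\big(\sum_k|f^j_k|^2\big)^{1/2}\big\|_p\lesssim\|f\|_p$ holds for $p\ge 2$ with the sharp cutoffs as they stand (this is the Rubio de Francia--type square-function estimate, obtainable by iterating the one-dimensional result across the coordinates), followed by vector-valued Minkowski, also valid for $p\ge 2$. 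Dropping the smoothing discussion and citing the sharp-cutoff square-function inequality directly would tighten the argument.
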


\begin{lem}\label{lgj}
Suppose $2\le p < q$, $\frac{2d}{d-1} \le q \le 4$ and $2^{2j}\de\la \ge 1/10$. Then there are  $N$ and  $\epsilon$, independent of such $p$, $q$, $j$, $\de$ and $\la$, such that
\begin{equation}\label{lgjeq}
\normo{T_j(f_1,f_2)}_{q/2} \le C 2^{-2j(d-1)(\frac 1p-\frac 1q)}\la^{-2} (\de\la)^{2(\frac 1p-\frac 1q)} \|f_1\|_p \|f_2\|_p
\end{equation}
for $\psi\in{\bf Ell}(N,\epsilon)$ and $m\in \mnb$. The constant $C$ is independent of $j$, $\de$, $\la$, $m$ and $\psi$.
\end{lem}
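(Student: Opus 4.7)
The plan is to obtain \eqref{lgjeq} by combining Plancherel, Bernstein's inequality, and the linear estimate of Proposition~\ref{cstomul} with a real interpolation argument. The structural observation is that in the regime $2^{2j}\de\la\ge 1/10$, the cube scale $2^{-j}$ is of order $\sqrt{\de\la}$ or smaller, so the curvature of the surface $\xi_d=\psi(\xi')$ on each $Q^j_k$ is essentially invisible; consequently the bilinear extension inequality of Corollary~\ref{tcsbil}, which drove Lemma~\ref{smj}, is no longer the natural tool here.

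First I would establish \eqref{lgjeq} at the base endpoint $p=2$. Writing $f_{i,k}$ ($i=1,2$) for the cube pieces with $\wh{f_{i,k}}=\chi_{Q^j_k}\wh{f_i}$ and setting $g_{i,k}=\mm_{\de,\la}(D)f_{i,k}$, the triangle inequality (valid since $q/2\ge 1$) gives
\[
	\|T_j(f_1,f_2)\|_{q/2} \le \sum_{k_1\sim k_2}\|g_{1,k_1}\|_q\|g_{2,k_2}\|_q.
\]
Since $\wh{g_{i,k}}$ is supported in a set of measure $\mu_j:=2^{-j(d-1)}\de\la$, Bernstein combined with Plancherel yields $\|g_{i,k}\|_q\le C\mu_j^{1/2-1/q}\la^{-1}\|f_{i,k}\|_2$. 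Summing via Cauchy-Schwarz on the bounded-incidence relation $k_1\sim k_2$ and using the orthogonality $\sum_k\|f_{i,k}\|_2^2=\|f_i\|_2^2$ gives $\|T_j(f_1,f_2)\|_{q/2}\le C\mu_j^{1-2/q}\la^{-2}\|f_1\|_2\|f_2\|_2$, which matches \eqref{lgjeq} at $p=2$.

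For a second endpoint at some $\wt p\in(p_*,q]$, I would use the pointwise Cauchy-Schwarz $|T_j(f_1,f_2)|\le C(\sum_k|g_{1,k}|^2)^{1/2}(\sum_k|g_{2,k}|^2)^{1/2}$, then apply the vector-valued Littlewood-Paley inequality for the cube projections $\chi_{Q^j_k}$ to bound each square function in $L^{\wt p}$, and finally use Proposition~\ref{cstomul} together with Bernstein to pass from $L^{\wt p}$ to $L^q$. Real interpolation via Lemma~\ref{intpl} between the $p=2$ and the $p=\wt p$ endpoints then yields \eqref{lgjeq} throughout the claimed range.

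The main technical obstacle is matching all the exponents during interpolation: the second endpoint inherits an extra $(\de\la)^{2d/\wt p-(d-1)}$-factor from Proposition~\ref{cstomul}, and summing over the $\sim 2^{j(d-1)}$ cubes typically introduces a loss of $2^{j(d-1)(1/2-1/\wt p)}$. Both discrepancies must be absorbed using the assumption $2^{2j}\de\la\ge 1/10$, which permits the trade-off $2^{-2j}\lesssim \de\la$; a careful choice of $\wt p$ (depending on $d$ and $q$) and of the interpolation parameter is needed so that the exponents collapse to exactly $(2^{-2j(d-1)}(\de\la)^2)^{1/p-1/q}\la^{-2}$ as in \eqref{lgjeq}.
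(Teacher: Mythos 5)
Your $p=2$ endpoint computation is correct and reproduces the exponents of \eqref{lgjeq} exactly, but the plan for $p>2$ has a genuine gap, and the paper's route is both simpler and structurally different. The paper does not interpolate at the level of the summed bilinear form $T_j$: it invokes Lemma \ref{piece} (the almost-orthogonality lemma imported from \cite[Lemma 3.5]{CKLS}, which rests on the bounded overlap of the Fourier supports of the products $\mm_{\de,\la}(D)f^j_{k_1}\cdot\mm_{\de,\la}(D)f^j_{k_2}$) to reduce \eqref{lgjeq} to the single-piece bound \eqref{bibi}, and then proves \eqref{bibi} by pure convolution. After an affine change of variables, the localized kernel $K^j_{k_i}$ satisfies $\|K^j_{k_i}\|_1\lesssim\la^{-1}$ and $\|K^j_{k_i}\|_\infty\lesssim\la^{-1}2^{-j(d-1)}\de\la$, so Young's inequality gives $\|\mm_{\de,\la}(D)f^j_{k_i}\|_q\lesssim\la^{-1}(2^{-j(d-1)}\de\la)^{1/p-1/q}\|f^j_{k_i}\|_p$ for \emph{all} $1\le p\le q\le\infty$, and H\"older finishes. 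No oscillatory estimate, Littlewood--Paley theory, or bilinear extension bound is used; the hypothesis $2^{2j}\de\la\ge 1/10$ plays no role in this lemma's proof (it only decides when \eqref{lgjeq} beats Lemma \ref{smj} in the subsequent $j$-summation). The whole point is that Lemma \ref{piece} sums the $\approx 2^{j(d-1)}$ Whitney pairs with an $O(1)$ constant, with no loss.

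The gap is precisely in your substitute for Lemma \ref{piece}. If you sum the pieces by triangle inequality and H\"older over the $\approx 2^{j(d-1)}$ cubes, you pay $2^{j(d-1)(1/2-1/\wt p)}\ge 1$ for $\wt p>2$. If instead you use the Rubio de Francia square function at exponent $\wt p$ together with Proposition \ref{cstomul}, you pick up $(\de\la)^{2d/\wt p-(d-1)}\ge 1$, because any admissible $\wt p>p_*\ge\tfrac{2d}{d-1}$ forces the exponent $2d/\wt p-(d-1)<0$. Interpolating either version with your $p=2$ estimate does reproduce the correct $2^{-2j(d-1)(1/p-1/q)}$ power, but leaves a residual factor of the form $(\de\la)^{-\gamma}$ or $2^{2j\gamma}$ with $\gamma\ge 0$. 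The hypothesis $2^{2j}\de\la\ge 1/10$ only provides $2^{-2j\beta}\lesssim(\de\la)^\beta$ for $\beta\ge 0$, i.e.\ it trades a small $2^{-2j}$ for a small $\de\la$; it cannot cancel a factor that is already $\ge 1$. The mismatch disappears only when $q=\tfrac{2d}{d-1}$ (which for $d=2$ is the whole allowed range $q=4$), so for $d\ge 3$ and $\tfrac{2d}{d-1}<q\le 4$ your interpolation does not close. You need the almost-orthogonality of Lemma \ref{piece}, or an equivalent device exploiting the Fourier-support separation of the Whitney products, to sum at the sharp exponent.
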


Assuming Lemma \ref{smj} and Lemma \ref{lgj}  for the moment,  we  prove Proposition \ref{dethick}. 

\begin{proof}[Proof of  Proposition \ref{dethick}] Choose $N$ and $\epsilon>0$ so that both Lemma \ref{smjeq} and Lemma \ref{lgjeq} hold.  For $p,q$ such that $\frac{d+1}q=(d-1)(1-\frac1p)$ and $q_\circ<q<\frac{2(d+1)}{d-1}$, applying  $(\mathrm I)$ in Lemma \ref{intpl} with $k=2$ to the estimate \eqref{smjeq} we get 
\begin{equation}\label{smjsum}
\Big\|\sum_{2^{2j}\de\la <\frac1{10}} T_j(f_1,f_2)\Big\|_{q/2,\infty} \le C \la^{-2} (\de\la)^{1-d+\frac {2d}p} \prod_{i=1,2}\|f_i\|_{p,1}.
\end{equation}
On the other hand, when $2\le p <q$ and  $\frac{2d}{d-1} \le q \le 4$,  direct summation of \eqref{lgjeq} over $j$ with $2^{2j}\de\la\ge 1$ gives
\begin{equation}\label{lgjsum}
\Big\| \sum_{2^{2j}\de\la \ge\frac1{10}} T_j(f_1,f_2)\Big\|_{q/2}  \le C \la^{-2} (\de\la)^{(d+1)(\frac1p-\frac1q)} \prod_{i=1,2}\|f_i\|_{p}.
\end{equation}
Combining \eqref{smjsum} and \eqref{lgjsum} we obtain the following restricted weak type estimate
\begin{equation}\label{rest_wk}
\normo{(\mm_{\de, \la}(D)f)^2}_{q/2,\infty} \le C \la^{-2} (\de\la)^{1-d+\frac{2d}p}\|f\|_{p,1}^2
\end{equation}
for $\psi\in{\bf Ell}(N,\epsilon)$ and $m\in \mnb$  whenever $\frac1q=\frac{d-1}{d+1}(1-\frac1p)$ and $\frac{d-1}{2(d+1)}<\frac1q<\frac1{q_\circ}$. On the same range of $p,q$ we can upgrade  the restricted weak type estimates \eqref{rest_wk} to strong type bounds by using (real) interpolation between those estimates. The case $(p,q) = (2,\frac{2(d+1)}{d-1})$  follows directly from the Stein--Tomas restriction estimate in the similar argument as in the proof of Corollary \ref{L2bi}. This completes the proof of Proposition \ref{dethick}.
\end{proof}

Before we proceed to show Lemma \ref{smj} and Lemma \ref{lgj}  we recall the following lemma which is a slight modification of \cite[Lemma 3.5]{CKLS}.  Since the proof of \cite[Lemma 3.5]{CKLS} works without modification, we state it without proof. 

\begin{lem}\label{piece}
Let $2\le p <q\le 4$. Suppose that there is a constant $L$, independent of all pairs $(Q_{k_1}^j,Q_{k_2}^j)$ with $Q^j_{k_1}\sim Q^j_{k_2}$, such that 
\begin{equation}\label{1piece}
\Big\| \prod_{i=1,2} \mm_{\de, \la}(D)(f_i)^j_{k_i}\Big\|_{q/2} \le L\prod_{i=1,2}\|(f_i)^j_{k_i}\|_p.
\end{equation}
Then there is a constant $ C$ independent of $j,\, \de,\, \la,$ such that
\begin{equation}\label{summation}
\normo{T_j(f_1,f_2)}_{q/2} \le C L\|f_1\|_p\|f_2\|_p.
\end{equation}
\end{lem}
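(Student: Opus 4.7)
I would prove Lemma \ref{piece} by a standard almost orthogonality argument, using the bounded overlap of the $\xi'$-supports of the bilinear pieces $F_{k_1,k_2}:=\prod_{i=1,2}\mm_{\de,\la}(D)(f_i)^j_{k_i}$. The approach parallels that of \cite[Lemma 3.5]{CKLS}.

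First I would observe that $\widehat{F_{k_1,k_2}}$, being the convolution of two functions whose Fourier supports are in $Q^j_{k_i}\times\R$, is supported in a set whose $\xi'$-projection lies in the Minkowski sum $Q^j_{k_1}+Q^j_{k_2}$, an axis-aligned cube of side $\sim 2^{-j}$ centered at $c_{k_1}+c_{k_2}$. A short lattice-point count shows that $\{Q^j_{k_1}+Q^j_{k_2}:k_1\sim k_2\}$ has bounded overlap: for any fixed $\xi'$, the conditions $c_{k_1}+c_{k_2}\in \xi'+[-2^{-j},2^{-j}]^{d-1}$ and $|c_{k_1}-c_{k_2}|\sim 2^{-j}$ (forced by $k_1\sim k_2$) restrict the dyadic centers $(c_{k_1},c_{k_2})$ to a set of volume $\sim (2^{-j})^{2(d-1)}$ on a lattice of spacing $2^{-j}$ in $\R^{2(d-1)}$, which contains only $O_d(1)$ points.

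Next I would proceed by duality. For $g$ with $\|g\|_{(q/2)'}\le 1$, let $g^\natural_{k_1,k_2}$ denote the Fourier projection of $g$ onto a smooth thickening of $(Q^j_{k_1}+Q^j_{k_2})\times\R$. Since the $\xi'$-support of $\widehat{F_{k_1,k_2}}$ lies in $Q^j_{k_1}+Q^j_{k_2}$, one has $\langle F_{k_1,k_2},g\rangle=\langle F_{k_1,k_2},g^\natural_{k_1,k_2}\rangle$, and therefore, by H\"older and Cauchy--Schwarz,
\[
\bigl|\langle T_j(f_1,f_2),g\rangle\bigr|\le \Bigl(\sum_{k_1\sim k_2}\|F_{k_1,k_2}\|_{q/2}^{2}\Bigr)^{1/2}\Bigl(\sum_{k_1\sim k_2}\|g^\natural_{k_1,k_2}\|_{(q/2)'}^{2}\Bigr)^{1/2}.
\]
Since $q\le 4$ gives $(q/2)'\ge 2$, Minkowski's integral inequality followed by a Rubio de Francia / Littlewood--Paley square function estimate for the bounded-overlap family of equal-sized axis-aligned cubes yields
\[
\Bigl(\sum_{k_1\sim k_2}\|g^\natural_{k_1,k_2}\|_{(q/2)'}^{2}\Bigr)^{1/2}\le \Bigl\|\Bigl(\sum_{k_1\sim k_2}|g^\natural_{k_1,k_2}|^2\Bigr)^{1/2}\Bigr\|_{(q/2)'}\lesssim \|g\|_{(q/2)'}.
\]
For the other factor, the hypothesis \eqref{1piece} combined with a further Cauchy--Schwarz in $k_1\sim k_2$ (whose relation has bounded degree) reduces matters to proving $\sum_k\|(f_i)^j_k\|_p^{4}\lesssim \|f_i\|_p^{4}$. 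This follows from the uniform bound $\|(f_i)^j_k\|_p\lesssim \|f_i\|_p$ together with $\sum_k\|(f_i)^j_k\|_p^{2}\lesssim \|f_i\|_p^{2}$, the latter being a consequence of Minkowski's integral inequality applied to the standard square function estimate $\|(\sum_k|(f)^j_k|^2)^{1/2}\|_p\lesssim \|f\|_p$ for the cube tiling $\{Q^j_k\}$ when $p\ge 2$. Combining these two factors gives $\|T_j(f_1,f_2)\|_{q/2}\lesssim L\,\|f_1\|_p\|f_2\|_p$, as required.

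The main obstacle to watch is the verification of the square function estimate for the pair family $\{Q^j_{k_1}+Q^j_{k_2}\}_{k_1\sim k_2}$. Since the cubes all have comparable side length $\sim 2^{-j}$ and the family has bounded overlap, a standard coloring argument partitions it into $O_d(1)$ subfamilies of pairwise disjoint cubes, reducing matters to the classical Littlewood--Paley inequality for a single tiling of $\R^{d-1}$ by cubes of side $2^{-j}$, which holds for every $1<r<\infty$.
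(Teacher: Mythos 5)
Your overall strategy---exploiting that the Fourier $\xi'$-supports of the bilinear pieces $F_{k_1,k_2}=\prod_i\mm_{\de,\la}(D)(f_i)^j_{k_i}$ lie in the boundedly-overlapping cubes $Q^j_{k_1}+Q^j_{k_2}$, and then summing by an almost-orthogonality argument---is the right one; this is precisely the content of \cite[Lemma 3.5]{CKLS} (and, underneath it, \cite[Lemma 6.1]{TVV}), to which the paper defers. However, the orthogonality step as you have written it is wrong: you apply Minkowski's integral inequality in the wrong direction, and the error is structural rather than a slip.

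Concretely, you assert
\[
\Bigl(\sum_{\nu}\|g^\natural_{\nu}\|_{(q/2)'}^{2}\Bigr)^{1/2}\le \Bigl\|\Bigl(\sum_{\nu}|g^\natural_{\nu}|^2\Bigr)^{1/2}\Bigr\|_{(q/2)'}
\]
``by Minkowski, since $(q/2)'\ge 2$.'' But for $r\ge 2$ Minkowski gives the reverse: $\bigl\|(\sum_\nu|g_\nu|^2)^{1/2}\bigr\|_r\le(\sum_\nu\|g_\nu\|_r^2)^{1/2}$; the inner exponent may only be pulled outward when it is the smaller one. Thus the Rubio de Francia estimate you invoke controls the $L^r(\ell^2)$ quantity, which Minkowski tells you is \emph{smaller than}, not larger than, the $\ell^2(L^r)$ quantity you need. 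The same reversal invalidates your claim that $\sum_k\|(f_i)^j_k\|_p^2\lesssim\|f_i\|_p^2$ follows from the square function bound ``by Minkowski when $p\ge 2$.'' Both of the $\ell^2(L^r)$-type inequalities may in fact be salvageable by other means (e.g.\ by passing to the dual $\ell^2(L^{r'})\to L^{r'}$ bound, where Minkowski then goes the right way and can be combined with the dual form of Rubio de Francia), but as written the derivations do not close.

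The cleanest fix removes the problem at its source. After $\langle T_j(f_1,f_2),g\rangle=\sum_\nu\langle F_\nu,g^\natural_\nu\rangle$, instead of Cauchy--Schwarz in $\nu$ use H\"older with the conjugate pair $(q/2,(q/2)')$, which is admissible since $\tfrac{2}{q}+\tfrac{1}{(q/2)'}=1$. The two factors become $(\sum_\nu\|F_\nu\|_{q/2}^{q/2})^{2/q}$ and $(\sum_\nu\|g^\natural_\nu\|_{(q/2)'}^{(q/2)'})^{1/(q/2)'}$. The second is $\lesssim\|g\|_{(q/2)'}$: this is the elementary $L^r\to\ell^r(L^r)$ bound for a boundedly-overlapping family of smooth frequency projections when $r\ge 2$, obtained by interpolating Plancherel against the trivial $L^\infty\to\ell^\infty(L^\infty)$ endpoint---no square function theory needed. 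The first factor, after Cauchy--Schwarz on the bounded-degree relation $k_1\sim k_2$, reduces to $\sum_k\|(f_i)^j_k\|_p^{q}\lesssim\|f_i\|_p^q$; this follows from $\sup_k\|(f_i)^j_k\|_p\lesssim\|f_i\|_p$ together with $\sum_k\|(f_i)^j_k\|_p^{p+\epsilon}\lesssim\|f_i\|_p^{p+\epsilon}$ (again by interpolation, with room from the sharp cutoffs absorbed by $\epsilon$ since $p<q$). Alternatively, one may avoid duality altogether by applying directly the almost-orthogonality inequality $\|\sum_\nu G_\nu\|_r\lesssim(\sum_\nu\|G_\nu\|_r^r)^{1/r}$, valid for $1\le r\le 2$ when the $\widehat{G_\nu}$ have bounded-overlap supports, with $r=q/2\le 2$; this is the route taken in \cite{TVV,CKLS}.
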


\begin{proof}[Proof of Lemma \ref{smj}]
By Lemma \ref{piece} it is sufficient to show that, for $Q^j_{k_1}\sim Q^j_{k_2}$ and $p,q$ satisfying $2\le p<q\le4$ and \eqref{below_the_line}, 
\begin{equation}\label{smj0}
\Big\|\prod_{i=1,2} \mm_{\de, \la}(D)(f_i)^j_{k_i} \Big\|_{q/2} \le C 2^{2j(\frac{d+1}q -(d-1)(1-\frac 1p))} \la^{-2} (\de\la)^{1-d+\frac {2d}p} \prod_{i=1,2} \|(f_i)^j_{k_i}\|_p
\end{equation}
with $C$ independent of $j$, $k_1$, $k_2$, $\de$, $\la$, $\psi\in{\bf Ell}(N,\epsilon)$ and $m\in \mnb$. For  given $k_1$, $k_2$ with $Q^j_{k_1}\sim Q^j_{k_2}$ let $R(j,k_1,k_2)$ be the smallest closed $(d-1)$-dimensional rectangle containing $Q^j_{k_1}\cup Q^j_{k_2}$ and let $c\in \mathbb \mI^{d-1}$ be the center of $R(j,k_1,k_2)$ and set 
\[	\rho := 2^{1-j}.	\]
Now we perform the change of variables $\xi\to L_{c,\rho}(\xi)$  in the frequency side. See \eqref{affine} and \eqref{multi-mod}. By setting 
\[	\widehat g_i(\xi)=\rho^{d+1}\chi_{Q^j_{k_i}}(\rho\xi'+c) 
	\wh f_i (L_{c,\rho}(\xi)), \quad i=1, 2,	\] 
	one can easily see that
\begin{equation}\label{smj1} 
	|\mm_{\de, \la}(D)(f_i)^j_{k_i}(x)| = \big|\big[(\mm_{\de, \la}\circ L_{c,\rho})(D) g_i\big] (\rho x'+ \rho x_d \nabla\psi(c), \rho^2 x_d)\big|. 
\end{equation}
Thus we have
\begin{equation}\label{smj2}  
	\Big\|\prod_{i=1,2} \mm_{\de, \la}(D)(f_i)^j_{k_i} \Big\|_{q/2}= \rho^{-\frac{2(d+1)}q}  \Big\|  \prod_{i=1,2} ( \mm_{\de, \la}\circ L_{c,\rho})(D) g_i \Big\|_{q/2}.	
\end{equation}
We now notice that $\wh{g_i}$ is supported in  $\wt Q_i\times\mI$, where $\wt Q_i$'s are cubes in $\mI^{d-1}$ of sidelength $1/2$, and $\dist (\wt Q_1, \, \wt Q_2)\ge 1/2$.   We now recall \eqref{multi-mod} and that  $\mm_{\de, \la}\circ L_{c,\rho}$ can be regarded as a multiplier $\mm_{\de', \la}$ given by  putting $\de'=\rho^{-2}\de $, $m= m\circ L_{c,\rho}$, and $\psi=\psi_{c,\rho}$ (see Remark \ref{simple_cutoff}).  Since  $\psi_{c,\rho}\in {\bf Ell}(N,\mf c\epsilon)$  by  Lemma \ref{approximation} and   $m\circ L_{c,\rho}\in {\bf Mul}({ N}, Cb)$ for some $C>0$,  we can apply Corollary \ref{tcsbil}  to the right hand side of \eqref{smj2} with $\de$ replaced by $\rho^{-2}\de$,  to get 
\[	\Big\|\prod_{i=1,2} \mm_{\de, \la}(D)(f_i)^j_{k_i} \Big\|_{q/2}\lesssim \rho^{-\frac{2(d+1)}q} \la^{-2} (\rho^{-2}\de\la)^{1-d+\frac{2d}p}   \prod_{i=1,2}  \|g_i \|_p.	\]
It is easy to see that  $\|g_i \|_{p}= \rho^{\frac{d+1}{p}} \|(f_i)^j_{k_i}\|_p$. Hence, we get \eqref{smj0}. 
\end{proof}

\begin{proof}[Proof of Lemma \ref{lgj}] 
In order to show \eqref{lgjeq} by Lemma \ref{piece} it is sufficient to show 
\begin{equation}\label{bibi}
	\Big\| \prod_{i=1,2} \mm_{\de, \la}(D)(f_i)^j_{k_i} \Big\|_{q/2} \lesssim \la^{-2} (2^{-j(d-1)}\de\la)^{2(\frac1p-\frac1q)}\prod_{i=1,2}\|(f_i)^j_{k_i}\|_p\,.
\end{equation}
Let $\wt\chi\in C_0^\infty(\R^{d-1})$ be a smooth cutoff function supported in $\mI^{d-1}$ such that $\wt\chi = 1$ on $(2^{-1}\mI)^{d-1}$, and $c^j_{k_i}$ the center of $Q^j_{k_i}$. Then  $\wt\chi_{Q^j_{k_i}}(\xi')=\wt \chi (2^j(\xi'-c^j_{k_i}))$ is supported in $2Q^j_{k_i}$ and equal to $1$ on $Q^j_{k_i}$. Thus $\wt\chi_{Q^j_{k_i}} \widehat f_{k_i}^j=\widehat f_{k_i}^j$  and we may write $\mm_{\de, \la}(D)f^j_{k_i} = K^j_{k_i} * f^j_{k_i} $, where
\[	K^j_{k_i}(x)=  (2\pi)^{-d}\int e^{i\xi\cdot x} \mm_{\de, \la}(\xi) \wt\chi_{Q^j_{k_i}}(\xi') d\xi.	\]
For notational convenience let us set $c=c_{k_i}^j ,$ $ \rho=2^{-j}$.  By changing variables $\xi \to L_{c,\rho}\xi$ we have
\[	K^j_{k_i}(x)= (2\pi)^{-d} e^{i(x'\cdot c +x_d \psi(c))} \rho^{d+1}  \int e^{i(\rho(x'+x_d\nabla\psi(c))\cdot\xi' +\rho^{2} x_d \xi_d)}  \mathfrak M_{\de,\la} (L_{c,\rho}\xi)  \wt\chi(\xi') d\xi , 	\]
As before we regard $\mf M_{\de,\la} \circ L_{c,\rho}$ as a multiplier $\mm_{\de, \la}$ given by  $\rho^{-2}\de\to \de $, $m\circ L_{c,\rho}\to m$, and $\psi_{c,\rho}\to \psi$ (see Remark \ref{simple_cutoff}).  From \eqref{multi-mod}, \eqref{diff} and Lemma \ref{diff_uniform} it easily follows that  $|\p^\alpha_\xi \mf M_{\de,\la} (L_{c,\rho} \xi)|\lesssim \lambda^{-1}$ uniformly in $\psi\in \Ell$ and $m\in \mnb$ whenever $|\al|\le N$.  Since $(\mf M_{\de,\la}\circ L_{c,\rho}) \wt\chi$  is supported in $\mI^d$, it is clear that $\big|\F^{-1}\big((\mf M_{\de,\la}\circ L_{c,\rho} )\wt\chi\big)(x) \big|\lesssim \lambda^{-1}(1+|x|)^{-M}$ for any $M\le N$.  Thus  $\big\|\F^{-1}\big( (\mf M_{\de,\la}\circ L_{c,\rho}) \wt\chi)\big\|_1\lesssim \lambda^{-1}$, and trivially we also have  $\|K^j_{k_i}\|_\infty \lesssim \la^{-1} \rho^{d-1}\de\la. $  Thus it follows that $\|K^j_{k_i}\|_r \lesssim \la^{-1} (\rho^{d-1}\de\la)^{1-\frac 1r}$  for $1\le r\le \infty$. Since $p\le q$,  from Young's  inequality we see that
\[	\|\mm_{\de, \la}(D)f^j_{k_i}\|_q\lesssim \la^{-1} (\rho^{d-1}\de\la)^{\frac1p-\frac1q}\|f^j_{k_i}\|_p.	\]
Hence, by H\"older's inequality we get the desired estimate \eqref{bibi}. 
\end{proof}

\section{Resolvent estimates: Proof of Theorem \ref{thm}}\label{proof_of_main_thm}
\subsection{Reduction}\label{reduction}
For $z\in \zs$ let  us set 
\[	m(\xi, z)=(|\xi|^2-z)^{-1}.	\]
For every multi-index $\alpha$, it is easy to see that 
\begin{equation}\label{m_diff}
	| \partial_\xi^\alpha m(\xi, z)| \le C_\alpha \frac{\max \{|\xi|^{|\alpha|}, 1\}}{||\xi|^2-z|^{|\alpha|+1}},
\end{equation}
where the constant $C_\alpha$ is independent of $z\in \zs$. We decompose $m(\xi,z)$ into singular and regular parts. Let us fix a small number $\de_\circ>0$ and choose a function $\rho_0\in C^\infty_0(\R^d)$ such that $\rho_0(\xi)=1$ if $1-\de_\circ \le |\xi| \le 1+\de_\circ$ and $\rho_0(\xi)=0$ if $|\xi|\le 1-2\de_\circ$ or $|\xi|\ge 1+2\de_\circ$.  Setting 
\[	\rho_1:=(1-\rho_0)\chi_{B_d(0,1)}, \quad \rho_2:=(1-\rho_0)\chi_{\R^d\setminus B_d(0,1)}; \quad m_j(\xi, z):=m(\xi,z)\rho_j(\xi), \quad j=0,\, 1,\,2,	\]
we have  
\[	m(\xi,z) = \sum_{j=0}^2 m_j(\xi, z),	\] 
Since both $m_1$ and $m_2$ are zero on the annulus $1-\de_\circ\le|\xi|\le 1+\de_\circ$ it is easy to check that $||\xi|^2-z|\ge 2\de_\circ \pm\de_\circ^2$ on $\supp m_1\cup\supp m_2$. From this and \eqref{m_diff} it follows that $m_1$ and $m_2$ are uniformly bounded in $C^\infty(\R^d)$ for all $z\in \zs $. More precisely, for all $z\in\zs$, we have 
\begin{align}
\label{ufrm}  | \partial^{\alpha}_{\xi} m_1(\xi, z) |& \le C_{\alpha, \de_\circ}, \\
\label{ufrm1}  | \partial^{\alpha}_{\xi} m_2(\xi, z) | & \le C_{\alpha, \de_\circ} |\xi|^{-|\alpha|-2}. 
\end{align}
Since $\rho_1$ is a compactly supported smooth function, $m_1(D,z)$ are bounded from $L^p(\R^d)$ to $L^q(\R^d)$ for $1\le p\le q\le \infty $. Moreover, the bounds are independent of $z\in \zs$ because of  \eqref{ufrm}. On the other hand, the operators  $m_2(D, z)$ are uniformly bounded from $L^p(\R^d)$ to $L^q(\R^d)$ when $(1/p,1/q)\in \mathcal R_0(d)$, which can be seen in a similar manner as in the proof of Proposition \ref{eresb}  because of \eqref{ufrm1}. Hence it remains to deal with the operators $m_0(D,z)$. 

Let $\theta_\circ$ be a small number and set $\mathbb S^1(\theta_\circ):=\{e^{i\theta}\in\mathbb S^1: \theta\in[\theta_\circ, 2\pi-\theta_\circ] \}$.  By  \eqref{m_diff} we have,  for any $\alpha$ and $z\in \mathbb S^1(\theta_\circ)$, $| \partial^{\alpha}_{\xi} m_0(\xi, z) | \le C_{\alpha, \de_\circ, \theta_\circ}.$  Hence similar argument shows that $m_0(D,z)$ are bounded from $L^p(\R^d)$ to $L^q(\R^d)$ uniformly in $z\in \mathbb S^1(\theta_\circ)$ whenever $1\le p\le q\le \infty$. As a result, we conclude that the uniform estimate
\begin{equation}\label{ufrm2}
	\|(-\Delta-z)^{-1}\|_{p\to q} \le C, \quad \forall z\in \mathbb S^1(\theta_\circ)
\end{equation}
holds if $(1/p,1/q)\in\mathcal R_0(d)$.  

For the rest of this section, we focus on obtaining sharp bounds for $m_0(D,z)$ when $0<|\im z| \ll \re z < 1$,  which is the main part of  obtaining the estimate \eqref{con1}. By scaling $\xi \to (\re z)^{1/2} \xi$ it is harmless to assume that  $z=1+i\de$ and $0<|\de| <\theta_\circ$. Now we are reduced to showing that 
\begin{equation}\label{main_part}
\Big\| \F^{-1}\Big( \frac{\wt\chi (|\xi|) \wh f(\xi)}{|\xi|^s-1-i\de}\Big)\Big\|_q \lesssim |\de|^{-\gamma_{p,q}} \|f\|_p
\end{equation}
with $s=2$ and $\wt\chi\in C^\infty_0\big( (1-2\de_\circ,1+2\de_\circ) \big)$ for a  small  $\de_\circ>0$. 

The estimate \eqref{main_part} actually holds on a range (of $p, q$) which is wider than $\mathcal R_0$.   All the required estimates for the proof of Theorem \ref{thm} are contained in the following proposition, which completes the proof of Theorem \ref{thm}.  Though we need only deal with the case $s=2$, we prove \eqref{main_part} with  $s\neq 0$ for later use. Before stating the estimates we remind the reader of the definitions of $\gamma_{p,q}$, $B$, $B'$,  $\mathcal P$, $\mathcal T$ and $\mathcal Q$ (see \eqref{BBprime}--\eqref{quadrangle}) and $P_\ast$, $P_\circ$ (\eqref{pstar}). We also recall that $D=(\frac{d-1}{2d}, \frac{d-1}{2d})$ and $E'=(1,\frac{d-1}{2d})$. See Figure \ref{fig_added}. 
\begin{figure}
\captionsetup{type=figure,font=footnotesize}
\centering
\begin{tikzpicture} [scale=0.6]\scriptsize
	\path [fill=lightgray] (0,0)--(15/4,15/4)--(50/11, 40/11)--(6,8/3)--(6,0)--(0,0);
	\path [fill=lightgray] (10,4)--(10-8/3,4)--(10-40/11,10-50/11)--(10-15/4,10-15/4)--(10,10);
	\draw [EDR] (6,8/3)--(10-8/3,4)--(10,4)--(10,0)--(6,0);
	\draw [EDR1] (50/11, 40/11)--(5,5)--(10-40/11,10-50/11)--(10-8/3,4)--(6,8/3);
	\draw [<->] (0,10.7)node[above]{$y$}--(0,0) node[below]{$(0,0)$}--(10.7,0) node[right]{$x$};
	\draw (0,10) --(10,10)--(10,0) node[below]{$(1,0)$};
	\draw (4,4)node[above]{$D$}--(6,8/3)node[above]{$B$}--(10-8/3,4)node[left]{$B'$}--(6,6)node[above]{$D'$};
	\draw (0,0)--(4,4);
	\draw [dash pattern={on 2pt off 1pt}] (4,4)--(6,6);
	\draw (6,6)--(10,10);
	\draw [dash pattern={on 2pt off 1pt}] (6, 8/3)--(6, 2)--(6,0)node[below]{$E$};
	\draw [dash pattern={on 2pt off 1pt}] (10-8/3, 4)--(8, 4)--(10,4)node[right]{$E'$};
	\draw [dash pattern={on 2pt off 1pt}] (50/11, 40/11)--(5,5)node[above]{$H$}--(10-40/11,10-50/11)--(10-15/4,10-15/4);
	\draw (5.6, 4.4) node{$\wt{\mathcal R}_2$};
	\draw (4, 2) node{$\mathcal Q\setminus[P_\ast,P_\circ,D]$};
	\draw (8.17, 6) node{$\big(\mathcal Q\setminus[P_\ast,P_\circ,D]\big)'$};
	\draw (8, 2) node{$\mathcal P$};
	\draw [fill] (15/4,15/4) circle [radius=0.03];
	\draw [fill] (50/11, 40/11) circle [radius=0.03] node[below] {$P_\circ$};
	\draw [dash pattern={on 2pt off 1pt}] (15/4,15/4)node[left]{$P_*$}--(50/11, 40/11);
\end{tikzpicture}\caption{Proposition \ref{main_prop} when $d\ge3$.}\label{fig_added}
\end{figure}

\begin{prop}\label{main_prop}
Let $d\ge2$, $s\neq0 $, and   $(1/p, 1/q)\neq (1,0)$.  Suppose that $(1/p, 1/q) \in  \mathcal P \cup  \wt{\mathcal R}_2   \cup \big(\mathcal Q\setminus [P_\ast, P_\circ, D]\big) \cup \big(\mathcal Q\setminus [P_\ast, P_\circ, D]\big)'$.  Then the estimate \eqref{main_part} is true {provided that $\de_\circ$ is small.} If  $(1/p, 1/q)\in \{B, B'\}$,  then the restricted weak type $(p,q)$ estimate holds. If $(1/p,1/q)\in (B',E']$ the weak type $(p,q)$ estimate holds. 
\end{prop}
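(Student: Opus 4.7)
The plan is to bound \eqref{main_part} by localizing and dyadically decomposing the multiplier so as to reduce it to the two model pieces $\mm_\de$ and $\mm_{\de,\la}$ handled in Proposition \ref{dethick}, and then to sweep out the remaining regions by real interpolation.  After a smooth partition of unity on the annulus $\supp\wt\chi(|\xi|)$, together with a harmless affine change of variable, it suffices to treat a piece whose frequency support lies in a small neighborhood of a fixed point of $\{|\xi|=1\}$, parametrized as a graph $\xi_d=\psi(\xi')$ with $\psi\in \Ell$ (for any prescribed large $N$ and small $\epsilon$). A one-variable Taylor expansion in $\xi_d$ at $\xi_d=\psi(\xi')$ factorizes $|\xi|^s-1=m(\xi)(\xi_d-\psi(\xi'))$ with a smooth $m\in \mnb$ satisfying $|m|\approx |s|$ (after normalization $|m|\approx 1$). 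Taking $\varphi(t)=1/(t-i)$, which satisfies \eqref{diff}, the localized multiplier becomes
\[
\frac{\chi_0(\xi)}{|\xi|^s-1-i\de}
	=\frac1\de\,\chi_0(\xi)\,\varphi\!\left(\frac{m(\xi)(\xi_d-\psi(\xi'))}\de\right).
\]
Inserting $1=\be_0(u/\de)+\sum_{j\ge1}\be(u/(2^j\de))$ with $u=m(\xi)(\xi_d-\psi(\xi'))$ (a finite sum since $|u|\lesssim 1$ on $\supp\chi_0$), the localized multiplier decomposes as $\de^{-1}\mm_\de+\de^{-1}\sum_{j\ge 1}\mm_{\de,2^j}$.

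For $(1/p,1/q)$ on the line $1/q=\tfrac{d-1}{d+1}(1-1/p)$ with $q_\circ<q<\tfrac{2(d+1)}{d-1}$, a direct computation gives $\gamma_{p,q}=\tfrac{d+1}2-\tfrac dp>0$, and applying Proposition \ref{dethick} to each dyadic piece yields
\[
\de^{-1}\|\mm_{\de,2^j}(D)f\|_q\lesssim 2^{-j\gamma_{p,q}}\,\de^{-\gamma_{p,q}}\|f\|_p,
\]
and similarly for $\de^{-1}\mm_\de(D)$.  Since the geometric series in $j$ converges, this produces the sharp main-line bound $\lesssim\de^{-\gamma_{p,q}}\|f\|_p$. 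The endpoint $(1/p,1/q)=B$, where $\gamma_{p,q}=0$ and the sum is just borderline, is upgraded to a restricted weak-type estimate by applying Lemma \ref{intpl}(III) to the bilinear blocks encountered in the proof of Proposition \ref{dethick} (cf.~Lemma \ref{smj}, Lemma \ref{lgj}); the weak-type bound on $(B',E']$ comes from Lemma \ref{intpl}(I) in the same scheme, and $B'$ is obtained by an adjoint argument based on $\overline{(|\xi|^s-1-i\de)^{-1}}=(|\xi|^s-1+i\de)^{-1}$ with $\de\leftrightarrow -\de$.

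With the main line and its endpoints in hand, the remaining regions follow by real interpolation: (a) for $(1/p,1/q)\in\mathcal P$ one invokes the uniform resolvent estimate of Guti\'errez (equivalently Theorem \ref{rest_ext_sphere}), corresponding to $\gamma_{p,q}=0$; (b) for $\wt{\mathcal R}_2$ one interpolates the main line with the trivial Plancherel bound $\|(-\Delta-z)^{-1}\|_{2\to2}\lesssim\de^{-1}$ at $H$; (c) for $\mathcal Q\setminus[P_\ast,P_\circ,D]$ one interpolates the main line with a diagonal $L^p\to L^p$ estimate ($p>p_\ast$) obtained by running the same dyadic scheme but using the $L^p$-bounds of Proposition \ref{cstomul} in place of Proposition \ref{dethick} on each piece.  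The dual region $(\mathcal Q\setminus[P_\ast,P_\circ,D])'$ is then automatic by adjointness.

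The main technical obstacle will be the endpoint bookkeeping at $B$, $B'$ and along $(B',E']$, where the dyadic geometric series is exactly borderline and only a restricted weak-type (respectively weak-type) bound is available; this forces careful use of Lemma \ref{intpl}(I)--(III) on the bilinear blocks rather than a naive strong-type summation, and requires delicate matching of the three distinct formulae for $\gamma_{p,q}$ that coexist on the common boundaries of $\mathcal P$, $\wt{\mathcal R}_2$ and $\mathcal Q\setminus[P_\ast,P_\circ,D]$.
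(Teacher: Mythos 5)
Your treatment of the interior regions $\wt{\mathcal R}_2\cup(\mathcal Q\setminus[P_\ast,P_\circ,D])\cup(\mathcal Q\setminus[P_\ast,P_\circ,D])'$ matches the paper's Section \ref{pf_of_main_prop} quite closely: localize to a graph $\xi_d=\psi(\xi')$, factor $|\xi|^s-1=m(\xi)(\xi_d-\psi(\xi'))$ with $m\in\mnb$, take $\varphi(t)=(st\pm i)^{-1}$, decompose dyadically as $\de^{-1}\mm_\de+\de^{-1}\sum_j\mm_{\de,2^j}$, and sum the bounds from Proposition \ref{dethick} (resp.\ Proposition \ref{cstomul}) on each piece; reduction to the 1-dimensional sets $(P_\circ,B)$ and $[(0,0),P_\ast)$ via duality, Plancherel at $H$, and interpolation is also what the paper does. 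The minor slip is that Lemma \ref{intpl}(I) produces the $L^{p,1}\to L^{q,\infty}$ restricted weak-type bound and (III) the $L^p\to L^{q,\infty}$ weak-type bound, so you have their roles reversed.

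The genuine gap is your treatment of the endpoints $B$, $B'$ and the weak-type segment $(B',E']$, and hence of the pentagon $\mathcal P$, which you propose to extract from ``the bilinear blocks encountered in the proof of Proposition \ref{dethick}.'' This cannot work: Proposition \ref{dethick} is limited to $q_\circ<q\le\frac{2(d+1)}{d-1}$, while $B$ has $q=\frac{2d(d+1)}{(d-1)^2}>\frac{2(d+1)}{d-1}$ for every $d\ge2$; moreover the underlying bilinear lemmas \ref{smj}, \ref{lgj} explicitly require $p\ge2$ and $q\le4$, and at $B$ one has $p=\frac{2d}{d+1}<2$ and $q>4$. The machinery built around Theorem \ref{osc-est1} and Tao's bilinear estimate simply does not reach these endpoints. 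The paper instead uses an entirely different argument in Section 4.2: split the multiplier into real and imaginary parts, further decompose the real part dyadically using a function $\phi$ with $\supp\wh\phi\subset\{|t|\approx1\}$ (as in \cite{JKL}), and for each dyadic piece invoke the known sphere restriction--extension estimates (Theorem \ref{rest_ext_sphere}, which in particular provides the restricted weak-type bound at $B'$ and the weak-type bounds on $(B',E']$) together with the kernel estimate of Lemma \ref{local_special}; the summation is then closed via Lemma \ref{intpl}(I) for $B'$ and Lemma \ref{intpl}(III) for $(B',E']$, and $B$ follows by duality. Citing Guti\'errez only recovers the strong estimate on $\mathcal P$ (for $d\ge3$) and the restricted weak-type bounds at $B,B'$, not the weak-type bounds on $(B',E']$, which are new and which the interpolation scheme for the interior regions actually depends on. Without this separate endpoint argument your proof does not close.
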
 

In what follows we consider the cases $(1/p, 1/q)\in  \mathcal P \cup \{B,  B'\}\cup (B',E'] \setminus\{(1,0)\}$ and $(1/p, 1/q) \in \wt{\mathcal R}_2   \cup \big(\mathcal Q\setminus [P_\ast, P_\circ, D]\big) \cup \big(\mathcal Q\setminus [P_\ast, P_\circ, D]\big)'$, separately. 

\subsection{Proof of Proposition \ref{main_prop} when $(1/p, 1/q)\in \mathcal P\cup \{B, B'\}\cup (B',E']$ and $(1/p, 1/q)\neq (1,0)$}
It is enough to show the following: 
\begin{gather}
\label{rest_weak_type} 
	\Big\| \F^{-1}\Big( \frac{\wt\chi (|\xi|) \wh f(\xi)}{|\xi|^s-1-i\de}\Big)\Big\|_{\frac {2d}{d-1},\infty}
	\lesssim	\|f\|_{\frac{2d(d+1)}{d^2+4d-1},1},  \\
\label{weak_type}
	\Big\| \F^{-1}\Big( \frac{\wt\chi (|\xi|) \wh f(\xi)}{|\xi|^s-1-i\de}\Big)\Big\|_{\frac {2d}{d-1}, \infty}
	\lesssim	\|f\|_p, \quad 1\le p<\frac{2d(d+1)}{d^2+4d-1}.
\end{gather}
The estimates in  \eqref{weak_type} are the weak type $(p,q)$  estimates for $(1/p,1/q)\in (B',E']$, and  \eqref{rest_weak_type} is the restricted weak type $(p,q)$ estimate  with  $(1/p, 1/q)=B'$.  By duality, (real) interpolation, and Young's inequality (note that the multiplier has compact support), it is easy to see that the estimate \eqref{main_part} for $(1/p, 1/q)\in \mathcal P\setminus\{(0,1)\}$ follows from \eqref{rest_weak_type} and \eqref{weak_type}.  Indeed, note that $\gamma_{p,q}=0$ when $(1/p, 1/q)\in [(1,0), E, B, B', E']$.

We prove \eqref{rest_weak_type} and \eqref{weak_type} by making use of  Theorem \ref{rest_ext_sphere} as in \cite{JKL, KRS}.  Both arguments  to show \eqref{rest_weak_type} and \eqref{weak_type} are not much different from each other except for using different estimates in Theorem \ref{rest_ext_sphere}.

\begin{proof}[Proof of \eqref{rest_weak_type}] 
Let  us  fix $(1/p_0,1/q_0)=B'$ and write   
\[	\frac{\wt\chi (|\xi|)}{|\xi|^s-1-i\de}= \mathfrak R(\xi)+i\mathfrak I(\xi):= \frac{(|\xi|^s-1)\wt\chi (|\xi|)}{(|\xi|^s-1)^2+\de^2} + i \frac{\de \wt\chi (|\xi|)}{(|\xi|^s-1)^2+\de^2}.	\]  
Then \eqref{rest_weak_type} follows if we show  that both the operators $\mathfrak R(D)$ and $\mathfrak I(D)$ are of restricted weak type $(p_0, q_0)$.  The  desired estimate for $\mathfrak I(D)$ is easier than that for $\mathfrak R(D)$. Writing in the spherical coordinates, application of Minkowski's inequality and Theorem \ref{rest_ext_sphere} gives
\begin{align*}
\| \mathfrak I (D) f\|_{q_0,\infty} 
	&\lesssim \int_{1-2\de_\circ}^{1+2\de_\circ} \frac{|\de|}{(\rho^s-1)^2+\de^2} \Big\|\int_{\mathbb S^{d-1}}\wh f (\rho\theta) e^{i\rho x\cdot \theta} d\s(\theta) \Big\|_{q_0,\infty} d\rho \\
	&\lesssim \|f\|_{p_0,1} \int \frac{|\de|}{t^2+\de^2} dt \lesssim \|f\|_{p_0,1}.
\end{align*}

For the real part, we decompose the multiplier  $\mathfrak R(\xi)$ as in \cite[Section 4]{JKL}. Let $\phi\in \mathcal S(\R)$ be such that $\supp \wh \phi \subset [-2,-1/2]\cup[1/2,2]$, $\sum_{j=-\infty}^\infty 2^{-j}t \phi(2^{-j}t)=1$ whenever $t\in\R\setminus\{0\}$, and we set $\wt\phi (t) = t \phi (t)$.\footnote{For a proof of existence of such $\phi$ we refer the reader to \cite[Lemma 2.2]{JKL}. Also, see \cite[Lemma 2.1]{CKLS}.} Let us define
\begin{gather*}
A_j(\xi):=\mathfrak R(\xi)\, \wt\phi(2^{-j} (|\xi|^s-1)),	 \quad	B_j(\xi):=\Big(\mathfrak R(\xi)-\frac{\wt\chi (|\xi|)}{|\xi|^s-1} \Big) \wt\phi(2^{-j}(|\xi|^s -1)), \\ 
	C_j(\xi):=\frac{\wt\chi (|\xi|)}{|\xi|^s-1}\wt\phi (2^{-j}(|\xi|^s -1))
\end{gather*}
for each $j\in\Z$, and break the multiplier into
\[	\mathfrak R(\xi)= \sum_{2^j<|\de|} A_j(\xi) +\sum_{2^j\ge |\de|} B_j(\xi) + \sum_{2^j\ge |\de|} C_j(\xi).	\]
Again, by using the spherical coordinate, Minkowski's inequality, and Theorem \ref{rest_ext_sphere}, we see that
\begin{align*}
\Big\| \F^{-1} \Big( \sum_{2^j<|\de|} A_j(\xi) \wh f(\xi)\Big)\Big\|_{q_0,\infty} 
	& \lesssim \|f\|_{p_0,1}  \sum_{2^j<|\de|} \int_{1-2\de_\circ}^{1+2\de_\circ} \frac{|\rho^s-1||\wt\phi (2^{-j}(\rho^s-1))|}{(\rho^s-1)^2+\de^2} d\rho \\
	& \lesssim \|f\|_{p_0,1} \int_{1-2\de_\circ}^{1+2\de_\circ} \frac{|\de|}{(\rho^s-1)^2+\de^2} d\rho \lesssim  \|f\|_{p_0,1}
\end{align*}
since $\sum_{2^j<|\de|} |t \wt\phi(2^{-j}t)|\lesssim \sum_{2^j<|\de|} 2^j \lesssim |\de|$. Similarly we have
\begin{align*}
\Big\| \F^{-1} \Big( \sum_{2^j\ge |\de|} B_j(\xi) \wh f(\xi)\Big)\Big\|_{q_0,\infty} 
	& \lesssim \|f\|_{p_0,1}  \sum_{2^j\ge |\de|} \int_{1-2\de_\circ}^{1+2\de_\circ} \frac{\de^2 2^{-j} |\phi (2^{-j}(\rho^s-1))|}{(\rho^s-1)^2+\de^2} d\rho \\
	& \lesssim \|f\|_{p_0,1} \int_{1-2\de_\circ}^{1+2\de_\circ} \frac{|\de|}{(\rho^s-1)^2+\de^2} d\rho \lesssim  \|f\|_{p_0,1}.
\end{align*}

To estimate the multiplier operator given by $C_j$ we need the following. 
\begin{lem}\label{local_special} 
Let {$s\ne 0$} and $\la>0$.  Suppose $\phi\in\mathcal S(\R)$ with $\supp\wh\phi \subset [-2,-1/2]\cup[1/2,2]$. Then, for $1\le p,q\le\infty$ satisfying $q\ge 2$ and $\frac 1q \ge \frac{d+1}{d-1}(1-\frac1p)$,
\begin{equation}\label{loc_sp}
	\left\| \F^{-1} \left(\phi \big(\la^{-1}(|\xi|^s-1) \big)\wt\chi(|\xi|)\wh f(\xi) \right) \right\|_{q}\lesssim \la^{\frac{d+1}{2}-\frac dq} \|f\|_p,
\end{equation}
where $\wt\chi\in C^\infty_0\big( (1-\de_\circ, 1+\de_\circ) \big)$ for some small $\de_\circ>0$. 
\end{lem}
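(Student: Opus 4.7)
Let me set $m_\lambda(\xi):=\phi(\lambda^{-1}(|\xi|^s-1))\wt\chi(|\xi|)$. The plan is to verify the estimate at the three vertices of the triangular region $\{(1/p,1/q): q\ge 2,\, 1/q\ge\tfrac{d+1}{d-1}(1-1/p)\}$---namely $(1/p,1/q)=(1,\tfrac12)$, $(\tfrac{d+3}{2(d+1)},\tfrac12)$, and $(1,0)$---and then to interpolate. Note that since $s\ne0$ and $\wt\chi$ is supported near $|\xi|=1$, the support of $m_\lambda$ is an annular region of Lebesgue measure $\lesssim\lambda$ with $\|m_\lambda\|_\infty\lesssim1$; in particular $\|m_\lambda\|_{L^r}\lesssim\lambda^{1/r}$.

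At $(p,q)=(1,2)$ I would use Plancherel and Hausdorff--Young:
\[
\|m_\lambda(D)f\|_2=\|m_\lambda\widehat f\|_2\le\|m_\lambda\|_2\|\widehat f\|_\infty\lesssim\lambda^{1/2}\|f\|_1,
\]
which matches $\lambda^{(d+1)/2-d/2}=\lambda^{1/2}$. At the Stein--Tomas vertex $(p,q)=(2(d+1)/(d+3),2)$, I would exploit the radial dependence of $m_\lambda$ and decompose in polar coordinates:
\[
\|m_\lambda(D)f\|_2^{\,2}=\int_0^\infty r^{d-1}|\phi(\lambda^{-1}(r^s-1))\wt\chi(r)|^2\int_{S^{d-1}}|\widehat f(r\omega)|^2\,d\sigma(\omega)\,dr.
\]
Apply the classical Tomas--Stein restriction inequality $\int_{S^{d-1}}|\widehat f(r\omega)|^2\,d\sigma\lesssim\|f\|_{2(d+1)/(d+3)}^{\,2}$ at each $r\sim1$ (uniformity in $r$ following by a trivial rescaling), and observe that the remaining radial integral is $\lesssim\lambda$ by the support and uniform boundedness of $m_\lambda$.

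The hard vertex is $(p,q)=(1,\infty)$, where the claim is $\|K_\lambda\|_\infty\lesssim\lambda^{(d+1)/2}$ for the kernel $K_\lambda:=\F^{-1}(m_\lambda)$. This improves on the naive Young bound $\|K_\lambda\|_\infty\le\|m_\lambda\|_1\lesssim\lambda$, and the improvement comes from the curvature of the sphere combined with the cancellation $\widehat\phi(0)=0$ (forced by $\supp\widehat\phi$ being bounded away from the origin). In polar coordinates,
\[
K_\lambda(x)=\int_0^\infty r^{d-1}\phi(\lambda^{-1}(r^s-1))\wt\chi(r)\,\widehat{d\sigma_r}(x)\,dr.
\]
For $|x|\ge1$, using the large-argument asymptotic $\widehat{d\sigma_r}(x)\sim(r|x|)^{-(d-1)/2}(c_+e^{ir|x|}+c_-e^{-ir|x|})$ and the substitution $u=\lambda^{-1}(r^s-1)$, the inner integral reduces (up to smooth factors of size $O(1)$) to a constant multiple of $\lambda\,\widehat\phi(\pm\lambda|x|/s)$; since $\widehat\phi$ is supported in $|\tau|\in[\tfrac12,2]$, this localizes $K_\lambda$ essentially to $|x|\sim\lambda^{-1}$ with amplitude $\lesssim\lambda|x|^{-(d-1)/2}\sim\lambda^{(d+1)/2}$. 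For $|x|\lesssim1$, Taylor expansion of $\widehat{d\sigma_r}(x)$ in $r$ around $r=1$ together with $\int\phi=0$ yields an even stronger bound $O(\lambda^2)$; for $|x|\gg\lambda^{-1}$, integration by parts in $r$ (each exploiting the phase $r|x|$ against amplitude varying at scale $\lambda$) gives rapid decay.

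With the three endpoint estimates in hand, Riesz--Thorin (or real) interpolation covers the entire triangular region. The main obstacle is the $(1,\infty)$ endpoint: neither Young's inequality nor stationary phase on the sphere alone suffices for small $\lambda$, and the gain of $\lambda^{(d-1)/2}$ over the $L^1$-norm bound must be extracted by a careful stationary-phase analysis that combines sphere curvature with the cancellation property of $\phi$.
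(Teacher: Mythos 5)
Your overall skeleton matches the paper's: reduce by interpolation (plus Plancherel/Young) to the Stein--Tomas vertex and the $(p,q)=(1,\infty)$ vertex, prove the $L^2$ bound via the restriction theorem, and prove the $L^1\to L^\infty$ bound by a pointwise kernel estimate $|K_\lambda(x)|\lesssim\lambda^{(d+1)/2}$. The genuine gap is in the kernel estimate near $x=0$. You argue that for $|x|\lesssim 1$, Taylor expansion in $r$ plus the single cancellation $\int\phi=0$ yields $O(\lambda^2)$. But that falls short of the required $\lambda^{(d+1)/2}$ as soon as $d\ge 4$; indeed, one can check directly that $K_\lambda(0)=(2\pi)^{-d}\int\phi(\lambda^{-1}(|\xi|^s-1))\wt\chi(|\xi|)\,d\xi$ is genuinely of size $\approx\lambda^2$ when only the zeroth moment of $\phi$ vanishes, so your bound cannot be improved without using more structure. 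The hypothesis $\supp\widehat\phi\subset[-2,-1/2]\cup[1/2,2]$ is far stronger than $\widehat\phi(0)=0$: it forces \emph{all} derivatives of $\widehat\phi$ at $0$ to vanish, hence all moments $\int u^k\phi(u)\,du=0$, and one must exploit this to push the Taylor bound to $O(\lambda^M)$ for arbitrary $M$. The paper encodes this more cleanly by writing $\phi(\lambda^{-1}(\rho^s-1))=\frac1{2\pi}\int\widehat\phi(r)e^{ir\lambda^{-1}(\rho^s-1)}\,dr$ so that the kernel becomes a triple oscillatory integral in $(\theta,\rho,r)$ with phase $\rho\,x\cdot\theta+r\lambda^{-1}(\rho^s-1)$; on $\supp\widehat\phi$ one has $|r|\approx 1$, so for $|x|\le|s|\lambda^{-1}/100$ the $\rho$-derivative of the phase is $\gtrsim\lambda^{-1}$ and integration by parts in $\rho$ yields $|K_\lambda(x)|\lesssim\lambda^M$ directly, bypassing any Bessel asymptotics in this range.

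A secondary concern: your reduction of the inner $r$-integral to "a constant multiple of $\lambda\widehat\phi(\pm\lambda|x|/s)$" for $1\lesssim|x|\lesssim\lambda^{-1}$ is only an approximation; the higher-order corrections to the phase (which are not linear in $u$ after substitution), the $u$-dependence of the Jacobian and of $\wt\chi$, and the $O((r|x|)^{-(d+1)/2})$ Bessel error term do not vanish where $\widehat\phi(\pm\lambda|x|/s)$ does. In particular, for $|x|\approx 1$ the crude Bessel error bound after integrating against $\phi$ is $O(\lambda)$, which again exceeds $\lambda^{(d+1)/2}$ when $d\ge 2$. These errors must be estimated by integration by parts as well, and the paper's uniform treatment of the whole region $|x|\le|s|\lambda^{-1}/100$ handles this and the $x\approx 0$ issue at once. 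For $|x|\gg\lambda^{-1}$ and for $|x|\approx\lambda^{-1}$ your argument is essentially the paper's (integration by parts in $\rho$ using the asymptotic expansion, and taking absolute values, respectively).
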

Assuming this lemma for the moment let us continue.  Since $C_j(\xi)=\wt\chi(|\xi|) 2^{-j} \phi (2^{-j}(|\xi|^s-1))$, by Lemma \ref{local_special}  we have
\begin{equation} \label{rs_est}
\big\| C_j(D) f \big\|_\s
	=  2^{-j} \Big\| \F^{-1} \Big( \phi \big( 2^{-j} (|\xi|^s-1) \big) \wt\chi(|\xi|) \wh f(\xi)\Big) \Big\|_\s 
	\lesssim  2^{j(\frac{d-1}2-\frac d\s)} \|f\|_r
\end{equation}
for $2\le \s\le\infty$, $\frac1\s\ge\frac{d+1}{d-1}(1-\frac1r)$. Application of $(\mathrm I)$ in Lemma \ref{intpl} yields 
\[	\Big\| \F^{-1} \Big( \sum_{2^j\ge |\de|} C_j(\xi) \wh f(\xi)\Big)\Big\|_{q_0,\infty} \lesssim \|f\|_{p_0,1}.	\]
Therefore, the proof of \eqref{rest_weak_type} is completed.   
\end{proof}

\begin{proof}[Proof of \eqref{weak_type}]
We may follow  the same lines of argument as in the proof of  \eqref{rest_weak_type}  by replacing  the $L^{p_0,1}$--$L^{q_0,\infty}$ estimate for the restriction-extension operator with the $L^p$--$L^{q,\infty}$ estimate for the same operator with $(1/p,1/q)\in (B', E']$ in Theorem \ref{rest_ext_sphere}. The only difference occurs when we attempt to prove 
\[	\Big\| \F^{-1} \Big( \sum_{2^j\ge |\de|} C_j(\xi)  \wh f(\xi)\Big)\Big\|_{q,\infty} \lesssim \|f\|_{p}.	\]
However, this can be obtained again by \eqref{rs_est} and using the last statement $(\mathrm I\!\mathrm I\!\mathrm I)$ in Lemma \ref{intpl} since we can fix $p$ while $q$ is allowed to be chosen to satisfy the assumption in Lemma \ref{intpl}. This observation first appeared in Bak \cite{bak}. Also, see \cite{CKLS}. 
\end{proof}

Now, we prove Lemma \ref{local_special}. 
\vspace{-10pt}
\begin{proof}[Proof of Lemma \ref{local_special}]
We may assume that $\la\le 1/100$. Otherwise, for every $M\ge 0$, the multiplier in \eqref{loc_sp} is smooth and uniformly bounded in $C^M_0(\R^d)$, hence \eqref{loc_sp} is trivial. By interpolation and Young's inequality, it is sufficient to show \eqref{loc_sp} for $(p,q)=(\frac{2(d+1)}{d+3},2)$, and for $(p,q)=(1,\infty)$. When $(p,q)=(\frac{2(d+1)}{d+3},2)$ using Plancherel's identity and the Stein--Tomas restriction theorem (\cite{St-beijing, Tom}) we have
\begin{align*}
\Big\| \F^{-1} \Big(\phi \big(\la^{-1}(|\xi|^s-1) \big)\wt\chi(|\xi|)\wh f(\xi) \Big) \Big\|_2
	&\approx\Big( \int \big|\phi \big(\la^{-1}(\rho^s-1) \big) \wt\chi(\rho)\big|^2 \Big| \int_{\mathbb S^{d-1}} \wh f(\rho\theta) d\s (\theta) \Big|^2 \rho^{d-1} d\rho \Big)^\frac12 \\
	&\lesssim \Big( \int_{1-\de_\circ}^{1+\de_\circ} \big|\phi \big(\la^{-1}(\rho^s-1) \big)\big|^2 d\rho\Big)^\frac12 \|f\|_\frac{2(d+1)}{d+3} \lesssim \la^\frac12  \|f\|_\frac{2(d+1)}{d+3}.
\end{align*}
Thus, it remains to show \eqref{loc_sp} when $(p,q)=(1,\infty)$. The related kernel is given by
\[	K(x)=(2\pi)^{-d-1}\iint \int_{\mathbb S^{d-1}} e^{i(\rho x\cdot\theta +r\la^{-1}(\rho^s-1))} \chi(\rho) \wh\phi(r) d\s(\theta) \, dr d\rho,	\] 
where $\chi(\rho):=\wt\chi(\rho)\rho^{d-1}$, and it suffices to show that
\begin{equation}\label{kernel_est}
|K(x)|\lesssim \la^{\frac{d+1}2}.
\end{equation}
We separately consider the three cases $|x|\le |s|\la^{-1}/100$, $|x|\ge 100|s|\la^{-1}$, and $|x|\approx \la^{-1}$. For the first case, since $\supp\wh\phi\subset [-2,-1/2]\cup[1/2,2]$, we have $|\frac d{d\rho} (\rho x\cdot\theta +r\la^{-1}(\rho^s-1) ) | \gtrsim \la^{-1}$. Hence integration by parts gives $|K(x)|\lesssim \la^{M}$ for any $M\ge0$. For the rest of cases, we recall $\int_{\mathbb S^{d-1}} e^{ix\cdot \theta} d\s (\theta)=c_d |x|^{-\frac{d-2}2} J_{\frac{d-2}{2}}(|x|)$ and use the  asymptotic expansion of the Bessel function $J_\nu$ (\cite{OM, St-book}). Thus we have 
\begin{align}	
K(x)	&= \sum_{\pm} \sum_{j=0}^M c_{j,\pm} |x|^{-\frac{d-1}2 -j}\iint e^{i(r\la^{-1}(\rho^s-1)\pm \rho|x|)} \chi_{j,\pm}(\rho) \wh \phi(r) d\rho dr +O (|x|^{-M-\frac{d+1}2} ) \label{kernel1}	\\
	&= (2\pi)^{-1}\sum_{\pm} \sum_{j=0}^M c_{j,\pm}|x|^{-\frac{d-1}2 -j}\int e^{\pm i\rho|x|}  \chi_{j,\pm}(\rho) \phi\big(\la^{-1}(\rho^s -1)\big) d\rho +O (|x|^{-M-\frac{d+1}2} ),	\label{kernel2}
\end{align}
for $M\ge d$ and $\chi_{j,\pm} \in C^\infty_0\big( (1-\de_\circ, 1+\de_\circ) \big)$. When $|x|\ge 100|s|\la^{-1}$ we use \eqref{kernel1}. Since $\wh\phi(r)\neq 0$ only if $|r|\approx 1$, we have $|\frac d{d\rho} (r\la^{-1}(\rho^s-1) \pm \rho|x|) | \gtrsim |x|$, so $|K(x)|\lesssim |x|^{-M}$ for any $M\ge0$ by integration by parts. When $|x|\approx \la^{-1}$ taking the absolute value of the integrands in \eqref{kernel2} we get $|K(x)|\lesssim \la^\frac{d+1}{2}$. Therefore, \eqref{kernel_est} follows.
\end{proof}

\begin{rem}\label{diff_Gu}
In \cite[pp. 16--22]{Gu} the estimate $\|m_0(D, 1+i\de)\|_{p\to q}\lesssim 1$ was obtained by decomposing the kernel $K_3(x):=\F^{-1}\big(m_0(\, \cdot \, , 1+i\de)\big)(x)$ and using oscillatory integral estimate. Instead we work in the Fourier transform side by decomposing the multiplier $m_0(\xi, 1+i\de)$ dyadically away from the unit sphere $|\xi|=1$ carrying singularity.
\end{rem}

\subsection{Proof of  Proposition \ref{main_prop} when $(1/p, 1/q) \in  \wt{\mathcal R}_2   \cup \big(\mathcal Q\setminus [P_\ast, P_\circ, D]\big) \cup \big(\mathcal Q\setminus [P_\ast, P_\circ, D]\big)'$} \label{pf_of_main_prop}
Since we already have  the estimates \eqref{rest_weak_type}, \eqref{weak_type},  and \eqref{main_part} with $p=q=2$, in view of interpolation and duality, it is sufficient to show \eqref{main_part} for $p,q$ satisfying $ (1/p,1/q)\in (P_\circ, B) \cup [(0,0), P_\ast)$. 

For the purpose we may assume $|\delta|$ is small enough. Thus,  by  finite decomposition, rotation, and discarding harmless smooth part of the multiplier,  we may assume that the multiplier is supported near $(0,\cdots,0,-1)\in\R^d$.  We write $\xi=(\eta, \tau)\in\R^{d-1}\times \R $ and  
\[	|\xi|^s-1= \frac{(\tau+\sqrt{1-|\eta|^2}) (\tau-\sqrt{1-|\eta|^2}) ((\ta^2+|\eta|^2)^\frac s2 -1)}{\ta^2+|\eta|^2-1} .\]
Let us set 
\[\psi(\eta)=1-\sqrt{1-|\eta|^2},  \quad  m(\eta, \tau)=\frac{1}{s} \frac{(\tau+\psi(\eta)-2)(((\ta-1)^2+|\eta|^2)^\frac s2 -1)}{(\ta-1)^2+|\eta|^2-1}.  \] 
It is easy to see that  $m(\eta, \tau)=-1+O(|\tau|+|\eta|^2)$ for $|\tau|, |\eta|\ll 1$. In particular, for the case $s=2$ (corresponding to the Laplacian resolvent), the function $m(\eta, \tau)$ takes simpler form and it is easy to see that there is no singularity. After change of variables $\tau\to \tau-1$,  we may further assume that the multiplier is of the form
\[	\mf M_\delta (\eta, \tau) = \frac {\chi_0(\eta, \tau)}{s( \tau-\psi (\eta) )m(\eta,\ta)-{i\de}},	\]
where $\chi_0$ is a smooth function supported on a  small neighborhood of the origin in $\R^d$.  By further harmless affine transformations {(see \eqref{affine} and Lemma \ref{approximation})}, we may assume that $\psi \in{\bf Ell}(N, \epsilon)$ for a large $N\ge 10d$ and a small $\epsilon>0$, and $m\in {\bf Mul}(N, b)$ for some $b>0$, so that both Proposition \ref{dethick} and Proposition \ref{cstomul} are valid.  Thus  $\mf M_\delta$ takes the form
\[	\mf M_\delta(\eta, \tau) = \frac1{|\de|} \varphi\Big( \frac{m(\eta,\ta)(\ta-\psi(\eta))}{|\de|} \Big)\chi_0(\eta, \tau)	\]
for $\varphi (t) = (st\pm i)^{-1}$, which clearly satisfies the condition \eqref{diff}.  We break ${\mf M_\delta}$ as follows:
\begin{align}\label{break_dyadic}
\mf M_\delta(\eta, \ta) &= \frac1{|\de|} \varphi\Big( \frac{m(\eta,\ta)(\ta-\psi(\eta))}{|\de|} \Big) \be_0 \Big( \frac{m(\eta,\ta)(\ta-\psi(\eta))}{|\de|}\Big) \chi_0(\eta, \tau)  \\
\nonumber&\qquad + \frac1{|\de|}\sum_{j=1}^{\log\frac1{|\de|}} \varphi\Big( \frac{m(\eta,\ta)(\ta-\psi(\eta))}{|\de|} \Big) \be \Big( \frac{m(\eta,\ta)(\ta-\psi(\eta))}{2^{j-1}|\de|}\Big)\chi_0(\eta, \tau).
\end{align}

Since $(1/p,1/q)\in (P_\circ, B) \cup [(0,0), P_\ast)$, we note that $\gamma_{p,q}=\frac{d+1}2-\frac dp$, and that $(p,q)$ are the pairs given by $\frac{2d}{d+1}< p <p_\circ$ and $\frac 1q = \frac{d-1}{d+1}(1-\frac 1p)$, or $p_\ast<p=q\le \infty$. We apply Proposition \ref{dethick} with $\de$ and $\la$ replaced with $|\de|$ and $2^{j-1}$, respectively, to each of the multiplier operators  which are given by the functions on the right hand side of  \eqref{break_dyadic}. This yields, for $2\le p<p_\circ$ and $\frac 1q=\frac{d-1}{d+1}(1-\frac 1p)$, 
\[	\|{\mf M_\delta}(D)\|_{p\to q} 
	\lesssim \frac1{|\de|} \Big( |\de|^{\frac{1-d}2+\frac dp} + \sum_j  2^{-j} (2^j |\de|)^{\frac{1-d}2+\frac dp}\Big) 
	\lesssim |\de|^{\frac dp -\frac{d+1}2}. \]
Now interpolation between these  estimate and  \eqref{rest_weak_type} gives the desired estimate \eqref{main_part} for  $p,q$ satisfying $ (1/p,1/q)\in (P_\circ, B)$.   The remaining cases $(1/p,1/q)\in [(0,0), P_\ast)$ can be handled similarly by making use of Proposition \ref{cstomul}. Repeating the same argument, we get $\|{\mf M_\delta}(D)\|_{p\to p} \lesssim |\de|^{\frac dp -\frac{d+1}2}$ for  $p_\ast<p\le \infty$.  \qed

\subsection{Description of $\mathcal Z_{p,q}(\ell)$ }\label{drawing_figures}
The case $(1/p,1/q)\in\wt{\mathcal R}_3'$ can be deduced from the case $(1/p, 1/q)\in\wt{\mathcal R}_3$ by duality, hence we may consider the case  $(1/p,1/q)\in \mathcal R_1\cup\big(\bigcup_{i=2}^3\wt{\mathcal R}_i\big)$ only.  For $d\ge2$ and $(1/p,1/q)\in \mathcal R_1\cup\big(\bigcup_{i=2}^3\wt{\mathcal R}_i\big)$, we  set 
\[	\omega_{p,q}=\omega_{p,q}(d):=1-\frac{d}{2}\Big(\frac1p-\frac1q\Big),	\] 
which lies in $[0,1]$. Since ${\mathlarger \kappa}_{p,q}(z) = |z|^{-\omega_{p,q}} (\dist(z,[0,\infty))/|z|)^{-\gamma_{p,q}}$,  we get
\begin{align}\label{zpq}
	\mathcal Z_{p,q}(\ell)	&=\big\{z\in\C\setminus\{0\}: \re z\le 0,~ \ell |z|^{\omega_{p,q}}\ge 1\big\} \\
\nonumber			&\qquad \cup \big\{z\in\C\setminus[0,\infty): \re z>0, ~ \ell |\im z|^{\gamma_{p,q}} \ge | z |^{\gamma_{p,q}-\omega_{p,q}} \big\}
\end{align} 
for $\ell>0$ and $(1/p,1/q)\in \mathcal R_1\cup\big(\bigcup_{i=2}^3\wt{\mathcal R}_i\big)$. The shape of $\mathcal Z_{p,q}(\ell)$ is mainly determined by the value of $\gamma_{p,q}$  and  $\gamma_{p,q}-\omega_{p,q}$.\footnote{If $\gamma_{p,q}>0$ and  $\re z\ge |\im z|\gg 1$, then the region is roughly determined by $|\im z|\gtrsim (\re z)^{\frac{\gamma_{p,q}-\omega_{p,q}}{\gamma_{p,q}}}$. Likewise, if $\gamma_{p,q}>0$ and  $0<\re z< |\im z|$,  $|\im z|\gtrsim \ell^{-1/\omega_{p,q}} $ for   $z\in \mathcal Z_{p,q}(\ell)$.} When $\omega_{p,q}>0$, the value $\ell$ does not have particular role in determining the overall shape of $\mathcal Z_{p,q}(\ell)$. However, if $\omega_{p,q}=0$  the profile of $\mathcal Z_{p,q}(\ell)$ depends not only on $p,q,d,$ but also on $\ell$.  In what follows we handle these two cases separately. 

\begin{figure}
\captionsetup{type=figure,font=footnotesize}
\centering
	\begin{subfigure}[b]{0.3\textwidth}
		\centering
		\includegraphics[width=\textwidth]{complement_circular.pdf}
		\caption{\scriptsize $\frac{d+3}{2(d+1)}\le\frac1p<\frac{d+2}{2d}$.}
		\label{complement_circular}
	\end{subfigure}
	\hfill
	\begin{subfigure}[b]{0.3\textwidth}
		\centering
		\includegraphics[width=\textwidth]{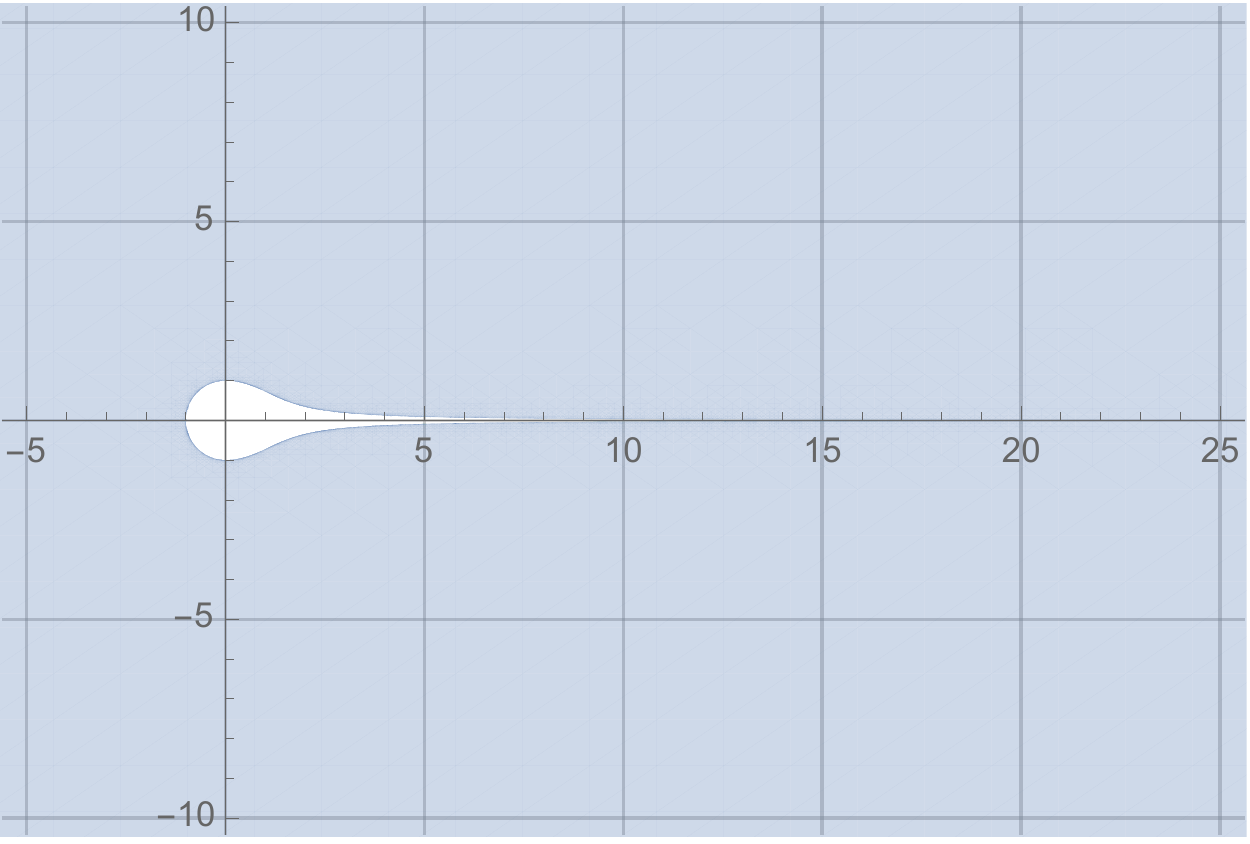}
		\caption{\scriptsize $\frac12<\frac1p<\frac{d+3}{2(d+1)}$.}
		\label{dual_p40div26}
	\end{subfigure}		
	\hfill
	\begin{subfigure}[b]{0.3\textwidth}
		\centering
		\includegraphics[width=\textwidth]{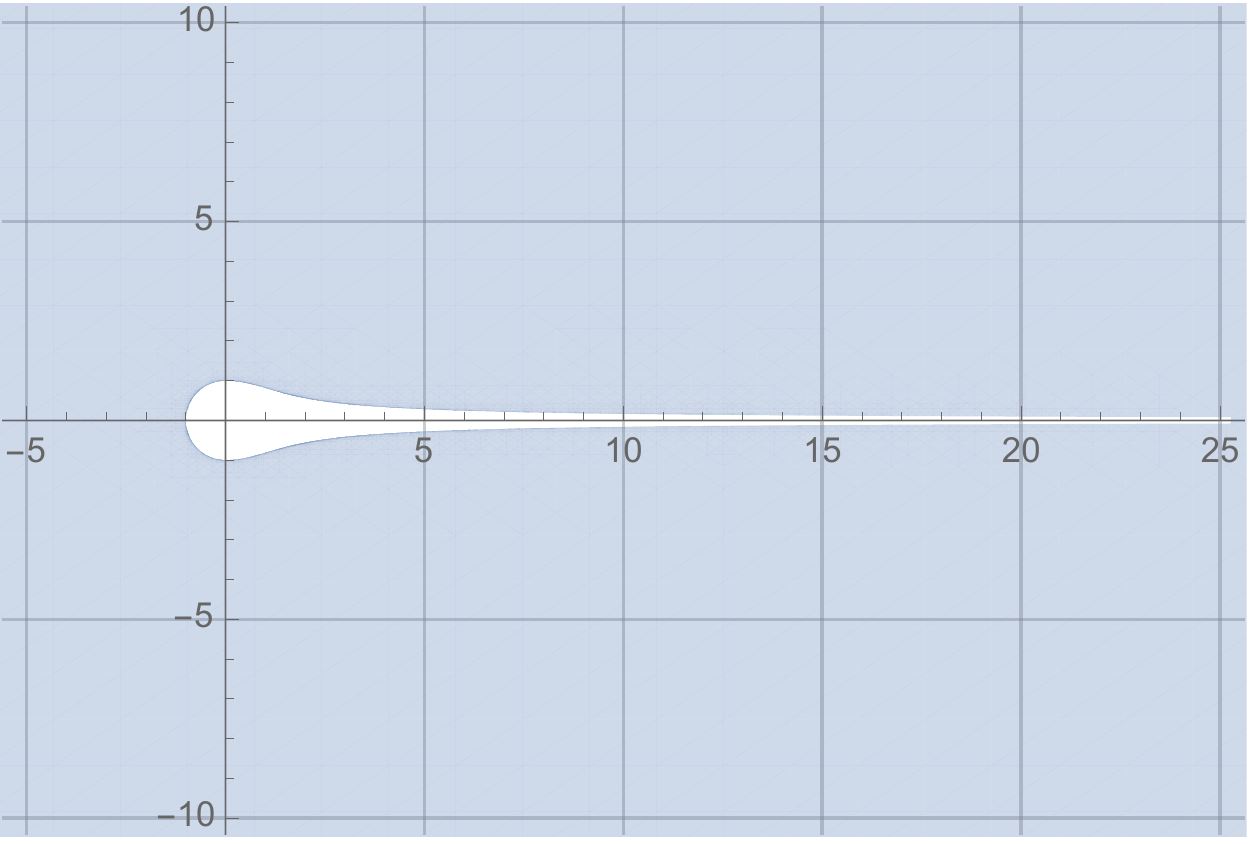}
		\caption{\scriptsize $\frac12<\frac1p<\frac{d+3}{2(d+1)}$.}
		\label{dual_p80div51}
	\end{subfigure}		
	
	\bigskip 
	
	\begin{subfigure}[b]{0.3\textwidth}
		\centering
		\includegraphics[width=\textwidth]{dual_p40div25.pdf}
		\caption{\scriptsize $\frac12<\frac1p<\frac{d+3}{2(d+1)}$.}
		\label{dual_p40div25}
	\end{subfigure}
	\hfill
	\begin{subfigure}[b]{0.3\textwidth}
		\centering
		\includegraphics[width=\textwidth]{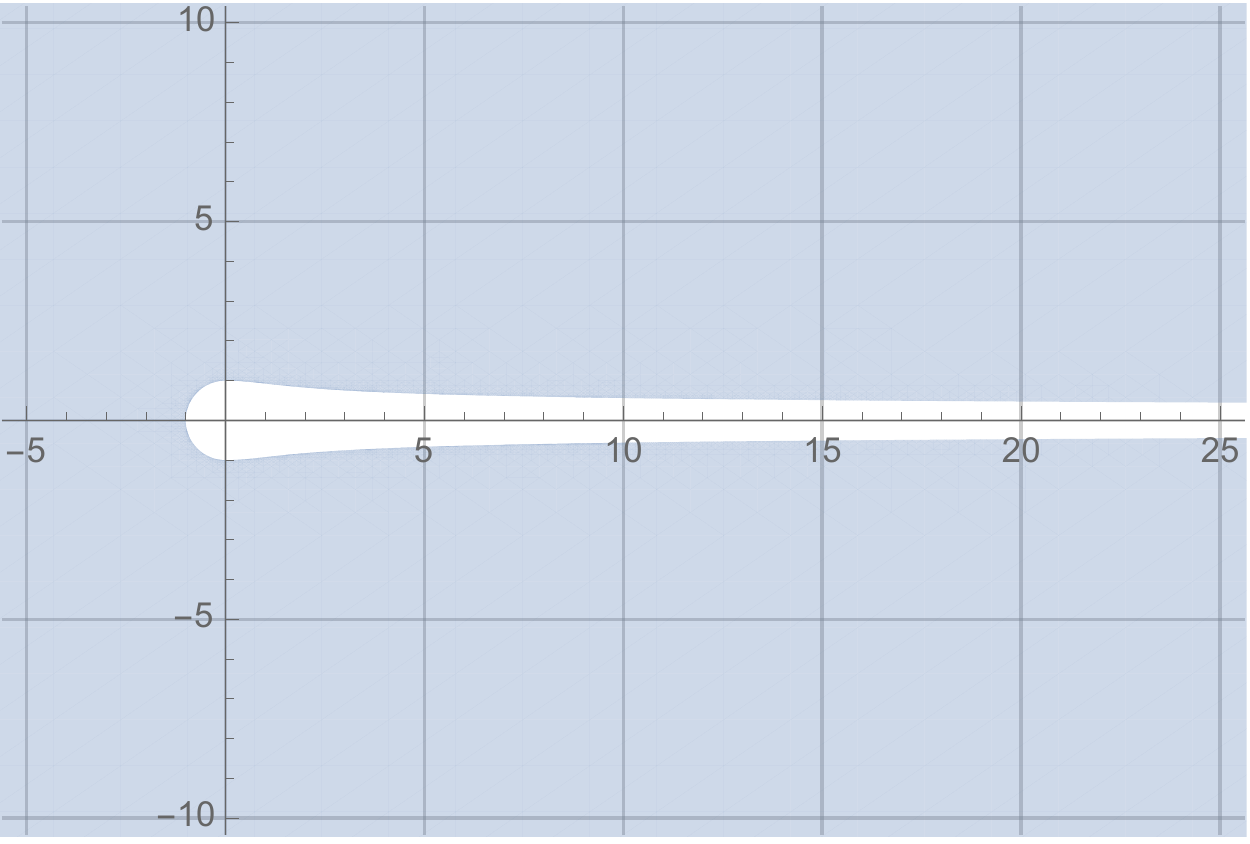}
		\caption{\scriptsize $\frac12<\frac1p<\frac{d+3}{2(d+1)}$.}
		\label{dual_p40div24}
	\end{subfigure}
	\hfill
	\begin{subfigure}[b]{0.3\textwidth}
		\centering
		\includegraphics[width=\textwidth]{broad10.pdf}
		\caption{\scriptsize $(\frac1p,\frac1q)=(\frac12,\frac12)$.}
		\label{broad10(uniform)}
	\end{subfigure}		
\caption{$\mathcal Z_{p,q}(1)$ ({\bf(a)} $\to$ {\bf(b)} $\to$ {\bf(c)} $\to$ {\bf(d)} $\to$ {\bf(e)} $\to$ {\bf(f)}) as $1/p$ decreases while $(\frac1p, \frac1q)\in \big[H, (\frac{d+2}{2d},\frac{d-2}{2d}) \big)$.} \label{fig_line_of_duality}
\end{figure}

\subsubsection*{The case $\omega_{p,q}>0$}  We further subdivide  this case into  the cases $(1/p,1/q)\in {\mathcal R}_1$, $(1/p, 1/q)\in \wt{\mathcal R}_2$, and  $(1/p, 1/q)\in \wt{\mathcal R}_3$. 

\vspace{-8pt}

\begin{itemize}
	\item $(1/p,1/q)\in {\mathcal R}_1(2)$, or $(1/p,1/q)\in {\mathcal R}_1(d)\setminus(A,A')$ if $d\ge3$: Then $\omega_{p,q}\in(0,\frac1{d+1}]$ and $\ga_{p,q}=0$. 	Hence $\mathcal Z_{p,q}(\ell) = \{z\in\C\setminus[0,\infty): |z| \ge \ell^{-1/\omega_{p,q}} \}$.  See  Figure \ref{complement_circular}. 
\smallskip
	\item $(1/p, 1/q)\in \wt{\mathcal R}_2$: Then $\omega_{p,q} \in ( \frac1{d+1},1]$, $\gamma_{p,q}=1-\frac{d+1}2(\frac1p-\frac1q)\in(0,1]$, and $\gamma_{p,q}-\omega_{p,q}= -\frac12(\frac1p-\frac1q)\le 0$.  If $(1/p, 1/q)\in \wt{\mathcal R}_2\setminus \{H\}$, 	since $\gamma_{p,q}-\omega_{p,q}<0$,  $\mathcal Z_{p,q}(\ell)$ is the complement of a neighborhood of $[0,\infty)$ which shrinks along the positive real line as $\re z\to \infty$. See Figure \ref{dual_p40div26}, Figure \ref{dual_p80div51}, Figure 	\ref{dual_p40div25} and Figure \ref{dual_p40div24}.   Also, {$\mathcal Z_{2,2}(\ell)=\{z: \re z\le 0,\, |z| \ge 1/\ell\}\cup \{z: \re z>0, \, |\im z|\ge 1/\ell\}$} (see Figure \ref{broad10(uniform)}). 
\smallskip
	\item $(1/p, 1/q)\in \wt{\mathcal R}_3$: In this case, $\omega_{p,q}\in(0, 1]$, $\gamma_{p,q}=\frac{d+1}2-\frac dp>0$ and $\gamma_{p,q}-\omega_{p,q}=\frac d2(\frac{d-1}d-(\frac1p+\frac1q))$. So, we divide $\wt{\mathcal R}_3$ into the three sets 	$\wt{\mathcal R}_{3,+}$, $\wt{\mathcal R}_{3,0}$, and $\wt{\mathcal R}_{3,-}$.\footnote{We recall from the introduction that $\wt{\mathcal R}_{3,\pm}=\{(\frac1p,\frac1q)\colon \pm(\gamma_{p,q}-\omega_{p,q})<0\}$ and $\wt{\mathcal R}_{3,0}=\{(\frac1p,\frac1q)\colon \gamma_{p,q}-\omega_{p,q}=0\}$.} 
	\begin{itemize}
		\item[$\dagger$] 
		$(1/p, 1/q)\in\wt{\mathcal R}_{3,+}$: Since $\gamma_{p,q}-\omega_{p,q}<0$,  $\mathcal Z_{p,q}(\ell)$ is the complement of a neighborhood of $[0,\infty)$ which shrinks along positive real line as $\re z\to \infty$. See Figure \ref{p2_q3}. 
	\item[$\dagger$]  
		$(1/p, 1/q)\in\wt{\mathcal R}_{3,0}$: Since $\gamma_{p,q}-\omega_{p,q}=0$,  $\mathcal Z_{p,q}(\ell)$ is the complement of the $\ell^{-1/\ga_{p,q}}$-neighborhood of $[0,\infty)$. See Figure \ref{p2_q10div3}. 
	\item[$\dagger$] 
		$(1/p, 1/q)\in\wt{\mathcal R}_{3,-}$:  In this case $\gamma_{p,q}-\omega_{p,q}>0$. 
		Hence $\mathcal Z_{p,q}(\ell)$ is the complement of a neighborhood of $[0,\infty)$ whose boundary asymptotically satisfies $|\im z|\approx (\re z)^{1-\omega_{p,q}/\gamma_{p,q}}$ when $\re z$ is large.  See Figure \ref{p2_q9pt9}, Figure \ref{p2_q20div3} and Figure \ref{p2_q5}. 
	\end{itemize}
\end{itemize}

\vspace{-8pt}
\begin{figure}
\captionsetup{type=figure,font=footnotesize}
\centering
	\begin{subfigure}[b]{0.3\textwidth}
		\centering
		\includegraphics[width=\textwidth]{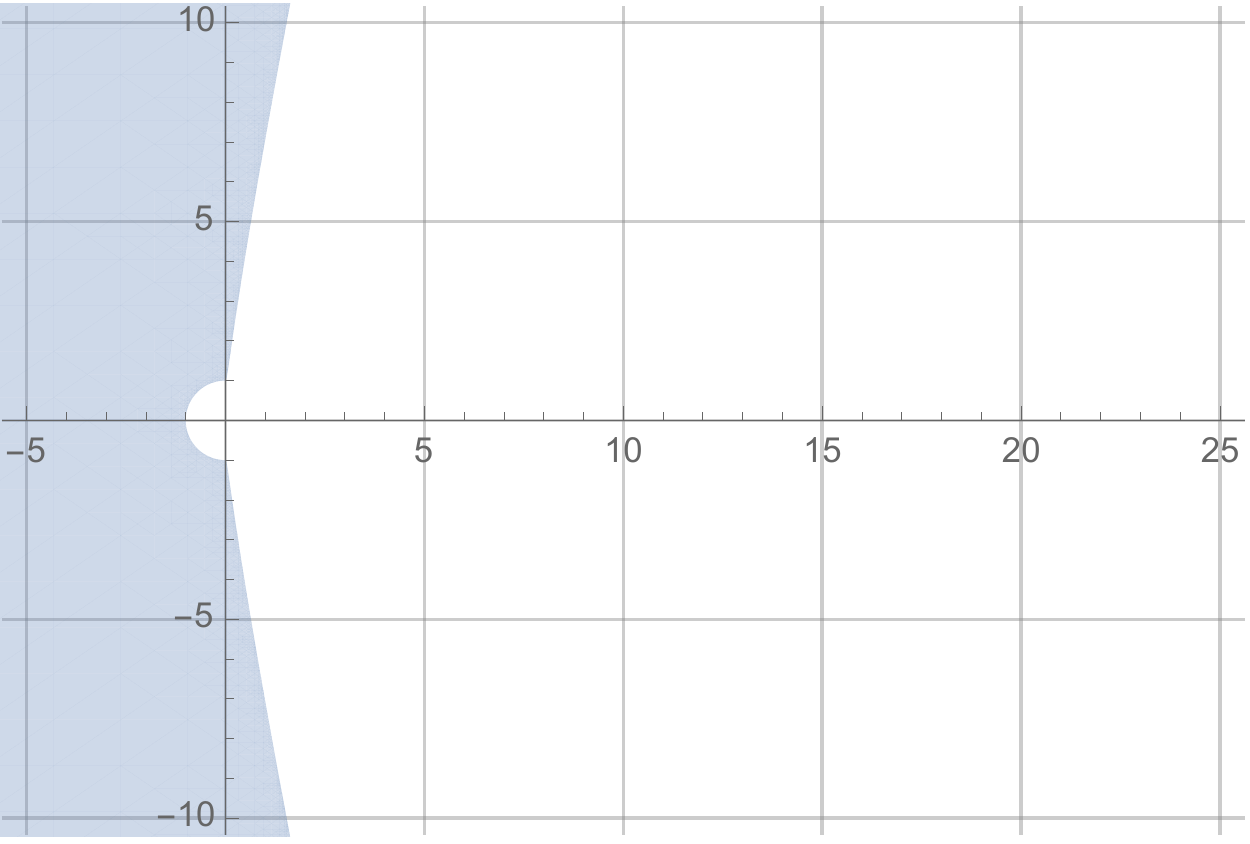}
		\caption{\scriptsize $0<\frac1q-\min\{0,\frac{d-4}{2d}\}\ll 1$.}
		\label{p2_q9pt9}
	\end{subfigure}
	\hfill 
	\begin{subfigure}[b]{0.3\textwidth}
		\centering
		\includegraphics[width=\textwidth]{broad6.pdf}
		\caption{\scriptsize $\min\{0,\frac{d-4}{2d}\}<\frac1q<\frac{d-1}d-\frac12$.}
		\label{p2_q20div3}
	\end{subfigure}		
	\hfill 	
	\begin{subfigure}[b]{0.3\textwidth}
		\centering
		\includegraphics[width=\textwidth]{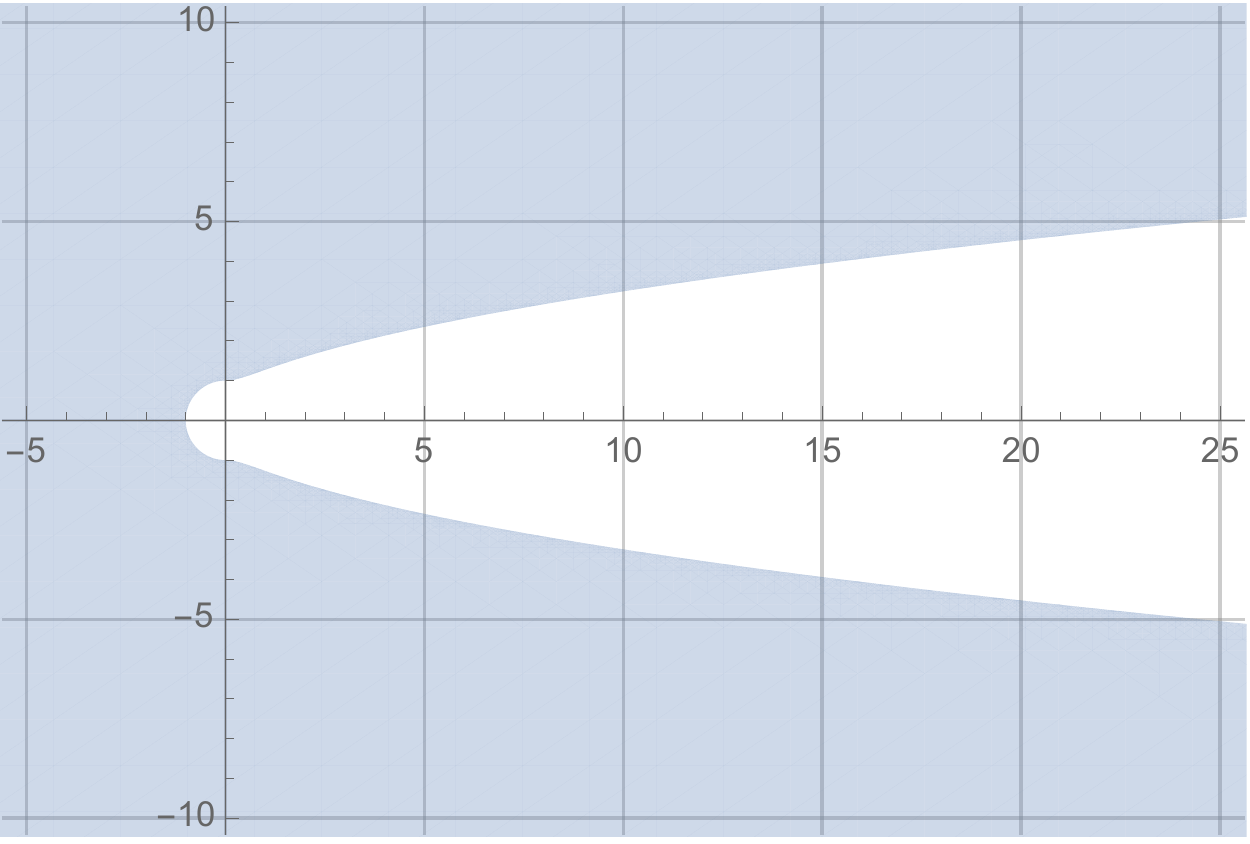}
		\caption{\scriptsize $\min\{0,\frac{d-4}{2d}\}<\frac1q<\frac{d-1}d-\frac12$.}
		\label{p2_q5}
	\end{subfigure}
	
	\bigskip
	
	\begin{subfigure}[b]{0.3\textwidth}
		\centering
		\includegraphics[width=\textwidth]{broad10.pdf}
		\caption{\scriptsize $(\frac1p,\frac1q)=(\frac12,\frac{d-1}d-\frac12)$.}
		\label{p2_q10div3}
	\end{subfigure}		
	\hfill  
	\begin{subfigure}[b]{0.3\textwidth}
		\centering
		\includegraphics[width=\textwidth]{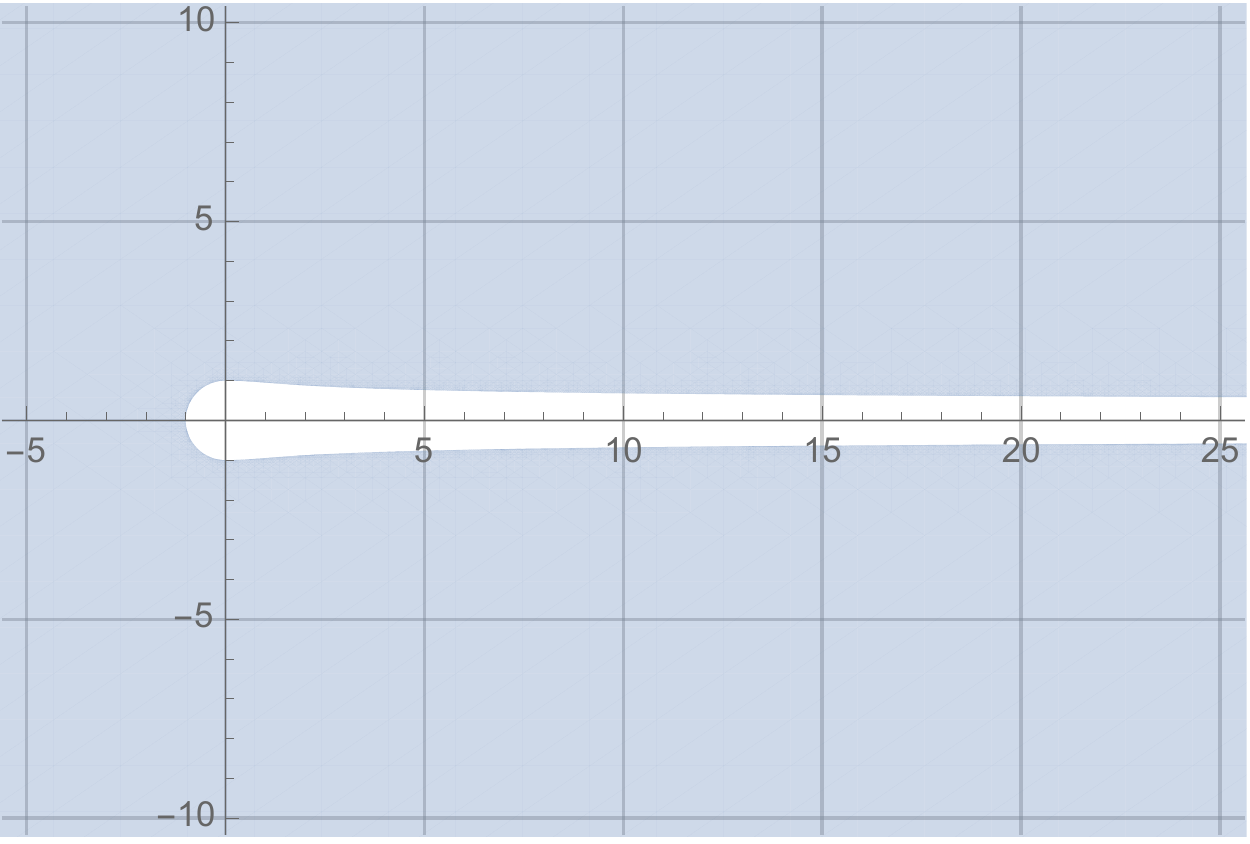}
		\caption{\scriptsize $\frac{d-1}d-\frac12<\frac1q<\frac12$.}
		\label{p2_q3}
	\end{subfigure}
	\hfill 
	\begin{subfigure}[b]{0.3\textwidth}
		\centering
		\includegraphics[width=\textwidth]{broad10.pdf}
		\caption{\scriptsize $(\frac1p,\frac1q)=(\frac12,\frac12)$.}
		\label{p2_q2}
	\end{subfigure}		
\caption{$\mathcal Z_{2,q}(1)$ ({\bf(a)} $\to$ {\bf(b)} $\to$ {\bf(c)} $\to$ {\bf(d)} $\to$ {\bf(e)} $\to$ {\bf(f)})  as $\frac1q\in \big(\min\{0,\frac{d-4}{2d}\}, \frac12 \big]$ increases.}\label{fig_shapes_p2}
\end{figure}

\subsubsection*{The case $\omega_{p,q}=0$}  
In this case the shape of $\mathcal Z_{p,q}(\ell)$  depends on the value of $\ell$ as well. 
\begin{itemize}
\vspace{-8pt}
	\item
	Let $(1/p, 1/q)\in (A,A')$.  Since $\omega_{p,q}=\ga_{p,q}=0$, we have $\mathcal Z_{p,q}(\ell)=\emptyset$ if $\ell<1$, and $\mathcal Z_{p,q}(\ell)=\C\setminus[0,\infty)$ if $\ell\ge1$. 
\smallskip
	\item
	Let $d\ge4$ and  $(\frac1p, \frac1q)\in((\frac 2d, 0),A)$. In this case $\omega_{p,q}=0$ and $\ga_{p,q}=\frac{d+1}2-\frac{d}{p}\in(0,\frac{d-3}2)$.
	\begin{itemize}
		\item[$\dagger$] 
		If $\ell<1$, $\mathcal Z_{p,q}(\ell)=\emptyset$. 
		\item[$\dagger$] 
		If $\ell=1$,  there is  a rigid dichotomy of $\mathcal Z_{p,q}(1)$ between the case of uniform bound and the other case; $\mathcal Z_{p,q}(1)=\C\setminus[0,\infty)$ if $(1/p,1/q)\in(A,A')$, but $\mathcal Z_{p,q}(1)=\{z: \re z\le 0\}\setminus\{0\}$ if $(1/p,1/q)\notin[A,A']$.
		\item[$\dagger$]
		When $\ell>1$, there is also a kind of  dichotomy  although it is not so rigid as in the former case with $\ell=1$. Indeed, if $\ell>1$ and $(1/p,1/q)\in(A,A')$, then $\mathcal Z_{p,q}(\ell)=\C\setminus[0,\infty)$. Otherwise, $\mathcal Z_{p,q}(\ell)$ is the complement (in $\C$) of a (planar) cone of which axis is the positive real line $[0,\infty)$, and apex is the origin.  It is interesting to note that, as $(1/p,1/q)$ moves from (near) $A$ to (near) $(2/d,0)$  along the line $1/p-1/q=2/d$, the apex angle gets larger from $0$ to $2\arctan\big(1/\sqrt{\ell^\frac4{d-3} -1} \big)$. See Figure \ref{fig_Sobolev_line2}. 
	\end{itemize}
\end{itemize}

\begin{figure}
\captionsetup{type=figure,font=footnotesize}
\centering
	\begin{subfigure}[b]{0.3\textwidth}
		\includegraphics[width=\textwidth]{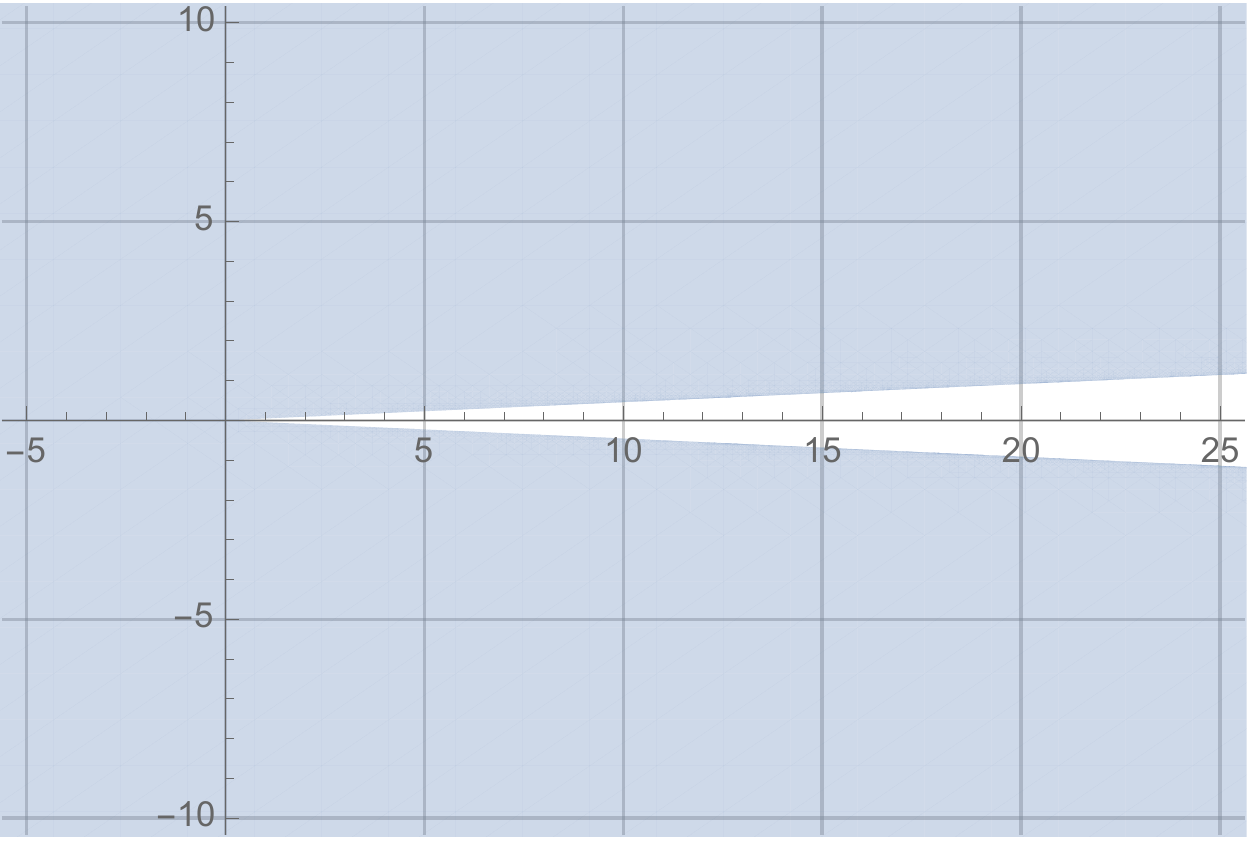}
		\caption{\scriptsize $0<\frac{d+1}{2d}-\frac1p\ll 1$.}
		\label{l>1p1_684}
	\end{subfigure}
	\hfill 
	\begin{subfigure}[b]{0.3\textwidth}
		\includegraphics[width=\textwidth]{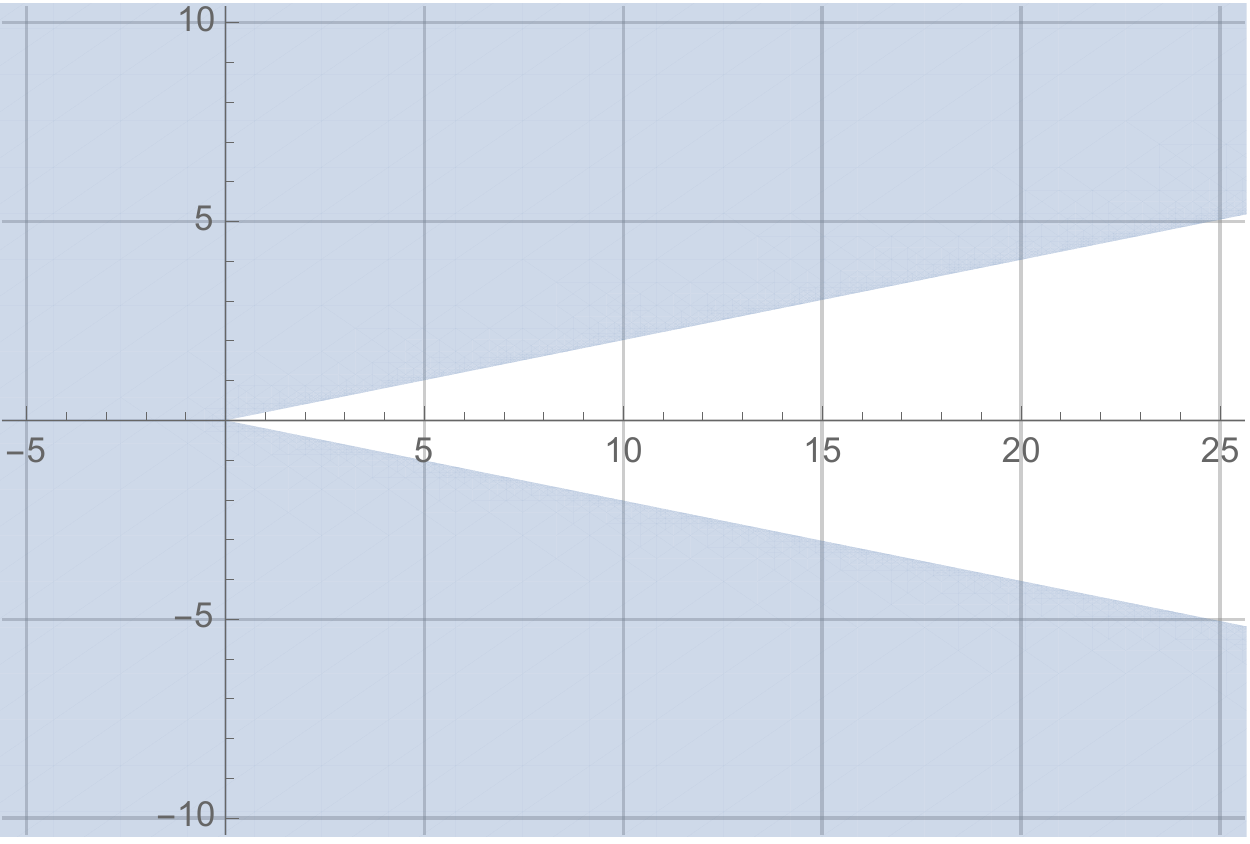}
		\caption{\scriptsize $\frac2d<\frac1p<\frac{d+1}{2d}$.}
		\label{l>1p1_7}
	\end{subfigure}	 
	\hfill 
	\begin{subfigure}[b]{0.3\textwidth}
		\includegraphics[width=\textwidth]{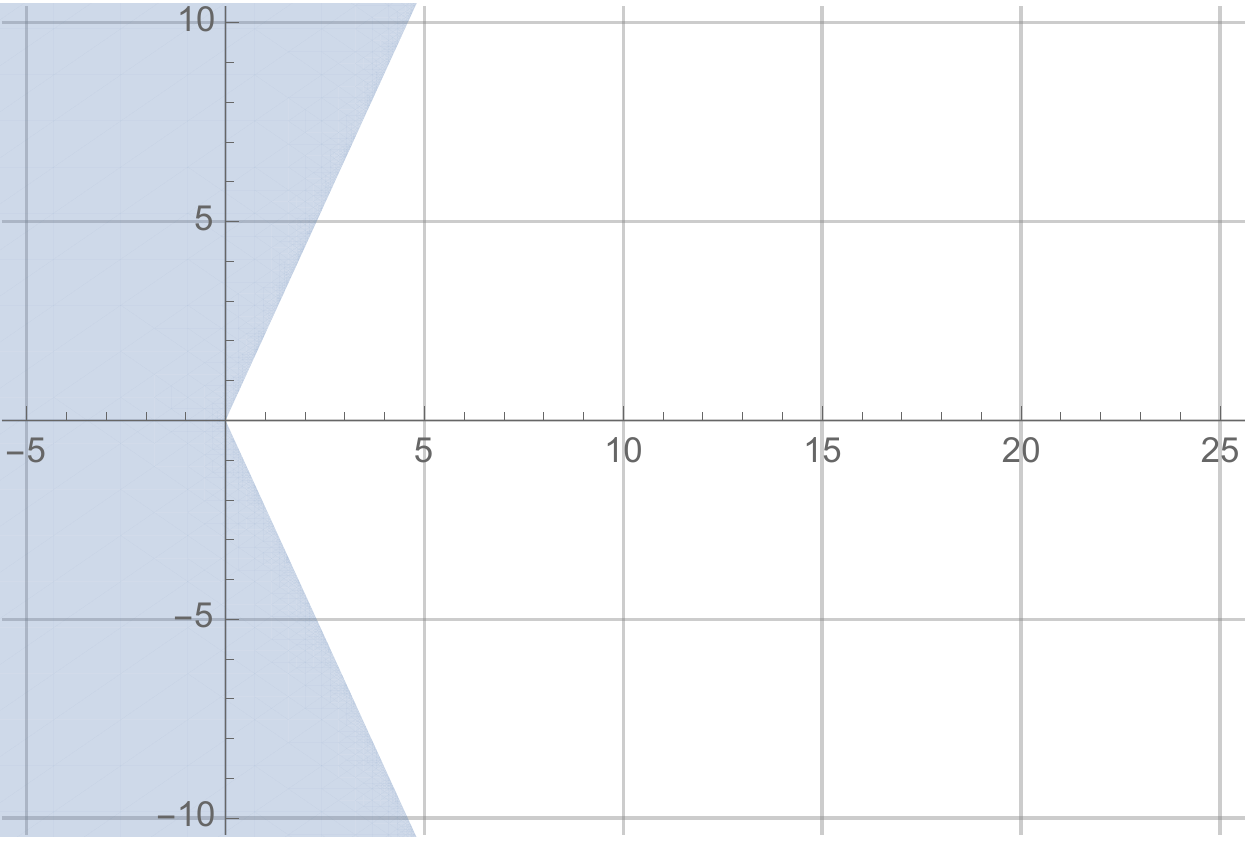}
		\caption{\scriptsize $0<\frac1p-\frac2d\ll1$.}
		\label{l>1p2_499999}
	\end{subfigure}	
\caption{$\mathcal Z_{p,q}(\ell)$ with $\ell>1$ ({\bf(a)} $\to$ {\bf(b)} $\to$ {\bf(c)}) as $\frac1p$ decreases in the interval $(\frac2d, \frac{d+1}{2d})$ along the line $\frac1p-\frac1q=\frac2d$ and $d\ge4$.}\label{fig_Sobolev_line2}
\end{figure}

\section{Lower bounds for $\|(-\Delta-z)^{-1}\|_{p\to q}$: Proposition \ref{shp}}\label{shp1}
In this section we obtain lower bounds for $\|(-\Delta-z)^{-1}\|_{p\to q}$, which prove  Proposition \ref{shp}.  Before doing this we provide proof of Proposition \ref{eresb} which is simpler. 

\subsection{Proof of Proposition \ref{eresb}}  
We first show the sufficiency part. By \eqref{mult} it is sufficient to show the estimate  
\begin{equation}\label{resolz1}
	\Big\| \F^{-1} \Big( \frac{\wh f(\xi)}{|\xi|^2-z}  \Big) \Big\|_q \le C_z\|f\|_p
\end{equation}
holds whenever $(1/p, 1/q)\in \mathcal R_0$.  Since $z\neq 0$, thanks to the scaling property \eqref{sc} we may assume that $z\in\mathbb S^1\setminus \{1\}$. Let us break the multiplier 
\[	(|\xi|^2-z)^{-1}=\beta_0(|\xi|)(|\xi|^2-z)^{-1}+\sum_{j\ge 0}\be(2^{-j}|\xi|)(|\xi|^2-z)^{-1}. 	\]
It is clear that $\beta_0(|\xi|)(|\xi|^2-z)^{-1}$ is smooth and compactly supported in the open ball $B_d(0,3/4)$. Hence
\begin{equation}\label{beta0}
	\Big\| \mathcal F^{-1} \Big({\beta_0(|\xi|)}({|\xi|^2-z})^{-1} \widehat f (\xi) \Big)\Big\|_q\le C\|f\|_p 
\end{equation}
for $p,q$ satisfying $1\le p\le q\le \infty$. By scaling it is easy to see $\big\| \mathcal F^{-1} \big( \frac{\be(2^{-j}|\xi|)}{|\xi|^2-z} \big) \big\|_{r}\lesssim 2^{(d-2-\frac dr)j} $ for $1\le r\le \infty$. Thus, Young's inequality gives 
\[	\Big\| \F^{-1}\Big(\frac{{\be(2^{-j}|\xi|)}\wh{f}(\xi)}{|\xi|^2-z} \Big) \Big\|_q \lesssim 2^{j(\frac dp-\frac dq-2)} \|f\|_p	\]
whenever $1\le p\le q\le \infty$. Thus summation along $j$  and combining the resulting estimate with \eqref{beta0} give all desired estimates \eqref{resolz1} except the case $1/p-1/q=2/d$.  In order to obtain the estimate \eqref{resolz1} for  $p,q$ with $1/p-1/q=2/d$  in the case  $d\ge 3$, we use $(\mathrm I\!\mathrm I\!\mathrm I)$ in  Lemma \ref{intpl} with $k=1$ to get 
\[	\Big\|\sum_{j\ge 1}\mathcal F^{-1}\Big({\be(2^{-j}|\xi|)}({|\xi|^2-z})^{-1} \widehat f(\xi) \Big)\Big\|_{q,\infty} \lesssim \|f\|_{p}	\]
provided that $1\le p, q< \infty$ and $1/p-1/q=2/d$.  Real interpolation between those estimates together with \eqref{beta0} gives  the desired estimate \eqref{resolz1} for all $p,q$ satisfying $(1/p, 1/q)\in \mathcal R_0$. 

Now we consider the necessity part of Proposition \ref{eresb}.  {In the case $d\ge 3$} we need to show \eqref{resolz1} holds only if 
\begin{equation}\label{nec_con}
	\frac 1p- \frac 1q \ge 0, \quad  \frac 1p- \frac 1q \le \frac 2d, \quad  (p, q)\neq \Big(1,\, \frac d{d-2} \Big), \quad (p, q)\neq \Big(\frac d2,\, \infty\Big).  
\end{equation}
The first condition is obvious since the multiplier operator is translation invariant (\cite{Ho60}).  For the second condition we notice that, for large $j$, the estimate \eqref{resolz1} remains valid with $C$ independent of $j$ if $\widehat f(\xi)$ is replaced with $\beta(2^{-j}|\xi|) \widehat f (\xi)$. Then re-scaling the estimate gives 
\begin{equation}\label{resolz2}
	\Big\| \F^{-1} \Big( \frac{\beta(|\xi|)\wh f(\xi)}{|\xi|^2-2^{-2j}z} \Big) \Big\|_q\le C_z 2^{j(2+ \frac dq- \frac dp)}\|f\|_p 
\end{equation}
for $f\in\mathcal S(\R^d).$ Now fix a nonzero Schwartz function $f$ such that $\|\F^{-1}(|\xi|^{-2}\beta(|\xi|)\wh f(\xi))\|_q>0$.  If we take limit $j\to+\infty$ the left side of \eqref{resolz2} converges to $\|\F^{-1}(|\xi|^{-2}\beta(|\xi|)\wh f(\xi))\|_q$, but  the quantity $2^{j(2+d/q-d/p)}$ on the right side converges to zero when $1/p-  1/q >  2/d$. Therefore any estimate of the form \eqref{resolz1} is possible only if  $1/p-1/q \le 2/d$.  This gives the second condition in \eqref{nec_con}.  Finally, to show the last two conditions,  by duality we need only show one of them.  Suppose \eqref{resolz1} is true with $(p,q)=(1, d/(d-2))$. Then by \eqref{resolz1} and \eqref{beta0} we see that 
\[	\Big\| \mathcal F^{-1} \Big((1-\be_0(|\xi|))(|\xi|^2-z)^{-1} \wh f (\xi) \Big)\Big\|_\frac{d}{d-2}\le C\|f\|_1.	 \] 
We now note that $1-\be_0(|\xi|) = (1-\be_0(4|\xi|)) (1-\be_0(|\xi|)) $.   By Mikhlin's multiplier theorem (applied to the multiplier operator given by $(1-\be_0(4|\xi|))(|\xi|^2-z)|\xi|^{-2}$) we see that
\[	\Big\| \mathcal F^{-1} \Big((1-{\beta_0(|\xi|)})  |\xi|^{-2} \wh f (\xi)\Big)\Big\|_\frac{d}{d-2}\le C\|f\|_1.	 \] 
By scaling this also implies, for all $\epsilon>0$, 
\[ \Big\| \mathcal F^{-1} \Big((1-{\beta_0(\epsilon |\xi|)})  |\xi|^{-2} \wh f (\xi) \Big)\Big\|_\frac{d}{d-2}\le C\|f\|_1. \] 
Letting $\epsilon\to 0$ gives $ \| \mathcal F^{-1} (  |\xi|^{-2} \widehat f (\xi) )\|_\frac{d}{d-2}\le C\|f\|_1$ which is obviously not true.  Therefore we conclude the estimate \eqref{resolz1} cannot be true with $p=1$ and $q=d/(d-2)$. 

When $d=2$ the above argument works for $p,q$ satisfying $1/p-1/q<2/d$, but not for $p,q$ with $1/p-1/q=2/d$, that is, $p=1$, $q=\infty$ because Mikhlin's theorem does not hold with $q=\infty$. So we still need to show the failure of \eqref{resolz1} with $p=1$, $q=\infty$. But the failure can be shown in a more straightforward manner.  We need to prove that for any fixed $z\in\C\setminus [0,\infty)$ there does not exist a constant $C>0$ such that
\begin{equation}\label{2d1_infty}
	\|(-\De -z)^{-1}f\|_{L^\infty (\R^2)} \le C \|f\|_{L^1(\R^2)}, \quad \forall f\in L^1(\R^2).
\end{equation}
To show this let us assume \eqref{2d1_infty} and recall from \cite[p. 202]{Tes} that 
\[	(-\De-z)^{-1} f (x)= (2\pi)^{-1} \int _{\R^2} K_0(\sqrt{-z}\, |x-y |) f(y) dy,	\]
where $K_\nu(w)$ is the modified Bessel function of the second kind (see \cite{OM,Tes}). It is well-known (\cite[p. 252]{OM}) that 
\begin{equation}\label{bessel_asymp2}
	\lim_{w\to 0} \frac{K_0(w)}{-\ln w} =1.
\end{equation}
Let us choose a $\phi\in C^\infty_0(B_2(0,1))$ such that $\phi\ge 0$ and $\|\phi\|_1=1$, and set $\phi_\epsilon (x)= \epsilon^{-2}\phi(\epsilon^{-1}x)$, $\epsilon>0$. Testing \eqref{2d1_infty} with $f=\phi_\epsilon$ and letting $\epsilon \to 0$ yield $\sup_{x\in\R^2} \big| K_0(\sqrt{-z}\,|x|) \big| \lesssim 1$.  This contradicts the asymptotic \eqref{bessel_asymp2}, thus we conclude \eqref{2d1_infty} fails. \qed

\subsection {Proof of  Proposition \ref{shp}} \label{pf_lower_bound}
For a bounded function $m(\xi)$ which is symmetric under reflection $\xi\to -\xi$ it is easy to see that the $L^p$--$L^q$ norms of the operators $m(D)$ and $\overline{m}(D)$ are equal, hence we have
\[
\|(m\pm\overline{m})(D)\|_{p\to q}\le 2\|m(D)\|_{p\to q}.
\]
Therefore, to prove the lower bounds \eqref{shp2}, we may work with  the imaginary part
\[
\im  \Big(\frac{1}{|\xi|^2-z} \Big) =\frac 1{2i}\Big( \frac{1}{|\xi|^2-z}-\frac{1}{|\xi|^2-\bar{z}} \Big)=\frac{\im z}{(|\xi |^2-\re z)^2+(\im z)^2}.
\]
The lower bounds in \eqref{shp2} is meaningful only when $0<|\im z| \ll \re z <1$. Hence we only need to consider  $z=1+i\de$ with $0<100\de<1$ in this section. The lower bound $\gtrsim 1$ (in the case of $\gamma_{p,q}=0$) is clear since the resolvent operators are nontrivial. Thus, recalling \eqref{def-gamma} it is sufficient to show that 
\[
\Big\|\F^{-1}\Big(\frac{\de \wh f(\xi)}{(|\xi|^2-1)^2+\de^2}\Big) \Big\|_q \gtrsim \max\big\{\de^{-1+\frac{d+1}2(\frac 1p-\frac 1q)}, \de^{\frac{d-1}2-\frac dq}\big\} \|f\|_p. 
\]
for  $f\in L^p(\R^d)$. The other lower bound $\de^{\frac dp-\frac{d+1}2}$ in \eqref{shp2} follows from the  lower bound $\de^{\frac{d-1}2-\frac dq}$ by duality.  Since we also consider the resolvent estimates for the fractional Laplacian in Section \ref{fract} we  prove this in a slightly  more general  form. The argument is based on the counterexamples related to the restriction conjecture: a Knapp type example and the Fourier transform of the surface measure on the sphere (see \cite[pp. 387--388]{St-book}).
\begin{lem}\label{lower} Let $1\le p,q\le \infty$, and for $r, s >0$ let $m_\de^s(r) :=\frac{\de}{(r^s-1)^2+\de^2}$. Then, if  $0<\de <c$ for a small $c>0$,
\begin{align}
\label{beebsh_fr} 
\| m_\de^s(|D|) \|_{p\to q}  &\gtrsim \de^{-1+\frac{d+1}2(\frac 1p-\frac 1q)}, \\
\label{r4sh_fr} 
\| m_\de^s(|D|) \|_{p\to q}  &\gtrsim \de^{\frac{d-1}2-\frac dq},
\end{align}
where the implicit constants depend only on $p$, $q$, $s$ and $d$.
\end{lem}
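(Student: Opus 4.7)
The plan is to supply an explicit extremizing test function for each lower bound. Both are standard sharpness examples for the extension estimate on the sphere: \eqref{beebsh_fr} comes from a Knapp cap adapted to the surface $\{|\xi|^s=1\}=\mathbb S^{d-1}$, while \eqref{r4sh_fr} comes from the slow decay of the Fourier transform of the surface measure $d\sigma$.

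For \eqref{beebsh_fr}, I would fix $\xi_0\in\mathbb S^{d-1}$ and let $R_\de\subset \R^d$ be a box centered near $\xi_0$ with tangential side-lengths $c\sqrt{\de}$ and radial side-length $c\de$. A Taylor expansion of $|\xi|^s-1$ at $\xi_0$ (with $c$ small) shows $||\xi|^s-1|\lesssim\de$ on $R_\de$, so $m_\de^s(|\xi|)\approx\de^{-1}$ there. Let $\wh f$ be a smooth nonnegative bump supported in $R_\de$. By the uncertainty principle, $|f(x)|\approx |R_\de|\approx \de^{(d+1)/2}$ on the dual rectangle $R_\de^*$ of volume $\approx \de^{-(d+1)/2}$, giving $\|f\|_p\approx \de^{(d+1)/(2p')}$. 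Since $m_\de^s(|\xi|)\wh f(\xi)\approx \de^{-1}\wh f(\xi)$ on $R_\de$, the same phase-coherence computation gives $|m_\de^s(|D|)f(x)|\approx \de^{(d-1)/2}$ on $R_\de^*$, hence $\|m_\de^s(|D|)f\|_q\approx \de^{(d-1)/2-(d+1)/(2q)}$; the ratio equals $\de^{-1+\frac{d+1}{2}(\frac1p-\frac1q)}$.

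For \eqref{r4sh_fr}, I would test against a fixed Schwartz function $f$ with radial Fourier transform $\wh f(\xi)=\phi(|\xi|)$, where $\phi\in C_0^\infty((\tfrac12,\tfrac32))$ with $\phi\equiv 1$ near $r=1$, so that $\|f\|_p\approx 1$. Polar coordinates and the identity $\int_{\mathbb S^{d-1}}e^{irx\cdot\theta}\,d\sigma(\theta)=F(r|x|)$, together with the Bessel asymptotic $F(t)=\sum_{\pm} c_{\pm}\,t^{-(d-1)/2}e^{\pm it}+O(t^{-(d+1)/2})$ for $t\gg 1$, reduce matters to estimating the oscillatory integrals
\[
 I_{\pm}(|x|):=\int_0^\infty m_\de^s(r)\,\phi(r)\,r^{(d-1)/2}\,e^{\pm ir|x|}\,dr.
\]
Changing variables $u=r^s-1$ converts $m_\de^s(r)\,dr$ into the Lorentzian $s^{-1}\frac{\de\,du}{u^2+\de^2}$ times a smooth bump $\tilde A(u)$ with $\tilde A(0)=1$; linearising $r=1+u/s+O(u^2)$ in the exponential and invoking the identity $\int \frac{\de\,du}{u^2+\de^2}e^{iut}=\pi e^{-\de|t|}$ yields $I_{\pm}(|x|)=\frac{\pi}{s}e^{\pm i|x|}e^{-\de|x|/s}(1+O(\de))$ uniformly for $|x|\le c\de^{-1}$, with the errors controlled by $\de^2|x|$ and $\de$. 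Consequently $|m_\de^s(|D|)f(x)|\gtrsim |x|^{-(d-1)/2}|\cos(|x|-\alpha)|$ on the annulus $1\ll |x|\ll\de^{-1}$, and integrating $q$-th powers in polar coordinates gives
\[
 \|m_\de^s(|D|)f\|_q^q\gtrsim \int_1^{c/\de}r^{(d-1)(1-q/2)}\,dr,
\]
which is $\approx \de^{(d-1)q/2-d}$ when $q<\frac{2d}{d-1}$ and at least a positive constant (with a log factor at $q=\frac{2d}{d-1}$) otherwise. Since $\de^{(d-1)/2-d/q}\le 1$ when $q\ge\frac{2d}{d-1}$, this yields $\|m_\de^s(|D|)f\|_q\gtrsim \de^{(d-1)/2-d/q}$ in every case.

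The main technical hurdle will be the uniform control of $I_{\pm}(|x|)$ all the way up to $|x|\sim\de^{-1}$: a naive derivative bound on the amplitude picks up a factor of $|x|$ per integration by parts and is only useful for $|x|=O(1)$, while a direct Taylor expansion of $\tilde A$ about $0$ loses a factor $\de|x|$ that is not small in the relevant range. Recognising the dominant part of $m_\de^s$ as a Cauchy--Poisson kernel and evaluating its Fourier transform exactly is the cleanest way to produce the harmless damping factor $e^{-\de|x|/s}$ rather than a catastrophic error.
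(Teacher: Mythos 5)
Your Knapp construction for \eqref{beebsh_fr} is essentially identical to the paper's: a box of tangential scale $\sqrt\de$ and radial scale $\de$ near a point of $\mathbb S^{d-1}$, with the Taylor expansion of $|\xi|^s-1$ keeping $m_\de^s\approx\de^{-1}$ on the box and the usual duality computation on the dual rectangle. Nothing to add there.

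For \eqref{r4sh_fr} you take a genuinely different route. The paper also tests against a radial bump $\wh f(\xi)=\phi(|\xi|)$ and invokes the Bessel asymptotic, but then proceeds by a direct positivity argument: it splits the radial integral into the pieces $\{|r^s-1|\le s\la\de\}$ and $\{|r^s-1|>s\la\de\}$, chooses $\la$ (via the function $k(t)=\int_{|\ta|\le t}\frac{d\ta}{\ta^2+1}$) so the near-singularity piece dominates the tail of the Lorentzian in $L^1$, and works only on a thin annulus $|x|\approx\mu\de^{-1}$ (with $\la\mu$ small) where $\cos((r-1)|x|)$ stays close to $1$ on the near piece. You instead recognize $\frac{\de}{u^2+\de^2}$ as a Cauchy--Poisson kernel, change variables $u=r^s-1$, linearize the phase, and use the exact identity $\int\frac{\de\,du}{u^2+\de^2}e^{iut}=\pi e^{-\de|t|}$ to produce a clean leading term with the damping $e^{-\de|x|/s}$; you then integrate $|x|^{-(d-1)q/2}|\cos(|x|-\alpha)|^q$ over the full annulus $1\ll|x|\ll\de^{-1}$. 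This is conceptually tidier and makes the mechanism (the Fourier transform of the Lorentzian is exponential, hence essentially constant up to $|x|\sim\de^{-1}$) transparent, whereas the paper's argument is more elementary but demands careful tracking of explicit constants. Both give the stated bound; your version even gives the stronger $\gtrsim 1$ when $q\ge\frac{2d}{d-1}$.

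One quantitative point you should correct: the claimed error $O(\de)$ in $I_\pm(|x|)=\frac\pi s e^{\pm i|x|}e^{-\de|x|/s}(1+O(\de))$ is too optimistic. The obstruction is exactly the heavy tail of the Lorentzian that you flag at the end: splitting the $u$-integral at $|u|\sim|x|^{-1/2}$, the phase-linearization error $\int\frac{\de}{u^2+\de^2}\min(u^2|x|,1)\,du$ and the amplitude error $\int\frac{\de}{u^2+\de^2}|\tilde A(u)-1|\,du$ come out to $O(\de|x|^{1/2})$ and $O(\de\log\tfrac1\de)$ respectively, so at $|x|\sim c\de^{-1}$ the relative error is $O(\sqrt{\de})$, not $O(\de)$. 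This is harmless for the lemma (one only needs $|I_\pm|\gtrsim1$ uniformly on $1\ll|x|\le c\de^{-1}$), but the exponent should be fixed, and it is worth stating explicitly that $c$ is taken small so that $\de|x|\le c$ also keeps the quadratic phase error from the central bump $|u|\lesssim\sqrt\de$ under control.
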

\begin{proof}[Proof of \eqref{beebsh_fr}] \label{knapp}
Let $c,k$ be positive constants to be chosen later, depending only on $d$ and $s$. If $|\xi_j|\le c\sqrt \de$, $j=1,\dots, d-1$ and $k\de/4\le \xi_d-1\le k\de$,  then
\[	0< |\xi|^s-1 \le \left(1+\left( (d-1)c^2+2k+ 10^{-2}{k^2} \right)\de \right)^{s/2}-1	\]
provided that $0<100\de\le 1$.  Let us set $\mu(t):=(1+t)^{s/2}$ and $M_s:= \max\{|\mu ''(t)|: 0\le t\le 1\}$.  Then, from Taylor's theorem it follows that 
\[	0< |\xi|^s-1 \le  s\left( (d-1)c^2+2k+10^{-2}{k^2} \right)\de	\]
if we choose $\de$ small enough, that is to say,  $0<\big( (d-1)c^2 +2k +10^{-2}{k^2} \big) \de\le \min\big\{1, s/M_s\big\}$. We now choose $c=1/ \sqrt{2(d-1)s}$ and $k$ as the positive solution of the quadratic equation $2k+10^{-2} k^2=1/2s$ so that, for $0< \de \le \min\big\{s, s^2/ M_s\big\}$, 
\[	0< |\xi|^s-1 \le \de.	\]  
Let us set $c_s :=\min\big\{ 10^{-2}, s, s^2/ M_s \big\}$, and choose $\phi,\, \psi \in C^\infty_0(\R)$ such that $\supp\phi\subset [-1, 1]$, $0\le \phi \le 1$, $\phi=1$ on $[-1/2, 1/2]$,  $\supp \psi\subset [1/4, 1]$, $0\le \psi\le 1$ and $\psi=1$ on $[1/2, 3/4]$. 
For every $\de\in(0,c_s)$ we define $f_\de\in\mathcal S (\R^d)$ by 
\[
\wh{f_\de}(\xi)= \psi\Big( \frac{\xi_d-1}{k\de} \Big) \prod_{j=1}^{d-1} \phi\Big( \frac{\xi_j}{c\sqrt \de}\Big).
\]
Since $| \int m_\de^s(|\xi|) \wh{ f_\de}(\xi) e^{ix\cdot\xi} d\xi| = | \int  m_\de^s(|\xi|) \wh {f_\de} (\xi) e^{i(x\cdot\xi-x_d)} d\xi|$, we have 
\[	\Big| \int m_\de^s(|\xi|) \wh{ f_\de}(\xi) e^{ix\cdot\xi} d\xi \Big| 
		\ge  \Big| \int m_\de^s(|\xi|) \wh {f_\de} (\xi)\cos(x\cdot\xi-x_d) d\xi \Big| -  \Big| \int m_\de^s(|\xi|) \wh {f_\de} (\xi)\sin(x\cdot\xi-x_d) d\xi \Big|.	\]
On $\supp\wh {f_\de}$,  $|\xi_j|\le c\sqrt \de$ for $j=1,\cdots, d-1$ and $k\de/4\le \xi_d-1\le k\de$. Thus $m_\de^s(|\xi|) \ge 1/2\de$ whenever $\xi\in \supp\wh{ f_\de}$ and $0<\de<c_s$.  
Also, if $\xi\in \supp \wh {f_\de}$ and  
\[x\in\mathcal A_\de:=\Big\{ x\in\R^d: |x_j|\le \frac{1}{200(d-1)c\sqrt \de},\, j=1,\cdots, d-1,\, |x_d|\le\frac{1}{200k\de} \Big\},\]
 then $|x\cdot\xi-x_d|\le 1/100$, hence
\[	\Big| \int_{\R^d} m_\de^s(|\xi|) \wh{ f_\de}(\xi) e^{ix\cdot\xi} d\xi \Big|
		\ge \frac1{2\de} \int \wh{ f_\de}(\xi) \Big( 1-\frac1{100} \Big) d\xi - \frac1{100\de} \int \wh {f_\de}(\xi) d\xi \approx \de^{-1} \|\wh {f_\de}\|_{1} \approx \de^\frac{d-1}{2}.	\]
Integration on the box $\mathcal A_\de$ yields
\[	\Big\| \int_{\R^d} m_\de^s(|\xi|) \wh {f_\de}(\xi) e^{ix\cdot\xi} d\xi\Big\|_{q} \gtrsim \de^\frac{d-1}{2} |\mathcal A_\de|^{1/q} \approx \de^{\frac{d-1}{2}-\frac{d+1}{2q}}.	\]
On the other hand it is easy to check that $\|f_\de\|_p \approx \de^{\frac{d+1}2-\frac{d+1}{2p}}$. Thus we obtain \eqref{beebsh_fr}. 
\end{proof}

\begin{proof}[Proof of \eqref{r4sh_fr}] 
Let $\phi$ be a non-negative smooth function on $\R$ such that  $\supp \phi\subset ( 1-2\varepsilon_\circ, 1+2\varepsilon_\circ) $ for some small $\varepsilon_\circ>0$ to be determined later depending on $s$. 
We take $f\in C^\infty_0(\R^d)$ so that  $\wh {f}(\xi)=\phi(|\xi|)$ and set 
\[	Q(x):=\int_{\R^d} m_\de^s(|\xi|) \wh{ f}(\xi) e^{ix\cdot\xi} d\xi.	\] 
By the spherical coordinate we write 
\[	Q(x)	=  (2\pi )^\frac d2 \int_{1-2\varepsilon_\circ}^{1+2\varepsilon_\circ} m_\de^s(r)\, \phi(r) r^{d-1} |rx|^{\frac{2-d}{2}} J_{\frac{d-2}2}(|rx|) dr,	\]
where $J_\nu $ denotes the Bessel function of order $\nu$. It is well-known (see \cite[p. 338]{St-book}) that for $\nu> -1/2$ 
\begin{equation}\label{bessel_asymp}
J_\nu(r)=\left(\frac{\pi r}2\right)^{-\frac 12}\cos \left( r-\frac {\pi \nu}{2} -\frac \pi 4\right) +R_\nu(r), \quad r>0,
\end{equation}
where $R_\nu$ satisfies $|R_\nu(r)|\le c_\nu r^{-3/2}$ if $r\ge 1$. 

By \eqref{bessel_asymp} with $\nu=\frac{d-2}2$ and the formula $\cos(u+v)=\cos u\cos v- \sin u\sin v$,  we write $Q$ as follows:
\[Q(x)=Q_1(x)-Q_2(x)+Q_3(x),\] 
where 
\begin{align}
\label{Q11}	Q_1(x) &:= 2(2\pi)^\frac{d-1}{2} |x|^\frac{1-d}{2}\cos\Big( |x|- \frac{\pi(d-1)}{4} \Big) \int_{1-2\varepsilon_\circ}^{1+2\varepsilon_\circ} m_\de^s(r)\, \phi(r) r^\frac{d-1}{2} \cos\left( (r-1)|x| \right) dr, \\
\nonumber	Q_2(x) &:=2(2\pi)^\frac{d-1}{2} |x|^\frac{1-d}{2}\sin\Big( |x|- \frac{\pi(d-1)}{4} \Big) \int_{1-2\varepsilon_\circ}^{1+2\varepsilon_\circ} m_\de^s(r)\,\phi(r) r^\frac{d-1}{2} \sin\left( (r-1)|x| \right) dr, \\
\nonumber	Q_3(x) &:= (2\pi)^\frac{d}{2} \int_{1-2\varepsilon_\circ}^{1+2\varepsilon_\circ} m_\de^s(r)\, \phi(r) r^{d-1} |rx|^\frac{2-d}{2} R_\frac{d-2}{2}(|rx|) dr.
\end{align}
We now  split the domain of the integral $\int_{1-2\varepsilon_\circ}^{1+2\varepsilon_\circ} m_\de^s(r) \phi(r) r^\frac{d-1}{2} \cos( (r-1)|x| ) dr$ of  $Q_1(x)$ into subintervals on which $|r-1|\lesssim \de$ and $|r-1|\gtrsim \de$, respectively.  To be precise let us set $k(t):=\int_{|\ta|\le t} \frac1{\ta^2+1}d\ta$ and fix a large $\la>0$ such that
\begin{equation}\label{klda}
k(s\lambda) \ge 100 (\pi-k(s\lambda)  ). 
\end{equation}
Clearly, such $\lambda$ exists since $\int_{-\infty}^\infty \frac 1{\ta^2+1} d\ta =\pi$. Let $\mu$ be a small number so that $\la\mu\le 10^{-2}$, and  let
\[	\mathcal A':=\Big \{x\in\R^d: \frac{\mu}{4\de} \le |x| \le \frac{\mu}{2\de}\Big \}.	\]
Put $\psi(r):=r^s-1$, $r>0$ and set $M_s :=\max \{ |\psi''(r)|:|r-1|\le 1/2 \}$.  By Taylor's theorem, $|\psi(r)-s(r-1)|\le {M_s}(r-1)^2/2$ when $|r-1|\le 1/2$. Thus, if $|\psi(r)|\le s\la \de$, then 
\[	|r-1|\le \frac{M_s}{2s}(r-1)^2 +  \frac{|\psi(r)|}s \le \frac{M_s\varepsilon_\circ}{s}|r-1| + \la \de, \quad \forall r\in\supp \phi \cap [1/2,3/2].	\]
Choosing $\varepsilon_\circ:=\min \{ s/(2M_s), 1/4 \}$ we have $|r-1|\le 2\la \de$ on $\supp \phi$ whenever $|\psi(r)|\le s\la \de$. Therefore, if $x\in\mathcal A'$ and $|\psi(r)|\le s\la \de$, then $|(r-1)x|\le \la\mu\le10^{-2}$, so $\cos((r-1)|x|)\ge 99/100$ on $\supp\phi$.

Now we break the integral part of  $Q_1(x)$ as the following:
\[	I_1(x)+I_2(x):=\Big( \int_{|\psi(r)|\le s\la \de}+\int_{|\psi(r)|> s\la \de} \Big) \frac{\de}{\psi(r)^2+\de^2} \,\phi(r) r^\frac{d-1}{2}\cos \left((r-1)|x|\right) dr.	\]
If $x\in\mathcal A'$, by the above choice of $\lambda$ and $\varepsilon_\circ$, we have 
\[	I_1(x) \ge \frac{99}{100 } \int_{|\psi(r)|\le s\la \de} \frac{\de}{\psi(r)^2+\de^2} \, \phi(r) r^{\frac{d-1}{2} -s+1} \frac{d\psi(r)}{s}.	\]
Hence, we choose $\phi\in C_0^\infty((1-2\varepsilon_\circ, 1+2\varepsilon_\circ))$ such that $\phi(r)r^{ \frac{d-1}{2} -s+1}=1$ if $|r-1| \le \varepsilon_\circ$, and $0\le \phi(r) r^{\frac{d-1}{2} -s+1} \le 2$ for all $r$. Thus it follows that, if $x\in\mathcal A'$ and $\de\le \varepsilon_\circ/(2\la)$, then
\[	I_1(x) \ge \frac{99}{100 s} \int_{|t|\le s\la \de} \frac{\de}{t^2+\de^2} dt = \frac{99}{100 s} k(s\la).	\]
On the other hand, by our choice of $\phi$  and \eqref{klda}
\[	|I_2(x)| \le \int_{|\psi(r)|> s\la \de} \frac{\de}{\psi(r)^2+\de^2} \, \phi(r) r^{\frac{d-1}{2}-s+1} \frac{d\psi(r)}{s} 
		\le \frac 2s \big(\pi- k(s\la)\big)\le \frac{k(s\la)}{50s}.	\]
Therefore we have, for $x\in\mathcal A'$ and $0< \de \le \varepsilon_\circ/(2\la)$, 
\begin{equation}\label{Q1}
I_1(x)+I_2(x)\ge I_1(x)-|I_2(x)| \ge \frac{97 k(s\la)}{100 s}.
\end{equation}

For each $n\in\N$ let us set
\[	\mathcal A_n:= \Big\{x\in \mathcal A' : \pi \Big( 2n+\frac{d-1}{4} \Big) - \frac{1}{100} \le |x| \le \pi \Big( 2n+\frac{d-1}{4} \Big)+\frac{1}{100} \Big\},	\]
which is nonempty only if $n\approx \de^{-1}$.  We also set 
\[	\mathcal A:=  \bigcup_{n\in \N}A_n.	\] 
From now on suppose that  $x\in\mathcal A$. It is clear that $\cos (|x|- \pi(d-1)/4 ) \ge 99/100$, and  $| \sin (|x|- \pi(d-1)/4)| \le 1/100$.  Hence, when  $0<\de \le \varepsilon_\circ/(2\la)$ it follows from  \eqref{Q11} and  \eqref{Q1} that 
\[	Q_1(x) \ge 2(2\pi)^{\frac{d-1}{2}}  |x|^{\frac{1-d}2} \cdot \frac{99}{100}\cdot \frac{97 k(s\la)}{100 s}.	\]
Similarly,  we get
\[	|Q_2(x)| \le 2(2\pi)^{\frac{d-1}{2}}  |x|^{\frac{1-d}2} \cdot \frac{2}{100s} \cdot \frac{101\, k(s\la)}{100}.	\]
Combining the estimates for $Q_1(x)$ and $|Q_2(x)|$, we have 
\[	Q_1(x)-Q_2(x)\gtrsim |x|^{\frac{1-d}2}  \approx \de^\frac {d-1}{2},	\]
where the implicit constants depend only on $d$ and $s$. On the other hand, it is straightforward to see that $Q_3(x)=O(\de^{\frac{d+1}2})$ {as} $ \de\to 0$.  Thus, with sufficiently small $\de$,  we have that 
\[	Q(x)=Q_1(x)-Q_2(x)+Q_3(x)\gtrsim   \de^\frac {d-1}{2}, \quad \forall x\in \mathcal A. \] 
Observe that we can choose a  constant $c\in(0,1/10)$, independent of all small $\de>0$, such that $|\mathcal A| \ge c|B(0,\mu(2\de)^{-1})|$. Therefore, we conclude that
\[	\normo{Q}_{L^q(\mathcal A)} \gtrsim \de^{\frac{d-1}{2}-\frac dq}	\]
for all sufficiently small $\de>0$, with the implicit constant depending only on $d,s$.  Meanwhile, $f=\F^{-1}(\phi(|\cdot|))\in L^p(\R^d)$ for any $p\in[1,\infty]$.  Thus, this completes the proof of \eqref{r4sh_fr}. 
\end{proof}

\section{Sharp resolvent estimates for the fractional Laplace operators}\label{fract}
The  resolvent estimates for $(-\Delta)^{\frac s2}$ can be obtained by making use of the argument we have used for  the resolvent estimates for $-\Delta$ (the case $s=2$). In technical aspect there is not  much difference, but it is worthwhile to record the result for the operator $((-\Delta)^{\frac s2}-z)^{-1}$.   We shall be brief, but include the statements of results and sketch their proofs.   In what follows we consider $s\in (0,d)$ though generalization to  $s\ge d$  is also possible. 

We begin with the following which can be shown by adapting the proof of Proposition \ref{eresb}, so we state it without proof. 
\begin{prop}\label{eresb_fr} 
Let $d\ge2$, $0<s<d$,  $1\le p, q \le \infty$ and let $z\in\C\setminus [0,\infty)$. Then, $\|((-\Delta)^{\frac s2}-z)^{-1}\|_{p\to q}<\infty$ if and only if $(1/p, 1/q)\in \mathcal R^s_0$ which is given by
\[	\mathcal R^s_0=\mathcal R^s_0(d) :=
		\Big\{(x,y) \in I^2 : 0\le x, y\le 1,~ 0\le x-y \le  \frac sd \Big\} \setminus \Big\{ \Big(1, \frac{d-s}d \Big) , \Big(\frac sd, 0 \Big) \Big\}.	\]
\end{prop}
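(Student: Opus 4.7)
The plan is to repeat the argument given for Proposition~\ref{eresb} with $|\xi|^2$ replaced by $|\xi|^s$ throughout. The homogeneity identity
\[
\|((-\De)^{\frac s2}-z)^{-1}\|_{p\to q}=|z|^{-1+\frac{d}{s}(\frac 1p-\frac 1q)}\|((-\De)^{\frac s2}-z/|z|)^{-1}\|_{p\to q}
\]
reduces the sufficiency part to showing finiteness for $z\in\zs$. I would then dyadically split
\[
(|\xi|^s-z)^{-1}=\be_0(|\xi|)(|\xi|^s-z)^{-1}+\sum_{j\ge 0}\be(2^{-j}|\xi|)(|\xi|^s-z)^{-1}.
\]
The first term is smooth with Fourier support in $B_d(0,3/4)$, disjoint from the sphere $|\xi|=1$, so its kernel lies in $L^1$ and Young's inequality yields an $L^p$--$L^q$ bound for every $1\le p\le q\le\infty$. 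For the $j$-th dyadic piece the scaling $\xi\to 2^j\xi$ shows
$\|\F^{-1}(\be(2^{-j}|\xi|)(|\xi|^s-z)^{-1})\|_{L^r}\lesssim 2^{j(d-s-d/r)}$ for $1\le r\le\infty$, and Young's inequality then gives
\[
\Big\|\F^{-1}\Big(\frac{\be(2^{-j}|\xi|)\wh f(\xi)}{|\xi|^s-z}\Big)\Big\|_q\lesssim 2^{j(\frac{d}{p}-\frac{d}{q}-s)}\|f\|_p,
\]
which is summable over $j\ge 0$ exactly when $\frac 1p-\frac 1q<\frac s d$. On the critical line $\frac 1p-\frac 1q=\frac s d$ (away from its endpoints), I would apply part $(\mathrm{III})$ of Lemma~\ref{intpl} with $p$ fixed and two values of $q$ straddling the critical exponent to obtain a weak-type bound on the dyadic sum, then real-interpolate with the subcritical strong-type estimates to recover the strong-type bound.

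For the necessity, the condition $\frac 1p\ge\frac 1q$ follows from translation invariance of the operator (\cite{Ho60}), and $\frac 1p-\frac 1q\le\frac s d$ is forced by the rescaling argument of the original proof: the hypothesized bound remains valid when $\wh f$ is localized to $|\xi|\approx 2^j$ with a constant independent of $j$, and rescaling then produces
\[
\Big\|\F^{-1}\Big(\frac{\be(|\xi|)\wh f(\xi)}{|\xi|^s-2^{-js}z}\Big)\Big\|_q\le C_z 2^{j(s+\frac dq-\frac dp)}\|f\|_p,
\]
whose right-hand side tends to zero as $j\to+\infty$ if $\frac 1p-\frac 1q>\frac s d$, contradicting the nontriviality of the limiting operator $\F^{-1}(|\xi|^{-s}\be(|\xi|)\wh f\,)$ for a suitable fixed $f$. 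To exclude the endpoint $(1/p,1/q)=(1,(d-s)/d)$, I would proceed exactly as in the proof of Proposition~\ref{eresb}: combining the hypothesized bound with the bound on the low-frequency piece and applying Mikhlin's multiplier theorem to the smooth symbol $(1-\be_0(4|\xi|))(|\xi|^s-z)|\xi|^{-s}$, which is a Mikhlin multiplier on $L^{d/(d-s)}$ since $d/(d-s)<\infty$ when $s<d$, one deduces the inequality $\|\F^{-1}((1-\be_0(|\xi|))|\xi|^{-s}\wh f\,)\|_{d/(d-s)}\lesssim\|f\|_1$, and a further scaling $\xi\to\epsilon\xi$ followed by $\epsilon\to 0$ produces the forbidden endpoint Hardy--Littlewood--Sobolev inequality $\|\F^{-1}(|\xi|^{-s}\wh f\,)\|_{d/(d-s)}\lesssim\|f\|_1$. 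The point $(s/d,0)$ is then excluded by duality.

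The main difference from the case $s=2$ is that the two-dimensional pathology at $(p,q)=(1,\infty)$ treated in Proposition~\ref{eresb} via the logarithmic asymptotic of the Bessel kernel $K_0$ does not appear here: for $s<d$ we have $s/d<1$, so the critical line never meets the corner $(1,\infty)$, and Mikhlin's theorem applies uniformly at both excluded endpoints without any dimensional restriction. Consequently the argument is actually marginally simpler than its $s=2$ counterpart, and the only step requiring genuine care is the real-interpolation step along the critical line, which proceeds routinely since strong-type bounds are available strictly on either side of it.
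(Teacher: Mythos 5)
Your proposal follows exactly the route the paper itself indicates: Proposition~\ref{eresb_fr} is stated without proof with a remark that it follows by adapting the argument of Proposition~\ref{eresb}, and that is precisely what you do, with the scaling exponent $2$ replaced by $s$ throughout. The dyadic split, the $L^r$ kernel bound $\lesssim 2^{j(d-s-d/r)}$, the Young-inequality step, the necessity via translation invariance and rescaling, and the Mikhlin argument at the endpoint $(1,(d-s)/d)$ (with $(s/d,0)$ by duality) all match the paper's template. Your observation that the $d=2$ pathology handled via the $K_0$ asymptotic in Proposition~\ref{eresb} cannot occur here because $s/d<1$ keeps the critical segment away from the corner $(1,0)$ is correct and a genuine simplification relative to the $s=2$ case.

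One step is phrased incorrectly, however. After obtaining the weak-type bounds $L^p\to L^{q,\infty}$ on the critical line from part $(\mathrm{III})$ of Lemma~\ref{intpl}, you propose to ``real-interpolate with the subcritical strong-type estimates to recover the strong-type bound.'' Real interpolation between a critical weak-type estimate and a subcritical strong-type estimate only produces points \emph{strictly inside} the region (i.e.\ with $1/p-1/q<s/d$), where strong type is already known; it does not upgrade to strong type \emph{on} the critical segment. The correct move, which is what the paper's ``real interpolation between those estimates'' refers to, is to real-interpolate between two of the critical weak-type estimates at distinct points $(1/p_0,1/q_0)$ and $(1/p_1,1/q_1)$ on the line $1/p-1/q=s/d$: Marcinkiewicz interpolation then yields $L^{p,p}\to L^{q,p}$ at the intermediate point, and since $p<q$ one has $L^{q,p}\subset L^q$, giving strong type on the open critical segment. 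This is a standard fix, but as written the step does not deliver the claimed conclusion.
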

We introduce some notations which  we need  to state our results. For $d\ge 2$ and $0<s< d$, let us define $\mathcal R^s_1=\mathcal R^s_1(d),$ $\mathcal R^s_2=\mathcal R^s_2(d)$, and $\mathcal R^s_3=\mathcal R^s_3(d)$ by 
\[     \mathcal R^s_1:= \mathcal P(d) \cap \mathcal R^s_0(d), \quad
     \mathcal R^s_2 := \big(\mathcal T(d)\cap \mathcal R^s_0(d)\big)\setminus \big( [D,H)\cup [D',H)\big), \quad 
	\mathcal R^s_3  := \mathcal Q(d) \cap \mathcal R^s_0(d).	\]
Note that  if $0<s<\frac{2d}{d+1}$, $\mathcal R^s_1 = \emptyset$ (see Figure \ref{fig3}), and  $\mathcal R^2_i= \mathcal R_i$ for $i=0, 1, 2, 3$. If $0<s< 2$, then $\mathcal R^s_i= \mathcal R_i\cap \mathcal R^s_0$ for every $i$. From the definition of $\gamma_{p,q}$ (recall \eqref{def-gamma}), it  follows that \eqref{gamma} holds with $\mathcal R_i$ replaced by ${\mathcal R}^s_i$, $i=1,2,3$. 

\begin{figure}
\captionsetup{type=figure,font=scriptsize}
\begin{minipage}[b]{0.45\textwidth}
\centering
\begin{tikzpicture} [scale=0.6]\scriptsize
	\path [fill=lightgray] (0,0)--(15/4,15/4)--(50/11, 40/11)--(5,5)--(10-40/11,10-50/11)--(10-15/4,10-15/4)--(10,10)--(10,7.5)--(2.5,0)--(0,0);
	\draw [<->] (0,10.7)node[above]{$y$}--(0,0) node[below]{$(0,0)$}--(10.7,0) node[right]{$x$};
	\draw (0,10) --(10,10)--(10,0) node[below]{$(1,0)$};
	\draw (4,4)node[above]{$D$}--(5.5,3);
	\draw (6,6)--(7,4.5);
	\draw (0,0)--(4,4);
	\draw [dash pattern={on 2pt off 1pt}]  (4,4)--(6,6);
	\draw [dash pattern={on 2pt off 1pt}] (4,0)node[below]{$\frac 2d$}--(10,6);	
	\draw [dash pattern={on 2pt off 1pt}] (6,0)node[below]{$E$}--(6, 8/3)node[above]{$B$}--(10-8/3, 4)node[left]{$B'$}--(10, 4)node[right]{$E'$};
	\draw [dash pattern={on 2pt off 1pt}] (6, 8/3)--(11/2,3);
	\draw [dash pattern={on 2pt off 1pt}] (10-8/3, 4)--(7, 10-11/2);
	\draw (6,6)--(10,10);
	\draw (2.5,0)node[below]{$\frac sd$}--(10,7.5);
	\draw [dash pattern={on 2pt off 1pt}] (15/4,15/4)node[left]{$P_*$}--(50/11, 40/11)node[below] {$P_\circ$}--(5,5)node[above]{$H$}--(10-40/11,10-50/11)--(10-15/4,10-15/4);
	\draw (5.6, 4.4) node{$\wt{\mathcal R}^s_2$};
	\draw (3.2, 2) node{$\wt{\mathcal R}^s_3$};
	\draw (8, 6.8) node{$\wt{\mathcal R}^{s\prime}_3$};
\end{tikzpicture}\caption{The case $d\ge3$, $0<s<\frac{2d}{d+1}$}\label{fig3}
\end{minipage}\hfill
\begin{minipage}[b]{0.46\textwidth}
\centering
\begin{tikzpicture} [scale=0.6]\scriptsize
	\path [fill=lightgray] (0,0)--(15/4,15/4)--(50/11, 40/11)--(5,5)--(10-40/11,10-50/11)--(10-15/4,10-15/4)--(10,10)--(10,4.4)--(5.6,0)--(0,0);
	\draw [<->] (0,10.7)node[above]{$y$}--(0,0) node[below]{$(0,0)$}--(10.7,0) node[right]{$x$};
	\draw (0,10) --(10,10)--(10,0) node[below]{$(1,0)$};
	\draw (4,4)node[above]{$D$}--(6,8/3)node[above]{$B$}--(10-8/3,4)node[left]{$B'$}--(6,6);
	\draw (0,0)--(4,4);
	\draw [dash pattern={on 2pt off 1pt}]  (4,4)--(6,6);
	\draw (6,6)--(10,10);
	\draw (5.6,0)node[below]{$\frac sd$}--(10,4.4);
	\draw [dash pattern={on 2pt off 1pt}] (4,0)node[below]{$\frac 2d$}--(10,6);
	\draw [dash pattern={on 2pt off 1pt}] (6, 8/3)--(6,0)node[below]{$E$};
	\draw [dash pattern={on 2pt off 1pt}] (10-8/3, 4)--(10, 4)node[right]{$E'$};
	\draw [dash pattern={on 2pt off 1pt}] (15/4,15/4)node[left]{$P_*$}--(50/11, 40/11)node[below] {$P_\circ$}--(5,5)node[above]{$H$}--(10-40/11,10-50/11)--(10-15/4,10-15/4);
	\draw (5.6, 4.4) node{$\wt{\mathcal R}^s_2$};
	\draw (30/8, 2) node{$\wt{\mathcal R}^s_3$};
	\draw (8, 50/8) node{$\wt{\mathcal R}^{s\prime}_3$};
	\draw (7.1, 2.9) node{$\mathcal R^s_1$};
\end{tikzpicture}\caption{The case $d\ge3$, $\frac{2d}{d+1}\le s< d$}\label{fig4}
\end{minipage}
\end{figure}

As before, by scaling we have $\| ((-\Delta)^{\frac s2} - z)^{-1}\|_{p\to q} = |z|^{-1+\frac ds(\frac 1p-\frac 1q)} \| ((-\Delta)^{\frac s2} - |z|^{-1}z)^{-1}\|_{p\to q}$ for  $z\in \C\setminus [0,\infty)$.  Using Lemma \ref{lower} we may repeat the argument in the proof of  Proposition \ref{shp} to get
\[	\|((-\Delta)^{\frac s2}-z)^{-1}\|_{p\to q} \gtrsim \dist(z,[0,\infty))^{-\gamma_{p,q}}, \quad   z\in \zs.	\]
Combining these two,  we get, for $z\in\C\setminus [0,\infty)$, 
\begin{equation}\label{lower_fr}	
\| ((-\Delta)^{\frac s2} - z)^{-1}\|_{p\to q} \gtrsim {\mathlarger \kappa}_{p,q}^s(z):= |z|^{-1+\frac ds(\frac 1p-\frac 1q)+\gamma_{p,q}} \dist(z,[0,\infty))^{-\gamma_{p,q}},	
\end{equation} 
and we may conjecture the following which is a natural extension of Conjecture \ref{main_conj}.
\begin{conj}\label{conj_fr}
Let $d\ge 2$, $0<s < d$ and let $(1/p, 1/q)\in \big( \bigcup_{i=1}^3\mathcal R_i^s \big) \cup \mathcal R_3^{s\prime}$.  There exists an absolute constant $C$, depending only on $p$, $q$, $d$ and $s$, such that,   for $z\in \C\setminus [0,\infty)$,
\begin{equation}\label{shp_resol_fr}	
C^{-1}  { \mathlarger \kappa}_{p,q}^s(z) \le \|((-\Delta)^{\frac s2}-z)^{-1}\|_{p\to q} \le C  { \mathlarger \kappa}_{p,q}^s(z).
\end{equation}
\end{conj}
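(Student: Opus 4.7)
The lower bound in \eqref{shp_resol_fr} is already obtained in \eqref{lower_fr}, so the task reduces to proving the upper bound. I expect it to be accessible on the same range as in Theorem \ref{thm}, namely on $\mathcal R_1^s$ together with $\wt{\mathcal R}_2^s \cup \wt{\mathcal R}_3^s \cup \wt{\mathcal R}_3^{s\prime}$ (with the tilde regions defined by analogy with the $s=2$ case, excluding $[D, P_\circ, P_\ast]$ and the half-open segments through $H$, intersected with $\mathcal R_0^s$). The plan is to run the argument of Theorem \ref{thm} with $|\xi|^2$ replaced by $|\xi|^s$, leveraging the fact that almost every technical ingredient in Sections \ref{prelim}--\ref{proof_of_main_thm} was formulated with enough generality to carry through: Lemma \ref{local_special} is explicitly stated for general $s \neq 0$; Proposition \ref{dethick} only requires \eqref{diff} on $\varphi$ and the elliptic/bounded-multiplier hypotheses on $\psi, m$; and Corollary \ref{tcsbil}, Theorem \ref{bi_tao}, and Theorem \ref{rest_ext_sphere} are phase-independent.

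\textbf{Reduction.} First rescale so that $|z| = 1$ using the scaling identity. Then split $(|\xi|^s - z)^{-1} = m_0 + m_1 + m_2$ via smooth cutoffs around the critical sphere $\{|\xi|^s = \re z\}$, exactly as in Section \ref{reduction}. The piece $m_1$ (bounded frequencies away from the sphere) is a smooth compactly supported multiplier, and $m_2$ (high frequencies) obeys $|\partial^\alpha m_2(\xi, z)| \lesssim |\xi|^{-|\alpha|-s}$, so both yield uniform $L^p$--$L^q$ bounds on $\mathcal R_0^s$ via the same Mikhlin/Young decomposition used in the proof of Proposition \ref{eresb_fr}. Their contributions are dominated by ${\mathlarger \kappa}_{p,q}^s(z)$. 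For $z$ bounded away from $[0,\infty)$ (i.e., $z \in \mathbb S^1(\theta_\circ)$), the piece $m_0$ is also smooth and uniformly bounded, so further analysis is only needed for $z = 1 + i\delta$ with $0 < |\delta| \ll 1$.

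\textbf{Singular core.} The core reduces to the generalized \eqref{main_part}, which is precisely Proposition \ref{main_prop} as stated for arbitrary $s \neq 0$. After localization near a point of the unit sphere (whose nonvanishing Gaussian curvature produces an elliptic phase $\psi \in {\bf Ell}(N,\epsilon)$ after affine normalization), the multiplier takes the form $\mathfrak M_\delta(\eta,\tau) = \delta^{-1}\varphi\bigl(m(\eta,\tau)(\tau - \psi(\eta))/\delta\bigr)\chi_0(\eta,\tau)$ with $\varphi(t) = (st \pm i)^{-1}$ satisfying \eqref{diff}, and $m \in {\bf Mul}(N, b)$ for a suitable $b$. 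All the $s$-dependence is absorbed into the smooth nonvanishing factor $m(\eta,\tau) = -1 + O(|\tau| + |\eta|^2)$. The dyadic decomposition \eqref{break_dyadic} then produces pieces of the form $\mathfrak M_\delta$ and $\mathfrak M_{\delta,\lambda}$, to which Proposition \ref{dethick} and Proposition \ref{cstomul} apply, while the $s$-independent Bochner--Riesz endpoint bounds from Theorem \ref{boc_rie_neg} together with the real interpolation Lemma \ref{intpl} handle the endpoint cases, exactly as for $s=2$.

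\textbf{Main obstacle.} The principal obstacle is the same one that blocks Conjecture \ref{main_conj} on the triangle $[D,P_\circ,P_\ast]$: the bilinear restriction threshold of Theorem \ref{bi_tao} forces Proposition \ref{dethick} to the range $q_\circ < q \le 2(d+1)/(d-1)$, so the triangle $[D,P_\circ,P_\ast]$ and the half-open segments $[D,H), [D',H)$ remain out of reach. A secondary, purely bookkeeping, concern specific to the fractional setting is verifying that the $s$-dependent smooth correction $m(\eta,\tau)$ and the rescaled/affinely transformed versions of $\psi$ remain within the uniform classes $\mnb$ and $\Ell$ with constants independent of $\delta, \lambda$; this amounts to bounding a finite number of smooth derivatives and follows from Lemma \ref{approximation} together with the analyticity of $|\xi|^s - 1$ near $|\xi|=1$, but does not alter the structure of the argument.
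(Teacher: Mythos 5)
Your proposal matches the paper's own treatment of this statement. Since the statement is labelled a Conjecture, the paper itself only establishes the partial result Theorem~\ref{fractional}, and your sketch reproduces that proof: the same $|z|=1$ rescaling, the same $\rho_0,\rho_1,\rho_2$ decomposition dispatching the regular pieces $m_1^s,m_2^s$ and the case $z\in\mathbb S^1(\theta_\circ)$, followed by an appeal to Proposition~\ref{main_prop}, which the paper deliberately stated for general $s\neq 0$ precisely to make this extension immediate; you also correctly identify the bilinear threshold $q>q_\circ$ from Corollary~\ref{tcsbil} as the reason the full range (in particular the triangle $[D,P_\circ,P_\ast]$ and its dual) remains out of reach.
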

When $d\ge3$ let us set $\wt{\mathcal R}^s_2 := \mathcal R_0^s(d)\cap \wt{\mathcal R}_2(d)$ and $\wt{\mathcal R}^s_3:={\mathcal R}^s_3(d)\setminus [D,P_\circ, P_\ast]$. We have the following. 

\begin{thm}\label{fractional} 
Let $z\in \C\setminus[0,\infty)$. If $d=2$, Conjecture \ref{conj_fr} is true. If $d\ge3$, the conjectured estimate  \eqref{shp_resol_fr} is true whenever $(1/p,1/q)\in  \mathcal R^s_1\cup \big( \bigcup_{i=2}^3 \wt{\mathcal R}^s_i\big) \cup \wt{\mathcal R}^{s\prime}_3$.  Furthermore, if $d\ge2$, for $p,q,s$ satisfying  $(1/p,1/q)\in\{B,B'\}$ and $\frac{2d}{d+1}\le s< d$ (see Figure \ref{fig4}), we have $\|((-\Delta)^{\frac s2}-z)^{-1}f\|_{q,\infty} \lesssim |z|^{-1+\frac{2d}{s(d+1)}} \|f\|_{p,1}$ and,  for $p,s$ satisfying $(\frac1p,\frac{d-1}{2d}) \in (B',E'] \cap \mathcal R^s_0$ and $\frac{2d}{d+1}< s< d$,  we have $\|((-\Delta)^{\frac s2}-z)^{-1}f\|_{\frac{2d}{d-1},\infty}\lesssim |z|^{-1+\frac ds(\frac1p-\frac1q)} \|f\|_p.$
\end{thm}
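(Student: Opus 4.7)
The plan is to closely mirror the proof of Theorem~\ref{thm}, exploiting the fact that almost all of the machinery in Section~\ref{proof_of_main_thm} has already been formulated for arbitrary $s\neq 0$. The lower bound in \eqref{shp_resol_fr} has been recorded as \eqref{lower_fr}, obtained from Lemma~\ref{lower} applied to the imaginary part of $(|\xi|^s-z)^{-1}$ together with the scaling identity preceding Conjecture~\ref{conj_fr}, so the entire task is to establish the matching upper bound. First I would carry out a reduction parallel to Section~\ref{reduction}: setting $m(\xi,z):=(|\xi|^s-z)^{-1}$ and fixing a cutoff $\rho_0$ supported in a thin annulus around $\{|\xi|^s=\re z\}$, decompose $m = m_0+m_1+m_2$ with $m_1$, $m_2$ supported strictly inside and outside the annulus. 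The $m_1$ part is uniformly smooth with compact support, hence trivially uniformly $L^p\to L^q$ bounded for all $1\le p\le q\le\infty$. The $m_2$ part satisfies $|\partial^\al m_2(\xi,z)|\lesssim |\xi|^{-s-|\al|}$ uniformly in $z$, and a dyadic decomposition combined with Young's inequality piecewise and, at the Sobolev endpoint $1/p-1/q=s/d$, real interpolation via Lemma~\ref{intpl}, yields uniform $L^p\to L^q$ boundedness precisely on $\mathcal R_0^s$, exactly as in the proof of Proposition~\ref{eresb_fr}. For $z\in\mathbb S^1(\theta_\circ)$ the part $m_0$ is also uniformly smooth on compact support, so after the scaling $\xi\to(\re z)^{1/s}\xi$ everything reduces to bounding the localized multiplier in \eqref{main_part} for $z=1+i\de$ with $|\de|\ll 1$.

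Next I would invoke Proposition~\ref{main_prop} directly. The key point is that the factorization of $|\xi|^s-1$ exhibited in Section~\ref{pf_of_main_prop}, namely $|\xi|^s-1 = s(\tau-\psi(\eta))\, m(\eta,\tau)$ with $\psi\in {\bf Ell}(N,\epsilon)$ and $m\in {\bf Mul}(N,b)$ near a fixed point of the sphere, is valid for arbitrary $s\neq 0$, and the subsequent dyadic decomposition and application of Proposition~\ref{dethick} make no use of the specific value of $s$. Consequently Proposition~\ref{main_prop} delivers the strong-type estimate in \eqref{shp_resol_fr} for every $(1/p,1/q)\in \wt{\mathcal R}_2^s\cup \big(\mathcal Q\setminus[P_\ast,P_\circ,D]\big)\cup\big(\mathcal Q\setminus[P_\ast,P_\circ,D]\big)'$, the restricted weak type estimate at $B,B'$, and the weak type estimate on $(B',E']$, in each case restricted to the $s$-range for which the relevant points belong to $\mathcal R_0^s$. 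The strong-type bound on $\mathcal R_1^s$, which is nonempty only when $s\ge \frac{2d}{d+1}$, then follows from the endpoint weak type bounds by duality and real interpolation, exactly as in Remark~\ref{rmk1}. Combined with the lower bound this yields \eqref{shp_resol_fr} on the asserted range, and the $d=2$ case (where $\wt{\mathcal R}_i^s=\mathcal R_i^s$) then recovers the full Conjecture~\ref{conj_fr}.

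The hard part here is essentially bookkeeping rather than substance: carefully tracking which of the endpoint statements ($B$, $B'$, segments into $E$, $E'$, etc.) remain meaningful as $s$ varies in $(0,d)$, since these points drop out of $\mathcal R_0^s$ once $s<\frac{2d}{d+1}$ (compare Figure~\ref{fig3} with Figure~\ref{fig4}), and the $s$-dependent $p,q$ ranges in the statement must be derived accordingly. The only genuinely new computation is verifying that the dyadic treatment of the $m_2$ part in the reduction captures the sharp range $\mathcal R_0^s$ all the way up to the Sobolev endpoint $1/p-1/q=s/d$, but this is routine and essentially identical to the corresponding argument in the proof of Proposition~\ref{eresb_fr}.
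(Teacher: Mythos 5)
Your proposal follows the same route as the paper's proof: decompose $m^s = m_0^s + m_1^s + m_2^s$ with the same annular cutoffs, dispose of $m_1^s$ and $m_2^s$ by the uniform derivative bounds and the dyadic-plus-interpolation argument from Proposition \ref{eresb}, and reduce $m_0^s$ to Proposition \ref{main_prop}, which was deliberately stated for general $s\neq 0$ precisely so that this step carries over verbatim. The identification of the only $s$-dependent content as bookkeeping of which endpoints remain in $\mathcal R_0^s$, and the observation that the lower bound is already furnished by \eqref{lower_fr} via Lemma \ref{lower}, both match the paper's reasoning exactly.
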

We  remark that Theorem \ref{thm} is a special case of  Theorem \ref{fractional}  when $d\ge 3$. If $\frac{2d}{d+1}\le s<d$ and $(1/p, 1/q)\in\mathcal R^s_1(d)$, Theorem \ref{fractional} covers the result by Huang, Yao and Zheng \cite[Theorem 1.4]{HYZ}.

\begin{proof}[Proof of Theorem \ref{fractional}] We basically follow the argument in Section \ref{proof_of_main_thm} with some modifications.  For $z\in \zs$ let  us set 
\[	m^s(\xi, z)=(|\xi|^s-z)^{-1}.	\]
Using the same functions  $\rho_0,$   $\rho_1, $ and   $\rho_2$ as in Section \ref{proof_of_main_thm} we break $m^s$ such that $m_j^s(\xi, z):=m(\xi,z)\rho_j(\xi),$ $ j=0, 1,2,$ and  $ m^s(\xi,z) = \sum_{j=0}^2 m_j^s(\xi, z).$  Since $|\partial^\alpha_\xi  m_2^s(\xi,z)|\lesssim |\xi|^{-s-|\alpha|}$ and $m_2^s$ is supported away from the orign, by the standard argument (for example, the proof of sufficiency part of Proposition \ref{eresb})  we see that $\|m_2^s(D, z) f\|_{q}\le C\|f\|_{p}$ if $(1/p,1/q)\in \mathcal R^s_0$.  Similarly,  since $m_1^s$ is supported in $B_d(0,1)$ and $|\partial^\alpha_\xi  m_1^s(\xi,z)|\lesssim |\xi|^{s-|\alpha|}$, we see that $\|m_1^s(D, z) f\|_{q}\le C\|f\|_{p}$ if $1\le p\le q\le \infty$.  Thus, we need only handle $m_0^s(D, z)$.  If $z \in \mathbb S^1(\theta_\circ)$,  $\partial^\alpha_\xi m_0^s({\xi}, z)$  is uniformly bounded.  So, we may assume $z \not\in \mathbb S^1(\theta_\circ)$ and we  are reduced to showing  \eqref{main_part} when $(1/p,1/q)\in  \mathcal R^s_1\cup \big( \bigcup_{i=2}^3 \wt{\mathcal R}^s_i\big) \cup \wt{\mathcal R}^{s\prime}_3$, and its (resticted) weak type variants when $(1/p,1/q)\in [B',E']\cap \mathcal R_0^s$. All of these estimates are contained  in Proposition \ref{main_prop}.
\end{proof}

\subsection{Region of spectral parameters for uniform estimate} 
Let $p$, $q$, $d$ and $s$ be as in Theorem \ref{fractional}, and let $\ell>0$. Making use of Theorem \ref{fractional} we can also describe the region
\[	\mathcal Z_{p,q}^s(\ell):= \big\{z\in \C\setminus [0,\infty) : { \mathlarger \kappa}_{p,q}^s(z) \le \ell  \big\}.	\] 
We  consider  three cases $0<s<\frac{2d}{d+1}$, $s=\frac{2d}{d+1}$, and $\frac{2d}{d+1}<s< d$, separately. Also, by duality it is sufficient to consider $p,q$ satisfying  $(1/p,1/q)\in  \mathcal R^s_1\cup \wt{\mathcal R}^s_2 \cup \wt{\mathcal R}^s_3$.  As before we set the homogeneity degree $\omega_{p,q}^s=\omega_{p,q}^s(d):=1-\frac ds (\frac1p-\frac1q ),$ which is in $[0,1]$ when $(1/p,1/q)\in \mathcal R_0^s$, and note that
\begin{align*}
\mathcal Z_{p,q}^s(\ell)
	& =\big\{z\in \C\setminus \{0\} : \re z\le 0, \, \ell |z|^{\omega_{p,q}^s} \ge 1 \big\}	\\
	& \qquad \cup \big\{z\in \C\setminus [0,\infty) : \re z>0, \, \ell |\im z|^{\gamma_{p,q}}\ge |z|^{\ga_{p,q}-\omega_{p,q}^s}\big\}.
\end{align*}
Since $\omega_{2,2}^s=1=\ga_{2,2}$ for any $s$ and $d$,  $\mathcal Z_{2,2}^s(\ell)$ is always the complement of the $\ell^{-1}$-neighborhood of $[0, \infty)$ (see Figure \ref{1nbd_removed} or Figure \ref{broad10(uniform)}). We also note that  $\mathcal Z_{p,q}^s(\ell)=\emptyset$ if $\omega_{p,q}^s=0$ and $\ell<1$.  In what follows we disregard the case $p=q=2$, and   the case $\ell<1$ whenever $\omega_{p,q}^s=0$. 

{\bf When $0<s<\frac{2d}{d+1}$.}  In this case, $\mathcal R_1^s=\emptyset$, and $\ga_{p,q}>0$ for all $p,q$ with $(1/p,1/q)\in \wt{\mathcal R}^s_2 \cup \wt{\mathcal R}^s_3$.  \emph{If  $\omega_{p,q}^s=0$}, $\mathcal Z_{p,q}^s(1)=\{z\in\C\setminus\{0\}: \re z\le 0\}$, and  $\mathcal Z_{p,q}^s(\ell)$ with $\ell>1$ is a complement of a planar cone such as in Figure \ref{fig_Sobolev_line2}. 
\emph{If $\omega_{p,q}^s>0$,}  one can easily check that $\ga_{p,q}-\omega_{p,q}^s>0$ for all $(1/p,1/q)\in \big(\wt{\mathcal R}_2^s\setminus\{H\}\big)\cup \wt{\mathcal R}_3^s$. Thus, $\mathcal Z_{p,q}^s(\ell)$ has profiles such as the regions in Figure \ref{p2_q9pt9}, Figure \ref{p2_q20div3}, or Figure \ref{p2_q5}.

{\bf When $s=\frac{2d}{d+1}$.} In this case, $\frac1p-\frac1q=\frac sd=\frac2{d+1}$ and $\mathcal R_1^s=(B, B')$. 
\emph{If $\omega_{p,q}^s=0$},     $\mathcal Z_{p,q}^s(\ell)= \C\setminus [0,\infty)$ for $(1/p,1/q)\in\mathcal R_1^s$ and $\ell\ge1$;  $\mathcal Z_{p,q}^s(1)$ is the left half-plane for $(1/p,1/q)\in \wt{\mathcal R}_3^s$;   $\mathcal Z_{p,q}^s(\ell)$ is a complement of a planar cone for $(1/p,1/q)\in\mathcal R_3^s$ and  $\ell>1$ (see Figure \ref{fig_Sobolev_line2});  {there is no $p,q$ satisfying $(1/p,1/q)\in \mathcal R_2^s$.} 
\emph{If $\omega_{p,q}^s>0$}, we consider the following two cases:

\vspace{-10pt}

\begin{itemize}
	\item 
	$(1/p,1/q)\in\wt{\mathcal R}_2^s\setminus\{H\}$: In this case, $\ga_{p,q}-\omega_{p,q}^s=0$ since $s=\frac{2d}{d+1}$. Hence, $\mathcal Z_{p,q}^s(\ell)$ is the complement of the $\ell^{-1/\ga_{p,q}}$-neighborhood of $[0,\infty)$.
	\smallskip
	\item 
	$(1/p,1/q)\in\wt{\mathcal R}_3^s$: Then,  $\ga_{p,q}-\omega_{p,q}^s= -\frac {d+1}2 \big(\frac1q-\frac{d-1}{d+1}(1-\frac1p) \big)>0$. Thus, the profiles of $\mathcal Z_{p,q}^s(\ell)$ take the forms of the regions in Figure   \ref{p2_q9pt9}, Figure \ref{p2_q20div3}, or Figure \ref{p2_q5}.
\end{itemize}

{\bf When $\frac{2d}{d+1}<s< d$.} The classification of the profiles of $\mathcal Z_{p,q}^s(\ell)$ is similar to that  in Section \ref{drawing_figures} where $s=2$.   Again we consider the cases $\omega_{p,q}^s=0$ and $\omega_{p,q}^s>0$, separately.  
\emph{If $\omega_{p,q}^s=0$,} there are only two cases   $(1/p,1/q)\in\mathcal R_1^s$ and $(1/p,1/q)\in \wt{\mathcal R}_3^s$. For the first  case
$\mathcal Z_{p,q}^s(\ell)=\C\setminus[0,\infty)$ when $\ell\ge1$, and for the latter $\mathcal Z_{p,q}^s(1)$ is the left half-plane and $\mathcal Z_{p,q}^s(\ell)$ with $\ell >1$ is the complement of a planar cone (Figure \ref{fig_Sobolev_line2}). \emph{If  $\omega_{p,q}^s>0$}, we consider the following cases: 
\vspace{-10pt}
\begin{itemize}
		\item $(1/p,1/q)\in \mathcal R_1^s$: Since $\ga_{p,q}=0$ and $0<\omega_{p,q}^s<\frac{s(d+1)-2d}{s(d+1)}$,   $\mathcal Z_{p,q}^s(\ell)=\{z\in\C\setminus[0,\infty): |z|\ge \ell^{-1/\omega_{p,q}}\}$. 
		\item  $(1/p,1/q)\in \wt{\mathcal R}_2^s\setminus\{H\}$:  Since $\ga_{p,q}-\omega_{p,q}^s=(\frac ds-\frac{d+1}2)(\frac1p-\frac1q) <0$, $\mathcal Z_{p,q}^s(\ell)$ is the complement of a neighborhood of $[0,\infty)$ which shrinks along the positive real line as $\re z\to \infty$. 
		\item  
	$(1/p,1/q)\in \wt{\mathcal R}_3^s$: Note that  $\gamma_{p,q}=\frac{d+1}2-\frac dp>0$, and $\gamma_{p,q}-\omega_{p,q}^s=\frac ds(\frac{1-s}p+\frac{s(d-1)}{2d}-\frac1q)$. We divide $\wt{\mathcal R}_3^s$ into  $\wt{\mathcal R}_{3,+}^s$, $\wt{\mathcal R}_{3,0}^s$, and $\wt{\mathcal R}_{3,-}^s$ which are given by
	\begin{align*}
		\wt{\mathcal R}_{3,\pm}^s &=\Big\{(x,y)\in \wt{\mathcal R}_3^s: \pm\Big(y-(1-s)x-\frac{s(d-1)}{2d} \Big)>0 \Big\},  \\
		\wt{\mathcal R}_{3,0}^s &=\Big\{(x,y)\in \wt{\mathcal R}_3^s : y=(1-s)x+\frac{s(d-1)}{2d} \Big\}.
	\end{align*}
	\begin{itemize}
		\item[$\dagger$]
		$(1/p, 1/q)\in\wt{\mathcal R}_{3,+}^s$: Since $\gamma_{p,q}-\omega_{p,q}^s<0$, $\mathcal Z_{p,q}(\ell)$ is the complement of a neighborhood of $[0,\infty)$ which shrinks along positive real line as $\re z\to \infty$. 
		\item[$\dagger$] 
		$(1/p, 1/q)\in\wt{\mathcal R}_{3,0}^s$: Then  $\gamma_{p,q}-\omega_{p,q}^s=0$ and  $\mathcal Z_{p,q}(\ell)$ is the complement of the $\ell^{-1/\ga_{p,q}^s}$-neighborhood of $[0,\infty)$. 
		\item[$\dagger$] 
		$(1/p, 1/q)\in\wt{\mathcal R}_{3,-}^s$:  Since $\gamma_{p,q}-\omega_{p,q}^s>0$,  
		$\mathcal Z_{p,q}(\ell)$ is the complement of a neighborhood of $[0,\infty)$ whose boundary asymptotically satisfies $|\im z|\approx (\re z)^{1-\omega_{p,q}^s/\gamma_{p,q}^s}$ when $\re z$ is large (Figure \ref{p2_q9pt9}, Figure \ref{p2_q20div3}, and Figure \ref{p2_q5}).
	\end{itemize}
\end{itemize}

\subsection{Location of the eigenvalues of $(-\Delta)^{\frac s2}+V$ }  
Finally, using Theorem \ref{fractional} we can obtain the following which describes location of eigenvalues of the fractional operator $(-\De)^{\frac s2}+V$ acting in $L^q(\R^d)$.  The proof is similar to that of Corollary \ref{location_eigen}.
\begin{cor} 
Let $d\ge2$, $0<s< d$, $(1/p,1/q)\in\mathcal R_1^s\cup \big(\bigcup_{i=2}^3 \wt{\mathcal R}_i^s \big)\cup \wt{\mathcal R}_3^{s,\prime}$, and let $C>0$ be the constant which appears in \eqref{shp_resol_fr}. Fix a positive number $\ell>0$ (we choose $\ell\ge1$ if $1/p-1/q=s/d$). Suppose that $\|V\|_{L^{\frac{pq}{q-p}}(\R^d)} \le t (C\ell )^{-1}$ for some  $t\in(0,1)$. Then, if $E\in\C\setminus[0,\infty)$ is an eigenvalue of $(-\Delta)^{\frac s2}+V$ acting in $L^q(\R^d)$, $E$ must lie in $\C\setminus \mathcal Z_{p,q}^s(\ell)$.
\end{cor}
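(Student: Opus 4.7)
The proof follows the template of Corollary \ref{location_eigen} with only routine adaptation to the fractional setting, so the plan is to argue by contradiction, assuming an eigenvalue $E \in \mathcal Z_{p,q}^s(\ell)$ exists and using the sharp resolvent estimate (Theorem \ref{fractional}) to bound the associated eigenfunction by itself with a factor strictly less than $1$.

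First, I would let $u \in L^q(\R^d) \setminus \{0\}$ satisfy $((-\Delta)^{s/2}+V)u = Eu$ with $E \in \C \setminus [0,\infty)$, and suppose toward a contradiction that $E \in \mathcal Z_{p,q}^s(\ell)$. By the definition of $\mathcal Z_{p,q}^s(\ell)$ and Theorem \ref{fractional} we obtain
\[
\|((-\Delta)^{s/2}-E)^{-1}\|_{p\to q} \le C\,{\mathlarger \kappa}^s_{p,q}(E) \le C\ell.
\]
Since $u$ is an eigenfunction we have $((-\Delta)^{s/2}-E)u = -Vu$, which combined with the triangle inequality gives
\[
\|u\|_q \le \|((-\Delta)^{s/2}-E)^{-1}\|_{p\to q}\bigl(\|((-\Delta)^{s/2}+V-E)u\|_p + \|Vu\|_p\bigr) = C\ell\, \|Vu\|_p.
\]
Applying H\"older's inequality with the exponent relation $\tfrac{1}{p} = \tfrac{1}{q} + \tfrac{q-p}{pq}$ bounds $\|Vu\|_p \le \|V\|_{pq/(q-p)} \|u\|_q$, and the smallness hypothesis on $V$ then yields $\|u\|_q \le t\|u\|_q$ with $t < 1$, forcing $u = 0$ and producing the desired contradiction.

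The only technical point to verify carefully is that, for the endpoints where Theorem \ref{fractional} provides only restricted weak type or weak type estimates (namely $(1/p,1/q)\in\{B,B'\}$ and the segment $(B',E']\cap \mathcal R_0^s$), one should either avoid those points or replace the strong $L^p\to L^q$ bound by its Lorentz-space version, using $\|Vu\|_{p,1} \le \|V\|_{pq/(q-p)}\|u\|_{q,1}$ or a duality argument; since the statement restricts $(1/p,1/q)$ to $\mathcal R_1^s\cup(\bigcup_{i=2}^3 \wt{\mathcal R}_i^s)\cup \wt{\mathcal R}_3^{s\prime}$, where the strong estimate \eqref{shp_resol_fr} is at our disposal, no endpoint gymnastics are required and the argument above is complete. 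The ``main obstacle'' is therefore not really an obstacle at all: the content lies entirely in having the sharp resolvent estimate \eqref{shp_resol_fr} available, after which the Birman--Schwinger style perturbation argument on the function $u$ itself (rather than on $|V|^{1/2}u$, which would be unavailable for $q \ne 2$) closes immediately.
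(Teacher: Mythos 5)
Your proof is correct and follows exactly the same contradiction argument as the paper's proof of Corollary \ref{location_eigen}, which the paper itself cites as the model ("The proof is similar to that of Corollary \ref{location_eigen}"). The only addition you make is the (accurate) observation that the stated hypotheses keep $(1/p,1/q)$ inside the region where the strong-type estimate \eqref{shp_resol_fr} is available, so no Lorentz-space modification is needed.
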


\bibliographystyle{plain}

\end{document}